 \newtheorem{thrm}{Theorem}[section]
 \newtheorem{lemma}[thrm]{Lemma}
 \newtheorem{proposition}[thrm]{Proposition}
 \newtheorem{definition}[thrm]{Definition}
 \newtheorem{remark}[thrm]{Remark}
\def\rt{\rtimes}
\def\vh{\frac{1}{2}}
\newcommand{\Irr}{\mathop{\text{Irr}}}
\title{Degenerate principal series in the general case}
\author{Yeansu Kim, Baiying Liu, and Ivan Mati\'{c}}
\date{\today}
\begin{document}

\maketitle

\begin{abstract}
Let $G_n$ denote either the group $SO(2n+1, F)$, $Sp(2n, F)$, or $GSpin(2n+1, F)$ over a non-archimedean local field of characteristic different than two. We determine all composition factors of degenerate principal series of $G_n$, using methods based on the Aubert involution and known results on irreducible subquotiens of the generalized principal series of particular type.
\end{abstract}

{\renewcommand{\thefootnote}{} \footnotetext[1]{\textit{MSC2000:}
primary 22E35; secondary 22E50, 11F70}
\footnotetext[2]{\textit{Keywords:} classical $p$-adic groups, degenerate principal series, generalized principal series}}

\section{Introduction}

Let $F$ be a non-archimedean local field of characteristic different than two.
Let $\mathrm{G}_n$ denote a symplectic, odd special orthogonal, or odd general spin group of split rank $n$ defined over $F$, and $G_n=\mathrm{G}_n(F)$. 
The aim of this paper is to obtain a uniform description of reducibility and composition factors of degenerate principle series of $G_n$. This greatly generalizes and simplifies previous works of Jantzen \cite{Jan7}, Kudla-Rallis \cite{KR1}, Gustafson \cite{Gust}, and others. We note that the degenerate principle series, besides being interesting by themselves, play an important role in the theory of automorphic forms, especially
the extension of the Siegel-Weil formula, constructions of residual spectrum \cite{JK01, Kim01} , and in the local theta-correspondence.  

Let $\sigma$ denote an irreducible cuspidal representation of some $G_n$. Also, let $\rho_0$ denote an irreducible cuspidal representation of $GL(n_{\rho_0}, F)$, and let $\rho$ denote an irreducible
self-contragredient, essentially self-contragredient (i.e., $\widetilde\rho\cong\rho\otimes\omega_{\sigma}$), cuspidal representation of $GL(n_{\rho}, F)$ when $G_n$ is a classical group, $GSpin(2n+1,F)$, respectively. Then there exist unique non-negative half-integers $\alpha, \beta$ such that $\nu^{\alpha} \rho \rtimes \sigma$, $\nu^{\beta} \rho_0 \rtimes \sigma$ are reducible (for more details regarding the notation we refer the reader to Section 2). For $x \geq \alpha$ such that $x - \alpha \in \mathbb{Z}$, the induced representation
$\nu^{-x} \rho \times \nu^{-x+1} \rho \times \cdots \times \nu^{-\alpha} \rho \rtimes \sigma$ contains a unique irreducible subrepresentation, which we denote by $\zeta(\rho, x; \sigma)$. A degenerate principal series is an induced representation of the form
\begin{equation} \label{dps}
\zeta([\nu^{-b} \rho_0, \nu^{-a} \rho_0]) \rtimes \zeta(\rho, x; \sigma),
\end{equation}
for $a, b$ such that $b - a \in \mathbb{Z}$, where $\zeta([\nu^{-b} \rho_0, \nu^{-a} \rho_0])$ is a Zelevinsky segment representation, i.e., the unique irreducible subrepresentation of $\nu^{-b} \rho_0 \times \nu^{-b+1} \rho_0 \times \cdots \times \nu^{-a} \rho_0$. It has been explained in detail in \cite[Section~2]{Jan7} that this definition generalizes the classical notion of the degenerate principal series, studied in \cite{Gust} and \cite{KR1}. We note that the composition series of the degenerate principal series (\ref{dps}) have been determined in \cite{Jan7} for $\alpha \in \{ 0, \vh, 1 \}$, using Tadi\'{c}'s Jacquet modules method \cite{Tad5, Tad2}, and here we treat the general case. Our results show that the degenerate principal series are multiplicity one representations of length up to four, and also provide a deeper insight into the structure of the irreducible subquotients.

Our approach to the determination of reducibility and composition factors of induced representations of the form (\ref{dps}) is completely different than one used in \cite{Jan7}, and is based on the methods of the Aubert involution. The Aubert dual of the degenerate principal series is a special type of the generalized principal series, and the composition factors of such representations have been determined in \cite{Mu3} and \cite[Proposition~3.2]{Matic8}. To determine the Aubert duals of composition factors in question, we use a further adjustment of the methods initiated in \cite{Matic14, Matic15, Matic10}. Eventually, it turns out that needed Aubert duals of tempered representations mostly follow directly from \cite{Matic14, Matic10}. On the other hand, to determine the Aubert duals of the involved non-tempered representations we use an inductive approach based on the detailed investigation of embeddings and Jacquet modules of such representations, using a case-by-case consideration.

Let us now describe the contents of the paper in more detail. In the following section we present some preliminaries, while the first special case $\beta=0$ is treated in the third section. The case $\beta>0$ is studied in Sections 4 -- 6, where in the fourth section we handle the case $a \geq 1$, in the fifth section the case $a \leq 0$, and in the sixth section we deal with the case $a = \vh$. To work effectively, from Lemma 2.5 to the end of Section 6, we mainly focus on the cases $G_n=Sp(2n,F)$ and $SO(2n+1,F)$ (see Remark 2.4). 
In the final section we provide necessary adjustments in the odd GSpin case.

\subsection*{Acknowledgements}
The first author is supported by Chonnam National University (Grant number: 2018-0978).
The second author is partially supported by NSF grants DMS-1702218, DMS-1848058, and by start-up funds from the Department of Mathematics at Purdue University.
The third author is partially supported by Croatian Science Foundation under the project IP-2018-01-3628.

\section{Preliminaries}

Throughout the paper, $F$ will denote a non-archimedean local field of characteristic different than two.

For a connected reductive $p$-adic group $G$ defined over field $F$, let $\Sigma$ denote the set of roots of $G$ with respect to fixed minimal parabolic subgroup and let $\Delta$ stand for a basis of $\Sigma$. For $\theta \subseteq \Delta$, we let $P_{\theta}$ denote the standard parabolic subgroup of $G$ corresponding to $\theta$ and let $M_{\theta}$ denote a corresponding standard Levi subgroup. Let $W$ denote the Weyl group of $G$.

For a parabolic subgroup $P$ of $G$ with the Levi subgroup $M$, and a representation $\sigma$ of $M$, we denote by $i_{M}(\sigma)$ a normalized parabolically induced representation of $G$ induced from $\sigma$. Also, let $r_M(\sigma)$ stand for the normalized Jacquet module of an admissible finite length representation $\sigma$ of $G$, with respect to the standard parabolic subgroup having the Levi subgroup equal to $M$.

We take a moment to recall the definition of the Aubert involution and some of its basic properties \cite{A, A2}.

\begin{thrm} \label{aub}
Define the operator on the Grothendieck group of admissible representations of finite length of $G$ by
\begin{equation*}
D_{G} = \sum_{\theta \subseteq\Delta} (-1)^{| \theta |} i_{M_{\theta}} \circ r_{M_{\theta}}.
\end{equation*}
Operator $D_{G}$ has the following properties:
\begin{enumerate}[(i)]
  \item $D_{G}$ is an involution.
  \item $D_{G}$ takes irreducible representations to irreducible ones.
  \item If $\sigma$ is an irreducible cuspidal representation, then $D_{G}(\sigma) = (-1)^{|\Delta|} \sigma$.
  \item For a standard Levi subgroup $M = M_{\theta}$, we have
\begin{equation*}
r_{M} \circ D_{G} = Ad(w) \circ D_{w^{-1}(M)} \circ r_{w^{-1} (M)},
\end{equation*}
where $w$ is the longest element of the set $\{ w \in W : w^{-1}(\theta) > 0 \}$.
  \item For a standard Levi subgroup $M = M_{\theta}$, we have $D_G \circ i_{M} = i_{M} \circ D_M$.
\end{enumerate}
\end{thrm}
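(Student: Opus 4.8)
The plan is to run everything through the Bernstein--Zelevinsky geometric lemma together with transitivity of induction and of the Jacquet functor. Recall that for standard Levi subgroups $M_1, M_2$ the geometric lemma expresses $r_{M_2} \circ i_{M_1}$ as a direct sum, over a fixed set of representatives $w$ of the double cosets $W_{M_2} \backslash W / W_{M_1}$, of operators $i^{M_2}_{M_2 \cap w(M_1)} \circ Ad(w) \circ r^{M_1}_{w^{-1}(M_2) \cap M_1}$, and that $i_{M_2} \circ i^{M_2}_{N} = i_{N}$, $r^{M_1}_{N} \circ r_{M_1} = r_{N}$ for $N$ a standard Levi inside the relevant group. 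Granting that $D_{G}$ is a well-defined endomorphism of the Grothendieck group (each summand $i_{M_{\theta}} \circ r_{M_{\theta}}$ preserves admissibility and finite length), property (iii) is immediate: for $\sigma$ irreducible cuspidal one has $r_{M_{\theta}}(\sigma) = 0$ whenever $\theta \subsetneq \Delta$, so only the term $\theta = \Delta$ survives in $D_{G}(\sigma)$, and as $M_{\Delta} = G$ it equals $(-1)^{|\Delta|} \sigma$.

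For (i) I would expand
\begin{equation*}
D_{G} \circ D_{G} = \sum_{\eta, \eta' \subseteq \Delta} (-1)^{|\eta| + |\eta'|}\, i_{M_{\eta'}} \circ \bigl( r_{M_{\eta'}} \circ i_{M_{\eta}} \bigr) \circ r_{M_{\eta}},
\end{equation*}
substitute the geometric lemma for the bracketed composition, and use transitivity to absorb the outer $i_{M_{\eta'}}$ and the inner $r_{M_{\eta}}$ into the induction and restriction coming from the geometric lemma. This rewrites $D_{G} \circ D_{G}$ as a single sum, over triples $(\eta, \eta', w)$, of standard ``induce--$Ad(w)$--restrict'' operators. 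Grouping the triples producing the same such operator, the coefficient of each is an alternating sum of the shape $\sum (-1)^{|\eta| + |\eta'|}$, and the crux is the combinatorial claim that every such sum vanishes except the one attached to $i_{G} \circ r_{G} = \mathrm{id}$, which survives with coefficient $(-1)^{2|\Delta|} = 1$. At bottom this is repeated use of $\sum_{S \subseteq T} (-1)^{|S|} = 0$ for $T \ne \emptyset$ over the Boolean lattice of subsets of $\Delta$ (a M\"obius/Euler-characteristic vanishing); keeping track of which triples yield a prescribed operator is the fussy bookkeeping, and I expect this combinatorial collapse to be the most computation-heavy step.

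Properties (iv) and (v) come out of the same machine with a single application of the geometric lemma. For (iv), with $M = M_{\theta}$, start from $r_{M} \circ D_{G} = \sum_{\eta}(-1)^{|\eta|}\, \bigl( r_{M} \circ i_{M_{\eta}} \bigr) \circ r_{M_{\eta}}$, insert the geometric lemma for $r_{M} \circ i_{M_{\eta}}$, and reorganize; after the analogous cancellations the surviving contribution is concentrated on the double coset of the longest element $w$ with $w^{-1}(\theta) > 0$ and assembles into $Ad(w) \circ D_{w^{-1}(M)} \circ r_{w^{-1}(M)}$. For (v), again $M = M_{\theta}$: transitivity gives $i_{M} \circ D_{M} = \sum_{\psi \subseteq \theta} (-1)^{|\psi|}\, i_{M_{\psi}} \circ r^{M}_{M_{\psi}}$, while expanding $D_{G} \circ i_{M}$ with the geometric lemma and collecting signs over subsets of $\Delta$ — those meeting $\Delta \setminus \theta$ being the ones that cancel in pairs — reproduces exactly this expression. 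Both computations are lighter than in (i) but of the same flavour.

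The outlier is (ii), which I expect to be the genuine conceptual obstacle: Grothendieck-group bookkeeping alone shows at most that $\pm D_{G}(\pi)$ is an honest (non-virtual) finite sum of irreducibles, not that this sum has length one. The route I would take is the homological one (in the spirit of Schneider--Stuhler, and carried out in \cite{A, A2}): realize $D_{G}$ at the level of complexes, so that $D_{G}(\pi) = \sum_{i} (-1)^{i} [\mathcal{H}^{i}]$ for the cohomology of the complex whose degree-$k$ term is $\bigoplus_{|\theta| = k} i_{M_{\theta}} \circ r_{M_{\theta}}(\pi)$, and then invoke the vanishing theorem that for irreducible $\pi$ this cohomology is concentrated in a single degree and is itself irreducible; the sign picked up is $(-1)^{|\Delta|}$ on cuspidals by (iii) and is propagated by (v). There is also a more elementary attempt via (i), (iv), (v): if $D_{G}(\pi) = \sum_{j} m_{j} \tau_{j}$ with the $\tau_{j}$ distinct irreducibles, then $\pi = D_{G}^{2}(\pi) = \sum_{j} m_{j} D_{G}(\tau_{j})$, and analyzing cuspidal supports (preserved up to twist) together with the least-tempered constituent in the Langlands classification should force a single $j$ with $m_{j} = \pm 1$ — but the homological argument is the one I would trust, and this step is where I would budget the bulk of the effort.
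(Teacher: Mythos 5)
The paper does not actually prove this theorem: it is recalled verbatim from Aubert's work, with the citation \cite{A, A2} supplied in the sentence immediately preceding the statement. So there is no in-paper argument to compare against, and your proposal has to be judged against the literature rather than against a proof in this document.

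With that caveat, your sketch is a faithful reconstruction of how the result is established. Part (iii) is indeed trivial from vanishing of Jacquet modules on cuspidals for proper parabolics. Parts (i), (iv), (v) are proved in \cite{A, A2} by exactly the route you describe: expand the relevant composite, feed each $r_{M_{\eta'}} \circ i_{M_{\eta}}$ through the Bernstein--Zelevinsky geometric lemma, push the outer induction and inner restriction into the resulting sum via transitivity, and then organize the triples so the signs collapse by the Boolean-lattice identity $\sum_{S \subseteq T} (-1)^{|S|} = 0$ for $T \neq \emptyset$. The bookkeeping is delicate precisely where you expect it to be, namely in deciding which triples $(\eta,\eta',w)$ produce the same ``induce--$Ad(w)$--restrict'' operator; the parametrization is by Weyl group elements together with pairs of subsets of $\Delta$ constrained by $w$, and it takes real care to see that only the identity coset survives for (i) and only the longest element of $\{w : w^{-1}(\theta) > 0\}$ survives for (iv). Your outline correctly identifies the mechanism even if it leaves the combinatorial reparametrization to the reader.

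Part (ii) is, as you say, the genuine obstacle, and your honest hedging here is appropriate. A caution: your ``more elementary attempt'' via (i), (iv), (v) and cuspidal supports is exactly the step that turned out to be problematic in the original treatment (there is an erratum to Aubert's paper precisely about the proof of irreducibility preservation), and the argument that ``the least-tempered constituent forces a single $j$ with $m_j = \pm 1$'' does not go through as smoothly as one might hope: one has to control possible cancellations among the $D_G(\tau_j)$ inside the Grothendieck group, and the Langlands-quotient minimality argument needs the Jacquet-module information from (iv) fed in more carefully than a one-line appeal suggests. The homological realization (in the Schneider--Stuhler spirit, concentration of the Aubert complex in a single degree) that you list as your primary route is the cleaner and now-standard way to get irreducibility, and deferring to it is the right call; but it is a separate input, not a formal consequence of (i), (iv), (v). In short: your sketch matches the known proofs in structure, correctly isolates (ii) as the hard part, and your instinct to trust the homological argument over the elementary one is well placed.
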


We look at the usual towers of symplectic or orthogonal groups $G_n = G(V_n)$, that are groups of isometries of $F$-spaces $(V_n, ( ~, ~)), n \geq 0$, where the form $( ~, ~)$ is non-degenerate and it is skew-symmetric if the tower is symplectic and symmetric otherwise. In the final section, we also consider the odd general spin groups $G_n=GSpin(2n+1, F)$ (See Section \ref{GSpin} for the definition). The set of standard parabolic subgroups of the group $G_n$ will be fixed in the usual way.

Then the Levi subgroups of standard parabolic subgroups have the form $M \cong GL(n_{1}, F) \times \cdots \times GL(n_{k}, F) \times G_{m}$, where $GL(n_i, F)$ denotes a general linear group of rank $n_i$ over $F$. For simplicity of exposition, if $\delta_{i}, i = 1, 2, \ldots, k$ denotes a representation of $GL(n_{i}, F)$, and if $\tau$ stands for a representation of $G_{m}$, we let $\delta_{1} \times \delta_2 \times \cdots \times \delta_{k} \rtimes \tau$ stand for the induced representation $i_{M}(\delta_{1} \otimes \delta_2 \otimes \cdots \otimes \delta_{k} \otimes \tau)$ of $G_n$, where $M$ is the standard Levi subgroup isomorphic to $GL(n_{1}, F) \times \cdots \times GL(n_{k}, F) \times G_{m}$. Here $n = n_{1} + n_{2} + \cdots + n_{k} + m$.

Similarly, by $\delta_{1} \times \delta_2 \times \cdots \times \delta_{k}$ we denote the induced representation $i_{M'}(\delta_{1} \otimes \delta_2 \otimes \cdots \otimes \delta_{k})$ of the group $GL(n', F)$, where the Levi subgroup $M'$ equals $GL(n_{1}, F) \times GL(n_{2}, F) \times \cdots \times GL(n_{k}, F)$ and $n' = n_{1} + n_{2} + \cdots + n_{k}$.

Let $\Irr(GL(n,F))$ denote the set of all irreducible admissible representations of $GL(n,F)$, and let $\Irr(G_n)$ denote the set of all irreducible admissible representations of $G_n$. Let $R(GL(n, F))$ stand for the Grothendieck group of admissible representations of finite length of $GL(n, F)$ and define $R(GL) = \oplus_{n \geq 0} R(GL(n, F))$. Similarly, let $R(G_n)$ stand for the Grothendieck group of admissible representations of finite length of $G_{n}$ and define $R(G) = \oplus_{n \geq 0} R(G_n)$.

If $\sigma$ is an irreducible representation of $G_n$, we denote by $\hat{\sigma}$ the representation $\pm D_{G_n}(\sigma)$, taking the sign $+$ or $-$ such that $\hat{\sigma}$ is a positive element in $R(G_{n})$. We call $\hat{\sigma}$ the Aubert dual of $\sigma$.

Using Jacquet modules for the maximal standard parabolic subgroups of $GL(n,F)$, one can define $m^{\ast}(\pi) = \sum_{k=0}^{n} (r_{(k)} (\pi)) \in R(GL) \otimes R(GL)$, for an irreducible representation $\pi$ of $GL(n,F)$, and then extend $m^{\ast}$ linearly to $R(GL)$. Here $r_{(k)}(\pi)$ denotes the normalized Jacquet module of $\pi$ with respect to the standard parabolic subgroup having the Levi subgroup equal to $GL(k, F) \times GL(n-k, F)$, and we identify $r_{(k)}(\pi)$ with its semisimplification in $R(GL(k,F)) \otimes R(GL(n-k, F))$.

Let $\nu$ denote the composition of the determinant mapping with the normalized absolute value on $F$. Let $\rho \in \Irr(GL(k,F))$ denote a cuspidal
representation. By a segment of cuspidal representations we mean a set of the form $\{ \rho, \nu \rho, \ldots, \nu^{m} \rho \}$, which we denote by $[\rho, \nu^{m} \rho]$.

By the results of \cite{Zel}, each irreducible essentially square-integrable representation $\delta \in \Irr(GL(n, F))$ is attached to a segment, and we set $\delta = \delta ([\nu^{a} \rho, \nu^{b} \rho ])$, which is the unique irreducible subrepresentation of $\nu^{b} \rho \times \nu^{b-1} \rho \times \cdots \times \nu^{a} \rho$, where $a, b \in \mathbb{R}$ are such that $b - a$ is a non-negative integer and $\rho$ is an irreducible unitary cuspidal representation of some $GL(k, F)$. The induced representation $\nu^{b} \rho \times \nu^{b-1} \rho \times \cdots \times \nu^{a} \rho$ also contains a unique irreducible quotient, which we denote by $\zeta([\nu^{a} \rho, \nu^{b} \rho ])$. Furthermore, $\zeta([\nu^{a} \rho, \nu^{b} \rho ])$ is the unique irreducible subrepresentation of $\nu^{a} \rho \times \nu^{a+1} \rho \times \cdots \times \nu^{b} \rho$, and in $R(GL)$ we have
\begin{equation*}
\nu^{a} \rho \times \nu^{a+1} \rho = \delta([\nu^{a} \rho, \nu^{a+1} \rho ]) + \zeta([\nu^{a} \rho, \nu^{a+1} \rho ]) 
\end{equation*}
and
\begin{equation*}
\nu^{a} \rho \times \nu^{a+1} \rho \times \nu^{a+1} \rho = \delta([\nu^{a} \rho, \nu^{a+1} \rho ]) \times \nu^{a+1} \rho + \zeta([\nu^{a} \rho, \nu^{a+1} \rho ]) \times \nu^{a+1} \rho,
\end{equation*}
both representations $\delta([\nu^{a} \rho, \nu^{a+1} \rho ]) \times \nu^{a+1} \rho$ and $\zeta([\nu^{a} \rho, \nu^{a+1} \rho ]) \times \nu^{a+1} \rho$ being irreducible.

%For every irreducible essentially square-integrable representation $\delta \in R(GL)$, there is a unique $e(\delta) \in \mathbb{R}$ such that $\nu^{-e(\delta)} \delta$ is unitarizable. Note that $e(\delta([\nu^{a} \rho, \nu^{b} \rho ])) = (a + b) / 2$. Suppose that $\delta_{1}, \delta_{2}, \ldots, \delta_{k}$ are irreducible essentially square-integrable representations such that $e(\delta_{1}) \leq e(\delta_{2}) \leq \ldots \leq e(\delta_{k})$. Then the induced representation $\delta_{1} \times \delta_{2} \times \cdots \times \delta_{k}$ has a unique irreducible subrepresentation, which we denote by $L(\delta_{1}, \delta_{2}, \ldots, \delta_{k})$. This irreducible subrepresentation is called the Langlands subrepresentation, and it appears with multiplicity one in the composition series of $\delta_{1} \times \delta_{2} \times \cdots \times \delta_{k}$. Every irreducible representation $\pi \in R(GL)$ is isomorphic to some $L(\delta_{1}, \delta_{2}, \ldots, \delta_{k})$ and, for a given $\pi$, the representations $\delta_{1}, \delta_{2}, \ldots, \delta_{k}$ are unique up to a permutation among those $\delta_i$ having the same exponents.

%Similarly, throughout the paper we use the subrepresentation version of the Langlands classification for classical groups, 

Let us briefly recall the Langlands classification for classical groups. We favor the subrepresentation version of this classification over the quotient one since it is more appropriate for our Jacquet module considerations. 

For every irreducible essentially square-integrable representation $\delta \in R(GL)$, there is a unique $e(\delta) \in \mathbb{R}$ such that $\nu^{-e(\delta)} \delta$ is unitarizable. Note that $e(\delta([\nu^{a} \rho, \nu^{b} \rho ])) = (a + b) / 2$. Every non-tempered irreducible representation $\pi$ of $G_{n}$ can be written as the unique irreducible (Langlands) subrepresentation of an induced representation of the form $\delta_{1} \times \delta_{2} \times \cdots \times \delta_{k} \rtimes \tau$, where $\tau$ is a tempered representation of some $G_{t}$, and $\delta_1, \delta_2, \ldots, \delta_k \in R(GL)$ are irreducible essentially square-integrable representations such that $e(\delta_1) \leq e(\delta_2) \leq \cdots \leq e(\delta_k) < 0$. In this case, we write $\pi = L(\delta_{1}, \delta_{2}, \ldots, \delta_{k}; \tau)$. For a given $\pi$, the representations $\delta_{1}, \delta_{2}, \ldots, \delta_{k}$ are unique up to a permutation among those $\delta_i$ having the same exponents.

Let $\tau \in R(G)$ denote an irreducible tempered representation. If $\delta_1, \delta_2$, $\ldots, \delta_k \in R(GL)$ are irreducible essentially square-integrable representations such that $e(\delta_i) < 0$ for $i = 1, 2, \ldots, k$, and $\delta_i \times \delta_j \cong \delta_j \times \delta_i$ for $i < j$ such that $e(\delta_i) > e(\delta_j)$, then the induced representation $\delta_1 \times \delta_2 \times \cdots \times \delta_k \rtimes \tau$ contains a unique irreducible subrepresentation, which will also be denoted by $L(\delta_1, \delta_2, \ldots, \delta_k; \tau)$, for simplicity of the notation.

For a representation $\sigma \in R(G_{n})$ and $1 \leq k \leq n$, we denote by $r_{(k)}(\sigma)$ the normalized Jacquet module of $\sigma$ with respect to the parabolic subgroup $P_{(k)}$ having the Levi subgroup equal to $GL(k, F) \times G_{n-k}$. We identify $r_{(k)}(\sigma)$ with its semisimplification in $R(GL(k,F)) \otimes R(G_{n-k})$ and consider
\begin{equation*}
\mu^{\ast}(\sigma) = 1 \otimes \sigma + \sum_{k=1}^{n} r_{(k)}(\sigma) \in R(GL) \otimes R(G).
\end{equation*}

We pause to state a result, derived in \cite{Tad5} (\cite{Kim1} for odd $GSpin$ groups), which presents a crucial structural formula for our calculations of Jacquet modules of classical groups.

\begin{lemma} \label{osn}
Let $\rho \in \Irr(GL(n,F))$ denote a cuspidal representation and let $k, l \in \mathbb{R}$ such that $k + l$ is a non-negative integer. Let $\sigma \in R(G)$ denote an admissible representation of finite length, and write $\mu^{\ast}(\sigma) = \sum_{\tau, \sigma'} \tau \otimes \sigma'$. Then the following holds:
\begin{align*}
\mu^{\ast}(\delta([\nu^{-k} \rho, \nu^{l} \rho]) \rtimes \sigma) & = \sum_{i = -k-1}^{l} \sum_{j=i}^{l} \sum_{\tau, \sigma'} \delta([ \nu^{-i} \widetilde{\rho}, \nu^{k} \widetilde{\rho} ]) \times  \delta([\nu^{j+1} \rho, \nu^{l} \rho]) \times \tau \nonumber \otimes \\
&  \qquad \otimes  \delta([\nu^{i+1} \rho, \nu^{j} \rho]) \rtimes \sigma'.
\end{align*}
If $\sigma$ is an admissible representation of finite length of the odd GSpin group, we have
\begin{align*}
\mu^{\ast}(\delta([\nu^{-k} \rho, \nu^{l} \rho]) \rtimes \sigma) & = \sum_{i = -k-1}^{l} \sum_{j=i}^{l} \sum_{\tau, \sigma'} \delta([ \nu^{-i} \widetilde{\rho} \otimes ( \omega_{\sigma} \circ det ), \nu^{k} \widetilde{\rho} \otimes ( \omega_{\sigma} \circ det ) ])\\
 & \qquad \times  \delta([\nu^{j+1} \rho, \nu^{l} \rho]) \times \tau \nonumber \otimes  \delta([\nu^{i+1} \rho, \nu^{j} \rho]) \rtimes \sigma',
\end{align*}
where $\omega_{\sigma}$ denotes the central character of $\sigma$.
We omit $\delta([\nu^{x} \rho, \nu^{y} \rho])$ if $x > y$.
\end{lemma}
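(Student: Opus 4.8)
\emph{Sketch of proof.} The plan is to reduce the statement to Tadi\'c's structural formula describing how $\mu^{\ast}$ interacts with $\rtimes$, together with the elementary computation of $m^{\ast}$ on a single segment representation. Recall from \cite{Tad5} (and from \cite{Kim1} in the odd $GSpin$ case) that for $\pi \in R(GL)$ and $\sigma \in R(G)$ one has
\begin{equation*}
\mu^{\ast}(\pi \rtimes \sigma) = \sum \bigl( \widetilde{\pi''} \times \pi'_{1} \times \tau \bigr) \otimes \bigl( \pi'_{2} \rtimes \sigma' \bigr),
\end{equation*}
the sum running over all terms appearing in $m^{\ast}(\pi) = \sum \pi' \otimes \pi''$, $m^{\ast}(\pi') = \sum \pi'_{1} \otimes \pi'_{2}$, and $\mu^{\ast}(\sigma) = \sum \tau \otimes \sigma'$; here $\widetilde{\phantom{x}}$ denotes the contragredient, which in the $GSpin$ case must be replaced by the twist $\widetilde{\phantom{x}} \otimes (\omega_\sigma \circ \det)$ --- this is precisely the origin of the central-character twist in the second formula of the lemma.

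First I would record Zelevinsky's formula for $m^{\ast}$ of an essentially square-integrable representation \cite{Zel}: for $a \leq b$,
\begin{equation*}
m^{\ast}(\delta([\nu^{a}\rho, \nu^{b}\rho])) = \sum_{c = a-1}^{b} \delta([\nu^{c+1}\rho, \nu^{b}\rho]) \otimes \delta([\nu^{a}\rho, \nu^{c}\rho]),
\end{equation*}
with the convention that a segment indexed by the empty set denotes the trivial representation of the trivial group, so that the summands $c = a-1$ and $c = b$ contribute $\delta([\nu^a\rho,\nu^b\rho]) \otimes 1$ and $1 \otimes \delta([\nu^a\rho,\nu^b\rho])$ respectively. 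This is immediate from the irreducibility of the maximal-parabolic Jacquet modules of $\delta([\nu^a\rho,\nu^b\rho])$.

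Next I would feed $\pi = \delta([\nu^{-k}\rho, \nu^{l}\rho])$ into Tadi\'c's formula and unwind. Applying $m^{\ast}$ gives the outer sum over $i \in \{-k-1, \ldots, l\}$ with $\pi' = \delta([\nu^{i+1}\rho, \nu^{l}\rho])$ and $\pi'' = \delta([\nu^{-k}\rho, \nu^{i}\rho])$, whence $\widetilde{\pi''} = \delta([\nu^{-i}\widetilde\rho, \nu^{k}\widetilde\rho])$; a second application of $m^{\ast}$, now to $\pi'$, introduces the inner sum over $j \in \{i, \ldots, l\}$ with $\pi'_{1} = \delta([\nu^{j+1}\rho, \nu^{l}\rho])$ and $\pi'_{2} = \delta([\nu^{i+1}\rho, \nu^{j}\rho])$. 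Substituting these together with $\mu^{\ast}(\sigma) = \sum_{\tau,\sigma'} \tau \otimes \sigma'$ into the formula reproduces the first asserted identity verbatim, and the $GSpin$ version follows in the same way once the $\omega_\sigma \circ \det$ twist is propagated through $\widetilde{\pi''}$.

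The only delicate point is the bookkeeping: keeping the index ranges right --- especially the lower endpoint $i = -k-1$, which encodes the boundary term with empty initial segment --- and consistently reading empty segments as trivial, so that the stated omission convention $\delta([\nu^x\rho,\nu^y\rho]) = 1$ for $x > y$ is respected through both iterations of $m^{\ast}$; in the $GSpin$ case one additionally checks that the twist is carried correctly through the contragredient and the relevant determinants, exactly as in \cite{Kim1}. None of this is deep: the entire substance resides in Tadi\'c's structural formula, which I would take as an input.
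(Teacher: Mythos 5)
Your derivation is correct and is exactly the standard one: apply Tadi\'c's factorization $\mu^{\ast}(\pi\rtimes\sigma)=M^{\ast}(\pi)\rtimes\mu^{\ast}(\sigma)$ with $M^{\ast}=(m\otimes 1)\circ(\widetilde{\phantom{x}}\otimes m^{\ast})\circ s\circ m^{\ast}$, feed in Zelevinsky's $m^{\ast}$ for a segment twice, and track the twist by $\omega_\sigma\circ\det$ through the contragredient in the $GSpin$ case. The paper does not reprove this lemma --- it simply cites \cite{Tad5} and \cite{Kim1} --- and your sketch is precisely what those references establish, so there is nothing to add.
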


An irreducible representation $\sigma \in R(G)$ is called
strongly positive if for every embedding
$$ \sigma \hookrightarrow \nu^{s_{1}} \rho_{1} \times \nu^{s_{2}} \rho_{2} \times \cdots \times \nu^{s_{k}} \rho_{k} \rtimes \sigma_{cusp},$$
where $\rho_{i} \in R(GL(n_{\rho_{i}},F))$, $i = 1, 2, \ldots, k$, are
cuspidal unitary representations and $\sigma_{cusp}
\in R(G)$ is an irreducible cuspidal representation, we
have $s_{i} > 0$ for each $i$.

Let us briefly recall an inductive description of
non-cuspidal strongly positive discrete series, which has
been obtained in \cite{Kim1, Matic3, MT1}.

\begin{proposition}  \label{spds}
Suppose that $\sigma_{sp} \in R(G)$ is an
irreducible strongly positive representation and let $\rho \in
R(GL)$ denote an irreducible
cuspidal unitary representation such that some twist of $\rho$ appears in
the cuspidal support of $\sigma_{sp}$. We denote by $\sigma_{cusp}$ the
partial cuspidal support of $\sigma_{sp}$. Then there exist unique $a,
b \in \mathbb{R}$ such that $a > 0, b > 0$, $b - a \in
\mathbb{Z}_{\geq 0}$, and a unique irreducible strongly positive
representation $\sigma'_{sp}$ without $\nu^{a} \rho$ in the cuspidal support, with the property that $\sigma_{sp}$ is the
unique irreducible subrepresentation of $\delta([\nu^{a} \rho,
\nu^{b} \rho]) \rtimes \sigma'_{sp}$. Furthermore, there is a
non-negative integer $l$ such that $a + l = s$, for $s > 0$
such that $\nu^{s} \rho \rtimes \sigma_{cusp}$ reduces. If $l = 0$,
there are no twists of $\rho$ appearing in the cuspidal support of
$\sigma'_{sp}$ and if $l > 0$ there exist unique $b' > b$ and a 
unique strongly positive discrete series $\sigma''_{sp}$, which
contains neither $\nu^{a} \rho$ nor $\nu^{a+1} \rho$ in its
cuspidal support, such that $\sigma'_{sp}$ can be written as the unique
irreducible subrepresentation of $\delta([\nu^{a+1} \rho, \nu^{b'}
\rho]) \rtimes \sigma''_{sp}$.
\end{proposition}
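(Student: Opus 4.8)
The plan is to follow the inductive ``peeling off'' strategy of \cite{Kim1, Matic3, MT1}: one detaches a single cuspidal segment from the bottom of the $\rho$-part of the cuspidal support of $\sigma_{sp}$, so that the stated description becomes exactly the inductive step in the classification of strongly positive discrete series. Note first that strong positivity of $\sigma_{sp}$ forces square-integrability by Casselman's criterion, so the Jacquet-module machinery for discrete series is available.

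Next I would record the numerical constraints. The existence and uniqueness of $s > 0$ with $\nu^{s}\rho \rtimes \sigma_{cusp}$ reducible is standard. Let $a$ be the smallest real number such that $\nu^{a}\rho$ lies in the cuspidal support of $\sigma_{sp}$; strong positivity gives $a > 0$. Combining Casselman's square-integrability criterion with the structure formula of Lemma \ref{osn}, one checks that $a - s \in \mathbb{Z}$, that $a \leq s$, and that $\nu^{a}\rho$ occurs with multiplicity one in the cuspidal support; putting $l := s - a$ then yields $l \in \mathbb{Z}_{\geq 0}$ and $a + l = s$. The same analysis shows that the $\rho$-part of the cuspidal support is built from cuspidal segments whose lower endpoints form the consecutive chain $a, a+1, \dots, s$ and whose upper endpoints are strictly increasing.

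The analytic heart of the argument is producing, for the largest $b \geq a$ for which it is possible, an embedding $\sigma_{sp} \hookrightarrow \delta([\nu^{a}\rho, \nu^{b}\rho]) \rtimes \sigma'_{sp}$ with $\sigma'_{sp}$ irreducible. Since $a$ is minimal and positive, Frobenius reciprocity applied to a suitable Jacquet module of $\sigma_{sp}$, together with the square-integrability estimates above (which govern which factors $\nu^{x}\rho$ may be interchanged), first yields $\sigma_{sp} \hookrightarrow \nu^{b}\rho \times \nu^{b-1}\rho \times \cdots \times \nu^{a}\rho \rtimes \sigma'_{sp}$ for some irreducible $\sigma'_{sp}$, and then forces the $GL$-part of the inducing representation to be the essentially square-integrable segment representation $\delta([\nu^{a}\rho, \nu^{b}\rho])$. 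One then verifies that $\sigma'_{sp}$ is again strongly positive: any embedding of $\sigma'_{sp}$ into a product of unitary cuspidal representations composes with the one just found to embed $\sigma_{sp}$ into a product of unitary cuspidals whose exponents are those of $\sigma'_{sp}$ together with $a, a+1, \dots, b$, all positive; hence every exponent of $\sigma'_{sp}$ is positive, so $\sigma'_{sp}$ is a strongly positive discrete series. Moreover, since the whole bottom segment -- carrying the unique occurrence of $\nu^{a}\rho$ -- has been removed, $\nu^{a}\rho$ does not appear in the cuspidal support of $\sigma'_{sp}$.

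Finally I would settle uniqueness and the dichotomy on $l$. Uniqueness of $a$ is immediate, as it is the minimal $\rho$-exponent of $\sigma_{sp}$; uniqueness of $b$ and of $\sigma'_{sp}$ follows from a multiplicity-one analysis via Lemma \ref{osn} of the Jacquet modules of any induced representation $\delta([\nu^{a}\rho, \nu^{b}\rho]) \rtimes \tau$ with $\tau$ strongly positive and free of $\nu^{a}\rho$, which shows that such a representation has a unique irreducible strongly positive subrepresentation, so that $b$ and $\tau = \sigma'_{sp}$ are recovered from $\sigma_{sp}$. If $l = 0$, then $a = s$ is the top of the consecutive chain of lower endpoints, so there is only one bottom segment and $\sigma'_{sp}$ contains no twist of $\rho$. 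If $l > 0$, the $\rho$-exponents of $\sigma'_{sp}$ have minimum $a + 1$, so applying the same peeling step to $\sigma'_{sp}$ detaches a segment $\delta([\nu^{a+1}\rho, \nu^{b'}\rho])$ with $b' > b$ and produces a strongly positive discrete series $\sigma''_{sp}$ containing neither $\nu^{a}\rho$ (already absent from $\sigma'_{sp}$) nor $\nu^{a+1}\rho$, with $b'$ and $\sigma''_{sp}$ unique by the same argument. I expect the main difficulty to be the third step: extracting a \emph{full} essentially square-integrable segment from the bottom rather than merely some irreducible subquotient of $\nu^{b}\rho \times \cdots \times \nu^{a}\rho \rtimes \sigma'_{sp}$, while keeping exact control of strong positivity and of the cuspidal support at every stage -- the technical core carried out in \cite{Kim1, Matic3, MT1} through a case-by-case Jacquet-module computation built on Lemma \ref{osn} and Casselman's criterion.
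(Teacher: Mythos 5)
The paper does not prove Proposition~\ref{spds}: it is explicitly stated as a recall of the inductive classification of non-cuspidal strongly positive discrete series obtained in \cite{Kim1, Matic3, MT1}, so there is no in-paper argument against which to compare your sketch.

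As an outline of the argument carried out in those references, your sketch captures the right structure: $a$ is the minimal (hence, by strong positivity, positive) $\rho$-exponent of the cuspidal support; the arithmetic constraints $a \le s$ and $a - s \in \mathbb{Z}$ come from Casselman's square-integrability criterion combined with the structural formula; the inductive step is an embedding $\sigma_{sp} \hookrightarrow \delta([\nu^a\rho,\nu^b\rho]) \rtimes \sigma'_{sp}$; strong positivity of $\sigma'_{sp}$ is inherited by the composition-of-embeddings argument you give; and the dichotomy on $l$ is read off from the chain $a, a+1, \ldots, s$ of lower endpoints. The points at which you assert rather than derive are exactly the technical core of the cited papers: that $\nu^a\rho$ appears with multiplicity one in the cuspidal support; that the $GL$-factor detached from the bottom must be the full essentially square-integrable segment representation $\delta([\nu^a\rho,\nu^b\rho])$ rather than some other irreducible subquotient of $\nu^b\rho \times \cdots \times \nu^a\rho$; and that the Jacquet module of $\delta([\nu^a\rho,\nu^b\rho]) \rtimes \sigma'_{sp}$ contains $\delta([\nu^a\rho,\nu^b\rho]) \otimes \sigma'_{sp}$ with multiplicity one, which is what underlies uniqueness of $b$ and $\sigma'_{sp}$. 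You flag the first two of these as the anticipated difficulty, which is accurate. So the proposal is a sound high-level reconstruction of the established proof but not a self-contained one; the gaps are precisely the ones you identify, and they are supplied by the Jacquet-module computations in \cite{Kim1, Matic3, MT1}.
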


Through the paper, we fix an irreducible cuspidal representation $\sigma \in R(G)$. Also, we fix an irreducible cuspidal representation $\rho_0 \in R(GL)$ and an irreducible (essentially) self-contragredient cuspidal representation $\rho \in R(GL)$, such that $\nu^{\alpha} \rho \rtimes \sigma$ reduces for some $\alpha > 0$. We note that $2 \alpha \in \mathbb{Z}$, due to results of \cite{Arthur}, \cite[Th\'{e}or\`{e}me~3.1.1]{Moe2} and \cite[Theorem~7.8]{GanLom}, and that $\nu^{s} \rho \rtimes \sigma$ is irreducible for $s \not\in \{ \alpha, -\alpha \}$.

Let $x$ stand for a half-integer such that $x \geq \alpha$ and $x - \alpha \in \mathbb{Z}$. Then the induced representation
\begin{equation*}
\nu^{-x} \rho \times \nu^{-x+1} \rho \times \cdots \times \nu^{-\alpha} \rho \rtimes \sigma
\end{equation*}
has a unique irreducible subrepresentation, which we denote by $\zeta(\rho, x; \sigma)$. Using \cite[Theorem~3.5]{Matic14}, we deduce that the Aubert dual of $\zeta(\rho, x; \sigma)$ is the unique irreducible subrepresentation of $\nu^{x} \rho \times \nu^{x-1} \rho \times \cdots \times \nu^{\alpha} \rho \rtimes \sigma$. We note that this representation is strongly positive, and will be denoted by $\delta(\rho, x; \sigma)$.

Let $a, b$ denote real numbers such that $b - a \in \mathbb{Z}$. 
We are interested in determining the composition factors of the degenerate principal series 
\begin{equation*}
\zeta([\nu^{-b} \rho_0, \nu^{-a} \rho_0]) \rtimes \zeta(\rho, x; \sigma). 
\end{equation*}
Since in $R(G)$ we have
\begin{align*}
\begin{split}
&\zeta([\nu^{-b} \rho_0, \nu^{-a} \rho_0]) \rtimes \zeta(\rho, x; \sigma) = \zeta([\nu^{a} \widetilde{\rho_0}, \nu^{b} \widetilde{\rho_0}]) \rtimes \zeta(\rho, x; \sigma), \\
& \text{ if } G_n=Sp(2n,F), SO(2n+1,F),\\
& \zeta([\nu^{-b} \rho_0, \nu^{-a} \rho_0]) \rtimes \zeta(\rho, x; \sigma) = \zeta([\nu^{a} \widetilde{\rho_0}\otimes \omega_{\sigma}, \nu^{b} \widetilde{\rho_0}\otimes \omega_{\sigma}]) \rtimes \zeta(\rho, x; \sigma), \\
& \text{ if } G_n=GSpin(2n+1,F),
\end{split}
\end{align*}
we can assume that $-a \leq b$.

By properties of the Aubert involution, the Aubert dual of the degenerate principal series $\zeta([\nu^{-b} \rho_0, \nu^{-a} \rho_0]) \rtimes \zeta(\rho, x; \sigma)$ is the generalized principal series
\begin{align}\label{gosprva}
\begin{split}
   & \delta([\nu^{a} \widetilde{\rho_0}, \nu^{b} \widetilde{\rho_0}]) \rtimes \delta(\rho, x; \sigma), \text{ if } G_n=Sp(2n,F), SO(2n+1,F),\\
   & \delta([\nu^{a} \widetilde{\rho_0}\otimes \omega_{\sigma}, \nu^{b} \widetilde{\rho_0}\otimes \omega_{\sigma}]) \rtimes \delta(\rho, x; \sigma), \text{ if } G_n=GSpin(2n+1,F),
   \end{split}
\end{align}
whose composition factors are completely described in \cite{Mu3} (this has been already noted in \cite[Corollary~4.3]{Jan6}). It follows from \cite[Section~2]{Mu3} (\cite[Proposition 2.5]{Kim1} for $GSpin$ groups) that the induced representation (\ref{gosprva}) is irreducible unless $\rho_0$ is (essentially) self-contragredient. Thus, in what follows we can assume that $\rho_0$ is (essentially) self-contragredient, and let us denote by $\beta$ the unique non-negative real number such that $\nu^{\beta} \rho_0 \rtimes \sigma$ reduces. Again, it follows from \cite[Section~2]{Mu3} that the induced representation (\ref{gosprva}) is irreducible if $a - \beta \not\in \mathbb{Z}$ (In the case of $GSpin$, the argument is similar). So, we can also assume that $a - \beta \in \mathbb{Z}$.

\begin{remark}
\begin{enumerate}
\item[(1)] To work effectively, from now on until Section \ref{Section a1/2}, $G_n$ will only denote $Sp(2n, F)$ and $SO(2n+1, F)$. 
In Section \ref{GSpin}, we will consider the case of $G_n=GSpin(2n+1, F)$. 
\item[(2)] All the lemmas and propositions in the rest of this section are also valid for the odd $GSpin$ case (with same statements, after replacing ``self-contragredient" by ``essentially self-contragredient"), see Section \ref{GSpin} for more detailed comments. 
\end{enumerate}
\end{remark}

We will use the following result \cite[Lemma~5.5]{Jan3} several times.

\begin{lemma} \label{lemajantz}
Suppose that $\pi \in R(G_n)$ is an irreducible representation, $\lambda$ an irreducible representation of the Levi subgroup $M$ of $G_n$, and $\pi$ is a subrepresentation of Ind$_{M}^{G_{n}}(\lambda)$. If $L > M$, then there is an irreducible subquotient $\rho$ of Ind$_{M}^{L}(\lambda)$ such that $\pi$ is a subrepresentation of Ind$_{L}^{G_{n}}(\rho)$.
\end{lemma}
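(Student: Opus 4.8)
The plan is to deduce the statement from two standard structural properties of normalized parabolic induction for reductive $p$-adic groups --- transitivity (induction in stages) and exactness --- while exploiting crucially that $\pi$ is given as an actual \emph{subrepresentation} of $\operatorname{Ind}_{M}^{G_n}(\lambda)$, not merely as a subquotient.

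First I would pass to compatible standard parabolic subgroups. Let $P_M$ and $P_L$ be the standard parabolic subgroups of $G_n$ with Levi factors $M$ and $L$; since $L > M$ among standard Levi subgroups, the corresponding standard parabolics satisfy $P_M \subseteq P_L$. Transitivity of normalized induction then gives
\[
\operatorname{Ind}_{M}^{G_n}(\lambda) \;\cong\; \operatorname{Ind}_{L}^{G_n}\!\bigl(\operatorname{Ind}_{M}^{L}(\lambda)\bigr),
\]
where $\operatorname{Ind}_{M}^{L}$ denotes normalized induction inside $L$ from the standard parabolic of $L$ with Levi factor $M$. Writing $\Sigma := \operatorname{Ind}_{M}^{L}(\lambda)$, the hypothesis becomes $\pi \hookrightarrow \operatorname{Ind}_{L}^{G_n}(\Sigma)$.

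Next, since $\lambda$ is irreducible (hence admissible of finite length), $\Sigma$ has finite length; I would fix a composition series $0 = \Sigma_0 \subset \Sigma_1 \subset \cdots \subset \Sigma_k = \Sigma$ and put $\rho_i := \Sigma_i / \Sigma_{i-1}$, so that each $\rho_i$ is by definition an irreducible subquotient of $\operatorname{Ind}_{M}^{L}(\lambda)$. Because normalized parabolic induction is exact, applying $\operatorname{Ind}_{L}^{G_n}$ yields a filtration
\[
0 = \operatorname{Ind}_{L}^{G_n}(\Sigma_0) \subseteq \operatorname{Ind}_{L}^{G_n}(\Sigma_1) \subseteq \cdots \subseteq \operatorname{Ind}_{L}^{G_n}(\Sigma_k) = \operatorname{Ind}_{M}^{G_n}(\lambda),
\]
whose successive quotients are isomorphic to $\operatorname{Ind}_{L}^{G_n}(\rho_i)$ (which need not be irreducible, and that is harmless).

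Finally I would run the standard ``first stage at which $\pi$ enters'' argument. Identifying $\pi$ with its image under the given embedding into $\operatorname{Ind}_{M}^{G_n}(\lambda) = \operatorname{Ind}_{L}^{G_n}(\Sigma_k)$, let $i$ be minimal with $\pi \subseteq \operatorname{Ind}_{L}^{G_n}(\Sigma_i)$; such $i \geq 1$ exists since $\pi \neq 0$ and $\operatorname{Ind}_{L}^{G_n}(\Sigma_0) = 0$. By minimality $\pi \not\subseteq \operatorname{Ind}_{L}^{G_n}(\Sigma_{i-1})$, so the composition
\[
\pi \hookrightarrow \operatorname{Ind}_{L}^{G_n}(\Sigma_i) \twoheadrightarrow \operatorname{Ind}_{L}^{G_n}(\Sigma_i)/\operatorname{Ind}_{L}^{G_n}(\Sigma_{i-1}) \cong \operatorname{Ind}_{L}^{G_n}(\rho_i)
\]
is nonzero, hence injective because $\pi$ is irreducible; taking $\rho = \rho_i$ finishes the argument. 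I do not anticipate any genuine obstacle here: the only points meriting a word of care are that $L > M$ indeed forces the inclusion $P_M \subseteq P_L$ of the corresponding standard parabolics (so that induction in stages applies verbatim) and the exactness of normalized parabolic induction, both of which are classical for reductive $p$-adic groups.
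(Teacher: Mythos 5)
Your proof is correct, and it is essentially the same argument as Jantzen's original proof of \cite[Lemma~5.5]{Jan3}, which the paper cites without reproducing: transitivity of normalized induction, exactness applied to a composition series of the intermediate induction, and the ``first stage at which $\pi$ enters'' step, where irreducibility of $\pi$ upgrades a nonzero map to an embedding into $\operatorname{Ind}_{L}^{G_n}(\rho_i)$.
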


The following result is a direct consequence of \cite[Lemma~2.2]{Matic14}.

\begin{lemma}  \label{lemaprva}
Suppose that the Jacquet module of $\pi$ with respect to the appropriate parabolic subgroup contains an irreducible cuspidal representation of the form $\nu^{a_1} \rho_1 \otimes \nu^{a_2} \rho_2 \otimes \cdots \otimes \nu^{a_k} \rho_k \otimes \sigma$, where $\rho_1, \ldots, \rho_k \in R(GL)$ are self-contragredient representations. Then $\widehat{\pi}$ is a subrepresentation of $\nu^{-a_1} \rho_1 \times \nu^{-a_2} \rho_2 \times \cdots \times \nu^{-a_k} \rho_k \rtimes \sigma$.
\end{lemma}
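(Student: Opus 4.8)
The plan is to invoke the Aubert involution through its compatibility with Jacquet modules and parabolic induction, as recorded in Theorem \ref{aub}(iv)--(v). Suppose $r_M(\pi)$ contains the irreducible cuspidal summand $\nu^{a_1}\rho_1 \otimes \cdots \otimes \nu^{a_k}\rho_k \otimes \sigma$, where $M \cong GL(n_{\rho_1},F)\times\cdots\times GL(n_{\rho_k},F)\times G_m$ and each $\rho_i$ is self-contragredient. Since this Jacquet module summand is cuspidal, it sits (as a subquotient) inside $r_M$ of $\pi$, and by Frobenius reciprocity $\pi$ embeds into $i_M$ of some irreducible subquotient of the corresponding induced representation; but because the inducing data is already cuspidal, we simply have that $\pi$ is a subquotient of $i_M(\nu^{a_1}\rho_1 \otimes \cdots \otimes \nu^{a_k}\rho_k \otimes \sigma)$. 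The key point is to pass to $\widehat\pi$: by Theorem \ref{aub}(ii), $\widehat\pi$ is irreducible, and by \cite[Lemma~2.2]{Matic14} (the cited input), the relevant Jacquet module of $\widehat\pi$ contains $\nu^{-a_1}\rho_1 \otimes \cdots \otimes \nu^{-a_k}\rho_k \otimes \sigma$, using that taking Aubert duals of cuspidal representations of $GL$ and of $G_m$ essentially negates the exponents (the $GL$-cuspidals $\nu^{a_i}\rho_i$ have dual $\nu^{-a_i}\widetilde{\rho_i} = \nu^{-a_i}\rho_i$ by self-contragredience, and $\sigma$ being cuspidal is fixed up to sign by Theorem \ref{aub}(iii)).

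Next, from the fact that $r_M(\widehat\pi)$ contains the cuspidal term $\nu^{-a_1}\rho_1 \otimes \cdots \otimes \nu^{-a_k}\rho_k \otimes \sigma$, Frobenius reciprocity gives a nonzero map from $\widehat\pi$ into $i_M(\nu^{-a_1}\rho_1 \otimes \cdots \otimes \nu^{-a_k}\rho_k \otimes \sigma) = \nu^{-a_1}\rho_1 \times \cdots \times \nu^{-a_k}\rho_k \rtimes \sigma$. Since $\widehat\pi$ is irreducible, this map is an embedding, which is exactly the claimed conclusion. The logical skeleton is therefore: (1) cuspidal Jacquet module summand of $\pi$ $\Rightarrow$ (via \cite[Lemma~2.2]{Matic14}) cuspidal Jacquet module summand of $\widehat\pi$ with negated exponents; (2) Frobenius reciprocity plus irreducibility of $\widehat\pi$ $\Rightarrow$ the desired embedding.

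The one point requiring care — and the main obstacle — is the bookkeeping of which parabolic subgroup is "appropriate" on the $\widehat\pi$ side and verifying that the Weyl-element conjugation in Theorem \ref{aub}(iv) does not permute or alter the ordered tuple $\nu^{-a_1}\rho_1 \otimes \cdots \otimes \nu^{-a_k}\rho_k \otimes \sigma$ in a way that would obstruct the Frobenius reciprocity step; this is precisely the content that \cite[Lemma~2.2]{Matic14} packages, so in the write-up I would simply cite that lemma and note that the hypothesis of self-contragredience of the $\rho_i$ guarantees $\widetilde{\rho_i}\cong\rho_i$ so that the dual exponents are $-a_i$ rather than $-a_i$ attached to $\widetilde{\rho_i}$. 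No genuinely new calculation is needed; the lemma is a direct translation of the cited result combined with Frobenius reciprocity and irreducibility of the Aubert dual.
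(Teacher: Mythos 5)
Your proposal is correct and matches the paper's approach: the paper itself gives no proof beyond the remark that the lemma is a direct consequence of \cite[Lemma~2.2]{Matic14}, and your write-up simply unpacks why that citation, combined with the observation that cuspidal summands of Jacquet modules split off and hence yield Frobenius reciprocity maps, and the irreducibility of $\widehat\pi$ from Theorem \ref{aub}(ii), produces the stated embedding. The one remark worth retaining in a final write-up is the point you already flag: the self-contragredience hypothesis is what makes the Weyl conjugation in Theorem \ref{aub}(iv) send $\nu^{a_i}\rho_i$ to $\nu^{-a_i}\rho_i$ rather than $\nu^{-a_i}\widetilde{\rho_i}$, so the exponents simply negate without changing the cuspidal datum.
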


We will now present a sequence of lemmas which enable us to use an inductive procedure when determining the Aubert duals.

\begin{lemma} \label{lemaindprva}
Let $c$ and $d$ denote positive real numbers such that $d - c$ is a nonnegative integer. Let $\rho_1 \in R(GL)$ denote an irreducible cuspidal self-contragredient representation. If $\pi$ is a subrepresentation of an induced representation of the form $\zeta([\nu^{c} \rho_1, \nu^{d} \rho_1]) \rtimes \pi_1$, where $\pi_1$ is an irreducible representation such that $\mu^{\ast}(\pi_1)$ does not contain an irreducible constituent of the form $\nu^{i} \rho_1 \otimes \pi_2$ for $i \in \{ c, c+1, \ldots, d \}$, with $\pi_2 \in R(G)$, then $\widehat{\pi}$ is the unique irreducible subrepresentation of $\delta([\nu^{-d} \rho_1, \nu^{-c} \rho_1]) \rtimes \widehat{\pi_1}$.
\end{lemma}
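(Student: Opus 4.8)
The plan is to reduce the statement to an application of Lemma~\ref{lemaprva}, so the main task is to locate, inside the Jacquet module of $\pi$ (with respect to a suitable standard parabolic subgroup), an irreducible cuspidal piece of the form
\[
\nu^{-d}\rho_1 \otimes \nu^{-d+1}\rho_1 \otimes \cdots \otimes \nu^{-c}\rho_1 \otimes \lambda
\]
where $\lambda$ is a cuspidal subquotient of an appropriate Jacquet module of $\pi_1$, and then to argue that the resulting embedding of $\widehat\pi$ into $\delta([\nu^{-d}\rho_1,\nu^{-c}\rho_1]) \rtimes \widehat{\pi_1}$ actually identifies $\widehat\pi$ with the \emph{unique} irreducible subrepresentation of that induced representation. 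First I would unwind the hypothesis: since $\pi \hookrightarrow \zeta([\nu^{c}\rho_1,\nu^{d}\rho_1]) \rtimes \pi_1$ and $\zeta([\nu^{c}\rho_1,\nu^{d}\rho_1])$ embeds in $\nu^{c}\rho_1 \times \nu^{c+1}\rho_1 \times \cdots \times \nu^{d}\rho_1$, by transitivity of induction $\pi$ embeds into $\nu^{c}\rho_1 \times \cdots \times \nu^{d}\rho_1 \rtimes \pi_1$. Picking an irreducible cuspidal $\lambda$ in the cuspidal support of $\pi_1$ on the $G$-side and using Frobenius reciprocity together with the structural formula (Lemma~\ref{osn}) and the transitivity of Jacquet modules, I would extract from $r_{(\ast)}(\pi)$ a cuspidal constituent of the shape $\nu^{\epsilon_c c}\rho_1 \otimes \cdots \rtimes \cdots$; the hypothesis that $\mu^{\ast}(\pi_1)$ has no constituent $\nu^{i}\rho_1 \otimes \pi_2$ for $i \in \{c,\dots,d\}$ is exactly what forces all the $\nu^{\pm j}\rho_1$ factors to come from the $\zeta$-segment and not from $\pi_1$, so their signs and order are controlled — they must assemble (after reindexing via Lemma~\ref{osn}) into the descending string $\nu^{-d}\rho_1,\dots,\nu^{-c}\rho_1$ sitting in front of a cuspidal $\lambda$ coming from $\pi_1$.

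Once such a cuspidal constituent is identified, Lemma~\ref{lemaprva} gives immediately that $\widehat\pi$ is a subrepresentation of
\[
\nu^{d}\rho_1 \times \nu^{d-1}\rho_1 \times \cdots \times \nu^{c}\rho_1 \rtimes \widehat{\pi_1},
\]
noting that the contragredient of a segment $\nu^{i}\rho_1$ with $\rho_1$ self-contragredient is $\nu^{-i}\rho_1$. By Lemma~\ref{lemajantz} applied with the Levi of the maximal parabolic whose $GL$-part has the size of the whole segment, this embedding can be pushed up: $\widehat\pi$ is a subrepresentation of $\delta \rtimes \widehat{\pi_1}$ for some irreducible subquotient $\delta$ of $\nu^{d}\rho_1 \times \cdots \times \nu^{c}\rho_1$. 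Since $\delta([\nu^{-d}\rho_1,\nu^{-c}\rho_1])$ is the unique irreducible subrepresentation of that $GL$-induced representation, and $\widehat\pi$ being a \emph{sub}representation forces $\delta$ to be a submodule as well (here one uses that $\widehat\pi$ appears as a genuine subrepresentation, not merely a subquotient, so its Jacquet module on the $GL$-side must contain the corresponding sub), one concludes $\delta = \delta([\nu^{-d}\rho_1,\nu^{-c}\rho_1])$, hence $\widehat\pi \hookrightarrow \delta([\nu^{-d}\rho_1,\nu^{-c}\rho_1]) \rtimes \widehat{\pi_1}$.

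It remains to see that $\delta([\nu^{-d}\rho_1,\nu^{-c}\rho_1]) \rtimes \widehat{\pi_1}$ has a \emph{unique} irreducible subrepresentation, which then must coincide with $\widehat\pi$. Here I would use the hypothesis on $\pi_1$ dualized through the Aubert involution: the condition that $\mu^{\ast}(\pi_1)$ contains no $\nu^{i}\rho_1 \otimes \pi_2$ with $i\in\{c,\dots,d\}$ translates, via property (iv) of Theorem~\ref{aub} and Lemma~\ref{osn}, into a statement about $\mu^{\ast}(\widehat{\pi_1})$ that prevents any ``extra'' occurrence of $\nu^{-i}\rho_1$ for $i \in \{c,\dots,d\}$ from interfering; this is the standard mechanism (cf.\ the arguments of \cite{Matic14}) showing that $\delta([\nu^{-d}\rho_1,\nu^{-c}\rho_1]) \rtimes \widehat{\pi_1}$ has a unique irreducible subrepresentation, because a second one would produce, after taking Jacquet modules, a forbidden constituent in $\mu^{\ast}(\widehat{\pi_1})$. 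The main obstacle I anticipate is precisely this last multiplicity-one/uniqueness bookkeeping: carefully carrying the hypothesis on $\pi_1$ through the Aubert dual and the $\mu^{\ast}$-formula to rule out a second irreducible subrepresentation, and making sure the reindexing in Lemma~\ref{osn} genuinely pins down the order $\nu^{-d}\rho_1,\dots,\nu^{-c}\rho_1$ rather than some other arrangement of the same cuspidal multiset; the embedding and ``push-up'' steps, by contrast, are routine applications of Lemmas~\ref{lemajantz}, \ref{lemaprva} and Frobenius reciprocity.
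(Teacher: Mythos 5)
Your overall strategy — locate a cuspidal constituent in the Jacquet module of $\pi$, apply Lemma~\ref{lemaprva} to get information about $\widehat{\pi}$, then use the hypothesis on $\pi_1$ to force uniqueness — is the same skeleton as the paper's proof, and your identification of the uniqueness bookkeeping as the crux is correct. However, there are two genuine gaps in the middle of your argument that the paper's proof is specifically designed to sidestep.

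First, you assert that Lemma~\ref{lemaprva} ``gives immediately'' an embedding $\widehat{\pi} \hookrightarrow \nu^{d}\rho_1 \times \cdots \times \nu^{c}\rho_1 \rtimes \widehat{\pi_1}$. Beyond the sign slip (the exponents must be $-c,\dots,-d$), this is not what Lemma~\ref{lemaprva} delivers: applied to a cuspidal constituent $\nu^{c}\rho_1\otimes\cdots\otimes\nu^{d}\rho_1\otimes\nu^{b_1}\rho'_1\otimes\cdots\otimes\sigma'$, it gives $\widehat{\pi} \hookrightarrow \nu^{-c}\rho_1\times\cdots\times\nu^{-d}\rho_1\times\nu^{-b_1}\rho'_1\times\cdots\rtimes\sigma'$, with a cuspidal representation on the right, not $\widehat{\pi_1}$. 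Reassembling $\widehat{\pi_1}$ from the trailing cuspidals is an extra step. Second, even granting the embedding, you invoke Lemma~\ref{lemajantz} to get $\widehat{\pi} \hookrightarrow \delta \rtimes \widehat{\pi_1}$ with $\delta$ an irreducible \emph{subquotient} of the $GL$-induced representation, and then claim that because $\widehat{\pi}$ is a subrepresentation, $\delta$ ``must be a submodule as well.'' This inference is not valid: Lemma~\ref{lemajantz} only produces a subquotient of the smaller induced module, and nothing in the setup forces that subquotient to be the Zelevinsky subrepresentation. You would need a separate Jacquet-module argument to pin down $\delta$.

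The paper avoids both difficulties by never trying to establish the subrepresentation embedding directly. It observes from the Aubert involution that $\widehat{\pi}$ is a \emph{subquotient} of $\delta([\nu^{-d}\rho_1,\nu^{-c}\rho_1]) \rtimes \widehat{\pi_1}$, then shows (via Frobenius reciprocity, transitivity of Jacquet modules, and Lemma~\ref{lemaprva}) that the Jacquet module of $\widehat{\pi}$ contains a constituent of the form $\nu^{-c}\rho_1\otimes\cdots\otimes\nu^{-d}\rho_1\otimes\pi'$. Then the hypothesis on $\pi_1$, transported to $\widehat{\pi_1}$, together with the structural formula, shows that $\nu^{-c}\rho_1\otimes\cdots\otimes\nu^{-d}\rho_1\otimes\widehat{\pi_1}$ is the \emph{only} such constituent in the Jacquet module of the induced representation, appearing with multiplicity one. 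Hence the induced representation has a unique irreducible subrepresentation, and since by Frobenius reciprocity any irreducible subrepresentation contains this constituent, that unique subrepresentation must be $\widehat{\pi}$. So the conclusion that $\widehat{\pi}$ is a subrepresentation emerges at the very end, rather than being an intermediate step; this is what makes the multiplicity-one argument do all the work cleanly. If you restructure your proof to work at the level of Jacquet-module multiplicities rather than trying to chain embeddings through Lemma~\ref{lemajantz}, your proposal will close its gaps and essentially match the paper's argument.
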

\begin{proof}
From properties of the Aubert involution we conclude that $\widehat{\pi}$ is contained in $\delta([\nu^{-d} \rho_1, \nu^{-c} \rho_1]) \rtimes \widehat{\pi_1}$.

From embeddings
\begin{equation*}
\pi \hookrightarrow \zeta([\nu^{c} \rho_1, \nu^{d} \rho_1]) \rtimes \pi_1 \hookrightarrow \nu^{c} \rho_1 \times \cdots \times \nu^{d} \rho_1 \rtimes \pi_1
\end{equation*}
and Frobenius reciprocity, it follows that the Jacquet module of $\pi$ with respect to the appropriate parabolic subgroup contains $\nu^{c} \rho_1 \otimes \cdots \otimes \nu^{d} \rho_1 \otimes \pi_1$.

Using transitivity of Jacquet modules and Lemma \ref{lemaprva}, we obtain that the Jacquet module of $\widehat{\pi}$ with respect to the appropriate parabolic subgroup contains an irreducible constituent of the form $\nu^{-c} \rho_1 \otimes \cdots \otimes \nu^{-d} \rho_1 \otimes \pi'$.

Since $\mu^{\ast}(\pi_1)$ does not contain an irreducible constituent of the form $\nu^{i} \rho_1 \otimes \pi_2$ for $i \in \{ c, c+1, \ldots, d \}$, it follows from Lemma \ref{lemaprva} that $\mu^{\ast}(\widehat{\pi_1})$ does not contain an irreducible constituent of the form $\nu^{-i} \rho_1 \otimes \pi_2$ for $i \in \{ c, c+1, \ldots, d \}$, with $\pi_2 \in R(G)$. Now it follows directly from the structural formula that $\nu^{-c} \rho_1 \otimes \cdots \otimes \nu^{-d} \rho_1 \otimes \widehat{\pi_1}$ is the unique irreducible constituent of the form $\nu^{-c} \rho_1 \otimes \cdots \otimes \nu^{-d} \rho_1 \otimes \pi'$ appearing in the Jacquet module of $\delta([\nu^{-d} \rho_1, \nu^{-c} \rho_1]) \rtimes \widehat{\pi_1}$ with respect to the appropriate parabolic subgroup, and it appears there with multiplicity one. It follows that $\delta([\nu^{-d} \rho_1, \nu^{-c} \rho_1]) \rtimes \widehat{\pi_1}$ contains a unique irreducible subrepresentation.

On the other hand, by Frobenius reciprocity every irreducible subrepresentation of $\delta([\nu^{-d} \rho_1, \nu^{-c} \rho_1]) \rtimes \widehat{\pi_1}$ contains $\nu^{-c} \rho_1 \otimes \cdots \otimes \nu^{-d} \rho_1 \otimes \widehat{\pi_1}$ in the Jacquet module with respect to the appropriate parabolic subgroup. Thus, $\widehat{\pi}$ has to be the unique irreducible subrepresentation of $\delta([\nu^{-d} \rho_1, \nu^{-c} \rho_1]) \rtimes \widehat{\pi_1}$. This proves the lemma.
\end{proof}

For positive integer $m$, real number $t$, and an irreducible cuspidal representation $\rho_1 \in R(GL)$, we denote by $(\nu^{t} \rho_1)^m$ the induced representation $\nu^{t} \rho_1 \times \cdots \times \nu^{t} \rho_1$, where $\nu^{t} \rho_1$ appears $m$ times. Note that the induced representation $\zeta([\nu^{c} \rho_1, \nu^{d} \rho_1]) \times (\nu^{t} \rho_1)^m$ is irreducible for $t \in \{ c, c+1, \ldots, d \}$ \cite{Zel}. In the same way as in the proof of Lemma \ref{lemaindprva}, one obtains the following results.

\begin{lemma} \label{lemainddruga}
Let $c$ and $d$ denote positive real numbers such that $d - c$ is a nonnegative integer. Let $\rho_1 \in R(GL)$ denote an irreducible cuspidal self-contragredient representation. Suppose that $\pi$ is a subrepresentation of an induced representation of the form $\zeta([\nu^{c} \rho_1, \nu^{d} \rho_1]) \times (\nu^{t} \rho_1)^m \rtimes \pi_1$, where $t \in \{ c, c+1, \ldots, d \}$, $\pi_1$ is  irreducible and $\mu^{\ast}(\pi_1)$ does not contain an irreducible constituent of the form $\nu^{i} \rho_1 \otimes \pi_2$ for $i \in \{ c, c+1, \ldots, d \}$, with $\pi_2 \in R(G)$. Then $\widehat{\pi}$ is the unique irreducible subrepresentation of $\delta([\nu^{-d} \rho_1, \nu^{-c} \rho_1]) \times (\nu^{-t} \rho_1)^m \rtimes \widehat{\pi_1}$.
\end{lemma}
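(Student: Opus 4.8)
The plan is to mimic the proof of Lemma \ref{lemaindprva} almost verbatim, with the only adjustment being that the segment $\zeta([\nu^{c}\rho_1,\nu^{d}\rho_1])$ is now accompanied by $m$ extra copies of $\nu^{t}\rho_1$ on the $GL$-side. First I would use the properties of the Aubert involution (Theorem \ref{aub}, especially parts (i) and (v)) together with the fact that $\widehat{\zeta([\nu^{c}\rho_1,\nu^{d}\rho_1])}=\delta([\nu^{-d}\rho_1,\nu^{-c}\rho_1])$ and $\widehat{\nu^{t}\rho_1}=\nu^{-t}\rho_1$ to conclude that $\widehat{\pi}$ is a subrepresentation of $\delta([\nu^{-d}\rho_1,\nu^{-c}\rho_1])\times(\nu^{-t}\rho_1)^m\rtimes\widehat{\pi_1}$; here the irreducibility of $\zeta([\nu^{c}\rho_1,\nu^{d}\rho_1])\times(\nu^{t}\rho_1)^m$ noted just before the statement is what lets us read off a single irreducible Aubert dual on the $GL$-part.

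Next I would produce an explicit cuspidal line in the Jacquet module. From the chain of embeddings
\begin{equation*}
\pi\hookrightarrow \zeta([\nu^{c}\rho_1,\nu^{d}\rho_1])\times(\nu^{t}\rho_1)^m\rtimes\pi_1\hookrightarrow \nu^{c}\rho_1\times\cdots\times\nu^{d}\rho_1\times(\nu^{t}\rho_1)^m\rtimes\pi_1,
\end{equation*}
Frobenius reciprocity gives that the appropriate Jacquet module of $\pi$ contains $\nu^{c}\rho_1\otimes\cdots\otimes\nu^{d}\rho_1\otimes(\nu^{t}\rho_1)^{\otimes m}\otimes\pi_1$ (all factors on the $GL$-side being self-contragredient). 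Applying transitivity of Jacquet modules and Lemma \ref{lemaprva}, the appropriate Jacquet module of $\widehat{\pi}$ then contains an irreducible constituent of the form $\nu^{-c}\rho_1\otimes\cdots\otimes\nu^{-d}\rho_1\otimes(\nu^{-t}\rho_1)^{\otimes m}\otimes\pi'$ for some $\pi'\in R(G)$.

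Then I would argue uniqueness via the structural formula (Lemma \ref{osn}). Since $\mu^{\ast}(\pi_1)$ has no constituent $\nu^{i}\rho_1\otimes\pi_2$ with $i\in\{c,\dots,d\}$, Lemma \ref{lemaprva} shows $\mu^{\ast}(\widehat{\pi_1})$ has no constituent $\nu^{-i}\rho_1\otimes\pi_2$ with $i\in\{c,\dots,d\}$; note $-t\in\{-d,\dots,-c\}$ is among these excluded exponents. Feeding this into the structural formula for $\mu^{\ast}(\delta([\nu^{-d}\rho_1,\nu^{-c}\rho_1])\times(\nu^{-t}\rho_1)^m\rtimes\widehat{\pi_1})$, one checks that $\nu^{-c}\rho_1\otimes\cdots\otimes\nu^{-d}\rho_1\otimes(\nu^{-t}\rho_1)^{\otimes m}\otimes\widehat{\pi_1}$ is the only constituent of the form $\nu^{-c}\rho_1\otimes\cdots\otimes\nu^{-d}\rho_1\otimes(\nu^{-t}\rho_1)^{\otimes m}\otimes\pi'$ appearing there, and with multiplicity one — in the decomposition $\delta([\nu^{-d}\rho_1,\nu^{-c}\rho_1])$ must give up its whole segment and each copy of $\nu^{-t}\rho_1$ must pass through untouched, for any constituent coming from $\widehat{\pi_1}$ is forbidden. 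Hence $\delta([\nu^{-d}\rho_1,\nu^{-c}\rho_1])\times(\nu^{-t}\rho_1)^m\rtimes\widehat{\pi_1}$ has a unique irreducible subrepresentation, while Frobenius reciprocity shows every irreducible subrepresentation contains that cuspidal line in its Jacquet module; combining with the second paragraph forces $\widehat{\pi}$ to be it.

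The main obstacle is the bookkeeping in the structural-formula step: because $\nu^{-t}\rho_1$ already occurs inside the segment $[\nu^{-d}\rho_1,\nu^{-c}\rho_1]$, one must be careful that no alternative way of splitting the segment together with the $m$ free copies reproduces the same ordered cuspidal tuple. The hypothesis $t\in\{c,\dots,d\}$ and the exclusion of exponents $-i$, $i\in\{c,\dots,d\}$, from $\mu^{\ast}(\widehat{\pi_1})$ are exactly what rule this out, but spelling out the combinatorics requires the same care as in Lemma \ref{lemaindprva} and is the only place where the extra factors $(\nu^{-t}\rho_1)^m$ genuinely change the argument; everything else is a routine transcription.
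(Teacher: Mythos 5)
Your plan is right up to the last step, but the multiplicity-one claim in the third paragraph is false once $m\geq1$, and this is exactly the combinatorial issue you flagged without resolving. The Jacquet module (with respect to the parabolic whose Levi is a product of $GL(n_{\rho_1})$'s with $G$) of $\delta([\nu^{-d}\rho_1,\nu^{-c}\rho_1])\times(\nu^{-t}\rho_1)^m\rtimes\widehat{\pi_1}$ does contain the tuple $\nu^{-c}\rho_1\otimes\cdots\otimes\nu^{-d}\rho_1\otimes(\nu^{-t}\rho_1)^{\otimes m}\otimes\widehat{\pi_1}$ only via the ``no flip, nothing from $\widehat{\pi_1}$'' path, as you argue, but that path does not yield multiplicity one: it yields the multiplicity of $\nu^{-c}\rho_1\otimes\cdots\otimes\nu^{-d}\rho_1\otimes(\nu^{-t}\rho_1)^{\otimes m}$ in the minimal Jacquet module of the irreducible $GL$-representation $\delta([\nu^{-d}\rho_1,\nu^{-c}\rho_1])\times(\nu^{-t}\rho_1)^m$. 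Already $r_{\min}\bigl((\nu^{-t}\rho_1)^m\bigr)=m!\,(\nu^{-t}\rho_1)^{\otimes m}$ by the geometric lemma, and the segment's Jacquet module shuffled against $m$ identical factors gives multiplicity at least $m!$ (and $(m+1)!$ when $t=d$). So the exclusion of exponents $-i$, $i\in\{c,\dots,d\}$, from $\mu^{\ast}(\widehat{\pi_1})$ does not rule out the overcounting; the overcounting is entirely on the $GL$ side. With multiplicity greater than one, the chain ``multiplicity one in the Jacquet module $\Rightarrow$ at most one irreducible subrepresentation $\Rightarrow$ $\widehat{\pi}$ is that subrepresentation'' breaks.

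The repair is short and keeps the overall architecture: instead of fully decomposing the $GL$ part into cuspidals, take the Jacquet module with respect to the parabolic whose Levi is $GL\bigl((d-c+1+m)n_{\rho_1}\bigr)\times G$, and work with the constituent $\bigl[\delta([\nu^{-d}\rho_1,\nu^{-c}\rho_1])\times(\nu^{-t}\rho_1)^m\bigr]\otimes\widehat{\pi_1}$. The same cuspidal-support analysis (no positive exponents, so no flipped $\widetilde\rho_1$ pieces; nothing from $\widehat{\pi_1}$; and a rank count forcing the whole $GL$ factor to land on the $GL$ side) shows this constituent appears with multiplicity exactly one. Since the $GL$ factor is irreducible, Frobenius reciprocity shows every irreducible subrepresentation of $\delta([\nu^{-d}\rho_1,\nu^{-c}\rho_1])\times(\nu^{-t}\rho_1)^m\rtimes\widehat{\pi_1}$ contains this constituent, so there is a unique one; and transitivity together with your cuspidal-line computation shows $\widehat{\pi}$ also contains it, hence coincides with that subrepresentation.
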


\begin{lemma} \label{lemaindtreca}
Let $c$ and $d$ denote positive real numbers such that $d - c$ is a nonnegative integer. Let $\rho_1 \in R(GL)$ denote an irreducible cuspidal self-contragredient representation. Suppose that $\pi$ is a subrepresentation of an induced representation of the form $\zeta([\nu^{c} \rho_1, \nu^{d} \rho_1]) \times (\nu^{d} \rho_1)^m \rtimes \pi_1$, where $\pi_1$ is an irreducible representation such that the Jacquet module of $\pi_1$ with respect to the appropriate parabolic subgroup does not contain an irreducible constituent of the form $\nu^{d-k} \rho_1 \otimes \cdots \otimes \nu^{d-1} \rho_1 \otimes \nu^{d} \rho_1 \otimes \pi'$ for a nonnegative integer $k < d$, with $\pi' \in R(G)$. Then $\widehat{\pi}$ is the unique irreducible subrepresentation of $\delta([\nu^{-d} \rho_1, \nu^{-c} \rho_1]) \times (\nu^{-d} \rho_1)^m \rtimes \widehat{\pi_1}$.
\end{lemma}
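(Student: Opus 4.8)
The plan is to mimic the proof of Lemma \ref{lemaindprva} as the text suggests, with the modification forced by the fact that we now extract the segment $[\nu^c\rho_1,\nu^d\rho_1]$ together with $m$ extra copies of the \emph{largest} exponent $\nu^d\rho_1$, and that the hypothesis on $\pi_1$ is phrased in terms of the Jacquet module rather than $\mu^\ast$. First I would record, from properties of the Aubert involution and Theorem \ref{aub}(v), that $\widehat\pi$ is a subrepresentation of $\delta([\nu^{-d}\rho_1,\nu^{-c}\rho_1])\times(\nu^{-d}\rho_1)^m\rtimes\widehat{\pi_1}$; here I use that the Aubert dual of $\zeta([\nu^c\rho_1,\nu^d\rho_1])$ is $\delta([\nu^{-d}\rho_1,\nu^{-c}\rho_1])$ and that of $\nu^d\rho_1$ is $\nu^{-d}\rho_1$, together with $D_G\circ i_M=i_M\circ D_M$. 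So the only real content is the uniqueness of the irreducible subrepresentation of that induced representation and the identification of its bottom Jacquet module.

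Next I would chase embeddings. Since $\zeta([\nu^c\rho_1,\nu^d\rho_1])\times(\nu^d\rho_1)^m$ is irreducible (as noted, by \cite{Zel}), it embeds into $\nu^c\rho_1\times\nu^{c+1}\rho_1\times\cdots\times\nu^d\rho_1\times(\nu^d\rho_1)^m$ with an ordering of the $\nu^d\rho_1$ factors placed appropriately; the cleanest choice is to put the block $\nu^{d-k}\rho_1\times\cdots\times\nu^{d}\rho_1$ patterns at the end. Thus $\pi\hookrightarrow \nu^{c}\rho_1\times\cdots\times\nu^{d}\rho_1\times(\nu^{d}\rho_1)^m\rtimes\pi_1$, and by Frobenius reciprocity the Jacquet module of $\pi$ for the appropriate parabolic contains $\nu^{c}\rho_1\otimes\cdots\otimes\nu^{d}\rho_1\otimes(\nu^{d}\rho_1)^{\otimes m}\otimes\pi_1$ (after suitably expanding the $\zeta$-segment via its own Jacquet module, which contains the increasing tensor). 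Applying Lemma \ref{lemaprva} together with transitivity of Jacquet modules, I get that the Jacquet module of $\widehat\pi$ contains an irreducible constituent of the form $\nu^{-c}\rho_1\otimes\cdots\otimes\nu^{-d}\rho_1\otimes(\nu^{-d}\rho_1)^{\otimes m}\otimes\pi'$ for some $\pi'\in R(G)$.

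The key combinatorial step is then to show that in the Jacquet module of $\delta([\nu^{-d}\rho_1,\nu^{-c}\rho_1])\times(\nu^{-d}\rho_1)^m\rtimes\widehat{\pi_1}$ the term $\nu^{-c}\rho_1\otimes\cdots\otimes\nu^{-d}\rho_1\otimes(\nu^{-d}\rho_1)^{\otimes m}\otimes\sigma'$ forces $\sigma'=\widehat{\pi_1}$ and occurs with multiplicity one. Here I would dualize the hypothesis: by Lemma \ref{lemaprva}, since the Jacquet module of $\pi_1$ does not contain $\nu^{d-k}\rho_1\otimes\cdots\otimes\nu^{d}\rho_1\otimes\pi'$ for any nonnegative integer $k<d$, the Jacquet module of $\widehat{\pi_1}$ does not contain $\nu^{-(d-k)}\rho_1\otimes\cdots\otimes\nu^{-d}\rho_1\otimes\pi'$ for such $k$ — equivalently it contains no constituent of the form $\nu^{-d}\rho_1\otimes\tau$ \emph{preceded by the decreasing chain down from $\nu^{-(d-1)}\rho_1$ or lower}. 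Then I run the structural formula (Lemma \ref{osn}) on $\delta([\nu^{-d}\rho_1,\nu^{-c}\rho_1])\rtimes\widehat{\pi_1}$, followed by stripping off the $m$ copies of $\nu^{-d}\rho_1$ one at a time via $r_{(1)}$: each time one asks which exponents $\nu^{-d}\rho_1$ can be produced, and the constraint above (plus the fact that $\delta([\nu^{-d}\rho_1,\nu^{-c}\rho_1])$ has $\nu^{-d}\rho_1$ only as the first term of its segment-type Jacquet module) pins everything down. This yields that $\delta([\nu^{-d}\rho_1,\nu^{-c}\rho_1])\times(\nu^{-d}\rho_1)^m\rtimes\widehat{\pi_1}$ has a unique irreducible subrepresentation; by Frobenius reciprocity any irreducible subrepresentation must have that distinguished term in its Jacquet module, and since $\widehat\pi$ is a subrepresentation and has it, we conclude $\widehat\pi$ is exactly that unique irreducible subrepresentation.

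The main obstacle I anticipate is the bookkeeping in the last step: the extra copies of $\nu^{-d}\rho_1$ make the Jacquet-module expansion genuinely more delicate than in Lemma \ref{lemaindprva}, because a priori an exponent $\nu^{-d}\rho_1$ could be peeled either from one of the standalone copies, from the end of the segment $\delta([\nu^{-d}\rho_1,\nu^{-c}\rho_1])$, or from $\widehat{\pi_1}$, and one must check that the prescribed left-to-right pattern $\nu^{-c}\rho_1\otimes\cdots\otimes\nu^{-d}\rho_1$ followed by $m$ further $\nu^{-d}\rho_1$'s rules out all mixed possibilities except the intended one. This is precisely where the hypothesis is stated in terms of arbitrary-length decreasing chains $\nu^{d-k}\rho_1\otimes\cdots\otimes\nu^{d}\rho_1$ ending in $\nu^d\rho_1$ rather than just single exponents $\nu^i\rho_1$: that stronger hypothesis is exactly what is needed to kill the contributions where an initial segment of the decreasing chain comes from $\delta([\nu^{-d}\rho_1,\nu^{-c}\rho_1])$ and the tail $\nu^{-d}\rho_1$'s from $\widehat{\pi_1}$. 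Once that case analysis is organized, the multiplicity-one assertion and the identification of $\pi'$ with $\widehat{\pi_1}$ follow as in the earlier lemma.
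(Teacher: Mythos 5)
Your approach is exactly the one the paper intends: the paper gives no separate proof for this lemma, only the remark preceding it that the argument `is obtained in the same way as Lemma \ref{lemaindprva},' and your fleshed-out version correctly identifies the two new pieces of bookkeeping — the $m$ extra copies of $\nu^{d}\rho_1$, and the reformulation of the hypothesis in terms of increasing chains $\nu^{d-k}\rho_1\otimes\cdots\otimes\nu^{d}\rho_1$ rather than single exponents, which (after dualizing via Lemma \ref{lemaprva}) is precisely what is needed to kill routings in which a suffix of the decreasing chain $\nu^{-c}\rho_1\otimes\cdots\otimes\nu^{-d}\rho_1\otimes(\nu^{-d}\rho_1)^{\otimes m}$ is peeled from $\widehat{\pi_1}$ rather than from $\delta([\nu^{-d}\rho_1,\nu^{-c}\rho_1])\times(\nu^{-d}\rho_1)^m$.

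One slip in the logical flow at the start: Theorem \ref{aub}(v) is an identity in the Grothendieck group, so it only gives that $\widehat{\pi}$ occurs as a \emph{subquotient} of $\delta([\nu^{-d}\rho_1,\nu^{-c}\rho_1])\times(\nu^{-d}\rho_1)^m\rtimes\widehat{\pi_1}$, not a priori as a subrepresentation; your closing sentence ``since $\widehat\pi$ is a subrepresentation and has it'' therefore assumes what is being proved. The correct order of quantifiers, as in the paper's proof of Lemma \ref{lemaindprva}, is: (i) by Lemma \ref{lemaprva} the Jacquet module of $\widehat{\pi}$ contains some constituent $\nu^{-c}\rho_1\otimes\cdots\otimes\nu^{-d}\rho_1\otimes(\nu^{-d}\rho_1)^{\otimes m}\otimes\pi'$; (ii) your structural-formula analysis shows the Jacquet module of the induced representation contains exactly one constituent of this shape, namely with $\pi'=\widehat{\pi_1}$, and with multiplicity one; (iii) by Frobenius reciprocity every irreducible subrepresentation contains that term, so there is a unique irreducible subrepresentation; and (iv) since $\widehat{\pi}$ is a subquotient carrying that term and the multiplicity is one, $\widehat{\pi}$ must coincide with that unique irreducible subrepresentation. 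With that reordering the argument is complete and matches the paper's intent.
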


\begin{lemma} \label{lemarazl}
Suppose that $\rho_0 \not\cong \rho$ and let $\pi$ denote an irreducible subquotient of $\delta([\nu^{a} \rho_0, \nu^{b} \rho_0]) \rtimes \delta(\rho, x; \sigma)$. Then there is an irreducible representation $\pi_1 \in R(G)$ such that $\pi$ is a subrepresentation of $\delta([\nu^{\alpha} \rho, \nu^{x} \rho]) \rtimes \pi_1$ and $\widehat{\pi}$ is the unique irreducible subrepresentation of $\nu^{-x} \rho \times \nu^{-x+1} \rho \times \cdots \times \nu^{-\alpha} \rho \rtimes \widehat{\pi_1}$. Furthermore, if $\widehat{\pi_1} \cong L(\delta_1, \delta_2, \ldots, \delta_k; \tau_{temp})$, where $e(\delta_i) \leq e(\delta_j)$ for $i \leq j$, then
\begin{equation*}
\widehat{\pi} \cong L(\nu^{-x} \rho, \nu^{-x+1} \rho, \ldots, \nu^{-\alpha} \rho, \delta_1, \delta_2, \ldots, \delta_k; \tau_{temp}).
\end{equation*}
\end{lemma}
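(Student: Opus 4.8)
\emph{Proof strategy.} The plan is to exploit two facts that hold because $\rho_0\not\cong\rho$: no twist of $\rho_0$ is isomorphic to a twist of $\rho$ (both being (essentially) self-contragredient cuspidal), so the $\rho$-part and the $\rho_0$-part of every relevant Jacquet module decouple; and $\delta(\rho,x;\sigma)$ is the unique irreducible subrepresentation of $\delta([\nu^{\alpha}\rho,\nu^{x}\rho])\rtimes\sigma$ (since $\delta([\nu^{\alpha}\rho,\nu^{x}\rho])$ is the unique irreducible subrepresentation of $\nu^{x}\rho\times\cdots\times\nu^{\alpha}\rho$, while $\delta(\rho,x;\sigma)$ is the unique irreducible subrepresentation of $\nu^{x}\rho\times\cdots\times\nu^{\alpha}\rho\rtimes\sigma$). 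First I would establish the embedding. Since $[\nu^{a}\rho_0,\nu^{b}\rho_0]$ and $[\nu^{\alpha}\rho,\nu^{x}\rho]$ are not linked, the product $\delta([\nu^{a}\rho_0,\nu^{b}\rho_0])\times\delta([\nu^{\alpha}\rho,\nu^{x}\rho])$ is irreducible, hence isomorphic to $\delta([\nu^{\alpha}\rho,\nu^{x}\rho])\times\delta([\nu^{a}\rho_0,\nu^{b}\rho_0])$; combining with $\delta(\rho,x;\sigma)\hookrightarrow\delta([\nu^{\alpha}\rho,\nu^{x}\rho])\rtimes\sigma$ gives
\[
\delta([\nu^{a}\rho_0,\nu^{b}\rho_0])\rtimes\delta(\rho,x;\sigma)\hookrightarrow\delta([\nu^{\alpha}\rho,\nu^{x}\rho])\rtimes\bigl(\delta([\nu^{a}\rho_0,\nu^{b}\rho_0])\rtimes\sigma\bigr),
\]
so $\pi$ is an irreducible subquotient of the right-hand side and therefore of $\delta([\nu^{\alpha}\rho,\nu^{x}\rho])\rtimes\pi_1$ for some irreducible subquotient $\pi_1$ of $\delta([\nu^{a}\rho_0,\nu^{b}\rho_0])\rtimes\sigma$; in particular no twist of $\rho$ appears in the cuspidal support of $\pi_1$. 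To upgrade ``subquotient'' to ``subrepresentation'' I would compute the Jacquet module of $\delta([\nu^{a}\rho_0,\nu^{b}\rho_0])\rtimes\delta(\rho,x;\sigma)$ with respect to $GL(D,F)\times G$, where $\delta([\nu^{\alpha}\rho,\nu^{x}\rho])$ is a representation of $GL(D,F)$: by Lemma \ref{osn} together with the strong positivity of $\delta(\rho,x;\sigma)$ (which gives that $\delta([\nu^{\alpha}\rho,\nu^{x}\rho])\otimes\sigma$ is the only constituent of $\mu^{\ast}(\delta(\rho,x;\sigma))$ whose left factor is $\delta([\nu^{\alpha}\rho,\nu^{x}\rho])$, occurring with multiplicity one) and since $\rho_0\not\cong\rho$, one checks that $\delta([\nu^{\alpha}\rho,\nu^{x}\rho])\otimes(\delta([\nu^{a}\rho_0,\nu^{b}\rho_0])\rtimes\sigma)$ is, up to multiplicity one, the only constituent of that Jacquet module whose left factor is $\delta([\nu^{\alpha}\rho,\nu^{x}\rho])$. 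This multiplicity-one statement, combined with Lemma \ref{lemajantz}, forces $\pi\hookrightarrow\delta([\nu^{\alpha}\rho,\nu^{x}\rho])\rtimes\pi_1$.

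With the embedding in hand I would pass to Aubert duals. Using $\widehat{\delta([\nu^{\alpha}\rho,\nu^{x}\rho])}=\zeta([\nu^{\alpha}\rho,\nu^{x}\rho])$, the isomorphism $\theta\rtimes\tau\cong\widetilde{\theta}\rtimes\tau$ (valid for $Sp(2n,F)$ and $SO(2n+1,F)$, $\theta$ an irreducible representation of some $GL(m,F)$, $\tau$ an irreducible representation of $G$) with $\widetilde{\zeta([\nu^{\alpha}\rho,\nu^{x}\rho])}=\zeta([\nu^{-x}\rho,\nu^{-\alpha}\rho])$, and the properties of the Aubert involution (Theorem \ref{aub}), the embedding of $\pi$ yields
\[
\widehat{\pi}\hookrightarrow\zeta([\nu^{\alpha}\rho,\nu^{x}\rho])\rtimes\widehat{\pi_1}\cong\zeta([\nu^{-x}\rho,\nu^{-\alpha}\rho])\rtimes\widehat{\pi_1}\hookrightarrow\nu^{-x}\rho\times\nu^{-x+1}\rho\times\cdots\times\nu^{-\alpha}\rho\rtimes\widehat{\pi_1}.
\]
Writing $\widehat{\pi_1}\cong L(\delta_1,\ldots,\delta_k;\tau_{temp})$ with $e(\delta_1)\leq\cdots\leq e(\delta_k)<0$, so that $\widehat{\pi_1}\hookrightarrow\delta_1\times\cdots\times\delta_k\rtimes\tau_{temp}$, we obtain $\widehat{\pi}\hookrightarrow\nu^{-x}\rho\times\cdots\times\nu^{-\alpha}\rho\times\delta_1\times\cdots\times\delta_k\rtimes\tau_{temp}$. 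Since $\rho_0\not\cong\rho$ and the cuspidal support of each $\delta_i$ consists of twists of $\rho_0$, every $\delta_i$ commutes with every $\nu^{-j}\rho$, and the exponents $-x<-x+1<\cdots<-\alpha$ are distinct and negative; hence the factors of the last induced representation can be rearranged into non-decreasing order of exponents without changing its isomorphism class, yielding a standard module in Langlands subrepresentation form whose unique irreducible subrepresentation is $L(\nu^{-x}\rho,\nu^{-x+1}\rho,\ldots,\nu^{-\alpha}\rho,\delta_1,\ldots,\delta_k;\tau_{temp})$. Consequently $\nu^{-x}\rho\times\cdots\times\nu^{-\alpha}\rho\rtimes\widehat{\pi_1}$ has a unique irreducible subrepresentation, which must be $\widehat{\pi}$, and it equals $L(\nu^{-x}\rho,\ldots,\nu^{-\alpha}\rho,\delta_1,\ldots,\delta_k;\tau_{temp})$, which is the assertion.

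\emph{Main obstacle.} The delicate point is the first step: turning ``$\pi$ is an irreducible subquotient of $\delta([\nu^{a}\rho_0,\nu^{b}\rho_0])\rtimes\delta(\rho,x;\sigma)$'' into the embedding $\pi\hookrightarrow\delta([\nu^{\alpha}\rho,\nu^{x}\rho])\rtimes\pi_1$. This rests on the multiplicity-one analysis of the Jacquet module along $GL(D,F)\times G$, where strong positivity of $\delta(\rho,x;\sigma)$ is what pins the left factor down to exactly $\delta([\nu^{\alpha}\rho,\nu^{x}\rho])$, and the hypothesis $\rho_0\not\cong\rho$ is what keeps the $\rho$-part and the $\rho_0$-part of each Jacquet module constituent separated so that Lemma \ref{lemajantz} can be applied. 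Once the embedding is available, the passage to Aubert duals and the identification with a Langlands quotient are essentially formal, using only $\widehat{\delta([\nu^{\alpha}\rho,\nu^{x}\rho])}=\zeta([\nu^{\alpha}\rho,\nu^{x}\rho])$, the isomorphism $\theta\rtimes\tau\cong\widetilde{\theta}\rtimes\tau$, and the commutativity of $GL$-representations supported on disjoint cuspidal lines.
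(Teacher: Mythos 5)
Your high-level plan matches the paper's: first obtain the embedding $\pi\hookrightarrow\delta([\nu^{\alpha}\rho,\nu^{x}\rho])\rtimes\pi_1$, then pass to Aubert duals and identify the result through the Langlands classification. However, both of your main steps have genuine gaps.

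\emph{The embedding step.} Your multiplicity-one analysis of $r_{(D)}\bigl(\delta([\nu^{a}\rho_0,\nu^{b}\rho_0])\rtimes\delta(\rho,x;\sigma)\bigr)$, ``combined with Lemma~\ref{lemajantz},'' is not sufficient. Lemma~\ref{lemajantz} requires $\pi$ to be a \emph{subrepresentation} of the ambient induced representation, but you only know $\pi$ is an irreducible subquotient. Moreover, knowing that each constituent with left factor $\delta([\nu^{\alpha}\rho,\nu^{x}\rho])$ appears with multiplicity one in the Jacquet module of the \emph{whole} induced representation does not tell you that such a constituent appears in $r_{(D)}(\pi)$ for a given irreducible subquotient $\pi$, let alone that it appears as a quotient (which is what Frobenius reciprocity needs). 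The paper sidesteps this by using the explicit classification of the irreducible subquotients from \cite{Mu3}: every $\pi$ is either tempered or of the form $L(\delta([\nu^{c}\rho_0,\nu^{-a}\rho_0]);\tau)$ with $\tau$ tempered, and then the embedding $\pi\hookrightarrow\delta([\nu^{\alpha}\rho,\nu^{x}\rho])\rtimes\pi_1$ is constructed directly in each case (via the Langlands subrepresentation form and \cite[Lemma~3.2]{MT1}), where now $\pi$ genuinely \emph{is} a subrepresentation of the relevant induced representation and the Jantzen-type lemma legitimately applies.

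\emph{The Aubert dual step.} You assert that the embedding of $\pi$ ``yields $\widehat{\pi}\hookrightarrow\zeta([\nu^{\alpha}\rho,\nu^{x}\rho])\rtimes\widehat{\pi_1}$'' by ``the properties of the Aubert involution (Theorem~\ref{aub}).'' This is not one of those properties: $D_G$ is an operation on Grothendieck groups and commutes with induction, so one only deduces that $\widehat{\pi}$ is a \emph{subquotient} of $\zeta([\nu^{\alpha}\rho,\nu^{x}\rho])\rtimes\widehat{\pi_1}$, not a subrepresentation. The paper obtains the subrepresentation property by arguing ``in the same way as in the proof of Lemma~\ref{lemaindprva},'' i.e.\ by taking the embedding $\pi\hookrightarrow\delta([\nu^{\alpha}\rho,\nu^{x}\rho])\rtimes\pi_1$ down to the cuspidal level, applying Lemma~\ref{lemaprva} (which encapsulates Theorem~\ref{aub}(iv)), and using a uniqueness-in-the-Jacquet-module argument to force $\widehat{\pi}$ to actually sit as a subrepresentation. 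Without this, your subsequent Langlands-data computation correctly identifies the unique irreducible subrepresentation of $\nu^{-x}\rho\times\cdots\times\nu^{-\alpha}\rho\rtimes\widehat{\pi_1}$ but does not prove that $\widehat{\pi}$ coincides with it.

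Your final reduction to the Langlands form — commuting the $\rho$-segments past the $\rho_0$-segments and reading off $L(\nu^{-x}\rho,\ldots,\nu^{-\alpha}\rho,\delta_1,\ldots,\delta_k;\tau_{temp})$ — is correct and matches the paper's, once the two embedding statements above are in hand.
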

\begin{proof}
By the results of \cite{Mu3}, there is an irreducible tempered representation $\tau \in R(G)$ such that either $\pi \cong \tau$ or $\pi \cong L(\delta([\nu^{c} \rho_0, \nu^{-a} \rho_0]); \tau)$, for some $c \geq -b$ such that $c - a < 0$. Also, it is easy to see that there is an irreducible representation $\tau_1$ such that $\tau$ is a subrepresentation of $\delta([\nu^{\alpha} \rho, \nu^{x} \rho]) \rtimes \tau_1$, and there are no twists of $\rho$ appearing in the cuspidal support of $\tau_1$. If $\pi \cong \tau$, we can take $\pi_1 \cong \tau_1$. Otherwise, since $\rho_0 \not\cong \rho$ we have
\begin{align*}
\pi & \hookrightarrow \delta([\nu^{c} \rho_0, \nu^{-a} \rho_0]) \rtimes \tau \hookrightarrow \delta([\nu^{c} \rho_0, \nu^{-a} \rho_0]) \times \delta([\nu^{\alpha} \rho, \nu^{x} \rho]) \rtimes \tau_1 \\
& \cong \delta([\nu^{\alpha} \rho, \nu^{x} \rho]) \times \delta([\nu^{c} \rho_0, \nu^{-a} \rho_0]) \rtimes \tau_1,
\end{align*}
and by \cite[Lemma~3.2]{MT1} there is an irreducible representation $\pi_1$ such that $\pi$ is a subrepresentation of $\delta([\nu^{\alpha} \rho, \nu^{x} \rho]) \rtimes \pi_1$. Since there are no twists of $\rho$ appearing in the cuspidal support of $\pi_1$, it can be seen in the same way as in the proof of Lemma \ref{lemaindprva} that $\widehat{\pi}$ is the unique irreducible subrepresentation of $\nu^{-x} \rho \times \nu^{-x+1} \rho \times \cdots \times \nu^{-\alpha} \rho \rtimes \widehat{\pi_1}$.

If we write $\widehat{\pi_1} \cong L(\delta_1, \delta_2, \ldots, \delta_k; \tau_{temp})$, then $\delta_i \cong \delta([\nu^{x_i} \rho_0, \nu^{y_i} \rho_0])$ for $i=1, 2, \ldots, k$, and we have $\nu^{z} \rho \times \delta_i \cong \delta_i \times \nu^{z} \rho$ for all $i = 1, 2, \ldots, k$ and $z \in \mathbb{R}$. This ends the proof.
\end{proof}

The following result provides embeddings needed for an inductive determination of the Aubert duals.

\begin{proposition}  \label{propulag}
Let $\rho_1 \in R(GL)$ denote an irreducible self-contragredient cuspidal representation, and let $\sigma_{sp} \in R(G)$ denote a strongly positive discrete series. Let $k, l$ denote half-integers such that $k - l$ is a positive integer and $k + l > 0$.
\begin{enumerate}[(1)]
\item If $\nu^{k} \rho_1 \rtimes \sigma_{sp}$ is irreducible and $k \geq -l + 2$, then $L(\delta([\nu^{-k} \rho_1, \nu^{-l} \rho_1]); \sigma_{sp})$ is a subrepresentation of $\nu^{k} \rho_1 \rtimes L(\delta([\nu^{-k+1} \rho_1, \nu^{-l} \rho_1]); \sigma_{sp})$.
\item If $\mu^{\ast}(\sigma_{sp})$ does not contain an irreducible constituent of the form $\nu^{-l} \rho_1 \otimes \pi$, with $\pi \in R(G)$, then $L(\delta([\nu^{-k} \rho_1, \nu^{-l} \rho_1]); \sigma_{sp})$ is a subrepresentation of $\nu^{-l} \rho_1 \rtimes L(\delta([\nu^{-k} \rho_1$, $\nu^{-l-1} \rho_1]); \sigma_{sp})$.
\item Suppose that $\sigma_{sp}$ is a subrepresentation of $\nu^{t} \rho_1 \rtimes \sigma'_{sp}$ for some $t \neq k$, $t \neq -l+1 $ and a strongly positive representation $\sigma'_{sp}$. Then $L(\delta([\nu^{-k} \rho_1, \nu^{-l} \rho_1]);$ $\sigma_{sp})$ is a subrepresentation of $\nu^{t} \rho_1 \rtimes L(\delta([\nu^{-k} \rho_1, \nu^{-l} \rho_1]); \sigma'_{sp})$.
\end{enumerate}
\end{proposition}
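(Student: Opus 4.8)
\emph{Strategy.} I would handle the three claims in parallel. In every case the legitimate Langlands datum gives the embedding
$L(\delta([\nu^{-k}\rho_1,\nu^{-l}\rho_1]);\sigma_{sp})\hookrightarrow \delta([\nu^{-k}\rho_1,\nu^{-l}\rho_1])\rtimes\sigma_{sp}$
(here $e(\delta([\nu^{-k}\rho_1,\nu^{-l}\rho_1]))=(-k-l)/2<0$ since $k+l>0$, and $\sigma_{sp}$ is tempered). The plan is then: (i) enlarge the ambient induced representation so that the factor one wants to peel off sits at the far left; (ii) use Lemma~\ref{lemajantz} to replace the remaining ``tail'' by one of its irreducible subquotients $\pi$; (iii) identify $\pi$ with the relevant Langlands subrepresentation of that tail (the unique irreducible subrepresentation, by the Langlands classification) by means of a Jacquet module computation resting on the structural formula (Lemma~\ref{osn}) and the inductive description of strongly positive discrete series (Proposition~\ref{spds}).

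\emph{Step (i): the three openings.} In case (1) I would write $\delta([\nu^{-k}\rho_1,\nu^{-l}\rho_1])\hookrightarrow \delta([\nu^{-k+1}\rho_1,\nu^{-l}\rho_1])\times\nu^{-k}\rho_1$; since $\rho_1$ is self-contragredient, the Weyl group sign change on the $GL(n_{\rho_1},F)$-block adjacent to $G_m$ yields $\delta([\nu^{-k+1}\rho_1,\nu^{-l}\rho_1])\times\nu^{-k}\rho_1\rtimes\sigma_{sp}\cong \delta([\nu^{-k+1}\rho_1,\nu^{-l}\rho_1])\times\nu^{k}\rho_1\rtimes\sigma_{sp}$, and because $k\geq -l+2$ the Zelevinsky segments $[\nu^{-k+1}\rho_1,\nu^{-l}\rho_1]$ and $[\nu^{k}\rho_1]$ are unlinked, so the two $GL$-factors commute; this lands our representation inside $\nu^{k}\rho_1\rtimes\bigl(\delta([\nu^{-k+1}\rho_1,\nu^{-l}\rho_1])\rtimes\sigma_{sp}\bigr)$. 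In case (2) I would instead use $\delta([\nu^{-k}\rho_1,\nu^{-l}\rho_1])\hookrightarrow \nu^{-l}\rho_1\times\delta([\nu^{-k}\rho_1,\nu^{-l-1}\rho_1])$ — the factor we want is already on the left, no flip is needed — giving the embedding into $\nu^{-l}\rho_1\rtimes\bigl(\delta([\nu^{-k}\rho_1,\nu^{-l-1}\rho_1])\rtimes\sigma_{sp}\bigr)$. In case (3) I would plug the given embedding $\sigma_{sp}\hookrightarrow \nu^{t}\rho_1\rtimes\sigma'_{sp}$ into the right of the standard module; since $\sigma_{sp}$ is strongly positive and a subrepresentation forces $t>0$, and $t\neq -l+1$, the segments $[\nu^{-k}\rho_1,\nu^{-l}\rho_1]$ and $[\nu^{t}\rho_1]$ are unlinked and commute, giving the embedding into $\nu^{t}\rho_1\rtimes\bigl(\delta([\nu^{-k}\rho_1,\nu^{-l}\rho_1])\rtimes\sigma'_{sp}\bigr)$.

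\emph{Steps (ii)--(iii): peeling and identification.} In each case Lemma~\ref{lemajantz}, applied to the parabolic with Levi $GL(n_{\rho_1},F)\times G_{\bullet}$, produces an irreducible subquotient $\pi$ of the tail ($\delta([\nu^{-k+1}\rho_1,\nu^{-l}\rho_1])\rtimes\sigma_{sp}$, resp.\ $\delta([\nu^{-k}\rho_1,\nu^{-l-1}\rho_1])\rtimes\sigma_{sp}$, resp.\ $\delta([\nu^{-k}\rho_1,\nu^{-l}\rho_1])\rtimes\sigma'_{sp}$) with $L(\delta([\nu^{-k}\rho_1,\nu^{-l}\rho_1]);\sigma_{sp})\hookrightarrow \nu^{\varepsilon}\rho_1\rtimes\pi$, where $\varepsilon$ equals $k$, $-l$, $t$ respectively. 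To identify $\pi$: by Frobenius reciprocity $\nu^{\varepsilon}\rho_1\otimes\pi$ is a subquotient of $r_{(n_{\rho_1})}(L(\delta([\nu^{-k}\rho_1,\nu^{-l}\rho_1]);\sigma_{sp}))$, hence of $r_{(n_{\rho_1})}(\delta([\nu^{-k}\rho_1,\nu^{-l}\rho_1])\rtimes\sigma_{sp})$, which I would expand by Lemma~\ref{osn}. The hypotheses are exactly calibrated to pin down the term carrying $\nu^{\varepsilon}\rho_1$ on the left: in (1) the irreducibility of $\nu^{k}\rho_1\rtimes\sigma_{sp}$ is used to exclude any contribution of $\nu^{k}\rho_1$ coming out of $\mu^{\ast}(\sigma_{sp})$, so the only source of $\nu^{k}\rho_1$ on the left is the (flipped) left endpoint of the inducing segment; in (2) the assumption that $\mu^{\ast}(\sigma_{sp})$ carries no constituent $\nu^{-l}\rho_1\otimes(\cdot)$ forces the $\nu^{-l}\rho_1$ to come from the left endpoint of the segment; in (3) the conditions $t\neq k$ and $t\neq -l+1$ rule out respectively the flipped segment endpoint and any linking contribution, so $\nu^{t}\rho_1$ must be extracted from $\sigma_{sp}$, and transitivity of Jacquet modules together with $\sigma_{sp}\hookrightarrow \nu^{t}\rho_1\rtimes\sigma'_{sp}$ isolates the tail $\delta([\nu^{-k}\rho_1,\nu^{-l}\rho_1])\rtimes\sigma'_{sp}$. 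A multiplicity-one verification (again via the structural formula, using Proposition~\ref{spds} to control $\mu^{\ast}(\sigma_{sp})$, resp.\ $\mu^{\ast}(\sigma'_{sp})$) then shows the relevant leading term occurs exactly once, which forces $\pi$ to be the asserted Langlands subrepresentation and finishes the proof.

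\emph{Main obstacle.} The delicate part is step (iii), the Jacquet module bookkeeping — ruling out all ``unexpected'' irreducible subquotients $\nu^{\varepsilon}\rho_1\otimes(\cdot)$. This amounts to expanding $\mu^{\ast}(\delta([\nu^{-k}\rho_1,\nu^{-l}\rho_1])\rtimes\sigma_{sp})$ and, crucially, substituting the inductive description of $\mu^{\ast}(\sigma_{sp})$ from Proposition~\ref{spds}. In case~(1) the genuinely subtle point is converting the hypothesis ``$\nu^{k}\rho_1\rtimes\sigma_{sp}$ is irreducible'' into the combinatorial statement about the $\rho_1$-cuspidal support and Jacquet modules of $\sigma_{sp}$ that is actually used. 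One also has to keep an eye on the degenerate boundary cases (for instance $k-l=1$, where the auxiliary segments collapse to single cuspidal representations, and small values of $k+l$, where some auxiliary Langlands data must be checked to be well formed).
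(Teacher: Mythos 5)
Your overall plan — embed via the standard module, peel off the leftmost factor after a sign flip and a commutation, apply Lemma~\ref{lemajantz}, then identify the resulting irreducible subquotient of the tail via Frobenius reciprocity and the structural formula — is exactly the route the paper takes, so the skeleton is right. Steps~(i) and~(ii) match the paper essentially verbatim.

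There is, however, a genuine misunderstanding in how you deploy the hypothesis that $\nu^{k}\rho_1\rtimes\sigma_{sp}$ is irreducible, and it propagates into a false step. In Step~(i) you assert that the isomorphism
$\delta([\nu^{-k+1}\rho_1,\nu^{-l}\rho_1])\times\nu^{-k}\rho_1\rtimes\sigma_{sp}\cong\delta([\nu^{-k+1}\rho_1,\nu^{-l}\rho_1])\times\nu^{k}\rho_1\rtimes\sigma_{sp}$
follows from self-contragredience and ``the Weyl group sign change.'' It does not: the Weyl-group intertwining operator on the block $\nu^{-k}\rho_1\rtimes\sigma_{sp}$ is an isomorphism precisely because $\nu^{k}\rho_1\rtimes\sigma_{sp}$ (equivalently $\nu^{-k}\rho_1\rtimes\sigma_{sp}$) is \emph{assumed irreducible}; without irreducibility the two induced representations have the same Jordan--H\"older series but are not isomorphic. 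This is where the hypothesis in case~(1) is actually consumed — once and for all, in the opening embedding chain.

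Having overlooked this, you then relocate the irreducibility hypothesis into Step~(iii), claiming it ``excludes any contribution of $\nu^{k}\rho_1$ coming out of $\mu^{\ast}(\sigma_{sp})$,'' and in your closing paragraph you flag the conversion of ``$\nu^{k}\rho_1\rtimes\sigma_{sp}$ irreducible'' into a statement about $\mu^{\ast}(\sigma_{sp})$ as the delicate point. That implication is in fact false, and the paper itself supplies a counterexample: $\nu^{x}\rho\rtimes\delta(\rho,x;\sigma)$ is irreducible (this is used, e.g., in the proof of Proposition~\ref{propvod}), while $\mu^{\ast}(\delta(\rho,x;\sigma))$ clearly contains an irreducible constituent of the form $\nu^{x}\rho\otimes\pi$ by Frobenius reciprocity applied to the defining embedding $\delta(\rho,x;\sigma)\hookrightarrow\nu^{x}\rho\times\cdots\times\nu^{\alpha}\rho\rtimes\sigma$. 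What the paper's Step~(iii) actually uses is only that $\sigma_{sp}$ is strongly positive — so the $GL$-part of any constituent of $\mu^{\ast}(\sigma_{sp})$ has strictly positive exponents, whence the exponent $-k<0$ cannot be produced by the $\tau$ factor in the structural formula, and hence $\mu^{\ast}(\delta([\nu^{-k+1}\rho_1,\nu^{-l}\rho_1])\rtimes\sigma_{sp})$ contains no constituent of the form $\delta([\nu^{-k}\rho_1,\nu^{-l}\rho_1])\otimes\pi_1$. Combined with the Frobenius input $\mu^{\ast}(\nu^{k}\rho_1\rtimes\pi)\geq\delta([\nu^{-k}\rho_1,\nu^{-l}\rho_1])\otimes\sigma_{sp}$ this forces $\mu^{\ast}(\pi)\geq\delta([\nu^{-k+1}\rho_1,\nu^{-l}\rho_1])\otimes\sigma_{sp}$ and identifies $\pi$ via the Langlands classification. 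Your Step~(iii) as written cannot be completed because the key implication it rests on is not available; once you move the irreducibility hypothesis back to Step~(i) and run Step~(iii) on strong positivity alone, the argument closes.
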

\begin{proof}
We only prove the first part of the proposition, other parts can be proved in the same way but more easily. We have the following embeddings and isomorphisms:
\begin{align*}
L(\delta([\nu^{-k} \rho_1, \nu^{-l} \rho_1]); \sigma_{sp}) & \hookrightarrow \delta([\nu^{-k} \rho_1, \nu^{-l} \rho_1]) \rtimes \sigma_{sp}\\
& \hookrightarrow \delta([\nu^{-k+1} \rho_1, \nu^{-l} \rho_1]) \times \nu^{-k} \rho_1 \rtimes \sigma_{sp} \\
& \cong \delta([\nu^{-k+1} \rho_1, \nu^{-l} \rho_1]) \times \nu^{k} \rho_1 \rtimes \sigma_{sp} \\
& \cong \nu^{k} \rho_1 \times \delta([\nu^{-k+1} \rho_1, \nu^{-l} \rho_1]) \rtimes \sigma_{sp}.
\end{align*}
By Lemma \ref{lemajantz}, there is an irreducible subquotient $\pi$ of $\delta([\nu^{-k+1} \rho_1, \nu^{-l} \rho_1]) \rtimes \sigma_{sp}$ such that $L(\delta([\nu^{-k} \rho_1, \nu^{-l} \rho_1]); \sigma_{sp})$ is a subrepresentation of $\nu^{k} \rho_1 \rtimes \pi$. Frobenius reciprocity implies that $\mu^{\ast}(\nu^{k} \rho_1 \rtimes \pi)$ contains $\delta([\nu^{-k} \rho_1, \nu^{-l} \rho_1]) \otimes \sigma_{sp}$.

Using the structural formula and a description of the Jacquet modules of strongly positive representations, provided in \cite[Theorem~4.6]{Matic4} and \cite[Section~7]{MatTad}, we deduce that $\mu^{\ast}(\delta([\nu^{-k+1} \rho_1, \nu^{-l} \rho_1]) \rtimes \sigma_{sp})$ does not contain an irreducible constituent of the form $\delta([\nu^{-k} \rho_1, \nu^{-l} \rho_1]) \otimes \pi_1$, with $\pi_1 \in R(G)$. Thus, $\mu^{\ast}(\pi)$ contains $\delta([\nu^{-k+1} \rho_1, \nu^{-l} \rho_1]) \otimes \sigma_{sp}$ and it
is a direct consequence of the Langlands classification that $\pi \cong L(\delta([\nu^{-k+1} \rho_1, \nu^{-l} \rho_1]); \sigma_{sp})$.
\end{proof}

Note that both description of subquotients of $\delta([\nu^{a}\rho_0, \nu^{b}\rho_0]) \rtimes \delta(\rho, x; \sigma)$ and their Aubert duals depend on the reduciblity point $\beta$ of $\rho_0$ and $\sigma$ \cite{Matic10, Mu3}. Description of the Aubert duals happens to be slightly different in the case $\beta = 0$. Accordingly we also consider two cases: Section \ref{Section beta0} is the case $\beta=0$ (Section 5 of \cite{Matic10}) and Section \ref{Section a1}, \ref{Section a0}, \ref{Section a1/2} is the case $\beta>0$ (Section 4 of \cite{Matic10}).

\section{Case $\beta = 0$}\label{Section beta0}
In this section we consider the $\beta = 0$ case. Note that this implies $a \in \mathbb{Z}$.

The following irreducibility result is a direct consequence of \cite[Proposition~3.1]{Mu3}.

\begin{proposition}
Degenerate principal series $\zeta([\nu^{-b} \rho_0, \nu^{-a} \rho_0]) \rtimes \zeta(\rho, x; \sigma)$ is irreducible if and only if $a \geq 1$. %If $a \geq 1$, then $\zeta([\nu^{-b} \rho_0, \nu^{-a} \rho_0]) \rtimes \zeta(\rho, x; \sigma)$ is isomorphic to $L(\nu^{-b} \rho_0, \ldots, \nu^{-a} \rho_0, \nu^{-x} \rho, \ldots, \nu^{-\alpha} \rho; \sigma)$.
\end{proposition}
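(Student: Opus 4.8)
The plan is to transfer the question to the Aubert dual side, where it becomes a statement about the generalized principal series \eqref{gosprva} with $\rho_0 = \widetilde{\rho_0}$ (since we have reduced to the (essentially) self-contragredient case) and $\beta = 0$, that is, about the reducibility of $\delta([\nu^a \rho_0, \nu^b \rho_0]) \rtimes \delta(\rho, x; \sigma)$. Since the Aubert involution is an involution taking irreducibles to irreducibles and is additive on the Grothendieck group, the degenerate principal series $\zeta([\nu^{-b}\rho_0,\nu^{-a}\rho_0]) \rtimes \zeta(\rho, x; \sigma)$ is irreducible if and only if its Aubert dual \eqref{gosprva} is irreducible. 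So the whole statement follows once we cite the reducibility criterion for this particular type of generalized principal series.

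Concretely, I would invoke \cite[Proposition~3.1]{Mu3}: for $\delta([\nu^a \rho_0, \nu^b \rho_0]) \rtimes \delta(\rho, x; \sigma)$ with $\rho_0$ self-contragredient, $\beta = 0$ the reducibility point of $\nu^s\rho_0 \rtimes \sigma$, and $a - \beta = a \in \mathbb{Z}$ (which forces $a \in \mathbb{Z}$ as noted just before the proposition), the induced representation is reducible precisely when the segment $[\nu^a\rho_0, \nu^b\rho_0]$ "straddles" the reducibility point in the appropriate sense. With $\beta = 0$ and our reduction $-a \le b$, the segment contains (a twist conjugate to) $\nu^0 \rho_0$, equivalently meets the point $0$, exactly when $a \le 0$; when $a \ge 1$ the segment lies strictly on the positive side and the generalized principal series is irreducible. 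Thus reducibility holds iff $a \le 0$, i.e. irreducibility holds iff $a \ge 1$, which is exactly the claimed statement after dualizing.

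The steps, in order, are: (i) recall that we are in the (essentially) self-contragredient case for $\rho_0$ with $\beta = 0$ and $a \in \mathbb{Z}$, and that we may assume $-a \le b$; (ii) use property (i)--(ii) of Theorem~\ref{aub} (the Aubert involution is an involution sending irreducibles to irreducibles and is additive) together with the computation of the Aubert dual of a degenerate principal series carried out in the preliminaries, to reduce to the reducibility of \eqref{gosprva}; (iii) apply \cite[Proposition~3.1]{Mu3} to read off that \eqref{gosprva} is reducible iff $a \le 0$; (iv) conclude. This is a short argument: there is no real obstacle beyond correctly matching the normalization of the segment endpoints and the reducibility point $\beta = 0$ to the hypotheses of \cite[Proposition~3.1]{Mu3}, and checking that the cited $GSpin$ analogue (\cite[Proposition~2.5]{Kim1} and the surrounding discussion in \cite{Mu3}) gives the same numerical criterion so that Remark~2.4(2) applies. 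The only mildly delicate point is to make sure the degenerate case $a = b$ (a single cuspidal $\nu^{-a}\rho_0 \rtimes \zeta(\rho,x;\sigma)$) is covered correctly by the stated criterion, which it is, since there $a \le 0$ iff $\nu^a\rho_0$ is a twist of $\rho_0$ at or below the reducibility point.
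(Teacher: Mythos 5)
Your proposal is correct and follows essentially the same route as the paper: pass to the Aubert dual, which is the generalized principal series $\delta([\nu^a\rho_0,\nu^b\rho_0])\rtimes\delta(\rho,x;\sigma)$, and read off (ir)reducibility from \cite[Proposition~3.1]{Mu3} (the paper states this proposition as a direct consequence of that result, with the Aubert transfer already set up in the preliminaries). Your paraphrase of the criterion (segment meets the reducibility point $\beta=0$ iff $a\le 0$, using $a\in\mathbb Z$ and $-a\le b$) matches the paper's conclusion.
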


We consider the remaining cases in the following proposition.

\begin{proposition} \label{propnulaprva}
Suppose that $a \leq 0$, and write $\rho_0 \rtimes \sigma = \tau_1 + \tau_{-1}$, as a sum of mutually non-isomorphic irreducible tempered representations. If $-a < b$, then in $R(G)$ we have:
\begin{gather*}
\zeta([\nu^{-b} \rho_0, \nu^{-a} \rho_0]) \rtimes \zeta(\rho, x; \sigma)  = \\
L(\nu^{-x} \rho, \ldots, \nu^{-\alpha} \rho, \nu^{-b} \rho_0, \ldots, \nu^{a-1} \rho_0, \nu^{a} \rho_0, \nu^{a} \rho_0, \ldots, \nu^{-1} \rho_0, \nu^{-1} \rho_0, \tau_1) + \\
L(\nu^{-x} \rho, \ldots, \nu^{-\alpha} \rho, \nu^{-b} \rho_0, \ldots, \nu^{a-1} \rho_0, \nu^{a} \rho_0, \nu^{a} \rho_0, \ldots, \nu^{-1} \rho_0, \nu^{-1} \rho_0, \tau_{-1}) + \\
L(\nu^{-x} \rho, \ldots, \nu^{-\alpha} \rho, \nu^{-b} \rho_0, \ldots, \nu^{a-2} \rho_0, \delta([\nu^{a-1} \rho_0, \nu^{a} \rho_0]), \ldots, \delta([\nu^{-1} \rho_0, \rho_0]; \sigma)).
\end{gather*}
If $-a = b$, then in $R(G)$ we have:
\begin{gather*}
\zeta([\nu^{a} \rho_0, \nu^{-a} \rho_0]) \rtimes \zeta(\rho, x; \sigma)  = \\
L(\nu^{-x} \rho, \ldots, \nu^{-\alpha} \rho, \nu^{a} \rho_0, \nu^{a} \rho_0, \ldots, \nu^{-1} \rho_0, \nu^{-1} \rho_0, \tau_1) + \\
L(\nu^{-x} \rho, \ldots, \nu^{-\alpha} \rho, \nu^{a} \rho_0, \nu^{a} \rho_0, \ldots, \nu^{-1} \rho_0, \nu^{-1} \rho_0, \tau_{-1}).
\end{gather*}
\end{proposition}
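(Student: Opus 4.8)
The plan is to compute everything through the Aubert involution. By Theorem~\ref{aub}, $D_{G}$ is an involution on the Grothendieck group which carries irreducibles to irreducibles up to sign, so two finite length representations share the same composition factors (with multiplicities) exactly when their Aubert duals do; and, as recorded in the preliminaries, the Aubert dual of $\zeta([\nu^{-b}\rho_0,\nu^{-a}\rho_0])\rtimes\zeta(\rho,x;\sigma)$ is the generalized principal series $\delta([\nu^{a}\rho_0,\nu^{b}\rho_0])\rtimes\delta(\rho,x;\sigma)$ (using $\widetilde{\rho_0}\cong\rho_0$; here $a\in\mathbb Z$ because $\beta=0$, and $\beta=0<\alpha$ forces $\rho_0\not\cong\rho$). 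So I would recall from \cite{Mu3} the composition factors of $\delta([\nu^{a}\rho_0,\nu^{b}\rho_0])\rtimes\delta(\rho,x;\sigma)$ and then compute the Aubert dual of each one; the multiplicity one assertion on the left is then inherited from the multiplicity one of the generalized principal series, which is also part of \cite{Mu3}.

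By \cite{Mu3}, when $-a=b$ the generalized principal series is a direct sum of two irreducible tempered representations, and when $-a<b$ it has three irreducible constituents: two of them tempered once the $\rho$-contribution is stripped away, and one a Langlands quotient sitting above a discrete series of the form $\delta([\nu^{a-1}\rho_0,\rho_0];\sigma)$. For each such constituent $\pi$, since $\rho_0\not\cong\rho$, Lemma~\ref{lemarazl} produces an irreducible $\pi_1$ -- itself a constituent of the $\rho$-free generalized principal series $\delta([\nu^{a}\rho_0,\nu^{b}\rho_0])\rtimes\sigma$ -- with $\pi\hookrightarrow\delta([\nu^{\alpha}\rho,\nu^{x}\rho])\rtimes\pi_1$, and, writing $\widehat{\pi_1}\cong L(\delta_1,\ldots,\delta_k;\tau_{temp})$, it gives
\[
\widehat{\pi}\cong L(\nu^{-x}\rho,\nu^{-x+1}\rho,\ldots,\nu^{-\alpha}\rho,\delta_1,\ldots,\delta_k;\tau_{temp}).
\]
This explains the uniform prefix $\nu^{-x}\rho,\ldots,\nu^{-\alpha}\rho$ appearing in the statement, and reduces the problem to computing the Aubert duals of the constituents of $\delta([\nu^{a}\rho_0,\nu^{b}\rho_0])\rtimes\sigma$ -- which is precisely the $\beta=0$ situation treated in \cite[Section~5]{Matic10} and in \cite{Matic14}.

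It then remains to identify those Aubert duals. For the tempered constituents this is the Aubert dual of a tempered representation of a classical group, which (as noted in the introduction) mostly follows from \cite{Matic14, Matic10} and equals $L(\nu^{a}\rho_0,\nu^{a}\rho_0,\ldots,\nu^{-1}\rho_0,\nu^{-1}\rho_0;\tau_{\pm1})$; the delicate point is to match the sign $\pm1$ with the correct tempered constituent, which I would do by comparing Jacquet modules via Frobenius reciprocity and Lemma~\ref{lemaprva}. For the remaining constituent in the case $-a<b$, I would peel off the single twists $\nu^{-b}\rho_0,\ldots,\nu^{a-2}\rho_0$ one exponent at a time using Lemmas~\ref{lemaindprva}--\ref{lemaindtreca} (taking the degenerate segments $\zeta([\nu^{j}\rho_0,\nu^{j}\rho_0])=\nu^{j}\rho_0$), reducing to a smaller representation built from the discrete series $\delta([\nu^{a-1}\rho_0,\rho_0];\sigma)$, whose Aubert dual is read off from the inductive description of discrete series in Proposition~\ref{spds} together with the embeddings supplied by Proposition~\ref{propulag}; this yields the third summand in the statement. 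Collecting the two (resp.\ three) resulting Langlands data and feeding them into the displayed formula completes the argument.

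The step I expect to be the main obstacle is the last one: the explicit computation of the Aubert duals of the tempered constituents, and of the mixed Langlands-type constituent when $-a<b$ -- in particular the correct $\tau_1$ versus $\tau_{-1}$ labelling -- is not a single clean formula but a case-by-case analysis of embeddings and Jacquet modules of the kind carried out in \cite{Matic14, Matic10}. By contrast, the passage to the Aubert dual and the removal of the $\rho$-part via Lemma~\ref{lemarazl} are formal, and the multiplicity one assertion is automatic once one has it for the generalized principal series.
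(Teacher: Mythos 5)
Your overall strategy coincides with the paper's: pass to the Aubert dual (a generalized principal series whose composition factors are known from \cite{Mu3}), strip the $\rho$-prefix $\nu^{-x}\rho,\ldots,\nu^{-\alpha}\rho$ via Lemma~\ref{lemarazl} using $\rho_0\not\cong\rho$, and reduce to computing Aubert duals of the constituents of the $\rho$-free generalized principal series $\delta([\nu^{a}\rho_0,\nu^{b}\rho_0])\rtimes\sigma$. The $-a=b$ case and the reduction to \cite[Theorem~5.1]{Matic10} for the tempered constituents also match the paper, although you leave implicit the specific multiplicity argument the paper uses to identify the representation $\pi_i$ supplied by Lemma~\ref{lemajantz} with the correct discrete series $\sigma'_i$ (namely, that $\delta([\nu^{a}\rho_0,\nu^{b}\rho_0])\otimes\sigma$ appears with multiplicity two in $\mu^{\ast}(\delta([\nu^{a}\rho_0,\nu^{b}\rho_0])\rtimes\sigma)$ and is absorbed by the two discrete series, so it cannot appear in $\mu^{\ast}(L(\delta([\nu^{-b}\rho_0,\nu^{-a}\rho_0]);\sigma))$).

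The genuine gap is in your treatment of the third summand. You propose to reduce to ``a smaller representation built from the discrete series $\delta([\nu^{a-1}\rho_0,\rho_0];\sigma)$, whose Aubert dual is read off from the inductive description of discrete series in Proposition~\ref{spds}.'' This does not work. After peeling $\nu^{-b}\rho_0,\ldots,\nu^{a-2}\rho_0$ by Proposition~\ref{propulag}$(1)$ and Lemma~\ref{lemaindprva}, what remains is the Langlands quotient $L(\delta([\nu^{a-1}\rho_0,\nu^{-a}\rho_0]);\sigma)$; it is not a discrete series, and Proposition~\ref{spds} concerns strongly positive discrete series, which is inapplicable here. The decisive step, which your proposal omits, is a Jacquet module count: since $\delta([\nu^{a}\rho_0,\nu^{-a+1}\rho_0])\rtimes\sigma$ has only two irreducible subrepresentations (each containing a $\nu^{-a+1}\rho_0\otimes\pi$ in its Jacquet module), $\mu^{\ast}(L(\delta([\nu^{a-1}\rho_0,\nu^{-a}\rho_0]);\sigma))$ contains no constituent of the form $\nu^{-a+1}\rho_0\otimes\pi$. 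Combined with Proposition~\ref{propulag}$(2)$ and $(1)$, this yields an embedding into $\zeta([\nu^{-a}\rho_0,\nu^{-a+1}\rho_0])\rtimes L(\delta([\nu^{a}\rho_0,\nu^{-a-1}\rho_0]);\sigma)$, i.e.\ a \emph{length-two} Zelevinsky segment on the outside. It is precisely this that, under Lemma~\ref{lemaindprva}, produces the length-two segments $\delta([\nu^{j-1}\rho_0,\nu^{j}\rho_0])$ in the Aubert dual's Langlands data; repeating the argument gives the third summand. Without this mechanism you cannot recover the shape of the answer, and the peeling-by-singletons you describe (``$\zeta([\nu^{j}\rho_0,\nu^{j}\rho_0])=\nu^{j}\rho_0$'') would produce the wrong dual.
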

\begin{proof}
We will only comment the case $-a < b$, since the case $-a = b$ can be handled in the same way as in the proof of \cite[Theorem~5.1]{Matic10}. By \cite[Theorem~2.1]{Mu3} and classification of discrete series \cite{KimMatic, MT1}, in $R(G)$ we have
\begin{equation*}
\delta([\nu^{a} \rho_0, \nu^{b} \rho_0]) \rtimes \delta(\rho, x; \sigma) = \sigma_1 + \sigma_{-1} + L(\delta([\nu^{-b} \rho_0, \nu^{-a} \rho_0]); \delta(\rho, x; \sigma)),   
\end{equation*}
where $\sigma_i$ is a discrete series subrepresentation of $\delta([\nu^{a} \rho_0, \nu^{b} \rho_0]) \rtimes \delta(\rho, x; \sigma)$ such that
\begin{equation*}
\mu^{\ast}(\sigma_i) \geq \delta([\nu \rho_0, \nu^{-a} \rho_0]) \times \delta([\nu \rho_0, \nu^{b} \rho_0]) \times \delta([\nu^{\alpha} \rho, \nu^{x} \rho]) \otimes \tau_i
\end{equation*}
and
\begin{equation*}
\mu^{\ast}(\sigma_i) \not\geq \delta([\nu \rho_0, \nu^{-a} \rho_0]) \times \delta([\nu \rho_0, \nu^{b} \rho_0]) \times \delta([\nu^{\alpha} \rho, \nu^{x} \rho]) \otimes \tau_{-i},
\end{equation*}
for $i \in \{ 1, -1 \}$.

Since $\sigma_i$ is a subrepresentation of $\delta([\nu^{a} \rho_0, \nu^{b} \rho_0]) \rtimes \delta(\rho, x; \sigma)$, for $i \in \{ 1, -1 \}$, we have
\begin{align*}
\sigma_i & \hookrightarrow \delta([\nu^{a} \rho_0, \nu^{b} \rho_0]) \rtimes \delta(\rho, x; \sigma) \hookrightarrow \delta([\nu^{a} \rho_0, \nu^{b} \rho_0]) \times \delta([\nu^{\alpha} \rho, \nu^{x} \rho]) \rtimes \sigma \\
& \cong \delta([\nu^{\alpha} \rho, \nu^{x} \rho]) \times \delta([\nu^{a} \rho_0, \nu^{b} \rho_0]) \rtimes \sigma.
\end{align*}
By Lemma \ref{lemajantz}, there is an irreducible subquotient $\pi_i$ of $\delta([\nu^{a} \rho_0, \nu^{b} \rho_0]) \rtimes \sigma$ such that $\sigma_i$ is a subrepresentation of $\delta([\nu^{\alpha} \rho, \nu^{x} \rho]) \rtimes \pi_i$.

Using \cite[Theorem~2.1]{Mu3} and classification of discrete series one more time, we obtain that in $R(G)$ we have
\begin{equation*}
\delta([\nu^{a} \rho_0, \nu^{b} \rho_0]) \rtimes \sigma = \sigma'_1 + \sigma'_{-1} + L(\delta([\nu^{-b} \rho_0, \nu^{-a} \rho_0]); \sigma),
\end{equation*}
where $\sigma'_i$ is a discrete series subrepresentation of $\delta([\nu^{a} \rho_0, \nu^{b} \rho_0]) \rtimes \sigma$ such that $\mu^{\ast}(\sigma'_i) \geq \delta([\nu \rho_0, \nu^{-a} \rho_0]) \times \delta([\nu \rho_0, \nu^{b} \rho_0]) \otimes \tau_i$ and $\mu^{\ast}(\sigma'_i) \not\geq \delta([\nu \rho_0, \nu^{-a} \rho_0]) \times \delta([\nu \rho_0, \nu^{b} \rho_0]) \otimes \tau_{-i}$, for $i \in \{ 1, -1 \}$. Also, note that $\mu^{\ast}(L(\delta([\nu^{-b} \rho_0, \nu^{-a} \rho_0]); \sigma))$ does not contain $\delta([\nu^{a} \rho_0, \nu^{b} \rho_0]) \otimes \sigma$, since both $\mu^{\ast}(\sigma'_1)$ and $\mu^{\ast}(\sigma'_{-1})$ contain $\delta([\nu^{a} \rho_0, \nu^{b} \rho_0]) \otimes \sigma$, and $\mu^{\ast}(\delta([\nu^{a} \rho_0, \nu^{b} \rho_0]) \rtimes \sigma)$ contains $\delta([\nu^{a} \rho_0, \nu^{b} \rho_0]) \otimes \sigma$ with multiplicity two.

Thus, $\pi_i \cong \sigma'_i$. Now Lemma \ref{lemarazl} and \cite[Theorem~5.1]{Matic10} imply that
\begin{equation*}
\widehat{\sigma_i} \cong L(\nu^{-x} \rho, \ldots, \nu^{-\alpha} \rho, \nu^{-b} \rho_0, \ldots, \nu^{a-1} \rho_0, \nu^{a} \rho_0, \nu^{a} \rho_0, \ldots, \nu^{-1} \rho_0, \nu^{-1} \rho_0, \tau_{-i}).
\end{equation*}
In the same way we obtain that $L(\delta([\nu^{-b} \rho_0, \nu^{-a} \rho_0]); \delta(\rho, x; \sigma))$ is a subrepresentation of $\delta([\nu^{\alpha} \rho, \nu^{x} \rho]) \rtimes L(\delta([\nu^{-b} \rho_0, \nu^{-a} \rho_0]); \sigma)$. By Lemma \ref{lemarazl}, it remains to determine the Aubert dual of $L(\delta([\nu^{-b} \rho_0, \nu^{-a} \rho_0]); \sigma)$. Since $b > 0$, if $b \geq -a+2$ then using the first part of Proposition \ref{propulag} we get that $L(\delta([\nu^{-b} \rho_0, \nu^{-a} \rho_0]); \sigma)$ is a subrepresentation of $\nu^{b} \rho_0 \rtimes L(\delta([\nu^{-b+1} \rho_0, \nu^{-a} \rho_0]); \sigma)$. Also, it follows from the structural formula that $\mu^{\ast}(L(\delta([\nu^{-b+1} \rho_0, \nu^{-a} \rho_0]); \sigma))$ does not contain an irreducible constituent of the form $\nu^{b} \rho_0 \otimes \pi'$. Using Lemma \ref{lemaindprva} and repeating this procedure, we deduce that the Aubert dual of $L(\delta([\nu^{-b} \rho_0, \nu^{-a} \rho_0]); \sigma)$ is an irreducible subrepresentation of
\begin{equation*}
\nu^{-b} \rho_0 \times \cdots \times \nu^{a-2} \rho_0 \rtimes \widehat{L(\delta([\nu^{a-1} \rho_0, \nu^{-a} \rho_0]); \sigma)}.
\end{equation*}
The representation $L(\delta([\nu^{a-1} \rho_0, \nu^{-a} \rho_0]); \sigma)$ is the unique irreducible quotient of the induced representation $\delta([\nu^{a} \rho_0, \nu^{-a+1} \rho_0]) \rtimes \sigma$. By \cite[Theorem~2.1]{Mu3}, $\delta([\nu^{a} \rho_0, \nu^{-a+1} \rho_0]) \rtimes \sigma$ contains two irreducible subrepresentations and Frobenius reciprocity implies that each of them contains an irreducible constituent of the form $\nu^{-a+1} \rho_0 \otimes \pi$ in the Jacquet module with respect to the appropriate parabolic subgroup.

If $\nu^{-a+1} \rho_0 \otimes \pi$ is an irreducible constituent of $\mu^{\ast}(\delta([\nu^{a} \rho_0, \nu^{-a+1} \rho_0]) \rtimes \sigma)$, it follows from the structural formula that $\pi$ is an irreducible subquotient of $\delta([\nu^{a} \rho_0, \nu^{-a} \rho_0]) \rtimes \sigma$, which is a length two representation. Thus, there are only two irreducible constituents of the form $\nu^{-a+1} \rho_0 \otimes \pi$ appearing $\mu^{\ast}(\delta([\nu^{a} \rho_0, \nu^{-a+1} \rho_0]) \rtimes \sigma)$, and $\mu^{\ast}(L(\delta([\nu^{a-1} \rho_0, \nu^{-a} \rho_0]); \sigma))$ does not contain any of them.

From the second part of Proposition \ref{propulag} follows that $L(\delta([\nu^{a-1} \rho_0, \nu^{-a} \rho_0]); \sigma)$ is a subrepresentation of $\nu^{-a} \rho_0 \rtimes L(\delta([\nu^{a-1} \rho_0, \nu^{-a-1} \rho_0]); \sigma)$.

Since $a-1 \leq -1$, using the first part of Proposition \ref{propulag} we also obtain
\begin{equation*}
L(\delta([\nu^{a-1} \rho_0, \nu^{-a-1} \rho_0]); \sigma) \hookrightarrow \nu^{-a+1} \rho_0 \rtimes L(\delta([\nu^{a} \rho_0, \nu^{-a-1} \rho_0]); \sigma).
\end{equation*}

Consequently, $L(\delta([\nu^{a-1} \rho_0, \nu^{-a} \rho_0]); \sigma)$ is a subrepresentation of
\begin{equation*}
\nu^{-a} \rho_0 \times \nu^{-a+1} \rho_0 \rtimes L(\delta([\nu^{a} \rho_0, \nu^{-a-1} \rho_0]); \sigma),
\end{equation*}
and there is an irreducible subquotient $\pi_2$ of $\nu^{-a} \rho_0 \times \nu^{-a+1} \rho_0$ such that $L(\delta([\nu^{a-1} \rho_0, \nu^{-a} \rho_0]); \sigma)$ is a subrepresentation of $\pi_2 \rtimes L(\delta([\nu^{a} \rho_0, \nu^{-a-1} \rho_0]); \sigma)$. Since $\mu^{\ast}(L(\delta([\nu^{a-1} \rho_0, \nu^{-a} \rho_0]); \sigma))$ does not contain an irreducible constituent of the form $\nu^{-a-1} \rho_0 \otimes \pi'$, it follows that $\pi_2 \not\cong \delta([\nu^{-a} \rho_0, \nu^{-a+1} \rho_0])$, so we have that $\pi_2 \cong \zeta([\nu^{-a} \rho_0, \nu^{-a+1} \rho_0])$. It can also be seen, following the same arguments as for $L(\delta([\nu^{a-1} \rho_0, \nu^{-a} \rho_0]); \sigma)$, that $\mu^{\ast}(L(\delta([\nu^{a} \rho_0, \nu^{-a-1} \rho_0]); \sigma))$ does not contain an irreducible constituents of the form $\nu^{i} \rho_0 \otimes \pi'$, for $i \in \{ -a + 1, -a \}$. Now Lemma \ref{lemaindprva} implies that $\widehat{L(\delta([\nu^{a-1} \rho_0, \nu^{-a} \rho_0]); \sigma)}$ is the unique irreducible subrepresentation of $\delta([\nu^{a-1} \rho_0, \nu^{a} \rho_0]) \rtimes \widehat{L(\delta([\nu^{a} \rho_0, \nu^{-a-1} \rho_0]); \sigma)}$, and a repeated application of this procedure ends the proof.
\end{proof}

\section{Case $a \geq 1$}\label{Section a1}

From now on, we assume that $\beta > 0$. In this section we consider the case $a \geq 1$. Let us first consider the more complicated case $\rho_0 \cong \rho$. Directly from \cite[Proposition~3.1]{Mu3} we obtain the following reducibility criterion:

\begin{proposition}
Degenerate principal series $\zeta([\nu^{-b} \rho, \nu^{-a} \rho]) \rtimes \zeta(\rho, x; \sigma)$ reduces if and only if one of the following holds:
\begin{itemize}
\item $a \leq \alpha - 1 \leq b < x$,
\item $a \leq x+1$ and $x < b$.
\end{itemize}
\end{proposition}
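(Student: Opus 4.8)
The plan is to deduce this reducibility criterion directly from the Aubert duality set up in Section 2, reducing everything to the known reducibility of the generalized principal series \eqref{gosprva}. Recall that since the Aubert involution is an involution preserving length (it takes irreducibles to irreducibles), the degenerate principal series $\zeta([\nu^{-b}\rho,\nu^{-a}\rho])\rtimes\zeta(\rho,x;\sigma)$ reduces if and only if its Aubert dual $\delta([\nu^{a}\rho,\nu^{b}\rho])\rtimes\delta(\rho,x;\sigma)$ reduces. So the entire problem is to read off, from \cite[Proposition~3.1]{Mu3}, precisely when the generalized principal series $\delta([\nu^{a}\rho,\nu^{b}\rho])\rtimes\delta(\rho,x;\sigma)$ is reducible, where $\delta(\rho,x;\sigma)$ is the strongly positive discrete series which is the unique irreducible subrepresentation of $\nu^{x}\rho\times\nu^{x-1}\rho\times\cdots\times\nu^{\alpha}\rho\rtimes\sigma$.

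First I would recall the combinatorial data attached to $\delta(\rho,x;\sigma)$: by Proposition \ref{spds} (or directly from the classification of strongly positive discrete series in \cite{Kim1, Matic3, MT1}), the $\rho$-part of its Jordan block / cuspidal-support data is governed by the segment $[\nu^{\alpha}\rho,\nu^{x}\rho]$, i.e. the relevant set of ``jumps'' is $\{\alpha,\alpha+1,\dots,x\}$ together with the reducibility point $\alpha$ of $\nu^{\bullet}\rho\rtimes\sigma$. Then I would invoke the reducibility criterion of Mui\'c for $\delta([\nu^{a}\rho,\nu^{b}\rho])\rtimes\delta_{sp}$ with $\delta_{sp}$ strongly positive: reducibility occurs exactly when the segment $[\nu^{a}\rho,\nu^{b}\rho]$ ``interacts'' with the Jordan block of $\delta_{sp}$ in one of the finitely many prescribed ways — either $[\nu^a\rho,\nu^b\rho]$ together with the data of $\delta_{sp}$ produces a new discrete series (this is the case $a\le\alpha-1\le b<x$, where the segment bridges across the reducibility point $\alpha-1$ and stays strictly below the top $x$ of the strongly positive data, so one can form a larger discrete series à la \cite{MT1}), or the segment ``extends past the top'' of the strongly positive data, which after the $\widetilde{}$-symmetrization $\zeta([\nu^{-b}\rho,\nu^{-a}\rho])=\zeta([\nu^{a}\rho,\nu^{b}\rho])$ normalization corresponds to $a\le x+1$ and $x<b$. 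I would carry out the matching by comparing cuspidal supports: the cuspidal support of $\delta([\nu^{a}\rho,\nu^{b}\rho])\rtimes\delta(\rho,x;\sigma)$ is $\{\nu^{a}\rho,\dots,\nu^{b}\rho\}\cup\{\nu^{\alpha}\rho,\dots,\nu^{x}\rho\}$ (with multiplicities), and reducibility is detected by whether this multiset admits a nontrivial partition into ``discrete series blocks'' compatible with the reducibility point $\alpha$; a short case analysis on the relative position of $a,b$ with respect to $\alpha-1$, $\alpha$, $x$, and $x+1$ yields exactly the two listed conditions.

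The main obstacle I anticipate is bookkeeping rather than conceptual: one has to be careful about the normalization $\zeta([\nu^{-b}\rho,\nu^{-a}\rho])\rtimes\zeta(\rho,x;\sigma)=\zeta([\nu^{a}\widetilde{\rho},\nu^{b}\widetilde{\rho}])\rtimes\zeta(\rho,x;\sigma)$ (here $\widetilde\rho\cong\rho$ since $\rho$ is self-contragredient in the $Sp/SO$ cases), so that after passing to the Aubert dual the segment really is $\delta([\nu^{a}\rho,\nu^{b}\rho])$ with $a\le b$ or $-a\le b$ as arranged in Section 2; and one must verify that Mui\'c's criterion is being applied in the correct regime, namely that $\delta(\rho,x;\sigma)$ is genuinely strongly positive (which is exactly \cite[Theorem~3.5]{Matic14} as quoted) and that $a\ge1$ together with $\beta>0$ does not accidentally put us in an edge case outside the scope of \cite[Proposition~3.1]{Mu3}. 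Once these normalizations are pinned down, the criterion follows by directly citing \cite[Proposition~3.1]{Mu3} and translating its hypotheses into the stated inequalities; I would present the proof as: ``By the Aubert involution, reducibility of the degenerate principal series is equivalent to reducibility of \eqref{gosprva}, which by \cite[Proposition~3.1]{Mu3} happens precisely under the stated conditions,'' filling in the translation of the segment/Jordan-block interaction in one or two sentences of case analysis.
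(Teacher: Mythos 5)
Your proposal is correct and coincides with the paper's proof, which simply states that the reducibility criterion follows directly from \cite[Proposition~3.1]{Mu3} after passing to the Aubert dual $\delta([\nu^{a}\rho,\nu^{b}\rho])\rtimes\delta(\rho,x;\sigma)$ (using that the involution preserves irreducibility). The only caveat is that your intermediate heuristics about ``Jordan block interaction'' are not needed once one cites Mui\'c's proposition verbatim, and in fact the precise conditions there should just be transcribed rather than re-derived.
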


%\begin{proposition}
%Suppose that $\zeta([\nu^{-b} \rho, \nu^{-a} \rho]) \rtimes %\zeta(\rho, x; \sigma)$ is irreducible. Then one of the following %holds:
%\begin{itemize}
%\item $a \geq x+2$, and
%\begin{equation*}
%\zeta([\nu^{-b} \rho, \nu^{-a} \rho]) \rtimes \zeta(\rho, x; \sigma) %\cong L(\nu^{-b} \rho, \ldots, \nu^{-a} \rho, \nu^{-x} \rho, \ldots, %\nu^{-\alpha} \rho; \sigma).
%\end{equation*}
%\item $b \leq \alpha - 2$, and
%\begin{equation*}
%\zeta([\nu^{-b} \rho, \nu^{-a} \rho]) \rtimes \zeta(\rho, x; \sigma) %\cong L(\nu^{-x} \rho, \ldots, \nu^{-\alpha} \rho, \nu^{-b} \rho, %\ldots, \nu^{-a} \rho; \sigma).
%\end{equation*}
%\item $a \geq \alpha$ and $b < x$, and
%\begin{gather*}
%\zeta([\nu^{-b} \rho, \nu^{-a} \rho]) \rtimes \zeta(\rho, x; \sigma) %\cong \\ L(\nu^{-x} \rho, \ldots, \nu^{-b-1} \rho, \nu^{-b} \rho, %\nu^{-b} \rho, \ldots, \nu^{-a} \rho, \nu^{-a} \rho, \nu^{-a+1} \rho, %\ldots, \nu^{-\alpha} \rho; \sigma).
%\end{gather*}
%\item $b = x$, and
%\begin{equation*}
%\zeta([\nu^{-b} \rho, \nu^{-a} \rho]) \rtimes \zeta(\rho, x; \sigma) %\cong L(\nu^{-b} \rho, \nu^{-b} \rho, \ldots, \nu^{-t} \rho, \nu^{-t} %\rho, \nu^{-t+1} \rho, \ldots, \nu^{-s} \rho; \sigma),
%\end{equation*}
%where $t = \max(a, \alpha)$, $s = \min(a, \alpha)$.
%\end{itemize}
%\end{proposition}S

\begin{proposition}  \label{propspprva}
If $a \leq \alpha - 1 \leq b < x$, then in $R(G)$ we have
\begin{gather*}
\zeta([\nu^{-b} \rho, \nu^{-a} \rho]) \rtimes \zeta(\rho, x; \sigma) \cong \\ L(\nu^{-x} \rho, \ldots, \nu^{-b-1} \rho, \nu^{-b} \rho, \nu^{-b} \rho, \ldots, \nu^{-\alpha} \rho, \nu^{-\alpha} \rho, \nu^{-\alpha+1} \rho, \ldots, \nu^{-a} \rho; \sigma) + \\
L(\nu^{-x} \rho, \ldots, \nu^{-b-2} \rho, \delta([\nu^{-b-1} \rho, \nu^{-b} \rho]), \ldots, \delta([\nu^{-\alpha} \rho, \nu^{-\alpha+1} \rho]), \nu^{-\alpha+2} \rho, \ldots, \nu^{-a} \rho; \sigma).
\end{gather*}
\end{proposition}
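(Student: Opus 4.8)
The plan is to argue entirely on the Aubert-dual side. By the identity~(\ref{gosprva}) --- and since here $\rho_{0}\cong\rho$ is self-contragredient --- the Aubert dual of $\zeta([\nu^{-b}\rho,\nu^{-a}\rho])\rtimes\zeta(\rho,x;\sigma)$ is the generalized principal series $\delta([\nu^{a}\rho,\nu^{b}\rho])\rtimes\delta(\rho,x;\sigma)$, whose composition factors are known from \cite{Mu3} (equivalently \cite[Proposition~3.2]{Matic8}); in the range $a\le\alpha-1\le b<x$ that representation is multiplicity one of length two. Since $D_{G}$ is a length-preserving involution which takes irreducibles to irreducibles (Theorem~\ref{aub}), the proof reduces to (i) naming the two irreducible composition factors of $\delta([\nu^{a}\rho,\nu^{b}\rho])\rtimes\delta(\rho,x;\sigma)$, and (ii) showing that their Aubert duals are exactly the two Langlands quotients on the right-hand side.

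For (i): in $R(G)$ one has $\delta([\nu^{a}\rho,\nu^{b}\rho])\rtimes\delta(\rho,x;\sigma)=\delta([\nu^{-b}\rho,\nu^{-a}\rho])\rtimes\delta(\rho,x;\sigma)$, so one factor is the Langlands quotient $\pi_{2}:=L(\delta([\nu^{-b}\rho,\nu^{-a}\rho]);\delta(\rho,x;\sigma))$. By \cite[Theorem~2.1]{Mu3} and the classification of discrete series \cite{KimMatic, MT1}, the remaining factor $\pi_{1}$ is singled out by a distinguished constituent of $\mu^{\ast}(\pi_{1})$; its precise shape (and whether it is tempered) depends on whether $b=\alpha-1$ or $b\ge\alpha$, exactly as in the proof of Proposition~\ref{propnulaprva}.

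For (ii): since $\rho_{0}\cong\rho$, the two strings of $\rho$ --- the one coming from $\delta([\nu^{a}\rho,\nu^{b}\rho])$ and the one from $\delta(\rho,x;\sigma)$ --- cannot be separated by a single application of a result like Lemma~\ref{lemarazl}. Instead, for each $\pi_{i}$ I would use the embedding $\delta(\rho,x;\sigma)\hookrightarrow\delta([\nu^{\alpha}\rho,\nu^{x}\rho])\rtimes\sigma$ to obtain $\pi_{i}\hookrightarrow\delta([\nu^{a}\rho,\nu^{b}\rho])\times\delta([\nu^{\alpha}\rho,\nu^{x}\rho])\rtimes\sigma$, rearrange the $GL$-factors, and apply Lemma~\ref{lemajantz} to realize $\pi_{i}$ as a subrepresentation of $\zeta([\nu^{c}\rho,\nu^{d}\rho])\rtimes\pi_{i}'$, or of $\zeta([\nu^{c}\rho,\nu^{d}\rho])\times(\nu^{t}\rho)^{m}\rtimes\pi_{i}'$, for an irreducible $\pi_{i}'$ on a smaller group --- which is then identified by means of $\mu^{\ast}$ and a multiplicity count, in the same fashion as $\pi_{i}\cong\sigma_{i}'$ was identified in Proposition~\ref{propnulaprva}. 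One next peels off the exponents of $\rho$ one at a time, using Lemmas~\ref{lemaindprva}--\ref{lemaindtreca} together with the embeddings of Proposition~\ref{propulag}, the base of the induction being a strongly positive representation whose Aubert dual is the expected $\zeta(\rho,\cdot\,;\sigma)$-type Langlands quotient by \cite[Theorem~3.5]{Matic14} and \cite{Matic10}. Reading off the Langlands data at the end of the recursion then gives $\widehat{\pi_{1}}$ equal to the first summand and $\widehat{\pi_{2}}$ equal to the second; since $\pi_{1}$ and $\pi_{2}$ are distinct (only $\pi_{2}$ is a Langlands quotient of the generalized principal series), so are $\widehat{\pi_{1}}$ and $\widehat{\pi_{2}}$, and the asserted multiplicity-one decomposition of the degenerate principal series follows.

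The main obstacle is precisely step (ii) in the case $\rho_{0}\cong\rho$: because the two $\rho$-strings interact, one must keep exact control over the Jacquet modules of the strongly positive representations that appear along the recursion --- via \cite[Theorem~4.6]{Matic4} and \cite[Section~7]{MatTad} --- in order to verify, at each stage, the ``no constituent of the given form'' hypotheses of Lemmas~\ref{lemaindprva}--\ref{lemaindtreca} and of Proposition~\ref{propulag}. One must also handle separately the boundary values $b=\alpha-1$ and $a=\alpha-1$, where some of the length-two segments $\delta([\nu^{-b-1}\rho,\nu^{-b}\rho]),\dots$ or some of the single factors $\nu^{-\alpha+2}\rho,\dots,\nu^{-a}\rho$ in the Langlands data degenerate or disappear, and where the overlap pattern of $[\nu^{a}\rho,\nu^{b}\rho]$ with $[\nu^{\alpha}\rho,\nu^{x}\rho]$ --- hence the recursion itself --- changes shape.
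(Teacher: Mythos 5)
Your overall strategy matches the paper's: pass to the Aubert dual $\delta([\nu^{a}\rho,\nu^{b}\rho])\rtimes\delta(\rho,x;\sigma)$, invoke \cite{Mu3} (together with the classification of discrete series) to see it is multiplicity one of length two, then compute the Aubert duals of the two constituents by an inductive peeling-off using Proposition~\ref{propulag} together with Lemmas~\ref{lemaindprva}--\ref{lemaindtreca}, grounding the recursion in the strongly-positive case via Lemma~\ref{lemaprva} and \cite[Theorem~3.5]{Matic14}. That is exactly what the paper does. Two caveats worth fixing in your outline. First, in step (i) the paper does not leave $\pi_1$ as ``a factor singled out by a distinguished constituent of $\mu^{\ast}$''; it names it explicitly, namely $\pi_1\cong L(\delta([\nu^{-\alpha+2}\rho,\nu^{-a}\rho]);\sigma_{sp})$ where $\sigma_{sp}$ is the unique irreducible (strongly positive) subrepresentation of $\delta([\nu^{\alpha-1}\rho,\nu^{b}\rho])\rtimes\delta(\rho,x;\sigma)$ --- this precise form is what drives the second half of the recursion (where $\zeta([\nu^{b}\rho,\nu^{b+1}\rho])$ and \cite[Theorem~3.4]{Matic4} enter). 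Second, the parameter governing whether $\pi_1$ degenerates to a tempered representation is $a$, not $b$: the segment $[\nu^{-\alpha+2}\rho,\nu^{-a}\rho]$ is empty exactly when $a=\alpha-1$, while $b=\alpha-1$ versus $b\ge\alpha$ does not change the shape of $\pi_1$; this small slip would make your ``boundary cases'' discussion chase the wrong variable. With these corrections your plan reproduces the paper's argument.
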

\begin{proof}
In $R(G)$ we have
\begin{gather*}
\delta([\nu^{a} \rho, \nu^{b} \rho]) \rtimes \delta(\rho, x; \sigma) = L(\delta([\nu^{-b} \rho, \nu^{-a} \rho]); \delta(\rho, x; \sigma)) + \\
L(\delta([\nu^{-\alpha+2} \rho, \nu^{-a} \rho]); \sigma_{sp}),
\end{gather*}
where $\sigma_{sp}$ is the unique irreducible subrepresentation of $\delta([\nu^{\alpha-1} \rho, \nu^{b} \rho]) \rtimes \delta(\rho, x; \sigma)$. We note that $\sigma_{sp}$ is a strongly positive discrete series.

Let us first determine the Aubert dual of $L(\delta([\nu^{-b} \rho, \nu^{-a} \rho]); \delta(\rho, x; \sigma))$. The third part of Proposition \ref{propulag} implies that
\begin{equation*}
L(\delta([\nu^{-b} \rho, \nu^{-a} \rho]); \delta(\rho, x; \sigma)) \hookrightarrow \nu^{x} \rho \rtimes L(\delta([\nu^{-b} \rho, \nu^{-a} \rho]); \delta(\rho, x-1; \sigma)).
\end{equation*}
Using the structural formula and a description of the Jacquet modules of strongly positive representations, we deduce that $\mu^{\ast}(\delta([\nu^{-b} \rho, \nu^{-a} \rho]) \rtimes \delta(\rho, x-1; \sigma))$ does not contain an irreducible constituent of the form $\nu^{x} \rho \otimes \pi_2$. Repeating this procedure and using Lemma \ref{lemaindprva}, we obtain that the Aubert dual of $L(\delta([\nu^{-b} \rho, \nu^{-a} \rho]); \delta(\rho, x; \sigma))$ is an irreducible subrepresentation of
\begin{equation*}
\nu^{-x} \rho \times \nu^{-x+1} \rho \times \cdots \times \nu^{-b-1} \rho \rtimes \widehat{L(\delta([\nu^{-b} \rho, \nu^{-a} \rho]); \delta(\rho, b; \sigma))}.
\end{equation*}
Since $\nu^{b} \rho \rtimes \delta(\rho, b; \sigma)$ is irreducible, by \cite[Proposition~3.1]{Mu3}, we have
\begin{align*}
L(\delta([\nu^{-b} \rho, \nu^{-a} \rho]); \delta(\rho, b; \sigma)) &\hookrightarrow \delta([\nu^{-b+1} \rho, \nu^{-a} \rho]) \times \nu^{-b} \rho \rtimes \delta(\rho, b; \sigma) \\
&\cong \delta([\nu^{-b+1} \rho, \nu^{-a} \rho]) \times \nu^{b} \rho \rtimes \delta(\rho, b; \sigma) \\
&\hookrightarrow \delta([\nu^{-b+1} \rho, \nu^{-a} \rho]) \times \nu^{b} \rho \times \nu^{b} \rho \rtimes \delta(\rho, b-1; \sigma) \\
&\cong \nu^{b} \rho \times \nu^{b} \rho \times \delta([\nu^{-b+1} \rho, \nu^{-a} \rho]) \rtimes \delta(\rho, b-1; \sigma).
\end{align*}
Note that $\delta([\nu^{-b+1} \rho, \nu^{-a} \rho]) \rtimes \delta(\rho, b-1; \sigma)$ is irreducible, thus isomorphic to $L(\delta([\nu^{-b+1} \rho, \nu^{-a} \rho]); \delta(\rho, b-1; \sigma))$ and that $\mu^{\ast}(\delta([\nu^{-b+1} \rho, \nu^{-a} \rho]) \rtimes \delta(\rho, b-1; \sigma))$ does not contain an irreducible constituent of the form $\nu^{b} \rho \otimes \pi$. A repeated application of Lemma \ref{lemainddruga} and the previous procedure implies that the Aubert dual of $L(\delta([\nu^{-b} \rho, \nu^{-a} \rho]); \delta(\rho, b; \sigma))$ is an irreducible subrepresentation of
\begin{equation*}
\nu^{-b} \rho \times \nu^{-b} \rho \times \cdots \times \nu^{-\alpha} \rho \times \nu^{-\alpha} \rho \rtimes \widehat{L(\delta([\nu^{-\alpha+1} \rho, \nu^{-a} \rho]); \sigma)}.
\end{equation*}
Since the induced representation $\delta([\nu^{-\alpha+1} \rho, \nu^{-a} \rho]) \rtimes \sigma$ is also irreducible, its Jacquet module with respect to the appropriate parabolic subgroup contains $\nu^{\alpha - 1} \rho \otimes \cdots \otimes \nu^{a} \rho \otimes \sigma$. Now Lemma \ref{lemaprva} implies that the Aubert dual of $L(\delta([\nu^{-\alpha+1} \rho, \nu^{-a} \rho]); \sigma)$ is the unique irreducible subrepresentation of $\nu^{-\alpha + 1} \rho \times \cdots \times \nu^{-a} \rho \rtimes \sigma$. Altogether, the Aubert dual of $L(\delta([\nu^{-b} \rho, \nu^{-a} \rho]); \delta(\rho, x; \sigma))$ is isomorphic to
\begin{equation*}
L(\nu^{-x} \rho, \ldots, \nu^{-b-1} \rho, \nu^{-b} \rho, \nu^{-b} \rho, \ldots, \nu^{-\alpha} \rho, \nu^{-\alpha} \rho, \nu^{-\alpha+1} \rho, \ldots, \nu^{-a} \rho; \sigma).
\end{equation*}
It remains to determine the Aubert dual of $L(\delta([\nu^{-\alpha+2} \rho, \nu^{-a} \rho]); \sigma_{sp})$.

If $x > b + 1$, it follows from \cite[Section~3]{Matic4} that $\sigma_{sp}$ is a subrepresentation of $\nu^{x} \rho \rtimes \sigma'_{sp}$, where $\sigma'_{sp}$ is the unique irreducible subrepresentation of $\delta([\nu^{\alpha-1} \rho, \nu^{b} \rho]) \rtimes \delta(\rho, x-1; \sigma)$. The third part of Proposition \ref{propulag} implies that $L(\delta([\nu^{-\alpha+2} \rho, \nu^{-a} \rho]); \sigma_{sp})$ is a subrepresentation of $\nu^{x} \rho \rtimes L(\delta([\nu^{-\alpha+2} \rho, \nu^{-a} \rho]); \sigma'_{sp})$. Using Lemma \ref{lemaindprva} and continuing in the same way, we deduce that the Aubert dual of $L(\delta([\nu^{-\alpha+2} \rho, \nu^{-a} \rho]); \sigma_{sp})$ is a subrepresentation of
\begin{equation*}
\nu^{-x} \rho \times \cdots \times \nu^{-b-2} \rho \rtimes \widehat{L(\delta([\nu^{-\alpha+2} \rho, \nu^{-a} \rho]); \sigma^{(1)}_{sp})},
\end{equation*}
where $\sigma^{(1)}_{sp}$ is the unique irreducible subrepresentation of $\delta([\nu^{\alpha-1} \rho, \nu^{b} \rho]) \rtimes \delta(\rho, b+1; \sigma)$. From embeddings of strongly positive representations (\cite[Section~3]{Matic4}), using Proposition \ref{propulag} $(3)$ twice, we get
\begin{equation*}
L(\delta([\nu^{-\alpha+2} \rho, \nu^{-a} \rho]); \sigma^{(1)}_{sp}) \hookrightarrow \nu^{b} \rho \times \nu^{b+1} \rho \rtimes L(\delta([\nu^{-\alpha+2} \rho, \nu^{-a} \rho]); \sigma^{(2)}_{sp}),
\end{equation*}
where $\sigma^{(2)}_{sp}$ is the unique irreducible subrepresentation of $\delta([\nu^{\alpha-1} \rho, \nu^{b-1} \rho]) \rtimes \delta(\rho, b; \sigma)$. Now \cite[Theorem~3.4]{Matic4} implies
\begin{equation*}
L(\delta([\nu^{-\alpha+2} \rho, \nu^{-a} \rho]); \sigma^{(1)}_{sp}) \hookrightarrow \zeta([\nu^{b} \rho, \nu^{b+1} \rho]) \rtimes L(\delta([\nu^{-\alpha+2} \rho, \nu^{-a} \rho]); \sigma^{(2)}_{sp}).
\end{equation*}
Using a repeated application of Lemma \ref{lemainddruga} and continuing in the same way, we obtain that the Aubert dual of $L(\delta([\nu^{-\alpha+2} \rho, \nu^{-a} \rho]); \sigma^{(1)}_{sp})$ is a subrepresentation of
\begin{equation*}
\delta([\nu^{-b-1} \rho, \nu^{-b} \rho])\times \cdots\times \delta([\nu^{-\alpha} \rho, \nu^{-\alpha+1} \rho]) \rtimes \widehat{L(\delta([\nu^{-\alpha+2} \rho, \nu^{-a} \rho]); \sigma)},
\end{equation*}
and it can be seen in the same way as in the case of $L(\delta([\nu^{-\alpha+1} \rho, \nu^{-a} \rho]); \sigma)$ that the Aubert dual of $L(\delta([\nu^{-\alpha+2} \rho, \nu^{-a} \rho]); \sigma)$ is the unique irreducible subrepresentation of $\nu^{-\alpha + 2} \rho \times \cdots \times \nu^{-a} \rho \rtimes \sigma$. This ends the proof.
\end{proof}

\begin{proposition} \label{propvod}
Suppose that $a \leq x+1$ and $x < b$. If $a > \alpha$, then in $R(G)$ we have
\begin{gather*}
\zeta([\nu^{-b} \rho, \nu^{-a} \rho]) \rtimes \zeta(\rho, x; \sigma) = \\ L(\nu^{-b} \rho, \ldots, \nu^{-x-1} \rho, \nu^{-x} \rho, \nu^{-x} \rho, \ldots, \nu^{-a} \rho, \nu^{-a} \rho, \nu^{-a+1} \rho, \ldots, \nu^{-\alpha} \rho; \sigma) + \\
L(\nu^{-b} \rho, \ldots, \nu^{-x-2} \rho, \delta([\nu^{-x-1} \rho, \nu^{-x} \rho]), \ldots, \delta([\nu^{-a} \rho, \nu^{-a+1} \rho]), \nu^{-a+2} \rho, \ldots, \nu^{-\alpha} \rho; \sigma).
\end{gather*}
If $a \leq \alpha$, then in $R(G)$ we have
\begin{gather*}
\zeta([\nu^{-b} \rho, \nu^{-a} \rho]) \rtimes \zeta(\rho, x; \sigma) = \\ L(\nu^{-b} \rho, \ldots, \nu^{-x-1} \rho, \nu^{-x} \rho, \nu^{-x} \rho, \ldots, \nu^{-\alpha} \rho, \nu^{-\alpha} \rho, \nu^{-\alpha+1} \rho, \ldots, \nu^{-a} \rho; \sigma) + \\
L(\nu^{-b} \rho, \ldots, \nu^{-x-2} \rho, \delta([\nu^{-x-1} \rho, \nu^{-x} \rho]), \ldots, \delta([\nu^{-\alpha-1} \rho, \nu^{-\alpha} \rho]); \sigma_{sp}),
\end{gather*}
where $\sigma_{sp}$ is the unique irreducible subrepresentation of $\nu^{a} \rho \times \cdots \times \nu^{\alpha} \rho \rtimes \sigma$. 
%If $a = \alpha$, then in $R(G)$ we have
%\begin{gather*}
%\zeta([\nu^{-b} \rho, \nu^{-a} \rho]) \rtimes \zeta(\rho, x; \sigma) %= \\ L(\nu^{-b} \rho, \ldots, \nu^{-x-1} \rho, \nu^{-x} \rho, %\nu^{-x} \rho, \ldots, \nu^{-a} \rho, \nu^{-a} \rho; \sigma) + \\
%L(\nu^{-b} \rho, \ldots, \nu^{-x-2} \rho, \delta([\nu^{-x-1} \rho, %\nu^{-x} \rho]), \ldots, \delta([\nu^{-a-1} \rho, \nu^{-a} \rho]); %\delta(\rho, \alpha; \sigma)).
%\end{gather*}
\end{proposition}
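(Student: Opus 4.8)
The plan is to argue exactly as in the proofs of Propositions~\ref{propspprva} and~\ref{propnulaprva}: pass to the Aubert dual side, use the known decomposition of the generalized principal series there, and then identify the Aubert dual of each of its composition factors by iterated applications of the embeddings of Proposition~\ref{propulag} together with the inductive Lemmas~\ref{lemaindprva}, \ref{lemainddruga} and~\ref{lemaindtreca}.

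First I would recall, from the discussion around~\eqref{gosprva}, that the composition factors of $\zeta([\nu^{-b}\rho,\nu^{-a}\rho])\rtimes\zeta(\rho,x;\sigma)$ are precisely the Aubert duals (counted with multiplicity) of the composition factors of $\delta([\nu^{a}\rho,\nu^{b}\rho])\rtimes\delta(\rho,x;\sigma)$. By \cite[Theorem~2.1]{Mu3} and the classification of (strongly positive) discrete series \cite{KimMatic,MT1}, under the hypotheses $a\le x+1$ and $x<b$ this representation is multiplicity free of length two; in $R(G)$ it equals $L(\delta([\nu^{-b}\rho,\nu^{-a}\rho]);\delta(\rho,x;\sigma))+\pi'$, where the second factor $\pi'$ is the one described in \cite[Section~4]{Matic10} --- a Langlands quotient over the strongly positive $\delta(\rho,x;\sigma)$ carrying an additional essentially square-integrable segment when $a>\alpha$, and $L(\delta([\nu^{-b}\rho,\nu^{-x-1}\rho]);\sigma_{sp})$, with $\sigma_{sp}$ the strongly positive discrete series of the statement, when $a\le\alpha$. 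Since $D_{G}$ is a length-preserving involution taking irreducibles to irreducibles (Theorem~\ref{aub}), it suffices to identify $\widehat{L(\delta([\nu^{-b}\rho,\nu^{-a}\rho]);\delta(\rho,x;\sigma))}$ and $\widehat{\pi'}$ with the two Langlands quotients in the statement; that these are non-isomorphic, hence exhaust a multiplicity-free representation of length two, is clear once one computes that they have the same cuspidal support but distinct Langlands data.

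For the first dual I would proceed in two stages, according to whether the exponents of the segment $[\nu^{a}\rho,\nu^{b}\rho]$ lie beyond $x$ or inside the cuspidal support $[\nu^{\alpha}\rho,\nu^{x}\rho]$ of $\delta(\rho,x;\sigma)$. Since $\nu^{j}\rho\rtimes\delta(\rho,x;\sigma)$ is irreducible for every $j>x+1$, Proposition~\ref{propulag}(1) embeds $L(\delta([\nu^{-b}\rho,\nu^{-a}\rho]);\delta(\rho,x;\sigma))$ into $\nu^{b}\rho\rtimes L(\delta([\nu^{-b+1}\rho,\nu^{-a}\rho]);\delta(\rho,x;\sigma))$, and the structural formula (Lemma~\ref{osn}) shows that $\mu^{\ast}$ of the inner representation has no constituent of the form $\nu^{b}\rho\otimes\pi$, so Lemma~\ref{lemaindprva} applies; iterating down to exponent $x+2$ peels off the outer string $\nu^{-b}\rho\times\cdots\times\nu^{-x-2}\rho$ and reduces the problem to $\widehat{L(\delta([\nu^{-x-1}\rho,\nu^{-a}\rho]);\delta(\rho,x;\sigma))}$. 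In the second stage this peeling is blocked at the reducibility value $x+1$, so I would instead use the strongly-positive embedding $\delta(\rho,x;\sigma)\hookrightarrow\nu^{x}\rho\rtimes\delta(\rho,x-1;\sigma)$ from \cite[Section~3]{Matic4} together with Proposition~\ref{propulag}(3), Lemma~\ref{lemajantz} and Lemma~\ref{lemainddruga} to carry $\delta(\rho,x;\sigma)$ down to $\sigma$ while peeling the overlap $[\nu^{a}\rho,\nu^{x}\rho]$ of the segment with the cuspidal support of $\delta(\rho,x;\sigma)$; this is exactly the step that produces the doubled block $\nu^{-x}\rho,\nu^{-x}\rho,\dots,\nu^{-a}\rho,\nu^{-a}\rho$. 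The descent terminates at an induced representation of the form $\delta([\nu^{c}\rho,\nu^{d}\rho])\rtimes\sigma$ for suitable $c,d$, whose Aubert dual is computed exactly as in the proof of Proposition~\ref{propspprva}: directly via Lemma~\ref{lemaprva} when it is irreducible, and via its length-two decomposition together with Jacquet module considerations when it is not. Assembling the pieces yields the first Langlands quotient of the statement. The second dual $\widehat{\pi'}$ is obtained by running the same two stages; the only difference is that the additional square-integrable segment of $\pi'$ (respectively, the strongly positive $\sigma_{sp}$ when $a\le\alpha$, whose Jacquet modules are available from \cite[Theorem~4.6]{Matic4} and \cite[Section~7]{MatTad}) forces Lemma~\ref{lemaindtreca} into play, and this is precisely what converts the doubled characters of the first term into the length-two segments $\delta([\nu^{-x-1}\rho,\nu^{-x}\rho]),\dots,\delta([\nu^{-a}\rho,\nu^{-a+1}\rho])$ (respectively $\dots,\delta([\nu^{-\alpha-1}\rho,\nu^{-\alpha}\rho])$) of the second term.

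The main obstacle, as in the earlier sections, is the bookkeeping in the second stage: once the clean peeling of Proposition~\ref{propulag}(1) fails at $x+1$, one has to track precisely which copies of $\nu^{j}\rho$ become doubled characters and which are absorbed into length-two segments, and verify at each step that the relevant Jacquet module omits exactly the constituents forbidden by the hypotheses of Lemmas~\ref{lemainddruga} and~\ref{lemaindtreca}. This is also where the dichotomy $a>\alpha$ versus $a\le\alpha$ genuinely matters: in the second regime the inductive descent terminates at the strongly positive $\sigma_{sp}$ rather than at the cuspidal $\sigma$, so the descent along the overlap $[\nu^{a}\rho,\nu^{\alpha}\rho]$ must be matched against the embeddings and Jacquet module structure of $\sigma_{sp}$. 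Once all duals are in hand, a comparison of cuspidal supports confirms that the two Langlands quotients so obtained are distinct, hence exhaust the length-two generalized principal series, and the proposition follows.
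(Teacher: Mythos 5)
Your overall strategy — pass to the Aubert dual side, decompose $\delta([\nu^{a}\rho,\nu^{b}\rho])\rtimes\delta(\rho,x;\sigma)$ into two irreducibles and compute each dual by iterated peeling via Proposition~\ref{propulag} and Lemmas~\ref{lemaindprva}--\ref{lemaindtreca} — is the one the paper follows. But the concrete decomposition you start from is wrong, and this propagates through the rest of the sketch.

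Under the hypotheses $a\le x+1$ and $x<b$, the decomposition used in the paper is
\begin{equation*}
\delta([\nu^{a}\rho,\nu^{b}\rho])\rtimes\delta(\rho,x;\sigma)
= L(\delta([\nu^{-b}\rho,\nu^{-a}\rho]);\delta(\rho,x;\sigma))
+ L(\delta([\nu^{-x}\rho,\nu^{-a}\rho]);\delta(\rho,b;\sigma)),
\end{equation*}
\emph{uniformly}, with no dependence on whether $a>\alpha$ or $a\le\alpha$. Your proposed second factor $\pi'$ is different. For $a\le\alpha$ you set $\pi'=L(\delta([\nu^{-b}\rho,\nu^{-x-1}\rho]);\sigma_{sp})$ with $\sigma_{sp}$ the strongly positive subrepresentation of $\nu^{a}\rho\times\cdots\times\nu^{\alpha}\rho\rtimes\sigma$; but that representation has cuspidal support $\{\nu^{a}\rho,\dots,\nu^{\alpha}\rho\}\cup\{\nu^{x+1}\rho,\dots,\nu^{b}\rho\}\cup\mathrm{supp}(\sigma)$, which omits $\nu^{\alpha+1}\rho,\dots,\nu^{x}\rho$ present in the support of $\delta([\nu^{a}\rho,\nu^{b}\rho])\rtimes\delta(\rho,x;\sigma)$. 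So it cannot be a subquotient, and the claimed second composition factor is simply wrong. The dichotomy $a>\alpha$ versus $a\le\alpha$ enters only in where the inductive descent bottoms out when computing the Aubert duals, not in the decomposition itself.

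You have also reversed which factor produces which of the two Langlands quotients in the statement. The doubled block $\nu^{-x}\rho,\nu^{-x}\rho,\dots$ in the first summand of the proposition arises from $L(\delta([\nu^{-x}\rho,\nu^{-a}\rho]);\delta(\rho,b;\sigma))$: one peels the disjoint part $\nu^{-b}\rho,\dots,\nu^{-x-1}\rho$ via Proposition~\ref{propulag}$(3)$ and Lemma~\ref{lemaindprva}, then uses Proposition~\ref{propulag}$(2)$,$(3)$ together with Lemma~\ref{lemainddruga} (the $(\nu^{t}\rho)^{2}$ version) to produce the pairs. The length-two segments $\delta([\nu^{-x-1}\rho,\nu^{-x}\rho]),\dots$ in the second summand come from $L(\delta([\nu^{-b}\rho,\nu^{-a}\rho]);\delta(\rho,x;\sigma))$: there the outer peeling via Proposition~\ref{propulag}$(1)$ stops at $\nu^{-x-2}\rho$, because $\nu^{x+1}\rho\rtimes\delta(\rho,x;\sigma)$ reduces, and one then shows that $\mu^{\ast}(L(\delta([\nu^{-x-1}\rho,\nu^{-a}\rho]);\delta(\rho,x;\sigma)))$ has no constituent $\nu^{x+1}\rho\otimes\pi$ so that the next step embeds into $\zeta([\nu^{x}\rho,\nu^{x+1}\rho])\rtimes(\cdot)$ and Lemma~\ref{lemaindprva} applies. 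Finally, Lemma~\ref{lemaindtreca} is not invoked anywhere in this proposition; it only enters in the $a\le 0$ cases. As written, following your outline would yield incorrect Langlands data (e.g.\ a quotient with $\nu^{-b}\rho,\dots,\nu^{-x-2}\rho$ followed immediately by the doubled $\nu^{-x}\rho,\nu^{-x}\rho$, which has no $\nu^{-x-1}\rho$ at all), so the gap is substantive and not just a matter of bookkeeping.
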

\begin{proof}
Under the assumptions of the proposition, in $R(G)$ we have
\begin{gather*}
\delta([\nu^{a} \rho, \nu^{b} \rho]) \rtimes \delta(\rho, x; \sigma) = L(\delta([\nu^{-b} \rho, \nu^{-a} \rho]); \delta(\rho, x; \sigma)) + \\
L(\delta([\nu^{-x} \rho, \nu^{-a} \rho]); \delta(\rho, b; \sigma)).
\end{gather*}
Let us first determine the Aubert dual of $L(\delta([\nu^{-x} \rho, \nu^{-a} \rho]); \delta(\rho, b; \sigma))$. Using the third part of Proposition \ref{propulag} and Lemma \ref{lemaindprva}, we obtain that it is an irreducible subrepresentation of
\begin{equation*}
\nu^{-b} \rho \times \cdots \times \nu^{-x-1} \rho \rtimes \widehat{L(\delta([\nu^{-x} \rho, \nu^{-a} \rho]); \delta(\rho, x; \sigma))}.
\end{equation*}
Note that the induced representation $\nu^{x} \rho  \rtimes \delta(\rho, x; \sigma)$ is irreducible. Using the second part of Proposition \ref{propulag} we deduce that $L(\delta([\nu^{-x} \rho, \nu^{-a} \rho]); \delta(\rho, x; \sigma))$ is a subrepresentation of $\nu^{x} \rho \rtimes L(\delta([\nu^{-x+1} \rho, \nu^{-a} \rho]); \delta(\rho, x; \sigma))$, and then the third part of the same proposition gives an embedding
\begin{equation*}
L(\delta([\nu^{-x} \rho, \nu^{-a} \rho]); \delta(\rho, x; \sigma)) \hookrightarrow \nu^{x} \rho \times \nu^{x} \rho \rtimes L(\delta([\nu^{-x+1} \rho, \nu^{-a} \rho]); \delta(\rho, x-1; \sigma)).
\end{equation*}
We can continue in the same way to obtain the Aubert dual of $L(\delta([\nu^{-x} \rho, \nu^{-a} \rho])$; $\delta(\rho, x; \sigma))$ using Lemma \ref{lemainddruga}.

If $a = \alpha$, it follows that the Aubert dual of $L(\delta([\nu^{-x} \rho, \nu^{-a} \rho]); \delta(\rho, x; \sigma))$ is an irreducible subrepresentation of
\begin{equation*}
\nu^{-x} \rho \times \nu^{-x} \rho \times \cdots \times \nu^{-a} \rho \times \nu^{-a} \rho \rtimes \sigma.
\end{equation*}
If $a > \alpha$, it follows that the Aubert dual of $L(\delta([\nu^{-x} \rho, \nu^{-a} \rho]); \delta(\rho, x; \sigma))$ is an irreducible subrepresentation of
\begin{equation*}
\nu^{-x} \rho \times \nu^{-x} \rho \times \cdots \times \nu^{-a} \rho \times \nu^{-a} \rho \rtimes \widehat{\delta(\rho, a-1; \sigma)},
\end{equation*}
and it follows from \cite[Theorem~3.5]{Matic14} that $\widehat{\delta(\rho, a-1; \sigma)} \cong L(\nu^{-a+1} \rho, \ldots, \nu^{-\alpha} \rho; \sigma)$. Finally, if $a <\alpha$, it follows that the Aubert dual of $L(\delta([\nu^{-x} \rho, \nu^{-a} \rho]); \delta(\rho, x; \sigma))$ is an irreducible subrepresentation of
\begin{equation*}
\nu^{-x} \rho \times \nu^{-x} \rho \times \cdots \times \nu^{-\alpha} \rho \times \nu^{-\alpha} \rho \rtimes \widehat{L(\delta([\nu^{-\alpha+1} \rho, \nu^{-a} \rho]); \sigma)},
\end{equation*}
and the Aubert dual of $L(\delta([\nu^{-\alpha+1} \rho, \nu^{-a} \rho]); \sigma)$ is the unique irreducible subrepresentation of $\nu^{-\alpha + 1} \rho \times \cdots \times \nu^{-a} \rho \rtimes \sigma$, as before.

Let us now determine the Aubert dual of $L(\delta([\nu^{-b} \rho, \nu^{-a} \rho]); \delta(\rho, x; \sigma))$. First, using Lemma \ref{lemaindprva}, together with the first part of Proposition \ref{propulag}, we obtain that it is an irreducible subrepresentation of
\begin{equation*}
\nu^{-b} \rho \times \cdots \times \nu^{-x-2} \rho \rtimes \widehat{L(\delta([\nu^{-x-1} \rho, \nu^{-a} \rho]); \delta(\rho, x; \sigma))}.
\end{equation*}
Note that, by \cite[Proposition~3.1]{Mu3}, in $R(G)$ we have
\begin{gather*}
\delta([\nu^{a} \rho, \nu^{x+1} \rho]) \rtimes \delta(\rho, x; \sigma)) = L(\delta([\nu^{-x-1} \rho, \nu^{-a} \rho]); \delta(\rho, x; \sigma)) + \\
 L(\delta([\nu^{-x} \rho, \nu^{-a} \rho]); \delta(\rho, x+1; \sigma)).
\end{gather*}
Since $\delta([\nu^{a} \rho, \nu^{x} \rho]) \rtimes \delta(\rho, x; \sigma)$ is irreducible, the structural formula directly implies that $\nu^{x+1} \rho \otimes \delta([\nu^{a} \rho, \nu^{x} \rho]) \rtimes \delta(\rho, x; \sigma)$ is the unique irreducible constituent of the form $\nu^{x+1} \rho \otimes \pi$ appearing in $\mu^{\ast}(\delta([\nu^{a} \rho, \nu^{x+1} \rho]) \rtimes \delta(\rho, x; \sigma))$, which appears there with multiplicity one, and it obviously appears in \\$\mu^{\ast}(L(\delta([\nu^{-x} \rho, \nu^{-a} \rho]); \delta(\rho, x+1; \sigma)))$. Thus, $\mu^{\ast}(L(\delta([\nu^{-x-1} \rho, \nu^{-a} \rho]); \delta(\rho, x; \sigma)))$ does not contain an irreducible constituent of the form $\nu^{x+1} \rho \otimes \pi$.

Now, using the third part of Proposition \ref{propulag}, and then the first part of the same proposition, we obtain an embedding
\begin{gather*}
L(\delta([\nu^{-x-1} \rho, \nu^{-a} \rho]); \delta(\rho, x; \sigma)) \hookrightarrow \\
\zeta([\nu^{x} \rho, \nu^{x+1} \rho]) \rtimes L(\delta([\nu^{-x} \rho, \nu^{-a} \rho]); \delta(\rho, x-1; \sigma)).
\end{gather*}
Also, in the same way as before we conclude that $\mu^{\ast}(L(\delta([\nu^{-x} \rho, \nu^{-a} \rho]); \delta(\rho, x-1; \sigma)))$ does not contain an irreducible constituent of the form $\nu^{i} \rho \otimes \pi$, for $i \in \{ x, x+1 \}$. Using Lemma \ref{lemaindprva} and repeating this procedure, we obtain an embedding of the Aubert dual of $L(\delta([\nu^{-x-1} \rho, \nu^{-a} \rho]); \delta(\rho, x; \sigma))$.

If $a = \alpha$, it follows that the Aubert dual of $L(\delta([\nu^{-x-1} \rho, \nu^{-a} \rho]); \delta(\rho, x; \sigma))$ is an irreducible subrepresentation of
\begin{equation*}
\delta([\nu^{-x-1} \rho, \nu^{-x} \rho]) \times \cdots \times \delta([\nu^{-a-1} \rho, \nu^{-a} \rho]) \rtimes \widehat{L(\nu^{-\alpha} \rho; \sigma)},
\end{equation*}
and it follows from \cite[Theorem~3.5]{Matic14} that the Aubert dual of $L(\nu^{-\alpha} \rho; \sigma)$ is isomorphic to $\delta(\rho, \alpha; \sigma)$. Note that for $a = \alpha$ we have $\sigma_{sp} \cong \delta(\rho, \alpha; \sigma)$.

If $a > \alpha$, it follows that the Aubert dual of $L(\delta([\nu^{-x-1} \rho, \nu^{-a} \rho]); \delta(\rho, x; \sigma))$ is an irreducible subrepresentation of
\begin{equation*}
\delta([\nu^{-x-1} \rho, \nu^{-x} \rho]) \times \cdots \times \delta([\nu^{-a-1} \rho, \nu^{-a} \rho]) \rtimes \widehat{\delta(\rho, a-2; \sigma)},
\end{equation*}
and it follows from \cite[Theorem~3.5]{Matic14} that the Aubert dual of $\delta(\rho, a-2; \sigma)$ is the unique irreducible subrepresentation of $\nu^{-a+2} \rho \times \cdots \times \nu^{-\alpha} \rho \rtimes \sigma$.

If $a < \alpha$, it follows that the Aubert dual of $L(\delta([\nu^{-x-1} \rho, \nu^{-a} \rho]); \delta(\rho, x; \sigma))$ is an irreducible subrepresentation of
\begin{equation*}
\delta([\nu^{-x-1} \rho, \nu^{-x} \rho]) \times \cdots \times \delta([\nu^{-\alpha-1} \rho, \nu^{-\alpha} \rho]) \rtimes \widehat{L(\delta([\nu^{-\alpha} \rho, \nu^{-a} \rho ]); \sigma)},
\end{equation*}
and it follows from \cite[Theorem~3.5]{Matic14} that the Aubert dual of $L(\delta([\nu^{-\alpha} \rho, \nu^{-a} \rho ]); \sigma)$ is the unique irreducible subrepresentation of $\nu^{a} \rho \times \cdots \times \nu^{\alpha} \rho \rtimes \sigma$, which is strongly positive. This proves the proposition.
\end{proof}

Let us now consider the case $\rho_0 \not\cong \rho$. The following proposition can be proved in the same way as Proposition \ref{propvod}, using Lemma \ref{lemarazl}, details being left to the reader.

\begin{proposition}
Degenerate principal series $\zeta([\nu^{-b} \rho_0, \nu^{-a} \rho_0]) \rtimes \zeta(\rho, x; \sigma)$ is irreducible if and only if either $a > \beta$ or $b < \beta$. %In both cases, we have
%\begin{equation*}
%\zeta([\nu^{-b} \rho_0, \nu^{-a} \rho_0]) \rtimes \zeta(\rho, x; %\sigma) \cong L(\nu^{-b} \rho_0, \ldots, \nu^{-a} \rho_0, \nu^{-x} %\rho, \ldots, \nu^{-\alpha} \rho; \sigma).
%\end{equation*}
If $\zeta([\nu^{-b} \rho_0, \nu^{-a} \rho_0]) \rtimes \zeta(\rho, x; \sigma)$ reduces, in $R(G)$ we have
\begin{gather*}
\zeta([\nu^{-b} \rho_0, \nu^{-a} \rho_0]) \rtimes \zeta(\rho, x; \sigma) = \\ L(\nu^{-b} \rho_0, \ldots, \nu^{-a} \rho_0, \nu^{-x} \rho, \ldots, \nu^{-\alpha} \rho; \sigma) + \\
L(\nu^{-b} \rho_0, \ldots, \nu^{-\beta-1} \rho_0, \nu^{-x} \rho, \ldots, \nu^{-\alpha} \rho; \sigma_{sp}),
\end{gather*}
where $\sigma_{sp}$ is the unique irreducible subrepresentation of $\nu^{a} \rho_0 \times \cdots \times \nu^{\beta} \rho_0 \rtimes \sigma$.
\end{proposition}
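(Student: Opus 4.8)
The plan is to pass to the Aubert dual and to run the argument of the proof of Proposition~\ref{propvod} almost verbatim, the simplification being that, since $\rho_0\not\cong\rho$, Lemma~\ref{lemarazl} removes the entire $\rho$-part of the problem in a single step.

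By Theorem~\ref{aub}, the Aubert dual of $\zeta([\nu^{-b}\rho_0,\nu^{-a}\rho_0])\rtimes\zeta(\rho,x;\sigma)$ is the generalized principal series $\delta([\nu^{a}\rho_0,\nu^{b}\rho_0])\rtimes\delta(\rho,x;\sigma)$. By \cite{Mu3} (combined with the preliminary reductions, which have already excluded the cases where $\rho_0$ is not self-contragredient or $a-\beta\notin\mathbb{Z}$), this representation is irreducible precisely when $a>\beta$ or $b<\beta$; when $a\leq\beta\leq b$ it has exactly two composition factors, the Langlands quotient $\tau_1:=L(\delta([\nu^{-b}\rho_0,\nu^{-a}\rho_0]);\delta(\rho,x;\sigma))$ and a strongly positive discrete series $\tau_2$, which is its unique irreducible subrepresentation. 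Since the Aubert involution is a length-preserving bijection on irreducibles, this already yields the reducibility criterion and the multiplicity-one, length-two statement for the degenerate principal series; it remains to prove that $\widehat{\tau_2}$ is the first displayed summand and $\widehat{\tau_1}$ is the second.

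For $i\in\{1,2\}$ I would apply Lemma~\ref{lemarazl}: it produces an irreducible subquotient $\pi_i$ of $\delta([\nu^{a}\rho_0,\nu^{b}\rho_0])\rtimes\sigma$ with $\tau_i\hookrightarrow\delta([\nu^{\alpha}\rho,\nu^{x}\rho])\rtimes\pi_i$, and if $\widehat{\pi_i}\cong L(\delta_1,\ldots,\delta_k;\tau_{temp})$ then $\widehat{\tau_i}\cong L(\nu^{-x}\rho,\ldots,\nu^{-\alpha}\rho,\delta_1,\ldots,\delta_k;\tau_{temp})$. By \cite{Mu3} and the classification of discrete series \cite{KimMatic, MT1}, $\delta([\nu^{a}\rho_0,\nu^{b}\rho_0])\rtimes\sigma$ has length two, with a strongly positive discrete series subrepresentation $\pi$ and the Langlands quotient $L(\delta([\nu^{-b}\rho_0,\nu^{-a}\rho_0]);\sigma)$. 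A Jacquet-module comparison exactly as in the proof of Proposition~\ref{propnulaprva} --- the constituent $\delta([\nu^{a}\rho_0,\nu^{b}\rho_0])\otimes\sigma$ of $\mu^{\ast}(\delta([\nu^{a}\rho_0,\nu^{b}\rho_0])\rtimes\sigma)$ lies in $\mu^{\ast}(\pi)$ but not in $\mu^{\ast}(L(\delta([\nu^{-b}\rho_0,\nu^{-a}\rho_0]);\sigma))$, and, via the embedding $\tau_i\hookrightarrow\delta([\nu^{\alpha}\rho,\nu^{x}\rho])\rtimes\pi_i$ and Frobenius reciprocity, distinguishes the two possibilities for $\pi_i$ --- shows $\pi_2\cong\pi$ and $\pi_1\cong L(\delta([\nu^{-b}\rho_0,\nu^{-a}\rho_0]);\sigma)$. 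Thus everything reduces to computing $\widehat{\pi}$ and $\widehat{L(\delta([\nu^{-b}\rho_0,\nu^{-a}\rho_0]);\sigma)}$.

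These two Aubert duals are obtained by the inductive peeling-off of the characters $\nu^{-j}\rho_0$ used in Propositions~\ref{propspprva} and~\ref{propvod}: using Proposition~\ref{propulag}, the embeddings of strongly positive discrete series from \cite[Section~3]{Matic4}, and Lemma~\ref{lemaindprva}, one strips off $\nu^{-b}\rho_0,\nu^{-b+1}\rho_0,\ldots,\nu^{-\beta-1}\rho_0$ one at a time (all these twists lying strictly above the reducibility point $\beta$, so that the relevant induced representations are irreducible). This reduces the computation of $\widehat{L(\delta([\nu^{-b}\rho_0,\nu^{-a}\rho_0]);\sigma)}$ to that of $\widehat{L(\delta([\nu^{-\beta}\rho_0,\nu^{-a}\rho_0]);\sigma)}$ and the computation of $\widehat{\pi}$ to that of $\widehat{\sigma_{sp}}$, and both base cases are supplied by \cite[Theorem~3.5]{Matic14} exactly as at the end of the proof of Proposition~\ref{propvod} (where $\widehat{L(\nu^{-\alpha}\rho;\sigma)}\cong\delta(\rho,\alpha;\sigma)$ and $\widehat{\delta(\rho,a-2;\sigma)}$ are computed): one finds $\widehat{L(\delta([\nu^{-\beta}\rho_0,\nu^{-a}\rho_0]);\sigma)}\cong\sigma_{sp}$, and reassembling gives $\widehat{\pi}\cong L(\nu^{-b}\rho_0,\ldots,\nu^{-a}\rho_0;\sigma)$ and $\widehat{L(\delta([\nu^{-b}\rho_0,\nu^{-a}\rho_0]);\sigma)}\cong L(\nu^{-b}\rho_0,\ldots,\nu^{-\beta-1}\rho_0;\sigma_{sp})$, whence, feeding these back through Lemma~\ref{lemarazl}, the two displayed summands; the degenerate case $b=\beta$, in which the ranges of characters above $\beta$ are empty, is immediate. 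As in Propositions~\ref{propspprva} and~\ref{propvod}, the genuinely delicate point is to verify at each inductive step that the relevant Jacquet module contains no constituent of the form $\nu^{i}\rho_0\otimes(\cdot)$ that would obstruct the application of Lemma~\ref{lemaindprva}; this is where the structural formula (Lemma~\ref{osn}) and the explicit description of Jacquet modules of strongly positive discrete series (\cite[Theorem~4.6]{Matic4}, \cite[Section~7]{MatTad}) are needed, and, together with the identification of the $\pi_i$, I expect this bookkeeping to be the main obstacle.
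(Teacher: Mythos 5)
Your proof takes the same approach the paper indicates for this proposition (``can be proved in the same way as Proposition~\ref{propvod}, using Lemma~\ref{lemarazl}''): dualize, decompose $\delta([\nu^{a}\rho_0,\nu^{b}\rho_0])\rtimes\delta(\rho,x;\sigma)$ via \cite{Mu3}, apply Lemma~\ref{lemarazl} to remove the $\rho$-part, then peel off the characters $\nu^{-j}\rho_0$ for $j>\beta$, and identify which Aubert dual goes with which factor via Jacquet modules. The conclusions are correct. One slip in the bookkeeping is worth flagging: the peeling of $\widehat{\pi}$ (for $\pi$ the discrete series subrepresentation of $\delta([\nu^{a}\rho_0,\nu^{b}\rho_0])\rtimes\sigma$) reduces $\widehat{\pi}$ to the Aubert dual of the unique irreducible \emph{subrepresentation} of $\delta([\nu^{a}\rho_0,\nu^{\beta}\rho_0])\rtimes\sigma$, not to $\widehat{\sigma_{sp}}$, since $\sigma_{sp}$ is the Aubert dual of the other composition factor $L(\delta([\nu^{-\beta}\rho_0,\nu^{-a}\rho_0]);\sigma)$; in fact no peeling is needed here, because Frobenius reciprocity already puts $\nu^{b}\rho_0\otimes\cdots\otimes\nu^{a}\rho_0\otimes\sigma$ in the Jacquet module of $\pi$, and Lemma~\ref{lemaprva} then yields $\widehat{\pi}\cong L(\nu^{-b}\rho_0,\ldots,\nu^{-a}\rho_0;\sigma)$ directly, which matches your stated formula.
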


\section{Case $a \leq 0$}\label{Section a0}

In this section we analyze the case when $a \leq 0$. To make the notation uniform, we let $\tau^{(1)} = \rho_0 \rtimes \sigma$ if $a \in \mathbb{Z}$ and $\tau^{(1)} = \sigma$ if $a \not\in \mathbb{Z}$. Also, if $a \not\in \mathbb{Z}$, let $\tau^{(2)}$ denote the unique irreducible (strongly positive) subrepresentation of $\nu^{\vh} \rho_0 \times \nu^{\frac{3}{2}} \rho_0 \times \cdots \times \nu^{\beta} \rho_0 \rtimes \sigma$. If $a \in \mathbb{Z}$, let $\tau'$ denote the unique irreducible (strongly positive) subrepresentation of $\nu \rho_0 \times \cdots \times \nu^{\beta} \rho_0 \rtimes \sigma$ and let $\tau^{(2)}$ denote an irreducible (tempered) subrepresentation of $\rho_0 \rtimes \tau'$ which does not contain an irreducible representation of the form $\nu \rho_0 \otimes \pi$ in the Jacquet module with respect to the appropriate parabolic subgroup. We note that such a subrepresentation of $\rho_0 \rtimes \tau'$ is unique by \cite[Section~4]{Tad6}.

For an irreducible self-contragredient cuspidal representation $\rho_1 \in R(GL)$ and an irreducible cuspidal representation $\sigma_1 \in R(G)$ such that $\nu^{\vh} \rho_1 \rtimes \sigma_1$ reduces, we denote by $\tau(\rho_1, \sigma_1)$ the unique irreducible tempered subrepresentation of $\delta([\nu^{- \frac{1}{2}} \rho_1, \nu^{\frac{1}{2}} \rho_1]) \rtimes \sigma_1$ which is not a subrepresentation of $\nu^{\frac{1}{2}} \rho_1 \times \nu^{\frac{1}{2}} \rho_1 \rtimes \sigma_1$,
Also, for a real number $y$ let $\lceil y\rceil$ stand for the smallest integer which is not smaller than $y$.

We will again first consider the more complicated case $\rho_0 \cong \rho$.
Let us first assume that $-a = b$.

\begin{proposition}
Degenerate principal series $\zeta([\nu^{-a} \rho, \nu^{a} \rho]) \rtimes \zeta(\rho, x; \sigma)$ is irreducible if and only if either $-a \leq \alpha - 2$ or $-a = x$. If $\alpha-2 < -a < x$, in $R(G)$ we have
\begin{gather*}
\zeta([\nu^{-a} \rho, \nu^{a} \rho]) \rtimes \zeta(\rho, x; \sigma) = \\
L(\nu^{-x} \rho, \ldots, \nu^{a-1} \rho, \nu^{a} \rho, \nu^{a} \rho, \nu^{a} \rho, \ldots, \nu^{-\alpha} \rho, \nu^{-\alpha} \rho, \nu^{-\alpha} \rho, \nu^{-\alpha+1} \rho, \nu^{-\alpha+1} \rho, \ldots, \\
\nu^{\lceil \alpha \rceil - \alpha - 1} \rho, \nu^{\lceil \alpha \rceil - \alpha - 1} \rho; \tau^{(1)}) + \pi,
\end{gather*}
where
\begin{gather*}
\pi \cong L(\nu^{-x} \rho, \ldots, \nu^{a-2} \rho, \delta([\nu^{a-1} \rho, \nu^{a} \rho]), \nu^{a} \rho, \ldots, \delta([\nu^{-\alpha} \rho, \nu^{-\alpha+1} \rho]), \nu^{-\alpha+1} \rho, \\
\nu^{-\alpha+2} \rho, \nu^{-\alpha+2} \rho, \ldots, \nu^{\lceil \alpha \rceil - \alpha - 1} \rho, \nu^{\lceil \alpha \rceil - \alpha - 1} \rho; \tau^{(1)}),
\end{gather*}
if $\alpha \geq \frac{3}{2}$,
\begin{equation*}
\pi \cong L(\nu^{-x} \rho, \ldots, \nu^{a-2} \rho, \delta([\nu^{a-1} \rho, \nu^{a} \rho]), \nu^{a} \rho, \ldots, \delta([\nu^{-2} \rho, \nu^{-1} \rho]), \nu^{-1} \rho, \delta([\nu^{-1} \rho, \rho]); \sigma),
\end{equation*}
if $\alpha = 1$,
\begin{gather*}
\pi \cong L(\nu^{-x} \rho, \ldots, \nu^{a-2} \rho, \delta([\nu^{a-1} \rho, \nu^{a} \rho]), \nu^{a} \rho, \ldots, \delta([\nu^{-\frac{5}{2}} \rho, \nu^{-\frac{3}{2}} \rho]), \nu^{-\frac{3}{2}} \rho, \\
\delta([\nu^{-\frac{3}{2}} \rho, \nu^{-\frac{1}{2}} \rho]); \tau(\rho, \sigma)),
\end{gather*}
if $\alpha = \frac{1}{2}$.

If $-a > x$, in $R(G)$ we have
\begin{gather*}
\zeta([\nu^{-a} \rho, \nu^{a} \rho]) \rtimes \zeta(\rho, x; \sigma) = \\
L(\nu^{a} \rho, \nu^{a} \rho, \ldots, \nu^{-x-1} \rho, \nu^{-x-1} \rho, \nu^{-x} \rho, \nu^{-x} \rho, \nu^{-x} \rho, \ldots, \nu^{-\alpha} \rho, \nu^{-\alpha} \rho, \nu^{-\alpha} \rho, \\
\nu^{-\alpha+1} \rho, \nu^{-\alpha+1} \rho, \ldots, \nu^{\lceil \alpha \rceil - \alpha - 1} \rho, \nu^{\lceil \alpha \rceil - \alpha - 1} \rho; \tau^{(1)}) + \\
L(\nu^{a} \rho, \nu^{a} \rho, \ldots, \nu^{-x-2} \rho, \nu^{-x-2} \rho, \nu^{-x-1} \rho, \delta([\nu^{-x-1} \rho, \nu^{-x} \rho]), \ldots, \\
\nu^{-\alpha-1} \rho, \delta([\nu^{-\alpha-1} \rho, \nu^{-\alpha} \rho]),
\nu^{-\alpha} \rho, \ldots, \nu^{\lceil \alpha \rceil - \alpha - 1} \rho; \tau^{(2)}).
\end{gather*}
\end{proposition}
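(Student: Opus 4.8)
The plan is to pass to the Aubert dual. As explained in Section~2, the Aubert dual of the degenerate principal series in the statement is the generalized principal series $\delta([\nu^{a}\rho,\nu^{-a}\rho])\rtimes\delta(\rho,x;\sigma)$; since $\delta([\nu^{a}\rho,\nu^{-a}\rho])$ is square-integrable ($e(\delta)=0$) and $\delta(\rho,x;\sigma)$ is a strongly positive discrete series, this is the parabolic induction of a tempered representation, hence a direct sum of irreducible tempered representations. By \cite[Theorem~2.1, Proposition~3.1]{Mu3} together with the classification of discrete series \cite{KimMatic,MT1}, it is irreducible precisely when $-a\leq\alpha-2$ or $-a=x$, and otherwise it decomposes as $\pi_{1}\oplus\pi_{2}$ with $\pi_{1},\pi_{2}$ irreducible and tempered. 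Since $D_{G_{n}}$ is a multiplicity-preserving involution carrying irreducibles to irreducibles (Theorem \ref{aub}), this establishes the stated irreducibility criterion at once, and in the two remaining ranges $\alpha-2<-a<x$ and $-a>x$ it reduces the proposition to identifying $\widehat{\pi_{1}}$ and $\widehat{\pi_{2}}$, which are precisely the two summands displayed on the right-hand sides.

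I would compute each $\widehat{\pi_{i}}$ by the inductive ``peeling'' procedure used in the proofs of Propositions \ref{propspprva} and \ref{propvod}. Using the embeddings of strongly positive discrete series from \cite[Section~3]{Matic4} and Proposition \ref{propulag}, one realizes $\pi_{i}$ as a subrepresentation of $\nu^{x}\rho\rtimes\pi_{i}^{(1)}$ --- or, when the peeled exponent is forced to occur with multiplicity two, of $\zeta([\nu^{x-1}\rho,\nu^{x}\rho])\rtimes\pi_{i}^{(1)}$ or $(\nu^{x}\rho)^{m}\rtimes\pi_{i}^{(1)}$, after trading a product of characters for a Zelevinsky segment by \cite[Theorem~3.4]{Matic4} --- where $\pi_{i}^{(1)}$ is the analogous subquotient of $\delta([\nu^{a}\rho,\nu^{-a}\rho])\rtimes\delta(\rho,x-1;\sigma)$; this $\pi_{i}^{(1)}$ is isolated uniquely via Lemma \ref{lemajantz} and a Jacquet-module comparison carried out with the structural formula (Lemma \ref{osn}) and the known Jacquet modules of strongly positive discrete series (\cite[Theorem~4.6]{Matic4}, \cite[Section~7]{MatTad}). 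After checking at each step that $\mu^{\ast}(\pi_{i}^{(1)})$ contains no constituent of the form $\nu^{j}\rho\otimes(\cdot)$ for the exponents $j$ being removed, Lemmas \ref{lemaindprva}, \ref{lemainddruga} and \ref{lemaindtreca} give that $\widehat{\pi_{i}}$ is the unique irreducible subrepresentation of $\delta([\,\cdot\,])\rtimes\widehat{\pi_{i}^{(1)}}$, the segment being dictated by the peeled exponents. Iterating on the parameter $x$ --- and, in the range $-a>x$, first removing the part of the segment lying above $x$, where the two ways of resolving the resulting resonance yield the tempered pieces $\tau^{(1)}$ and $\tau^{(2)}$, separated by a Jacquet-module argument exactly as in the proof of Proposition \ref{propnulaprva} --- expresses each $\widehat{\pi_{i}}$ as the unique irreducible subrepresentation of an explicit induced representation over the Aubert dual of a representation supported near $\sigma$. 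The subrepresentation form of the Langlands classification then identifies this unique subrepresentation with the asserted $L(\ldots)$, and applying $D_{G_{n}}$ once more yields the claimed equalities in $R(G)$.

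The delicate point --- and the reason the formula for $\pi$ splits into the cases $\alpha\geq\frac{3}{2}$, $\alpha=1$, $\alpha=\frac{1}{2}$ --- is the evaluation of the base of this induction. For $\alpha\geq\frac{3}{2}$ the peeling terminates at $\sigma$ or at some $\delta(\rho,y;\sigma)$, whose Aubert duals are $\sigma$, respectively $\zeta(\rho,y;\sigma)=L(\nu^{-y}\rho,\ldots,\nu^{-\alpha}\rho;\sigma)$ by \cite[Theorem~3.5]{Matic14}, which produces the ``doubled-exponent'' Langlands data over $\tau^{(1)}$. For $\alpha=1$ the chain of embeddings necessarily runs into the discrete series $\delta([\nu^{-1}\rho,\rho];\sigma)$, and for $\alpha=\frac{1}{2}$ into the tempered representation $\tau(\rho,\sigma)\subseteq\delta([\nu^{-\frac{1}{2}}\rho,\nu^{\frac{1}{2}}\rho])\rtimes\sigma$, because in these cases the range of doubled exponents around the origin degenerates once $\nu\rho\rtimes\sigma$, respectively $\nu^{\frac{1}{2}}\rho\rtimes\sigma$, reduces. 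For these two boundary cases the required embeddings, and the vanishing of the obstructing Jacquet-module constituents, must be extracted directly from the defining properties of $\delta([\nu^{-1}\rho,\rho];\sigma)$ and $\tau(\rho,\sigma)$ (\cite[Section~4]{Tad6}, \cite{Matic14}) and from those reducibility statements. Controlling the interaction of the peeled characters with these small tempered and discrete-series building blocks, and ruling out extra composition factors when exponents collide, is the main obstacle; the rest is routine Jacquet-module bookkeeping.
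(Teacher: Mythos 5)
Your opening move---pass to the Aubert dual $\delta([\nu^{a}\rho,\nu^{-a}\rho])\rtimes\delta(\rho,x;\sigma)$, observe that it is unitarily induced from a tempered (indeed discrete series $\otimes$ square-integrable) representation and is therefore semisimple and tempered, and conclude that the question reduces to the reducibility and the Aubert duals of its (at most two) irreducible tempered summands---is exactly what the paper does, and the irreducibility criterion is obtained the same way. Where you diverge is the final step: the paper's entire proof is the one-line observation that the Aubert duals of the two tempered summands are already written down in \cite[Theorems~4.11,~4.16,~4.21]{Matic10}, whereas you propose to re-derive them from scratch via the peeling machinery of Lemmas~\ref{lemaindprva}--\ref{lemaindtreca} and Proposition~\ref{propulag}.

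This is not merely a matter of efficiency; there is a genuine gap in your route as stated. Proposition~\ref{propulag}, which you lean on to produce the initial embeddings of the form $\pi_i\hookrightarrow\nu^{t}\rho\rtimes\pi_i^{(1)}$, is formulated only for non-tempered Langlands quotients $L(\delta([\nu^{-k}\rho_1,\nu^{-l}\rho_1]);\sigma_{sp})$ with $k+l>0$; the summands $\pi_1,\pi_2$ here are tempered (discrete series for $-a<x$, elliptic tempered for $-a=x$), so this proposition does not apply to them. Producing the required embeddings of tempered representations into induced representations of the needed shape, and verifying the Jacquet-module vanishing hypotheses, is precisely the content of \cite{Matic10}, and your proposal would have to reproduce those arguments rather than apply the paper's Proposition~\ref{propulag}. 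If you instead cite \cite[Theorems~4.11,~4.16,~4.21]{Matic10} directly (as the paper does, and as the companion Proposition~\ref{propmanjeprva} does for its discrete-series constituents), the argument closes immediately; without that citation you owe a separate, tempered-case version of the embedding machinery.

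A smaller inaccuracy: the irreducibility of the tempered induction in the $-a=b$ case is not read off \cite[Theorem~2.1, Proposition~3.1]{Mu3} (those treat the cases $l_1\leq-1$ and $l_1\geq0$, i.e.\ unbalanced segments, where the induction has a non-tempered Langlands subquotient); the correct reference here is the classification of discrete series and elliptic tempered representations \cite{KimMatic, MT1, Tad6}, which the paper invokes as ``an integral part of the classification of discrete series.''
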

\begin{proof}
Reducibility of $\delta([\nu^{-a} \rho, \nu^{a} \rho]) \rtimes \delta(\rho, x; \sigma)$ is an integral part of the classification of discrete series. If such an induced representation reduces, it is a direct sum of two mutually non-isomorphic irreducible tempered representation, whose Aubert duals can be easily obtained from \cite[Theorem~4.11, Theorem~4.16, Theorem~4.21]{Matic10}.
\end{proof}

Now we deal with the case $-a < b$. The reducibility criterion follows from \cite[Theorem~4.1(i)]{Mu3}:

\begin{proposition}
Degenerate principal series $\zeta([\nu^{-b} \rho, \nu^{-a} \rho]) \rtimes \zeta(\rho, x; \sigma)$ is irreducible if and only if one of the following holds:
\begin{itemize}
\item $b < \alpha-1$,
\item $-a < \alpha-1$ and $b = x$.
\end{itemize}
\end{proposition}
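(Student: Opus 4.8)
The plan is to pass to the Aubert dual and then invoke M\oe glin's reducibility criterion for generalized principal series. First, recall from the discussion preceding this section (see (\ref{gosprva})), specialized to the present case $\rho_0 \cong \rho$ so that $\widetilde{\rho_0} \cong \rho$, that the Aubert dual of $\zeta([\nu^{-b}\rho, \nu^{-a}\rho]) \rtimes \zeta(\rho, x;\sigma)$ is the generalized principal series $\delta([\nu^{a}\rho, \nu^{b}\rho]) \rtimes \delta(\rho, x;\sigma)$. Since $D_G$ is an involution carrying irreducible representations to irreducible ones (Theorem~\ref{aub}(i)--(ii)), it induces a permutation of $\mathrm{Irr}(G)$ up to sign and in particular preserves the length of the composition series of any admissible representation of finite length; hence the degenerate principal series in question is irreducible if and only if $\delta([\nu^{a}\rho, \nu^{b}\rho]) \rtimes \delta(\rho, x;\sigma)$ is.

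It therefore suffices to decide when the latter reduces, and for this I would appeal to \cite[Theorem~4.1(i)]{Mu3}, which expresses the reducibility of $\delta([\nu^{a}\rho, \nu^{b}\rho]) \rtimes \delta$, for a discrete series $\delta$, in terms of the $\rho$-Jordan set of $\delta$. Here $\delta = \delta(\rho, x;\sigma)$ is the strongly positive discrete series realized as the unique irreducible subrepresentation of $\nu^{x}\rho \times \nu^{x-1}\rho \times \cdots \times \nu^{\alpha}\rho \rtimes \sigma$, whose internal structure is governed by Proposition~\ref{spds}; its Jordan set relative to $\rho$ is the staircase determined explicitly by $\alpha$ and $x$, with the usual minor modification when $\alpha = \vh$. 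Substituting this description into M\oe glin's criterion, I expect to obtain that $\delta([\nu^{a}\rho, \nu^{b}\rho]) \rtimes \delta(\rho, x;\sigma)$ is reducible precisely when $b \geq \alpha - 1$ and one is not in the exceptional configuration $-a < \alpha - 1$ with $b = x$; equivalently, irreducibility holds exactly under the two conditions in the statement.

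The genuine work, and the step most likely to hide an error, is the bookkeeping in this last substitution. In the present range the segment $[\nu^{a}\rho, \nu^{b}\rho]$ straddles $0$ (indeed $a \leq 0 < b$, since $-a < b$), so one must be careful about how M\oe glin's conditions, phrased for a general segment against the Jordan set, specialize here, and the shape of $\Jord_\rho(\delta(\rho,x;\sigma))$ depends on whether $\alpha$ is integral, half-integral, or equal to $\vh$. I would organize the verification as a short case split according to the position of $b$ (namely $b < \alpha - 1$, $\alpha - 1 \leq b < x$, $b = x$, and $b > x$) and, within each case, according to the position of $-a$ relative to $\alpha - 1$ (and to $x$); the delicate boundary cases are $b = \alpha - 1$ and $b = x$ with $-a$ near $\alpha - 1$, where the conditions are most easily misread. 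The reducible configurations isolated in this way are exactly those whose composition factors and Aubert duals are then determined in the remaining propositions of this section.
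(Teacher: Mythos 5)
Your proposal is correct and follows essentially the same route as the paper, which states only that the reducibility criterion "follows from [Mu3, Theorem 4.1(i)]" after passing to the Aubert dual generalized principal series $\delta([\nu^{a}\rho, \nu^{b}\rho]) \rtimes \delta(\rho, x;\sigma)$ — exactly as you describe via (\ref{gosprva}) specialized to $\rho_0\cong\rho$ and the length-preserving property of $D_G$. The bookkeeping you flag as the real work (translating Mui\'c's segment-versus-Jordan-set conditions, with the correct staircase $\Jord_\rho(\delta(\rho,x;\sigma))$ and the integral versus half-integral distinctions) is left implicit in the paper too, so your level of detail matches the source.
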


Other possibilities will be studied using a case-by-case consideration.

\begin{proposition} \label{propmanjeprva}
If $-a > x$, in $R(G)$ we have
\begin{gather*}
\zeta([\nu^{-b} \rho, \nu^{-a} \rho]) \rtimes \zeta(\rho, x; \sigma) = \\
L(\nu^{-b} \rho, \ldots, \nu^{a-1} \rho, \nu^{a} \rho, \nu^{a} \rho, \ldots, \nu^{-x-1} \rho, \nu^{-x-1} \rho, \nu^{-x} \rho, \nu^{-x} \rho, \nu^{-x} \rho, \ldots, \\
\nu^{-\alpha} \rho, \nu^{-\alpha} \rho, \nu^{-\alpha} \rho, \nu^{-\alpha+1} \rho, \nu^{-\alpha+1} \rho, \ldots, \nu^{\lceil \alpha \rceil - \alpha - 1} \rho, \nu^{\lceil \alpha \rceil - \alpha - 1} \rho; \tau^{(1)}) + \\
L(\nu^{-b} \rho, \ldots, \nu^{a-1} \rho, \nu^{a} \rho, \nu^{a} \rho, \ldots, \nu^{-x-2} \rho, \nu^{-x-2} \rho, \nu^{-x-1} \rho, \delta([\nu^{-x-1} \rho, \nu^{-x} \rho]), \ldots, \\
\nu^{-\alpha-1} \rho, \delta([\nu^{-\alpha-1} \rho, \nu^{-\alpha} \rho]), \nu^{-\alpha} \rho, \ldots, \nu^{\lceil \alpha \rceil - \alpha - 1} \rho; \tau^{(2)}) + \\
L(\nu^{-b} \rho, \ldots, \nu^{a-2} \rho, \delta([\nu^{a-1} \rho, \nu^{a} \rho]), \ldots, \delta([\nu^{-x-2} \rho, \nu^{-x-1} \rho]), \\
\delta([\nu^{-x-1} \rho, \nu^{-x} \rho]), \nu^{-x} \rho, \ldots, \delta([\nu^{-\alpha-1} \rho, \nu^{-\alpha} \rho]), \nu^{-\alpha} \rho, \nu^{-\alpha+1} \rho, \ldots,
\nu^{\lceil \alpha \rceil - \alpha - 1} \rho; \tau^{(2)}).
\end{gather*}
\end{proposition}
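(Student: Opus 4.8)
The plan is to pass to the Aubert dual, where the statement becomes a question about a generalized principal series with known composition series. By Theorem~\ref{aub} and the discussion surrounding~(\ref{gosprva}), since $\rho$ is self-contragredient the Aubert dual of $\zeta([\nu^{-b}\rho, \nu^{-a}\rho]) \rtimes \zeta(\rho, x; \sigma)$ is the generalized principal series $\delta([\nu^{a}\rho, \nu^{b}\rho]) \rtimes \delta(\rho, x; \sigma)$, and since $D_{G}$ is an involution which preserves composition series it suffices to decompose the latter in $R(G)$ and to dualize each summand. Thus the first step is to invoke the description of subquotients of generalized principal series of this type \cite{Mu3}, together with the classification of discrete series \cite{KimMatic, MT1}; under the hypotheses $a \leq 0 < x < -a < b$ this, combined with a matching of Jacquet modules read off from Lemma~\ref{osn} exactly as in the proofs of Propositions~\ref{propspprva} and~\ref{propvod}, gives a decomposition of $\delta([\nu^{a}\rho, \nu^{b}\rho]) \rtimes \delta(\rho, x; \sigma)$ into three irreducible constituents: the Langlands quotient $L(\delta([\nu^{-b}\rho, \nu^{-a}\rho]); \delta(\rho, x; \sigma))$, and two further constituents having partial cuspidal support $\tau^{(2)}$, one of which is tempered precisely when $-a = \alpha - 1$.

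For each constituent $\pi'$ one determines $\widehat{\pi'}$ inductively. Using the three parts of Proposition~\ref{propulag} together with the structure of strongly positive discrete series (Proposition~\ref{spds}, \cite[Theorem~4.6]{Matic4}, \cite[Section~7]{MatTad}) one builds a chain of embeddings realizing $\pi'$ as a subrepresentation of an induced representation carrying a growing family of $\nu$-twists of $\rho$ on the left, and then strips these twists off one at a time using Lemma~\ref{lemaindprva}, using Lemma~\ref{lemainddruga} whenever equal twists $(\nu^{t}\rho)^{m}$ occur --- which is what produces the doubled and tripled exponents $\nu^{-x}\rho, \nu^{-x}\rho, \nu^{-x}\rho, \ldots, \nu^{-\alpha}\rho, \nu^{-\alpha}\rho, \nu^{-\alpha}\rho$ in the statement, coming from the overlap of the segment $[\nu^{a}\rho, \nu^{b}\rho]$, its reflection, and the support of $\zeta(\rho, x; \sigma)$ --- and using Lemma~\ref{lemaindtreca} when such equal twists sit at the top of a segment. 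At each step the hypothesis that $\mu^{\ast}$ of the remaining ``core'' omits the relevant constituents $\nu^{i}\rho \otimes \pi$ is verified from Lemma~\ref{osn} and the known Jacquet modules of strongly positive representations and of $\tau^{(1)}, \tau^{(2)}$. The induction terminates at a strongly positive representation, at $\sigma$, or at one of the tempered representations $\tau^{(1)}, \tau^{(2)}, \tau(\rho, \sigma)$: the Aubert dual of the strongly positive base comes from \cite[Theorem~3.5]{Matic14}, and the Aubert duals of the tempered bases from \cite[Theorem~4.11, Theorem~4.16, Theorem~4.21]{Matic10}, which covers uniformly the subcases $\alpha \geq \frac{3}{2}$, $\alpha = 1$, and $\alpha = \frac{1}{2}$ implicit in the definition of $\tau^{(2)}$. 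Rewriting each answer in Langlands form then yields the three summands in the statement.

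The main obstacle is the bookkeeping forced by $-a > x$. Since then $[\nu^{a}\rho, \nu^{b}\rho]$ properly contains $[\nu^{-x}\rho, \nu^{x}\rho]$, on the dual side one must predict, for each exponent, whether the corresponding twists of $\rho$ remain separate factors $\nu^{-i}\rho$ or merge into length-two segments $\delta([\nu^{-i-1}\rho, \nu^{-i}\rho])$, and exactly where the multiplicity drops from triple to double and then to single; these transitions are governed by the interplay of the reflected segment, the support of $\zeta(\rho, x; \sigma)$, and the reducibility point $\alpha$, and their verification rests on a careful case-by-case analysis of Jacquet modules via Lemma~\ref{osn}. A second delicate point is separating the two $\tau^{(2)}$-constituents and checking that at every inductive step the induced representation in question has a \emph{unique} irreducible subrepresentation with the prescribed Jacquet module, so that the Langlands data in the statement are forced; here, as in the proof of Lemma~\ref{lemaindprva}, one uses the multiplicity-one statements coming from Lemma~\ref{osn} together with the irreducibility results of \cite{Zel} and \cite[Lemma~3.2]{MT1}.
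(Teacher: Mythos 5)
Your overall strategy matches the paper's: pass to the Aubert dual, apply the three-term decomposition of $\delta([\nu^{a}\rho, \nu^{b}\rho]) \rtimes \delta(\rho, x; \sigma)$ from \cite[Theorem~2.1]{Mu3}, and then pull each constituent's Aubert dual back via embedding chains built from Proposition~\ref{propulag} together with the stripping Lemmas~\ref{lemaindprva}--\ref{lemaindtreca}.

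However, your description of the decomposition is off in two places, and your handling of the constituents differs from the paper in a way that you should notice. First, under $-a > x$ the two non-Langlands constituents $\sigma_1, \sigma_2$ are both square-integrable, so the dichotomy ``tempered precisely when $-a = \alpha-1$'' is not a feature of this proposition; moreover $-a = \alpha-1$ is impossible under $-a > x \geq \alpha$, so the remark is vacuous. Second, the partial cuspidal support of $\sigma_1, \sigma_2$ is $\sigma$, not $\tau^{(2)}$; $\tau^{(2)}$ enters only as the tempered piece in the Langlands data of their \emph{Aubert duals}. Third, and most substantively, the paper does not run the inductive embedding procedure on $\sigma_1, \sigma_2$ at all --- Proposition~\ref{propulag} is geared toward Langlands quotients $L(\cdots;\sigma_{sp})$ rather than discrete series, and the paper simply reads off $\widehat{\sigma_1}, \widehat{\sigma_2}$ from \cite[Theorems~4.11,~4.16]{Matic10}. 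The induction is run only for the Langlands quotient $L(\delta([\nu^{-b}\rho,\nu^{-a}\rho]);\delta(\rho,x;\sigma))$, and its base is $L(\delta([\nu^{-\alpha}\rho,\nu^{\alpha-1}\rho]);\sigma)$ whose Aubert dual comes from \cite[Lemma~4.10]{Matic10}, not from \cite[Theorem~3.5]{Matic14}. These are corrections of detail rather than of high-level strategy, but if you tried to carry out the outline as written --- treating all three constituents uniformly by the embedding machinery --- you would find that Proposition~\ref{propulag} does not directly apply to the two discrete series, so be sure to separate those out and cite the Aubert duals directly.
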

\begin{proof}
By \cite[Theorem~2.1]{Mu3}, in $R(G)$ we have
\begin{gather*}
\delta([\nu^{a} \rho, \nu^{b} \rho]) \rtimes \delta(\rho, x; \sigma) = L(\delta([\nu^{-b} \rho, \nu^{-a} \rho]); \delta(\rho, x; \sigma)) + \sigma_1 + \sigma_2,
\end{gather*}
where $\sigma_1, \sigma_2$ are mutually non-isomorphic discrete series representations. Aubert duals of $\sigma_1$ and $\sigma_2$ have been obtained in \cite[Theorems~4.11,~4.16]{Matic10}. It remains to determine the Aubert dual of $L(\delta([\nu^{-b} \rho, \nu^{-a} \rho]); \delta(\rho, x; \sigma))$.

Using Proposition \ref{propulag}$(1)$ and Lemma \ref{lemaindprva}, we deduce that the Aubert dual of $L(\delta([\nu^{-b} \rho, \nu^{-a} \rho]); \delta(\rho, x; \sigma))$ is a subrepresentation of $\nu^{-b} \rho \times \cdots \times \nu^{a-2} \rho \rtimes \widehat{L(\delta([\nu^{a-1} \rho, \nu^{-a} \rho]); \delta(\rho, x; \sigma))}$. Now Proposition \ref{propulag}$(2)$ and $(1)$ imply that $L(\delta([\nu^{a-1} \rho, \nu^{-a} \rho]); \delta(\rho, x; \sigma))$ is a subrepresentation of $\nu^{-a} \rho \times \nu^{-a+1} \rho \rtimes L(\delta([\nu^{a} \rho, \nu^{-a-1} \rho]); \delta(\rho, x; \sigma))$. It can be seen, in the same way as in the proof of Proposition \ref{propnulaprva}, that $\mu^{\ast}(L(\delta([\nu^{a-1} \rho, \nu^{-a} \rho]); \delta(\rho, x; \sigma)))$ does not contain an irreducible constituent of the form $\nu^{-a+1} \rho \otimes \pi$, so
$L(\delta([\nu^{a-1} \rho, \nu^{-a} \rho]); \delta(\rho, x; \sigma))$ is a subrepresentation of $\zeta([\nu^{-a} \rho, \nu^{-a+1} \rho]) \rtimes L(\delta([\nu^{a} \rho, \nu^{-a-1} \rho]); \delta(\rho, x; \sigma))$.

Using Lemma \ref{lemaindtreca} and repeating this procedure, we obtain that the Aubert dual of $L(\delta([\nu^{a-1} \rho, \nu^{-a} \rho]); \delta(\rho, x; \sigma))$ is a subrepresentation of
\begin{equation*}
\delta([\nu^{a-1} \rho, \nu^{a} \rho]) \times \cdots \times \delta([\nu^{-x-2} \rho, \nu^{-x-1} \rho]) \rtimes \widehat{L(\delta([\nu^{-x-1} \rho, \nu^{x} \rho]); \delta(\rho, x; \sigma))}.
\end{equation*}

We now determine the Aubert dual of $L(\delta([\nu^{-x-1} \rho, \nu^{x} \rho]); \delta(\rho, x; \sigma))$. We first show the following embedding:
\begin{gather*}
L(\delta([\nu^{-x-1} \rho, \nu^{x} \rho]); \delta(\rho, x; \sigma)) \hookrightarrow \nonumber \\
\zeta([\nu^{x} \rho, \nu^{x+1} \rho]) \times \nu^{x} \rho \rtimes L(\delta([\nu^{-x} \rho, \nu^{x-1} \rho]); \delta(\rho, x-1; \sigma)). \end{gather*}

Note that
\begin{align*}
L(\delta([\nu^{-x-1} \rho, \nu^{x} \rho]); \delta(\rho, x; \sigma)) & \hookrightarrow \delta([\nu^{-x-1} \rho, \nu^{x} \rho]) \rtimes \delta(\rho, x; \sigma) \\
& \hookrightarrow \delta([\nu^{-x-1} \rho, \nu^{x} \rho]) \times \nu^{x} \rho \rtimes \delta(\rho, x-1; \sigma) \\
& \cong \nu^{x} \rho \times \delta([\nu^{-x-1} \rho, \nu^{x} \rho])\rtimes \delta(\rho, x-1; \sigma)\\
& \hookrightarrow \nu^{x} \rho \times \nu^{x} \rho \times \delta([\nu^{-x-1} \rho, \nu^{x-1} \rho])\rtimes \delta(\rho, x-1; \sigma).
\end{align*}
Consequently, there is an irreducible subquotient $\pi$ of $\delta([\nu^{-x-1} \rho, \nu^{x-1} \rho])\rtimes \delta(\rho, x-1; \sigma)$ such that $L(\delta([\nu^{-x-1} \rho, \nu^{x} \rho]); \delta(\rho, x; \sigma))$ is a subrepresentation of $\nu^{x} \rho \times \nu^{x} \rho \rtimes \pi$. Since $\mu^{\ast}(L(\delta([\nu^{-x-1} \rho, \nu^{x} \rho]); \delta(\rho, x; \sigma))) \geq \delta([\nu^{-x-1} \rho, \nu^{x} \rho]) \otimes \delta(\rho, x; \sigma)$, it follows at once that $\pi \cong L(\delta([\nu^{-x-1} \rho, \nu^{x-1} \rho]); \delta(\rho, x-1; \sigma))$.

By Proposition \ref{propulag}$(1)$, $L(\delta([\nu^{-x-1} \rho, \nu^{x-1} \rho]); \delta(\rho, x-1; \sigma))$ is a subrepresentation of $\nu^{x+1} \rho \rtimes L(\delta([\nu^{-x} \rho, \nu^{x-1} \rho]); \delta(\rho, x-1; \sigma))$, so there is an irreducible subquotient $\pi_1$ of $\nu^{x} \rho \times \nu^{x} \rho \times \nu^{x+1} \rho$ such that
$L(\delta([\nu^{-x-1} \rho, \nu^{x} \rho]); \delta(\rho, x; \sigma))$ is a subrepresentation of $\pi_1 \rtimes L(\delta([\nu^{-x} \rho, \nu^{x-1} \rho]); \delta(\rho, x-1; \sigma))$.

From \cite[Theorem~4.1(ii)]{Mu3} follows that in $R(G)$ we have
\begin{equation*}
\delta([\nu^{-x} \rho, \nu^{x+1} \rho]) \rtimes \delta(\rho, x; \sigma) = L(\delta([\nu^{-x-1} \rho, \nu^{x} \rho]); \delta(\rho, x; \sigma)) + \tau_{temp},
\end{equation*}
where $\tau_{temp}$ is an irreducible tempered subrepresentation of $\delta([\nu^{-x} \rho, \nu^{x} \rho]) \rtimes \delta(\rho, x+1; \sigma)$, which is also a subrepresentation of $\delta([\nu^{-x} \rho, \nu^{x+1} \rho]) \rtimes \delta(\rho, x; \sigma)$. Thus, $$\mu^{\ast}(\tau_{temp}) \geq \nu^{x+1} \rho \otimes \delta([\nu^{-x} \rho, \nu^{x} \rho]) \rtimes \delta(\rho, x; \sigma).$$
It follows from the structural formula and irreducibility of $\delta([\nu^{-x} \rho, \nu^{x} \rho]) \rtimes \delta(\rho, x; \sigma)$ that
$\nu^{x+1} \rho \otimes \delta([\nu^{-x} \rho, \nu^{x} \rho]) \rtimes \delta(\rho, x; \sigma)$ is the unique irreducible constituent of $\mu^{\ast}(\delta([\nu^{-x} \rho, \nu^{x+1} \rho]) \rtimes \delta(\rho, x; \sigma))$ of the form $\nu^{x+1} \rho \otimes \pi$, so $\mu^{\ast}(L(\delta([\nu^{-x-1} \rho, \nu^{x} \rho]); \delta(\rho, x; \sigma)))$ does not contain an irreducible constituent of such a form. Consequently, $\pi_1 \cong \zeta([\nu^{x} \rho, \nu^{x+1} \rho]) \times \nu^{x} \rho $.

In the same way it can be seen that $\mu^{\ast}(L(\delta([\nu^{-x} \rho, \nu^{x-1} \rho]); \delta(\rho, x-1; \sigma)))$ does not contain irreducible constituents of the form $\nu^{y} \rho \otimes \pi$, for $y \in \{ x, x+1 \}$.

Using Lemma \ref{lemainddruga}, we obtain that $\widehat{L(\delta([\nu^{-x-1} \rho, \nu^{x} \rho]); \delta(\rho, x; \sigma))}$ is a subrepresentation of $\delta([\nu^{-x-1} \rho,$ $\nu^{-x} \rho]) \times \nu^{-x} \rho \rtimes \widehat{L(\delta([\nu^{-x} \rho, \nu^{x-1} \rho]); \delta(\rho, x-1; \sigma)}$. 

Repeating this procedure until $x=\alpha$, we also obtain that the Aubert dual of $L(\delta([\nu^{-x-1} \rho,$ $\nu^{x} \rho]); \delta(\rho, x; \sigma))$ is a subrepresentation of
\begin{equation*}
\delta([\nu^{-x-1} \rho, \nu^{-x} \rho]) \times \nu^{-x} \rho \times \cdots \times \delta([\nu^{-\alpha-1} \rho, \nu^{-\alpha} \rho]) \times \nu^{-\alpha} \rho  \rtimes \widehat{L(\delta([\nu^{-\alpha} \rho, \nu^{\alpha-1} \rho]); \sigma)}.
\end{equation*}
It follows from \cite[Lemma~4.10]{Matic10} that the Aubert dual of $L(\delta([\nu^{-\alpha} \rho, \nu^{\alpha-1} \rho]); \sigma)$ is the unique irreducible subrepresentation of $\nu^{-\alpha+1} \rho \times \cdots \times \nu^{\lceil \alpha \rceil - \alpha - 1} \rho \rtimes \tau^{(2)}$, and the proposition is proved.
\end{proof}

\begin{proposition}  \label{propmanjedruga}
Suppose that $\alpha-1 \leq -a < b < x$. Let
\begin{gather*}
\pi_1 \cong L(\nu^{-x} \rho, \ldots, \nu^{-b-2} \rho, \delta([\nu^{-b-1} \rho, \nu^{-b} \rho]), \ldots, \delta([\nu^{a-2} \rho, \nu^{a-1} \rho]), \\
\delta([\nu^{a-1} \rho, \nu^{a} \rho]), \nu^{a} \rho, \ldots, \delta([\nu^{-\alpha} \rho, \nu^{-\alpha+1} \rho]), \nu^{-\alpha+1} \rho, \\
\nu^{-\alpha+2} \rho, \nu^{-\alpha+2} \rho, \ldots, \nu^{\lceil \alpha \rceil - \alpha - 1} \rho, \nu^{\lceil \alpha \rceil - \alpha - 1} \rho; \tau^{(1)})
\end{gather*}
if $\alpha \geq \frac{3}{2}$,
\begin{gather*}
\pi_1 \cong L(\nu^{-x} \rho, \ldots, \nu^{-b-2} \rho, \delta([\nu^{-b-1} \rho, \nu^{-b} \rho]), \ldots, \delta([\nu^{a-2} \rho, \nu^{a-1} \rho]), \\
\delta([\nu^{a-1} \rho, \nu^{a} \rho]), \nu^{a} \rho, \ldots, \delta([\nu^{-2} \rho, \nu^{-1} \rho]), \nu^{-1} \rho, \delta([\nu^{-1} \rho, \rho]); \sigma)
\end{gather*}
if $\alpha = 1$, and
\begin{gather*}
\pi_1 \cong L(\nu^{-x} \rho, \ldots, \nu^{-b-2} \rho, \delta([\nu^{-b-1} \rho, \nu^{-b} \rho]), \ldots, \delta([\nu^{a-2} \rho, \nu^{a-1} \rho]), \\
\delta([\nu^{a-1} \rho, \nu^{a} \rho]), \nu^{a} \rho, \ldots, \delta([\nu^{-\frac{5}{2}} \rho, \nu^{-\frac{3}{2}} \rho]), \nu^{-\frac{3}{2}} \rho,
\delta([\nu^{-\frac{3}{2}} \rho, \nu^{-\frac{1}{2}} \rho]); \tau(\rho, \sigma)),
\end{gather*}
if $\alpha = \frac{1}{2}$.

Also, let
\begin{gather*}
\pi_2 \cong L(\nu^{-x} \rho, \ldots, \nu^{-b-1} \rho, \nu^{-b} \rho, \nu^{-b} \rho, \ldots, \nu^{a-2} \rho, \nu^{a-2} \rho,
\nu^{a-1} \rho, \delta([\nu^{a-1} \rho, \nu^{a} \rho]), \ldots, \\
\nu^{-\alpha} \rho, \delta([\nu^{-\alpha} \rho, \nu^{-\alpha+1} \rho]), \nu^{-\alpha+1} \rho, \nu^{-\alpha+2} \rho, \nu^{-\alpha+2} \rho, \ldots,
\nu^{\lceil \alpha \rceil - \alpha - 1} \rho, \nu^{\lceil \alpha \rceil - \alpha - 1} \rho; \tau^{(1)}).
\end{gather*}
if $\alpha \geq \frac{3}{2}$,
\begin{gather*}
\pi_2 \cong L(\nu^{-x} \rho, \ldots, \nu^{-b-1} \rho, \nu^{-b} \rho, \nu^{-b} \rho, \ldots, \nu^{a-2} \rho, \nu^{a-2} \rho,
\nu^{a-1} \rho, \delta([\nu^{a-1} \rho, \nu^{a} \rho]), \ldots, \\
\nu^{-1} \rho, \delta([\nu^{-1} \rho, \rho]), \sigma),
\end{gather*}
if $\alpha = 1$, and
\begin{gather*}
\pi_2 \cong L(\nu^{-x} \rho, \ldots, \nu^{-b-1} \rho, \nu^{-b} \rho, \nu^{-b} \rho, \ldots, \nu^{a-2} \rho, \nu^{a-2} \rho,
\nu^{a-1} \rho, \delta([\nu^{a-1} \rho, \nu^{a} \rho]), \ldots, \\
\nu^{-\frac{3}{2}} \rho, \delta([\nu^{-\frac{3}{2}} \rho, \nu^{-\frac{1}{2}} \rho]); \tau(\rho, \sigma)),
\end{gather*}
if $\alpha = \vh$.

Then in $R(G)$ we have
\begin{gather*}
\zeta([\nu^{-b} \rho, \nu^{-a} \rho]) \rtimes \zeta(\rho, x; \sigma) = \\
L(\nu^{-x} \rho, \ldots, \nu^{-b-1} \rho, \nu^{-b} \rho, \nu^{-b} \rho, \ldots, \nu^{-a-1} \rho, \nu^{-a-1} \rho, \nu^{-a} \rho, \nu^{-a} \rho, \nu^{-a} \rho, \ldots, \\
\nu^{-\alpha} \rho, \nu^{-\alpha} \rho, \nu^{-\alpha} \rho, \nu^{-\alpha+1} \rho, \nu^{-\alpha+1} \rho, \ldots, \nu^{\lceil \alpha \rceil - \alpha - 1} \rho, \nu^{\lceil \alpha \rceil - \alpha - 1} \rho; \tau^{(1)}) + \pi_1 + \pi_2.
\end{gather*}
\end{proposition}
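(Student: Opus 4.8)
The plan is to proceed exactly as in Propositions \ref{propspprva}, \ref{propmanjeprva} and \ref{propnulaprva}, working on the Aubert dual side. Since $-a\le b$, the Aubert dual of $\zeta([\nu^{-b}\rho,\nu^{-a}\rho])\rtimes\zeta(\rho,x;\sigma)$ is $\delta([\nu^{a}\rho,\nu^{b}\rho])\rtimes\delta(\rho,x;\sigma)$. Under the hypothesis $\alpha-1\le -a<b<x$, \cite[Theorem~2.1]{Mu3} together with the classification of discrete series \cite{KimMatic, MT1} gives, in $R(G)$, a decomposition $\delta([\nu^{a}\rho,\nu^{b}\rho])\rtimes\delta(\rho,x;\sigma)=L(\delta([\nu^{-b}\rho,\nu^{-a}\rho]);\delta(\rho,x;\sigma))+\sigma_1+\sigma_2$, where $\sigma_1,\sigma_2$ are mutually non-isomorphic discrete series, distinguished by whether $\tau^{(1)}$ or $\tau^{(2)}$ occurs in the appropriate Jacquet module. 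The Aubert duals $\widehat{\sigma_1},\widehat{\sigma_2}$ are supplied by \cite[Theorems~4.11,~4.16,~4.21]{Matic10}, with the same split on $\alpha\in\{\vh,1,\ge\tfrac32\}$ as in the statement; they account for two of the three summands on the right-hand side (the first one, which has only repeated copies over $\tau^{(1)}$, and $\pi_2$). By the involution property it therefore suffices to identify $\widehat{L(\delta([\nu^{-b}\rho,\nu^{-a}\rho]);\delta(\rho,x;\sigma))}$ with the remaining summand $\pi_1$.

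To compute that Aubert dual I would run the usual inductive peeling in three stages. \emph{Stage~A:} using that $\delta(\rho,x;\sigma)$ is a subrepresentation of $\nu^{x}\rho\rtimes\delta(\rho,x-1;\sigma)$, apply Proposition \ref{propulag}(3) to obtain $L(\delta([\nu^{-b}\rho,\nu^{-a}\rho]);\delta(\rho,x;\sigma))\hookrightarrow\nu^{x}\rho\rtimes L(\delta([\nu^{-b}\rho,\nu^{-a}\rho]);\delta(\rho,x-1;\sigma))$; the structural formula of Lemma \ref{osn} and the description of Jacquet modules of strongly positive representations (\cite[Theorem~4.6]{Matic4}, \cite[Section~7]{MatTad}) show that $\mus$ of the smaller representation contains no constituent $\nu^{x}\rho\otimes\pi$, so Lemma \ref{lemaindprva} transfers the embedding to the dual side; iterating peels off the single copies $\nu^{-x}\rho,\ldots$ and brings us down to a base with parameter near $b$. \emph{Stage~B:} as in the proof of Proposition \ref{propspprva}, the irreducibility of $\nu^{b}\rho\rtimes\delta(\rho,b;\sigma)$ (\cite[Proposition~3.1]{Mu3}), the embedding $\delta(\rho,b;\sigma)\hookrightarrow\nu^{b}\rho\rtimes\delta(\rho,b-1;\sigma)$, and $\delta([\nu^{-b}\rho,\nu^{-a}\rho])\hookrightarrow\delta([\nu^{-b+1}\rho,\nu^{-a}\rho])\times\nu^{-b}\rho$ produce repeated copies of $\nu^{b}\rho$; passing through the $\zeta$-segment embeddings of \cite[Theorem~3.4]{Matic4} and applying Lemmas \ref{lemainddruga} and \ref{lemaindtreca} converts these, on the dual side, into the length-two segments $\delta([\nu^{-j-1}\rho,\nu^{-j}\rho])$ visible in $\pi_1$, while the right endpoint $-a\le-\alpha+1$ of the segment forces the transition to the triple/double-copy pattern near $-\alpha$. \emph{Stage~C:} the induction terminates at a base case of the form $\widehat{L(\delta([\nu^{-\alpha}\rho,\nu^{-a}\rho]);\sigma)}$ when $\alpha\ge\tfrac32$ (respectively at the analogous $\alpha=1$ and $\alpha=\vh$ base cases), whose Aubert dual is read off from \cite[Theorem~3.5]{Matic14} and \cite[Lemma~4.10]{Matic10}; this is precisely where the innermost building block $\tau^{(1)}$ (resp. the $\delta([\nu^{-1}\rho,\rho])\rtimes\sigma$-type data, resp. $\tau(\rho,\sigma)$) enters. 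Reassembling the peeled-off pieces yields $\widehat{L(\delta([\nu^{-b}\rho,\nu^{-a}\rho]);\delta(\rho,x;\sigma))}\cong\pi_1$, and applying $D_G$ once more to $L(\delta([\nu^{-b}\rho,\nu^{-a}\rho]);\delta(\rho,x;\sigma))+\sigma_1+\sigma_2$ gives the claimed identity in $R(G)$.

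The main obstacle is the bookkeeping in Stages~B and~C: at each of the many peeling steps one must verify, via Lemma \ref{osn} and the strongly positive Jacquet-module formulas, that no spurious constituent $\nu^{i}\rho\otimes\pi$ obstructs the hypotheses of Lemmas \ref{lemaindprva}--\ref{lemaindtreca}, and one must track carefully how the innermost strongly positive/tempered piece (and hence the shape of $\pi_1$) changes with $\alpha\in\{\vh,1,\ge\tfrac32\}$, which is what produces the three displayed forms of $\pi_1$. Equally delicate is locating the exact index at which the single-copy regime gives way to the segment regime, governed by the relative positions of $-a$, $-\alpha$ and $b$; once these transition points are pinned down, the remaining computations are the same routine Jacquet-module manipulations used in Propositions \ref{propspprva}--\ref{propmanjeprva}.
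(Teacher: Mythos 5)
Your overall strategy matches the paper's: pass to the dual $\delta([\nu^{a}\rho,\nu^{b}\rho])\rtimes\delta(\rho,x;\sigma)$, take the decomposition $L(\delta([\nu^{-b}\rho,\nu^{-a}\rho]);\delta(\rho,x;\sigma))+\sigma_1+\sigma_2$ from \cite[Theorem~2.1]{Mu3}, read $\widehat{\sigma_1},\widehat{\sigma_2}$ off from \cite{Matic10}, and compute $\widehat{L(\delta([\nu^{-b}\rho,\nu^{-a}\rho]);\delta(\rho,x;\sigma))}$ by inductive peeling. However, there is a concrete misidentification which propagates into a faulty description of Stage~B.

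You assert that $\widehat{\sigma_1},\widehat{\sigma_2}$ account for the first summand and $\pi_2$, leaving $\widehat{L(\delta([\nu^{-b}\rho,\nu^{-a}\rho]);\delta(\rho,x;\sigma))}=\pi_1$. This is backwards. Running the peeling: Stage~A (Proposition~\ref{propulag}(3) $+$ Lemma~\ref{lemaindprva}) strips off the singletons $\nu^{-x}\rho,\ldots,\nu^{-b-1}\rho$ and leaves $\widehat{L(\delta([\nu^{-b}\rho,\nu^{-a}\rho]);\delta(\rho,b;\sigma))}$. In Stage~B the irreducibility of $\nu^{b}\rho\rtimes\delta(\rho,b;\sigma)$ gives
\begin{equation*}
L(\delta([\nu^{-b}\rho,\nu^{-a}\rho]);\delta(\rho,b;\sigma))\hookrightarrow\nu^{b}\rho\times\nu^{b}\rho\rtimes L(\delta([\nu^{-b+1}\rho,\nu^{-a}\rho]);\delta(\rho,b-1;\sigma)),
\end{equation*}
and iterating with Lemma~\ref{lemainddruga} the repeated copies $\nu^{b}\rho,\nu^{b}\rho$ dualize to \emph{pairs of singletons} $\nu^{-b}\rho,\nu^{-b}\rho,\ldots,\nu^{a-2}\rho,\nu^{a-2}\rho$, not to the length-two segments $\delta([\nu^{-j-1}\rho,\nu^{-j}\rho])$ you describe. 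That is already the opening pattern of $\pi_2$, which begins $\nu^{-x}\rho,\ldots,\nu^{-b-1}\rho,\nu^{-b}\rho,\nu^{-b}\rho,\ldots$, whereas $\pi_1$ begins $\nu^{-x}\rho,\ldots,\nu^{-b-2}\rho,\delta([\nu^{-b-1}\rho,\nu^{-b}\rho]),\ldots$. The mixed blocks $\nu^{j}\rho,\delta([\nu^{j}\rho,\nu^{j+1}\rho])$ visible in $\pi_2$ do not come from Stage~B at all: they arise only once one reaches $L(\delta([\nu^{a-1}\rho,\nu^{-a}\rho]);\delta(\rho,-a+1;\sigma))$, via Proposition~\ref{propulag}(2), an application of \cite[Theorem~4.1]{Mu3} to rule out the $\nu^{-a+1}\rho\times\nu^{-a+1}\rho\otimes\pi'$-type constituents, and then Lemma~\ref{lemaindtreca}. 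So the correct conclusion of the peeling is $\widehat{L(\delta([\nu^{-b}\rho,\nu^{-a}\rho]);\delta(\rho,x;\sigma))}\cong\pi_2$, and $\widehat{\sigma_1},\widehat{\sigma_2}$ are the first summand and $\pi_1$. Your plan would therefore terminate at $\pi_2$ rather than at $\pi_1$, and the specific mechanism you cite for Stage~B (converting $\nu^{b}\rho$-repetitions into $\delta([\nu^{-j-1}\rho,\nu^{-j}\rho])$-segments via \cite[Theorem~3.4]{Matic4}) is the one used in Proposition~\ref{propspprva}, not here.
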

\begin{proof}
Again, by \cite[Theorem~2.1]{Mu3}, in $R(G)$ we have
\begin{gather*}
\delta([\nu^{a} \rho, \nu^{b} \rho]) \rtimes \delta(\rho, x; \sigma) = L(\delta([\nu^{-b} \rho, \nu^{-a} \rho]); \delta(\rho, x; \sigma)) + \sigma_1 + \sigma_2,
\end{gather*}
where $\sigma_1, \sigma_2$ are mutually non-isomorphic discrete series representations.

Similarly as in the previous proposition, it is enough to determine the Aubert dual of $L(\delta([\nu^{-b} \rho, \nu^{-a} \rho]); \delta(\rho, x; \sigma))$. Using Proposition \ref{propulag}$(3)$ and Lemma \ref{lemaindprva}, we deduce that the Aubert dual of $L(\delta([\nu^{-b} \rho, \nu^{-a} \rho]); \delta(\rho, x; \sigma))$ is an irreducible subrepresentation of
\begin{equation*}
\nu^{-x} \rho \times \cdots \times \nu^{-b-1} \rho \rtimes \widehat{L(\delta([\nu^{-b} \rho, \nu^{-a} \rho]); \delta(\rho, b; \sigma))}.
\end{equation*}
If $b > -a+1$, we have the following embeddings and isomorphisms:
\begin{align*}
L(\delta([\nu^{-b} \rho, \nu^{-a} \rho]); \delta(\rho, b; \sigma)) & \hookrightarrow \delta([\nu^{-b+1} \rho, \nu^{-a} \rho]) \times \nu^{-b} \rho \rtimes \delta(\rho, b; \sigma) \\
& \cong \delta([\nu^{-b+1} \rho, \nu^{-a} \rho]) \times \nu^{b} \rho \rtimes \delta(\rho, b; \sigma) \\
& \cong \nu^{b} \rho \times \delta([\nu^{-b+1} \rho, \nu^{-a} \rho]) \rtimes \delta(\rho, b; \sigma) \\
& \hookrightarrow \nu^{b} \rho \times \delta([\nu^{-b+1} \rho, \nu^{-a} \rho]) \times \nu^{b} \rho  \rtimes \delta(\rho, b-1; \sigma) \\
& \cong \nu^{b} \rho \times \nu^{b} \rho \times \delta([\nu^{-b+1} \rho, \nu^{-a} \rho]) \rtimes \delta(\rho, b-1; \sigma).
\end{align*}
Thus, there is an irreducible subquotient $\pi$ of $\delta([\nu^{-b+1} \rho, \nu^{-a} \rho]) \rtimes \delta(\rho, b-1; \sigma)$ such that $L(\delta([\nu^{-b} \rho, \nu^{-a} \rho]); \delta(\rho, b; \sigma))$ is a subrepresentation of $\nu^{b} \rho \times \nu^{b} \rho \rtimes \pi$. Since $\mu^{\ast}(L(\delta([\nu^{-b} \rho, \nu^{-a} \rho]); \delta(\rho, b; \sigma))) \geq \delta([\nu^{-b} \rho, \nu^{-a} \rho]) \otimes \delta(\rho, b; \sigma)$, it follows that $\pi \cong L(\delta([\nu^{-b+1} \rho, \nu^{-a} \rho]); \delta(\rho, b-1; \sigma))$. Obviously, $\mu^{\ast}(L(\delta([\nu^{-b+1} \rho, \nu^{-a} \rho])$; $\delta(\rho, b-1; \sigma)))$ does not contain an irreducible constituent of the form $\nu^{b} \rho \otimes \pi_1$. Repeated application of this procedure and Lemma \ref{lemainddruga} lead us to an embedding
\begin{gather*}
\widehat{L(\delta([\nu^{-b} \rho, \nu^{-a} \rho]); \delta(\rho, b; \sigma))} \hookrightarrow \\
\nu^{-b} \rho \times \nu^{-b} \rho \times \cdots \times \nu^{a-2} \rho \times \nu^{a-2} \rho \rtimes \widehat{L(\delta([\nu^{a-1} \rho, \nu^{-a} \rho]); \delta(\rho, -a+1; \sigma))}.
\end{gather*}
Thus, it remains to determine $\widehat{L(\delta([\nu^{a-1} \rho, \nu^{-a} \rho]); \delta(\rho, -a+1; \sigma))}$. Proposition \ref{propulag}$(2)$ implies that $L(\delta([\nu^{a-1} \rho, \nu^{-a} \rho]); \delta(\rho, -a+1; \sigma))$ is a subrepresentation of $\nu^{-a} \rho \rtimes L(\delta([\nu^{a-1} \rho, \nu^{-a-1} \rho]); \delta(\rho, -a+1; \sigma))$, and in the same way as before we get
\begin{gather*}
L(\delta([\nu^{a-1} \rho, \nu^{-a} \rho]); \delta(\rho, -a+1; \sigma)) \hookrightarrow \\
\nu^{-a} \rho \times \nu^{-a+1} \rho \times \nu^{-a+1} \rho \rtimes L(\delta([\nu^{a} \rho, \nu^{-a-1} \rho]); \delta(\rho, -a; \sigma)).
\end{gather*}
By \cite[Theorem~4.1]{Mu3}, in $R(G)$ we have
\begin{gather*}
\delta([\nu^{a} \rho, \nu^{-a+1} \rho]) \rtimes \delta(\rho, -a+1; \sigma) = \\
L(\delta([\nu^{a-1} \rho, \nu^{-a} \rho]); \delta(\rho, -a+1; \sigma)) + \tau_{temp},
\end{gather*}
where $\tau_{temp}$ is the unique common irreducible subrepresentation of $$\delta([\nu^{a-1} \rho, \nu^{-a+1} \rho]) \rtimes \delta(\rho, -a; \sigma)$$
and 
$$\delta([\nu^{a} \rho, \nu^{-a+1} \rho]) \rtimes \delta(\rho, -a+1; \sigma).$$
From the structural formula we obtain that 
$$\nu^{-a+1} \rho \times \nu^{-a+1} \rho \otimes \delta([\nu^{a} \rho, \nu^{-a} \rho]) \rtimes \delta(\rho, -a; \sigma)$$
is the unique irreducible constituent of $\mu^{\ast}(\delta([\nu^{a} \rho, \nu^{-a+1} \rho]) \rtimes \delta(\rho, -a+1; \sigma))$ of the form $\nu^{-a+1} \rho \times \nu^{-a+1} \rho \otimes \pi'$, which appears there with multiplicity one, and by Frobenius reciprocity it is contained in $\mu^{\ast}(\tau_{temp})$. Thus, $$\mu^{\ast}(L(\delta([\nu^{a-1} \rho, \nu^{-a} \rho]); \delta(\rho, -a+1; \sigma)))$$
does not contain an irreducible constituent of the form $\nu^{-a+1} \rho \times \nu^{-a+1} \rho \otimes \pi'$, which yields
\begin{gather*}
L(\delta([\nu^{a-1} \rho, \nu^{-a} \rho]); \delta(\rho, -a+1; \sigma)) \hookrightarrow \\
\zeta([\nu^{-a} \rho, \nu^{-a+1} \rho]) \times \nu^{-a+1} \rho \rtimes L(\delta([\nu^{a} \rho, \nu^{-a-1} \rho]); \delta(\rho, -a; \sigma)).
\end{gather*}
Also, $\mu^{\ast}(L(\delta([\nu^{a} \rho, \nu^{-a-1} \rho]); \delta(\rho, -a; \sigma)))$ does not contain an irreducible constituent of the form $\nu^{-a+1} \rho \otimes \pi'_1$, so using Lemma \ref{lemaindtreca} and a repeated application of this procedure, we get that the Aubert dual of $L(\delta([\nu^{a-1} \rho, \nu^{-a} \rho]);$ $\delta(\rho, -a+1; \sigma))$ is an irreducible subrepresentation of
\begin{gather*}
\nu^{a-1} \rho \times \delta([\nu^{a-1} \rho, \nu^{a} \rho]) \times \cdots \times \nu^{-\alpha-1} \rho \times \delta([\nu^{-\alpha-1} \rho, \nu^{-\alpha} \rho]) \rtimes \\ \widehat{L(\delta([\nu^{-\alpha} \rho, \nu^{\alpha-1} \rho]); \delta(\rho, \alpha; \sigma))}.
\end{gather*}
If $\alpha = \vh$, by \cite[Lemma~4.10]{Matic10} we have $\widehat{L(\delta([\nu^{-\alpha} \rho, \nu^{\alpha-1} \rho]); \delta(\rho, \alpha; \sigma))} \cong \tau(\rho, \sigma)$. If $\alpha > \vh$, in the same way as before we get
\begin{gather*}
\widehat{L(\delta([\nu^{-\alpha} \rho, \nu^{\alpha-1} \rho]); \delta(\rho, \alpha; \sigma))} \hookrightarrow \\
\nu^{-\alpha} \rho \times \delta([\nu^{-\alpha} \rho, \nu^{-\alpha+1} \rho]) \rtimes \widehat{L(\delta([\nu^{-\alpha+1} \rho, \nu^{\alpha-2} \rho]); \sigma)}.
\end{gather*}
For $\alpha = 1$, we have $L(\delta([\nu^{-\alpha+1} \rho, \nu^{\alpha-2} \rho]); \sigma) \cong \sigma$, and for $\alpha \geq \frac{3}{2}$ we have
\begin{gather*}
\widehat{L(\delta([\nu^{-\alpha+1} \rho, \nu^{\alpha-2} \rho]); \sigma)} \hookrightarrow \\
\nu^{-\alpha+1} \rho \times \nu^{-\alpha+2} \rho \times \nu^{-\alpha+2} \rho \times \cdots \times
\nu^{\lceil \alpha \rceil - \alpha - 1} \rho \times \nu^{\lceil \alpha \rceil - \alpha - 1} \rho \rtimes \tau^{(1)}.
\end{gather*}
This ends the proof.
\end{proof}

\begin{proposition}
Suppose that $\alpha-1 \leq -a < x$ and $x < b$. Let
\begin{gather*}
\pi_1 \cong
L(\nu^{-b} \rho, \ldots, \nu^{-x-1} \rho, \nu^{-x} \rho, \nu^{-x} \rho, \ldots, \nu^{a-2} \rho, \nu^{a-2} \rho, \nu^{a-1} \rho, \delta([\nu^{a-1} \rho, \nu^{a} \rho]), \\
\ldots, \nu^{-\frac{3}{2}} \rho, \delta([\nu^{-\frac{3}{2}} \rho, \nu^{-\vh} \rho]); \tau(\rho, \sigma)),
\end{gather*}
if $\alpha = \vh$,
\begin{gather*}
\pi_1 \cong
L(\nu^{-b} \rho, \ldots, \nu^{-x-1} \rho, \nu^{-x} \rho, \nu^{-x} \rho, \ldots, \nu^{a-2} \rho, \nu^{a-2} \rho, \nu^{a-1} \rho, \delta([\nu^{a-1} \rho, \nu^{a} \rho]), \\
\ldots, \nu^{-1} \rho, \delta([\nu^{-1} \rho, \rho]); \sigma),
\end{gather*}
if $\alpha = 1$, and
\begin{gather*}
\pi_1 \cong
L(\nu^{-b} \rho, \ldots, \nu^{-x-1} \rho, \nu^{-x} \rho, \nu^{-x} \rho, \ldots, \nu^{a-2} \rho, \nu^{a-2} \rho, \nu^{a-1} \rho, \delta([\nu^{a-1} \rho, \nu^{a} \rho]), \ldots, \\
\nu^{-\alpha} \rho, \delta([\nu^{-\alpha} \rho, \nu^{-\alpha+ 1} \rho]), \nu^{-\alpha+1} \rho, \nu^{-\alpha+2} \rho, \nu^{-\alpha+2} \rho, \ldots, \nu^{\lceil \alpha \rceil - \alpha - 1} \rho, \nu^{\lceil \alpha \rceil - \alpha - 1} \rho; \tau^{(1)}),
\end{gather*}
if $\alpha \geq \frac{3}{2}$.

Let
\begin{gather*}
\pi_2 \cong
L(\nu^{-b} \rho, \ldots, \nu^{-x-2} \rho, \delta([\nu^{-x-1} \rho, \nu^{-x} \rho]), \ldots, \delta([\nu^{a-3} \rho, \nu^{a-2} \rho]), \\ \delta([\nu^{a-2} \rho, \nu^{a} \rho]), \ldots, \delta([\nu^{-\frac{3}{2}} \rho, \nu^{\vh} \rho]); \sigma),
\end{gather*}
if $\alpha = \vh$,
\begin{gather*}
\pi_2 \cong
L(\nu^{-b} \rho, \ldots, \nu^{-x-2} \rho, \delta([\nu^{-x-1} \rho, \nu^{-x} \rho]), \ldots, \delta([\nu^{a-3} \rho, \nu^{a-2} \rho]), \\ \delta([\nu^{a-2} \rho, \nu^{a} \rho]), \ldots, \delta([\nu^{-2} \rho, \rho]); \delta(\rho, 1; \sigma)),
\end{gather*}
if $\alpha = 1$, and
%\begin{gather*}
%\pi_2 \cong
%L(\nu^{-b} \rho, \ldots, \nu^{-x-2} \rho, \delta([\nu^{-x-1} \rho, %\nu^{-x} \rho]), \ldots, \delta([\nu^{a-3} \rho, \nu^{a-2} \rho]), \\ %\delta([\nu^{a-2} \rho, \nu^{a} \rho]), \ldots, %\delta([\nu^{-\frac{5}{2}} \rho, \nu^{-\vh} \rho]); \tau^{(2)}),
%\end{gather*}
%if $\alpha = \frac{3}{2}$, and
\begin{gather*}
\pi_2 \cong
L(\nu^{-b} \rho, \ldots, \nu^{-x-2} \rho, \delta([\nu^{-x-1} \rho, \nu^{-x} \rho]), \ldots, \delta([\nu^{a-3} \rho, \nu^{a-2} \rho]), \\ \delta([\nu^{a-2} \rho, \nu^{a} \rho]), \ldots, \delta([\nu^{-\alpha-1} \rho, \nu^{-\alpha+1} \rho]), \nu^{-\alpha+2} \rho, \ldots, \nu^{\lceil \alpha \rceil - \alpha - 1} \rho; \tau^{(2)}),
\end{gather*}
if $\alpha \geq \frac{3}{2}$.

Then in $R(G)$ we have
\begin{gather*}
\zeta([\nu^{-b} \rho, \nu^{-a} \rho]) \rtimes \zeta(\rho, x; \sigma) = \\
L(\nu^{-b} \rho, \ldots, \nu^{-x-1} \rho, \nu^{-x} \rho, \nu^{-x} \rho, \ldots, \nu^{a-1} \rho, \nu^{a-1} \rho, \nu^{a} \rho, \nu^{a} \rho, \nu^{a} \rho, \ldots, \\
\nu^{-\alpha} \rho, \nu^{-\alpha} \rho, \nu^{-\alpha} \rho, \nu^{-\alpha+1} \rho, \nu^{-\alpha+1} \rho, \ldots, \nu^{\lceil \alpha \rceil - \alpha - 1} \rho, \nu^{\lceil \alpha \rceil - \alpha - 1} \rho; \tau^{(1)}) + \\
L(\nu^{-b} \rho, \ldots, \nu^{-x-2} \rho, \delta([\nu^{-x-1} \rho, \nu^{-x} \rho]), \ldots, \delta([\nu^{a-2} \rho, \nu^{a-1} \rho]),  \delta([\nu^{a-1} \rho, \nu^{a} \rho]), \nu^{a} \rho, \\
\ldots, \delta([\nu^{-\alpha-1} \rho, \nu^{-\alpha} \rho]), \nu^{-\alpha} \rho, \nu^{-\alpha + 1} \rho, \ldots, \nu^{\lceil \alpha \rceil - \alpha - 1} \rho; \tau^{(2)}) + \\
\pi_1 + \pi_2.
\end{gather*}
\end{proposition}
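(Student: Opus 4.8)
The strategy is the same as in Propositions \ref{propvod}, \ref{propmanjeprva} and \ref{propmanjedruga}: one passes to the Aubert dual and computes, one at a time, the Aubert duals of the composition factors of the generalized principal series $\delta([\nu^{a}\rho, \nu^{b}\rho]) \rtimes \delta(\rho, x; \sigma)$, whose composition series is known by \cite{Mu3}. First I would use \cite[Theorems~2.1 and~4.1]{Mu3} together with the classification of discrete series \cite{KimMatic, MT1} to write, in $R(G)$, the representation $\delta([\nu^{a}\rho, \nu^{b}\rho]) \rtimes \delta(\rho, x; \sigma)$ as a sum of four irreducible summands: two mutually non-isomorphic discrete series $\sigma_1, \sigma_2$, which occur because $-a \geq \alpha - 1$, so that the segment runs below the cuspidal reducibility point of $\nu^{\alpha}\rho \rtimes \sigma$, together with two Langlands quotients of the form $L(\delta([\nu^{-b}\rho, \nu^{-a}\rho]); \delta(\rho, x; \sigma))$ and $L(\delta([\nu^{-x}\rho, \nu^{-a}\rho]); \delta(\rho, b; \sigma))$, which occur because $x < b$, exactly as in Proposition \ref{propvod}. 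Since $D_{G}$ is an additive involution taking irreducibles to irreducibles (Theorem \ref{aub}), it is enough to compute the Aubert dual of each of these four summands.

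For $\sigma_1$ and $\sigma_2$ there is nothing new to do: their Aubert duals are read off from \cite[Theorems~4.11,~4.16,~4.21]{Matic10}, according to whether $\alpha = \vh$, $\alpha = 1$ or $\alpha \geq \frac{3}{2}$, and they are the representations $\pi_1$ and $\pi_2$ of the statement. For each of the two Langlands quotients $L(\delta([\nu^{-c}\rho, \nu^{-a}\rho]); \delta(\rho, y; \sigma))$ I would run the now-standard peeling procedure. On the top part of the segment (the range $[-b, -x-1]$, respectively $[-b, -x-2]$) one uses part $(3)$ of Proposition \ref{propulag} to obtain embeddings $L(\cdots) \hookrightarrow \nu^{-b}\rho \rtimes L(\cdots)$, checks, by means of the structural formula (Lemma \ref{osn}) and the Jacquet-module description of strongly positive representations (\cite[Theorem~4.6]{Matic4} and \cite[Section~7]{MatTad}), that the relevant $\mu^{\ast}$ has no constituent of the forbidden form $\nu^{i}\rho \otimes \pi$, and then moves the peeled factor across $D_{G}$ by Lemma \ref{lemaindprva}. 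On the middle part, where the segment overlaps the strongly positive support of $\delta(\rho, x; \sigma)$, one encounters the doubling phenomenon of Proposition \ref{propmanjeprva}: iterated use of parts $(2)$ and $(1)$ of Proposition \ref{propulag} produces embeddings of the kind $L(\cdots) \hookrightarrow \zeta([\nu^{x}\rho, \nu^{x+1}\rho]) \times \nu^{x}\rho \rtimes L(\cdots)$ seen in the proof of Proposition \ref{propmanjeprva}, and Lemmas \ref{lemainddruga} and \ref{lemaindtreca} peel off, on the dual side, the doubled singletons and the length-two segments. Iterating down to the bottom reduces the computation to the Aubert duals of $L(\delta([\nu^{-\alpha}\rho, \nu^{\alpha-1}\rho]); \delta(\rho, \alpha; \sigma))$ and of $L(\delta([\nu^{-\alpha}\rho, \nu^{\alpha-1}\rho]); \sigma)$; these are supplied by \cite[Lemma~4.10]{Matic10} and, according to $\alpha$, equal $\tau(\rho, \sigma)$, a strongly positive representation induced from $\tau^{(2)}$, the representation $\tau^{(1)}$, $\sigma$, or $\delta(\rho, 1; \sigma)$ --- precisely the base points appearing in $\pi_1$, $\pi_2$ and the two $L(\cdots; \tau^{(i)})$ factors. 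Reassembling the Langlands parameters at each step via Frobenius reciprocity and the Langlands classification then yields the two factors $L(\cdots; \tau^{(1)})$ and $L(\cdots; \tau^{(2)})$, and additivity of $D_{G}$ gives the asserted identity.

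The main obstacle is bookkeeping rather than a new idea: at every peeling step one must confirm that the irreducible subquotient furnished by Lemma \ref{lemajantz} is exactly the Langlands quotient expected one level down --- that is, one must check that the truncated representation has $\mu^{\ast}$ free of the forbidden constituent $\nu^{i}\rho \otimes \pi$ (so that Lemmas \ref{lemaindprva}--\ref{lemaindtreca} apply and the Aubert-dual subrepresentation is unique) and verify the ordering of exponents so that the Langlands classification pins the subquotient down. The wrinkle specific to this case, compared with Proposition \ref{propmanjedruga}, is that since $x < b$ one must simultaneously track three regimes --- the plain singletons on $[-b, -x-1]$, the doubled singletons and length-two segments around $[-x, -a]$ where the segment meets the strongly positive support, and the triple occurrence of $\nu^{-\alpha}\rho$ near the bottom --- and keep them separated across the three subcases $\alpha = \vh$, $\alpha = 1$ and $\alpha \geq \frac{3}{2}$. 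Each individual verification is, however, of the type already carried out in Propositions \ref{propmanjeprva} and \ref{propmanjedruga}, so no genuinely new difficulty arises.
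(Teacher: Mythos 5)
There is a genuine gap: your starting decomposition of the Aubert-dual generalized principal series is wrong, and it matters.

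You begin by asserting, via \cite[Theorems~2.1 and~4.1]{Mu3} and the discrete-series classification, that $\delta([\nu^{a}\rho, \nu^{b}\rho]) \rtimes \delta(\rho, x; \sigma)$ decomposes as two mutually non-isomorphic discrete series $\sigma_1, \sigma_2$ plus the two Langlands quotients $L(\delta([\nu^{-b}\rho, \nu^{-a}\rho]); \delta(\rho, x; \sigma))$ and $L(\delta([\nu^{-x}\rho, \nu^{-a}\rho]); \delta(\rho, b; \sigma))$. But the paper's proof is based on \cite[Proposition~3.2]{Matic8}, which — as noted elsewhere in the paper — corrects exactly \cite[Theorem~4.1.(iv),~Lemma~4.9]{Mu3}. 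The correct decomposition in the regime $\alpha-1 \leq -a < x < b$ has only one discrete series $\sigma_1$ (the unique common discrete series subrepresentation of $\delta([\nu^{x}\rho, \nu^{b}\rho]) \rtimes \delta(\rho, a; \sigma)$ and $\delta([\nu^{a}\rho, \nu^{x}\rho]) \rtimes \delta(\rho, b; \sigma)$) and three Langlands quotients, namely $L(\delta([\nu^{-b}\rho, \nu^{-a}\rho]); \delta(\rho, x; \sigma))$, $L(\delta([\nu^{-b}\rho, \nu^{x}\rho]); \delta(\rho, -a; \sigma))$, and $L(\delta([\nu^{-x}\rho, \nu^{-a}\rho]); \delta(\rho, b; \sigma))$. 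Your list is missing $L(\delta([\nu^{-b}\rho, \nu^{x}\rho]); \delta(\rho, -a; \sigma))$ and posits a second discrete series that is not there.

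This error propagates. You then claim both $\pi_1$ and $\pi_2$ are Aubert duals of discrete series, read off directly from \cite[Theorems~4.11,~4.16,~4.21]{Matic10}. In fact only one of the four summands is a discrete series; its dual, together with the dual of $L(\delta([\nu^{-b}\rho, \nu^{x}\rho]); \delta(\rho, -a; \sigma))$, is obtained by invoking Proposition \ref{propmanjeprva} with the roles of $a$ and $x$ interchanged, and the dual of $L(\delta([\nu^{-x}\rho, \nu^{-a}\rho]); \delta(\rho, b; \sigma))$ comes from Proposition \ref{propmanjedruga} with $b$ and $x$ interchanged. The remaining genuinely new work is computing $\widehat{L(\delta([\nu^{-b}\rho, \nu^{-a}\rho]); \delta(\rho, x; \sigma))}$, and here the peeling procedure needs the case-by-case multiplicity count of constituents of the form $\nu^{x+1}\rho \otimes \pi$ using the length-four statement of \cite[Proposition~3.2]{Matic8} (not a length-two statement as in the simpler cases), and in the bottom regime $\alpha = \vh$ an argument via intertwining operators and the explicit length-four decomposition \cite[Theorem~5.1(ii)]{Mu3} to rule out the forbidden Jacquet-module constituents. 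Your sketch does not account for these multiplicity checks, and the machinery of Proposition \ref{propulag} and Lemmas \ref{lemaindprva}--\ref{lemaindtreca} will not go through unless the right number of forbidden constituents is known to be absorbed by the other composition factors.
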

\begin{proof}
By \cite[Proposition~3.2]{Matic8}, in $R(G)$ we have
\begin{gather*}
\delta([\nu^{a} \rho, \nu^{b} \rho]) \rtimes \delta(\rho, x; \sigma) = L(\delta([\nu^{-b} \rho, \nu^{-a} \rho]); \delta(\rho, x; \sigma)) + \sigma_1 + \\
L(\delta([\nu^{-b} \rho, \nu^{x} \rho]); \delta(\rho, -a; \sigma)) + L(\delta([\nu^{-x} \rho, \nu^{-a} \rho]); \delta(\rho, b; \sigma)),
\end{gather*}
where $\sigma_1$ is the unique common discrete series subrepresentation of both $\delta([\nu^{x} \rho, \nu^{b} \rho]) \rtimes \delta(\rho, a; \sigma)$ and $\delta([\nu^{a} \rho, \nu^{x} \rho]) \rtimes \delta(\rho, b; \sigma)$.

The Aubert duals of $\sigma_1$ and of $L(\delta([\nu^{-b} \rho, \nu^{x} \rho]); \delta(\rho, -a; \sigma))$ can be obtained from Proposition \ref{propmanjeprva}, interchanging the roles of $a$ and $x$. Also, the Aubert dual of $L(\delta([\nu^{-x} \rho, \nu^{-a} \rho]); \delta(\rho, b; \sigma))$ can be obtained from Proposition \ref{propmanjedruga}, interchanging the roles of $b$ and $x$.

It remains to determine the Aubert dual of $L(\delta([\nu^{-b} \rho, \nu^{-a} \rho]); \delta(\rho, x; \sigma))$. First, in the same way as in the previously considered cases we obtain that $\widehat{L(\delta([\nu^{-b} \rho, \nu^{-a} \rho]); \delta(\rho, x; \sigma))}$ is a subrepresentation of
\begin{equation*}
\nu^{-b} \rho \times \cdots \times \nu^{-x-2} \rho \rtimes \widehat{L(\delta([\nu^{-x-1} \rho, \nu^{-a} \rho]); \delta(\rho, x; \sigma))}.
\end{equation*}
Also, if $x > -a+1$, we have
\begin{equation*}
L(\delta([\nu^{-x-1} \rho, \nu^{-a} \rho]); \delta(\rho, x; \sigma)) \hookrightarrow \nu^{x} \rho \times \nu^{x+1} \rho \rtimes L(\delta([\nu^{-x} \rho, \nu^{-a} \rho]); \delta(\rho, x-1; \sigma)),
\end{equation*}
and there is an irreducible subquotient $\pi_1$ of $\nu^{x} \rho \times \nu^{x+1} \rho$ such that $L(\delta([\nu^{-x-1} \rho$, $\nu^{-a} \rho]); \delta(\rho, x; \sigma))$ is a subrepresentation of $\pi_1 \rtimes L(\delta([\nu^{-x} \rho, \nu^{-a} \rho]); \delta(\rho, x-1; \sigma))$.

The induced representation $\delta([\nu^{a} \rho, \nu^{x+1} \rho]) \rtimes \delta(\rho, x; \sigma)$ is a length four representation, again by \cite[Proposition~3.2]{Matic8}. If $\nu^{x+1} \rho \otimes \pi$ is an irreducible constituent of $\mu^{\ast}(\delta([\nu^{a} \rho, \nu^{x+1} \rho]) \rtimes \delta(\rho, x; \sigma))$, using the structural formula we easily obtain that $\pi$ is an irreducible subquotient of $\delta([\nu^{a} \rho, \nu^{x} \rho]) \rtimes \delta(\rho, x; \sigma)$. From \cite[Theorem~4.1]{Mu3} we conclude that $\mu^{\ast}(\delta([\nu^{a} \rho, \nu^{x+1} \rho]) \rtimes \delta(\rho, x; \sigma))$ contains two irreducible constituents of the form $\nu^{x+1} \rho \otimes \pi$, which have to be contained in $\mu^{\ast}(L(\delta([\nu^{-x} \rho, \nu^{-a} \rho]); \delta(\rho, x+1; \sigma)))$ and in $\mu^{\ast}(\sigma_2)$, where $\sigma_2$ is a discrete series subrepresentation of $\delta([\nu^{a} \rho, \nu^{x+1} \rho]) \rtimes \delta(\rho, x; \sigma)$. Thus, $\mu^{\ast}(L(\delta([\nu^{-x-1} \rho, \nu^{-a} \rho]); \delta(\rho, x; \sigma)))$ does not contain irreducible constituents of the form $\nu^{x+1} \rho \otimes \pi$, so $\pi_1 \cong \zeta([\nu^{x} \rho, \nu^{x+1} \rho])$.

This can be used to conclude that the Aubert dual of $L(\delta([\nu^{-x-1} \rho, \nu^{-a} \rho])$; $\delta(\rho, x; \sigma))$ is a subrepresentation of
\begin{equation*}
\delta([\nu^{-x-1} \rho, \nu^{-x} \rho]) \times \cdots \times  \delta([\nu^{a-3} \rho, \nu^{a-2} \rho]) \rtimes \widehat{L(\delta([\nu^{a-2} \rho, \nu^{-a} \rho]); \delta(\rho, -a+1; \sigma))}.
\end{equation*}
Using Proposition \ref{propulag}$(2)$, $(3)$ and $(1)$, respectively, we get
\begin{gather*}
L(\delta([\nu^{a-2} \rho, \nu^{-a} \rho]); \delta(\rho, -a+1; \sigma)) \hookrightarrow \\
\nu^{-a} \rho \times \nu^{-a+1} \rho \times \nu^{-a+2} \rho \rtimes L(\delta([\nu^{a-1} \rho, \nu^{-a-1} \rho]); \delta(\rho, -a; \sigma)).
\end{gather*}
We have already seen that $\mu^{\ast}(L(\delta([\nu^{a-2} \rho, \nu^{-a} \rho]); \delta(\rho, -a+1; \sigma)))$ does not contain an irreducible constituent of the form $\nu^{-a+2} \rho \otimes \pi$. If $\nu^{-a+1} \rho \otimes \pi$ is an irreducible constituent of $\mu^{\ast}(\delta([\nu^{a} \rho, \nu^{-a+2} \rho]) \rtimes \delta(\rho, -a+1; \sigma))$, then $\pi$ is an irreducible subquotient of $\delta([\nu^{a} \rho, \nu^{-a+2} \rho]) \rtimes \delta(\rho, -a; \sigma)$, which is a length two representation. Thus, the Frobenius reciprocity can be used to deduce that $\mu^{\ast}(L(\delta([\nu^{a-2} \rho, \nu^{-a+1} \rho]); \delta(\rho, -a; \sigma)))$ and $\mu^{\ast}(\sigma_3)$, where $\sigma_3$ is a discrete series subrepresentation of $\delta([\nu^{a-2} \rho, \nu^{-a} \rho]) \rtimes \delta(\rho, -a+1; \sigma)$, contain all irreducible constituents of the form $\nu^{-a+1} \rho \otimes \pi$ appearing in $\mu^{\ast}(\delta([\nu^{a} \rho, \nu^{-a+2} \rho]) \rtimes \delta(\rho, -a+1; \sigma))$. So, $L(\delta([\nu^{a-2} \rho, \nu^{-a} \rho]); \delta(\rho, -a+1; \sigma))$ is a subrepresentation of $\zeta([\nu^{-a} \rho, \nu^{-a+2} \rho]) \rtimes L(\delta([\nu^{a-1} \rho, \nu^{-a-1} \rho]); \delta(\rho, -a; \sigma))$. In the same way it can be seen that $\mu^{\ast}(L(\delta([\nu^{a-1} \rho, \nu^{-a-1} \rho]); \delta(\rho, -a; \sigma)))$ does not contain irreducible constituents of the form $\nu^{y} \rho \otimes \pi$ for $\pi \in \{ -a, -a + 1 \}$. Using Lemma \ref{lemaindprva} and continuing in the same way, we get that the Aubert dual of $L(\delta([\nu^{a-1} \rho, \nu^{-a-1} \rho]); \delta(\rho, -a; \sigma))$ is a subrepresentation of
\begin{equation*}
\delta([\nu^{a-2} \rho, \nu^{a} \rho]) \times \cdots \times  \delta([\nu^{-\alpha-2} \rho, \nu^{-\alpha} \rho]) \rtimes \widehat{L(\delta([\nu^{-\alpha-1} \rho, \nu^{\alpha-1} \rho]); \delta(\rho, \alpha; \sigma))}.
\end{equation*}
Let us first consider the case $\alpha = \frac{1}{2}$. Then it can be seen, using the intertwining operators method, that $L(\delta([\nu^{-\frac{3}{2}} \rho, \nu^{-\frac{1}{2}} \rho]); \delta(\rho, \frac{1}{2}; \sigma))$ is a subrepresentation of $\nu^{-\vh} \rho \times \nu^{\vh} \rho \times \nu^{\frac{3}{2}} \rho \rtimes \sigma$. Thus, there is an irreducible subquotient $\pi_1$ of $\nu^{-\vh} \rho \times \nu^{\frac{1}{2}} \rho \times \nu^{\frac{3}{2}} \rho$ such that $L(\delta([\nu^{-\frac{3}{2}} \rho, \nu^{-\vh} \rho]); \delta(\rho, \vh; \sigma))$ is a subrepresentation of $\pi_1 \rtimes \sigma$.

By \cite[Theorem~5.1(ii)]{Mu3}, in $R(G)$ we have
\begin{gather*}
\delta([\nu^{\frac{1}{2}} \rho, \nu^{\frac{3}{2}} \rho]) \rtimes \delta(\rho, \frac{1}{2}; \sigma) = L(\delta([\nu^{-\frac{3}{2}} \rho, \nu^{-\frac{1}{2}} \rho]); \delta(\rho, \frac{1}{2}; \sigma)) + \sigma_4 + \\
L(\delta([\nu^{-\frac{3}{2}} \rho, \nu^{\frac{1}{2}} \rho]); \sigma) + L(\nu^{-\frac{1}{2}} \rho; \delta(\rho, \frac{3}{2}; \sigma)),
\end{gather*}
where $\sigma_4$ is the unique discrete series subrepresentation of 
$\delta([\nu^{\frac{1}{2}} \rho, \nu^{\frac{3}{2}} \rho]) \rtimes \delta(\rho, \frac{1}{2}; \sigma)$.

Since both induced representations $\delta([\nu^{\vh} \rho, \nu^{\frac{3}{2}} \rho]) \rtimes \sigma$ and $\nu^{\frac{1}{2}} \rho \rtimes \delta(\rho, \frac{1}{2}; \sigma)$ are of length two (by \cite[Theorem~5.1]{Mu3}), it follows from the structural formula that $\mu^{\ast}(\delta([\nu^{\vh} \rho, \nu^{\frac{3}{2}} \rho]) \rtimes \delta(\rho, \vh; \sigma))$ contains exactly two irreducible constituents of the form $\nu^{\frac{3}{2}} \rho \otimes \pi$ and exactly two irreducible constituents of the form $\nu^{\vh} \rho \otimes \pi$. Now Frobenius reciprocity and transitivity of the Jacquet modules imply that all irreducible constituents of the form $\nu^{\frac{3}{2}} \rho \otimes \pi$ are contained in $\mu^{\ast}(\sigma_4)$ and in $\mu^{\ast}(L(\nu^{-\frac{1}{2}} \rho; \delta(\rho, \frac{3}{2}; \sigma)))$, while all irreducible constituents of the form $\nu^{\vh} \rho \otimes \pi$ are contained in $\mu^{\ast}(\sigma_4)$ and in
$\mu^{\ast}((L(\delta([\nu^{-\frac{3}{2}} \rho, \nu^{\vh} \rho]); \sigma))$.

Consequently, $\mu^{\ast}(L(\delta([\nu^{-\frac{3}{2}} \rho, \nu^{-\vh} \rho]); \delta(\rho, \vh; \sigma)))$ does not contain irreducible constituents of the form $\nu^{y} \rho \otimes \pi$ for $y \in \{ \vh, \frac{3}{2} \}$.

Thus, it follows that $\pi_1 \cong \zeta([\nu^{-\vh} \rho, \nu^{\frac{3}{2}} \rho])$, so $L(\delta([\nu^{-\frac{3}{2}} \rho, \nu^{-\vh} \rho]); \delta(\rho, \vh; \sigma))$ is a subrepresentation of $\zeta([\nu^{-\vh} \rho, \nu^{\frac{3}{2}} \rho]) \rtimes \sigma$. Now Lemma \ref{lemaindprva} can be used to obtain that the Aubert dual of $L(\delta([\nu^{-\frac{3}{2}} \rho, \nu^{-\vh} \rho]); \delta(\rho, \vh; \sigma))$ is isomorphic to $L(\delta([\nu^{-\frac{3}{2}} \rho, \nu^{\vh} \rho]); \sigma)$.

If $\alpha > \frac{1}{2}$, in the same way as before we deduce that the Aubert dual of $L(\delta([\nu^{-\alpha-1} \rho, \nu^{\alpha-1} \rho]); \delta(\rho, \alpha; \sigma))$ is a subrepresentation of
\begin{equation*}
\delta([\nu^{-\alpha-1} \rho, \nu^{-\alpha+1} \rho]) \rtimes \widehat{L(\delta([\nu^{-\alpha} \rho, \nu^{\alpha-2} \rho]); \sigma)}.
\end{equation*}
If $\alpha = 1$, from \cite[Theorem~3.5]{Matic14} we deduce that $\widehat{L(\delta([\nu^{-\alpha} \rho, \nu^{\alpha-2} \rho]); \sigma)} \cong \delta(\rho, 1; \sigma)$. If $\alpha \geq \frac{3}{2}$, from \cite[Lemma~4.10]{Matic10} we get that $\widehat{L(\delta([\nu^{-\alpha} \rho, \nu^{\alpha-2} \rho]); \sigma)}$ is the unique irreducible subrepresentation of $\nu^{-\alpha+2} \rho \times \cdots \times \nu^{\lceil \alpha \rceil - \alpha - 1} \rho \rtimes \tau^{(2)}$. This ends the proof.
\end{proof}

\begin{proposition}
If $\alpha-1 \leq -a < x$ and $b=x$, in $R(G)$ we have
\begin{gather*}
\zeta([\nu^{-b} \rho, \nu^{-a} \rho]) \rtimes \zeta(\rho, x; \sigma) = \\
L(\nu^{-b} \rho, \nu^{-b} \rho, \ldots, \nu^{a-1} \rho, \nu^{a-1} \rho, \nu^{a} \rho, \nu^{a} \rho, \nu^{a} \rho, \ldots, \nu^{-\alpha} \rho, \nu^{-\alpha} \rho, \nu^{-\alpha} \rho, \\
\nu^{-\alpha+1} \rho, \nu^{-\alpha+1} \rho, \ldots, \nu^{\lceil \alpha \rceil - \alpha - 1} \rho, \nu^{\lceil \alpha \rceil - \alpha - 1} \rho; \tau^{(1)}) + \pi,
\end{gather*}
where
\begin{gather*}
\pi \cong L(\nu^{-b} \rho, \nu^{-b} \rho, \ldots, \nu^{a-2} \rho, \nu^{a-2} \rho, \nu^{a-1} \rho, \delta([\nu^{a-1} \rho, \nu^{a} \rho]), \ldots, \\
\nu^{-\alpha} \rho, \delta([\nu^{-\alpha} \rho, \nu^{-\alpha+1} \rho]), \nu^{-\alpha+1} \rho, \nu^{-\alpha+2} \rho, \nu^{-\alpha+2} \rho, \ldots, \nu^{\lceil \alpha \rceil - \alpha - 1} \rho, \nu^{\lceil \alpha \rceil - \alpha - 1} \rho; \tau^{(1)}),
\end{gather*}
if $\alpha \geq \frac{3}{2}$,
\begin{equation*}
\pi \cong L(\nu^{-b} \rho, \nu^{-b} \rho, \ldots, \nu^{a-2} \rho, \nu^{a-2} \rho, \nu^{a-1} \rho, \delta([\nu^{a-1} \rho, \nu^{a} \rho]), \ldots,
\nu^{-1} \rho, \delta([\nu^{-1} \rho, \rho]); \sigma),
\end{equation*}
if $\alpha = 1$, and
\begin{gather*}
\pi \cong L(\nu^{-b} \rho, \nu^{-b} \rho, \ldots, \nu^{a-2} \rho, \nu^{a-2} \rho, \nu^{a-1} \rho, \delta([\nu^{a-1} \rho, \nu^{a} \rho]), \ldots, \\
\nu^{-\frac{3}{2}} \rho, \delta([\nu^{-\frac{3}{2}} \rho, \nu^{-\frac{1}{2}} \rho]); \tau(\rho, \sigma)),
\end{gather*}
if $\alpha = \frac{1}{2}$.
\end{proposition}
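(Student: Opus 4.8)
The plan is to proceed exactly as in the preceding propositions: pass to the Aubert dual, decompose the resulting generalized principal series, and compute the Aubert duals of its two composition factors. Since the Aubert dual of $\zeta([\nu^{-b} \rho, \nu^{-a} \rho]) \rtimes \zeta(\rho, x; \sigma)$ is $\delta([\nu^{a} \rho, \nu^{b} \rho]) \rtimes \delta(\rho, x; \sigma)$ and $b = x$, the two segments occurring here share the endpoint $\nu^{x} \rho$; by \cite{Mu3} and the classification of discrete series \cite{KimMatic, MT1} this induced representation has length two, and (this is forced by the shape of the claimed answer) in $R(G)$
\begin{equation*}
\delta([\nu^{a} \rho, \nu^{b} \rho]) \rtimes \delta(\rho, x; \sigma) = \sigma_{ds} + L(\delta([\nu^{-b} \rho, \nu^{-a} \rho]); \delta(\rho, x; \sigma)),
\end{equation*}
with $\sigma_{ds}$ a discrete series subrepresentation. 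The Aubert dual of $\sigma_{ds}$ is then read off from \cite[Theorem~4.11, Theorem~4.16, Theorem~4.21]{Matic10} (one theorem for each of the regimes $\alpha \geq \frac{3}{2}$, $\alpha = 1$, $\alpha = \frac{1}{2}$), and it should coincide with the first summand in the statement. Thus the substance of the proof is the identification $\pi \cong \widehat{L(\delta([\nu^{-b} \rho, \nu^{-a} \rho]); \delta(\rho, x; \sigma))}$.

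To obtain that identification I would run the inductive peeling from the proof of Proposition \ref{propmanjedruga}, specialized to $b = x$. Using Proposition \ref{propulag}$(3)$, $(2)$, $(1)$ and $\widetilde{\nu^{x} \rho} \cong \nu^{-x} \rho$ one gets, for $x > -a+1$ (the case $x = -a+1$ needing only an obvious modification), an embedding
\begin{equation*}
L(\delta([\nu^{-b} \rho, \nu^{-a} \rho]); \delta(\rho, x; \sigma)) \hookrightarrow \nu^{x} \rho \times \nu^{x} \rho \times \delta([\nu^{-x+1} \rho, \nu^{-a} \rho]) \rtimes \delta(\rho, x-1; \sigma);
\end{equation*}
the structural formula and the description of Jacquet modules of strongly positive representations in \cite[Theorem~4.6]{Matic4} and \cite[Section~7]{MatTad} then identify the pertinent subquotient as $L(\delta([\nu^{-x+1} \rho, \nu^{-a} \rho]); \delta(\rho, x-1; \sigma))$ and show that $\mu^{\ast}$ of the latter carries no constituent $\nu^{x} \rho \otimes \pi'$, so Lemma \ref{lemainddruga} applies. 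Iterating strips off the pairs $\nu^{-x} \rho \times \nu^{-x} \rho$ and reduces everything to $\widehat{L(\delta([\nu^{a-1} \rho, \nu^{-a} \rho]); \delta(\rho, -a+1; \sigma))}$. For that representation I would argue as in Proposition \ref{propmanjedruga}: Proposition \ref{propulag}$(2)$, $(1)$ together with a multiplicity-one argument drawn from \cite[Theorem~4.1]{Mu3} give an embedding into $\zeta([\nu^{-a} \rho, \nu^{-a+1} \rho]) \times \nu^{-a+1} \rho \rtimes L(\delta([\nu^{a} \rho, \nu^{-a-1} \rho]); \delta(\rho, -a; \sigma))$, and Lemma \ref{lemaindtreca} lets me keep peeling the doubled segments $\delta([\nu^{a-1} \rho, \nu^{a} \rho]), \ldots$ down to the base case $\widehat{L(\delta([\nu^{-\alpha} \rho, \nu^{\alpha-1} \rho]); \delta(\rho, \alpha; \sigma))}$, whose value is supplied by \cite[Lemma~4.10]{Matic10} and \cite[Theorem~3.5]{Matic14}: it is $\tau(\rho, \sigma)$ when $\alpha = \frac{1}{2}$, it reduces (after one more step, as in Proposition \ref{propmanjedruga}) to $\sigma$ when $\alpha = 1$, and it is the unique irreducible subrepresentation of a suitable representation induced from $\tau^{(1)}$ when $\alpha \geq \frac{3}{2}$. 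Collecting the factors stripped off at each stage should reproduce precisely the three displayed forms of $\pi$.

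The hard part is not any single deduction but, as throughout this paper, the bookkeeping: at each stage of the peeling one must verify a non-occurrence statement for $\mu^{\ast}$ (each being an application of the structural formula together with the known Jacquet modules of strongly positive and of discrete series representations), and at the end one must match the accumulated Langlands data against the stated $\pi$ separately in the regimes $\alpha = \frac{1}{2}$, $\alpha = 1$, $\alpha \geq \frac{3}{2}$, and at the boundary value $-a = \alpha-1$. Each of these verifications, however, is routine and essentially already appears in the proofs of Propositions \ref{propmanjeprva}, \ref{propmanjedruga} and \ref{propnulaprva}; indeed the computation of $\widehat{L(\delta([\nu^{-b} \rho, \nu^{-a} \rho]); \delta(\rho, x; \sigma))}$ here is literally the $b = x$ specialization of the corresponding computation in Proposition \ref{propmanjedruga}, the decomposition of the generalized principal series now having length two rather than three because $b = x$.
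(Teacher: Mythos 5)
Your overall strategy matches the paper's: decompose the Aubert-dual generalized principal series into the Langlands quotient and a tempered piece, compute the dual of the Langlands quotient by the $b=x$ specialization of the peeling argument in Proposition~\ref{propmanjedruga}, and read off the dual of the tempered piece from \cite{Matic10}. However, there is a genuine error in your description of the tempered constituent. When $b=x$ the non-Langlands subquotient is \emph{not} a discrete series. In $R(G)$ one has
\begin{equation*}
\delta([\nu^{a} \rho, \nu^{b} \rho]) \rtimes \delta(\rho, b; \sigma) = L(\delta([\nu^{-b} \rho, \nu^{-a} \rho]); \delta(\rho, b; \sigma)) + \tau,
\end{equation*}
where $\tau$ is the unique common irreducible tempered subrepresentation of $\delta([\nu^{a} \rho, \nu^{b} \rho]) \rtimes \delta(\rho, b; \sigma)$ and of $\delta([\nu^{-b} \rho, \nu^{b} \rho]) \rtimes \delta(\rho, a; \sigma)$. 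The second induced representation involves the unitary (exponent-zero) segment $\delta([\nu^{-b} \rho, \nu^{b} \rho])$, so $\tau$ is tempered but fails Casselman's square-integrability criterion; it is an elliptic tempered representation, not a discrete series. This is precisely the degeneration that occurs when the segment endpoint $b$ collides with the endpoint $x$ of $\delta(\rho,x;\sigma)$: in the neighbouring case $x<b$, one does get a genuine discrete series subrepresentation (the $\sigma_1$ of the preceding proposition), but at $b=x$ it becomes a non-square-integrable tempered representation. Consequently, your attempt to invoke the discrete-series classification for $\tau$ is misdirected.

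A related slip: the citation ``\cite[Theorem~4.11, Theorem~4.16, Theorem~4.21]{Matic10}, one theorem for each of the regimes $\alpha\ge\tfrac{3}{2}$, $\alpha=1$, $\alpha=\tfrac{1}{2}$'' is not how those results are organized. The three theorems correspond to different types of tempered/discrete-series representations (and were cited together, e.g., in the proposition covering $-a=b$, precisely because that case has two tempered non-square-integrable constituents); the $\alpha$-regimes affect the shape of the Langlands data and the auxiliary representations $\tau^{(1)}$, $\tau(\rho,\sigma)$ in the final answer, not which theorem one looks up. For the present proposition the relevant reference for $\widehat{\tau}$ is \cite[Theorem~4.16]{Matic10} alone. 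Once the tempered piece is correctly identified and that single reference is used, the rest of your proposal — running the peeling from Proposition~\ref{propmanjedruga} specialized to $b=x$ via Propositions~\ref{propulag}, Lemmas~\ref{lemaindprva}--\ref{lemaindtreca}, and the base case from \cite[Lemma~4.10]{Matic10} and \cite[Theorem~3.5]{Matic14} — is correct and coincides with the paper's argument.
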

\begin{proof}
In $R(G)$ we have
\begin{gather*}
\delta([\nu^{a} \rho, \nu^{b} \rho]) \rtimes \delta(\rho, b; \sigma) = L(\delta([\nu^{-b} \rho, \nu^{-a} \rho]); \delta(\rho, b; \sigma)) + \tau,
\end{gather*}
where $\tau$ is the unique common irreducible tempered subrepresentation of $\delta([\nu^{a} \rho, \nu^{b} \rho]) \rtimes \delta(\rho, b; \sigma)$ and $\delta([\nu^{-b} \rho, \nu^{b} \rho]) \rtimes \delta(\rho, a; \sigma)$. The Aubert dual of the representation $L(\delta([\nu^{-b} \rho, \nu^{-a} \rho]); \delta(\rho, b; \sigma))$ has been determined in the proof of Proposition \ref{propmanjedruga}, while the Aubert dual of $\tau$ can be obtained from \cite[Theorem~4.16]{Matic10}.
\end{proof}

\begin{proposition}
If $-a < \alpha -2$ and $\alpha - 1 \leq b < x$, in $R(G)$ we have
\begin{gather*}
\zeta([\nu^{-b} \rho, \nu^{-a} \rho]) \rtimes \zeta(\rho, x; \sigma) = \\
L(\nu^{-x} \rho, \ldots, \nu^{-b-1} \rho, \nu^{-b} \rho, \nu^{-b} \rho, \ldots, \nu^{-\alpha} \rho, \nu^{-\alpha} \rho, \nu^{-\alpha+1} \rho, \ldots, \nu^{a-1} \rho, \\
\nu^{a} \rho, \nu^{a} \rho, \ldots, \nu^{\lceil \alpha \rceil - \alpha - 1} \rho, \nu^{\lceil \alpha \rceil - \alpha - 1} \rho; \tau^{(1)}) + \\
L(\nu^{-x} \rho, \ldots, \nu^{-b-2} \rho, \delta([\nu^{-b-1} \rho, \nu^{-b} \rho]), \ldots, \delta([\nu^{-\alpha} \rho, \nu^{-\alpha+1} \rho]), \nu^{-\alpha+2} \rho, \ldots\\
\nu^{a-1} \rho, \nu^{a} \rho, \nu^{a} \rho, \ldots, \nu^{\lceil \alpha \rceil - \alpha - 1} \rho, \nu^{\lceil \alpha \rceil - \alpha - 1} \rho; \tau^{(1)}).
\end{gather*}
If $-a = \alpha -2$ and $\alpha - 1 \leq b < x$, in $R(G)$ we have
\begin{gather*}
\zeta([\nu^{-b} \rho, \nu^{-a} \rho]) \rtimes \zeta(\rho, x; \sigma) = \\
L(\nu^{-x} \rho, \ldots, \nu^{-b-1} \rho, \nu^{-b} \rho, \nu^{-b} \rho, \ldots, \nu^{-\alpha} \rho, \nu^{-\alpha} \rho, \nu^{-\alpha+1} \rho, \nu^{-\alpha+2} \rho, \nu^{-\alpha+2} \rho,\\
\ldots, \nu^{\lceil \alpha \rceil - \alpha - 1} \rho, \nu^{\lceil \alpha \rceil - \alpha - 1} \rho; \tau^{(1)}) + \\
L(\nu^{-x} \rho, \ldots, \nu^{-b-2} \rho, \delta([\nu^{-b-1} \rho, \nu^{-b} \rho]), \ldots, \delta([\nu^{-\alpha} \rho, \nu^{-\alpha+1} \rho]), \\
\nu^{-\alpha+2} \rho, \nu^{-\alpha+2} \rho, \ldots, \nu^{\lceil \alpha \rceil - \alpha - 1} \rho, \nu^{\lceil \alpha \rceil - \alpha - 1} \rho; \tau^{(1)}).
\end{gather*}
\end{proposition}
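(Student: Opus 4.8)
The strategy is to transfer the problem to the Aubert dual side and there mimic, almost verbatim, the proof of Proposition~\ref{propspprva}; the single genuinely new point will be the base case of the resulting peeling induction, where the hypothesis $-a\le\alpha-2$ (together with the standing assumption $a\le0$, which forces $\alpha\ge2$) changes the outcome. By Theorem~\ref{aub}, $\zeta([\nu^{-b}\rho,\nu^{-a}\rho])\rtimes\zeta(\rho,x;\sigma)$ is the Aubert dual of $\delta([\nu^{a}\rho,\nu^{b}\rho])\rtimes\delta(\rho,x;\sigma)$; since $a\le\alpha-1\le b<x$ (we have $a\le0<\alpha-1$), \cite[Theorem~2.1]{Mu3} gives, exactly as in Proposition~\ref{propspprva},
\[
\delta([\nu^{a}\rho,\nu^{b}\rho])\rtimes\delta(\rho,x;\sigma)=L(\delta([\nu^{-b}\rho,\nu^{-a}\rho]);\delta(\rho,x;\sigma))+L(\delta([\nu^{-\alpha+2}\rho,\nu^{-a}\rho]);\sigma_{sp}),
\]
where $\sigma_{sp}$ is the strongly positive discrete series equal to the unique irreducible subrepresentation of $\delta([\nu^{\alpha-1}\rho,\nu^{b}\rho])\rtimes\delta(\rho,x;\sigma)$ (with the evident modification of the second summand when $-a=\alpha-2$). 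Because $D_{G}$ is additive and sends irreducibles to irreducibles, it suffices to identify the Aubert dual of each of the two summands.

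For $L(\delta([\nu^{-b}\rho,\nu^{-a}\rho]);\delta(\rho,x;\sigma))$ I would repeat the argument of Proposition~\ref{propspprva}: Proposition~\ref{propulag}(3) and Lemma~\ref{lemaindprva} strip $\nu^{-x}\rho,\dots,\nu^{-b-1}\rho$, and then, using irreducibility of $\nu^{b'}\rho\rtimes\delta(\rho,b';\sigma)$ (\cite[Proposition~3.1]{Mu3}) together with the Jacquet-module description of strongly positive representations (\cite[Theorem~4.6]{Matic4}), Proposition~\ref{propulag}(3) and Lemma~\ref{lemainddruga} strip the pairs $\nu^{-b}\rho,\nu^{-b}\rho,\dots,\nu^{-\alpha}\rho,\nu^{-\alpha}\rho$. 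This reduces everything to the computation of $\widehat{L(\delta([\nu^{-\alpha+1}\rho,\nu^{-a}\rho]);\sigma)}$. Here the inequality $0\le-a\le\alpha-2$ (in contrast to $a\ge1$ in Proposition~\ref{propspprva}) changes the answer, and one invokes \cite[Lemma~4.10]{Matic10} — applied just as it was applied to $\widehat{L(\delta([\nu^{-\alpha+1}\rho,\nu^{\alpha-2}\rho]);\sigma)}$ in the proof of Proposition~\ref{propmanjedruga} — to obtain
\[
\widehat{L(\delta([\nu^{-\alpha+1}\rho,\nu^{-a}\rho]);\sigma)}\cong L(\nu^{-\alpha+1}\rho,\dots,\nu^{a-1}\rho,\nu^{a}\rho,\nu^{a}\rho,\dots,\nu^{\lceil\alpha\rceil-\alpha-1}\rho,\nu^{\lceil\alpha\rceil-\alpha-1}\rho;\tau^{(1)}),
\]
with the doubled block (and, if $a=0$, the whole tail $\nu^{a}\rho,\nu^{a}\rho,\dots$) degenerating in the obvious way. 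Reattaching the stripped factors yields the first Langlands quotient in the statement.

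For $L(\delta([\nu^{-\alpha+2}\rho,\nu^{-a}\rho]);\sigma_{sp})$ I would again follow Proposition~\ref{propspprva}: using the embedding $\sigma_{sp}\hookrightarrow\nu^{x}\rho\rtimes\sigma'_{sp}$ (\cite[Section~3]{Matic4}), Proposition~\ref{propulag}(3) and Lemma~\ref{lemaindprva} strip $\nu^{-x}\rho,\dots,\nu^{-b-2}\rho$; then the embeddings of strongly positive representations from \cite[Section~3]{Matic4} and \cite[Theorem~3.4]{Matic4}, together with Lemma~\ref{lemainddruga}, strip the segment factors $\delta([\nu^{-b-1}\rho,\nu^{-b}\rho]),\dots,\delta([\nu^{-\alpha}\rho,\nu^{-\alpha+1}\rho])$, reducing the task to $\widehat{L(\delta([\nu^{-\alpha+2}\rho,\nu^{-a}\rho]);\sigma)}$, which by \cite[Lemma~4.10]{Matic10} equals $L(\nu^{-\alpha+2}\rho,\dots,\nu^{a-1}\rho,\nu^{a}\rho,\nu^{a}\rho,\dots,\nu^{\lceil\alpha\rceil-\alpha-1}\rho,\nu^{\lceil\alpha\rceil-\alpha-1}\rho;\tau^{(1)})$. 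Reattaching gives the second Langlands quotient, and the $-a=\alpha-2$ formulas are obtained from the $-a<\alpha-2$ ones by setting $a=2-\alpha$ (so that $\nu^{a-1}\rho=\nu^{-\alpha+1}\rho$ and the single-copy blocks shrink exactly to what is displayed).

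The step I expect to be the real obstacle is pinning down the two base-case identities above, i.e.\ verifying that \cite[Lemma~4.10]{Matic10} (or its proof, via Proposition~\ref{propulag}, the structural formula of Lemma~\ref{osn}, and Frobenius reciprocity) genuinely covers the shifted segments $[\nu^{-\alpha+1}\rho,\nu^{-a}\rho]$ and $[\nu^{-\alpha+2}\rho,\nu^{-a}\rho]$ for $0\le-a\le\alpha-2$ — in particular that the tempered base is $\tau^{(1)}$ rather than $\tau^{(2)}$, and that the doubling of the $\nu^{\cdot}\rho$ in the Langlands data begins precisely at the exponent $a$. Possibly one must interpose one further application of Proposition~\ref{propulag}(2) and Lemma~\ref{lemaindprva} to bring these duals into the exact form treated in \cite{Matic10}. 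Once this is settled, the remainder is the same mechanical bookkeeping already carried out in Proposition~\ref{propspprva}.
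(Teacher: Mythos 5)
Your approach is the same in outline as the paper's — decompose the Aubert dual $\delta([\nu^{a}\rho,\nu^{b}\rho])\rtimes\delta(\rho,x;\sigma)$ into its two irreducible subquotients, then run the stripping machinery (Proposition~\ref{propulag}, Lemmas~\ref{lemaindprva}, \ref{lemainddruga}, \ref{lemaindtreca}) — but there are two points where the details diverge or remain unresolved.

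First, the second summand. The paper treats the case $-a=\alpha-2$ (stating that $-a<\alpha-2$ is handled similarly but more easily). There the decomposition reads
\[
\delta([\nu^{-\alpha+2}\rho,\nu^{b}\rho])\rtimes\delta(\rho,x;\sigma)=L(\delta([\nu^{-b}\rho,\nu^{\alpha-2}\rho]);\delta(\rho,x;\sigma))+\tau,
\]
where $\tau$ is the unique common irreducible tempered subrepresentation of $\delta([\nu^{-\alpha+2}\rho,\nu^{b}\rho])\rtimes\delta(\rho,x;\sigma)$ and $\delta([\nu^{-\alpha+2}\rho,\nu^{\alpha-2}\rho])\rtimes\sigma_{sp}$. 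The point is that for $-a=\alpha-2$ the segment $[\nu^{-\alpha+2}\rho,\nu^{\alpha-2}\rho]$ is balanced, so the second piece is tempered, and its Aubert dual is tracked through a chain $\tau\to\tau_1\to\tau_2\to\tau_3$ of tempered common subrepresentations, finishing with explicit embeddings of $\tau_3$ into $\zeta([\nu^{\alpha-1}\rho,\nu^{\alpha}\rho])\times\delta([\nu^{-\alpha+2}\rho,\nu^{\alpha-2}\rho])\rtimes\sigma$. Your version, writing the second summand as $L(\delta([\nu^{-\alpha+2}\rho,\nu^{-a}\rho]);\sigma_{sp})$ and stripping down to $\widehat{L(\delta([\nu^{-\alpha+2}\rho,\nu^{-a}\rho]);\sigma)}$, is the right picture for $-a<\alpha-2$, and you correctly flag the ``evident modification'' needed at $-a=\alpha-2$; but it is precisely this borderline case that the paper actually writes out, and the tempered nature of $\tau$ makes the argument noticeably different from a mechanical re-run of Proposition~\ref{propspprva}.

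Second, the base-case identities. You invoke \cite[Lemma~4.10]{Matic10} to evaluate $\widehat{L(\delta([\nu^{-\alpha+1}\rho,\nu^{-a}\rho]);\sigma)}$ and $\widehat{L(\delta([\nu^{-\alpha+2}\rho,\nu^{-a}\rho]);\sigma)}$. But in this paper \cite[Lemma~4.10]{Matic10} is consistently applied only to segments beginning at $\nu^{-\alpha}\rho$ (e.g.\ $\widehat{L(\delta([\nu^{-\alpha}\rho,\nu^{\alpha-1}\rho]);\sigma)}$, $\widehat{L(\delta([\nu^{-\alpha}\rho,\nu^{\alpha-2}\rho]);\sigma)}$). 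For the shifted segments you need — beginning at $\nu^{-\alpha+1}\rho$ or $\nu^{-\alpha+2}\rho$ and ending at $\nu^{-a}\rho$ with $0\le -a\le\alpha-2$ — the paper does not cite \cite[Lemma~4.10]{Matic10}; for the first piece it relies on the embedding derived near the end of the proof of Proposition~\ref{propmanjedruga}, and for the second piece it uses the $\tau_3$ embeddings just described, remarking only as an aside that \cite[Lemma~4.13,~Lemma~4.15]{Matic10} give an alternative. You yourself identify this as the ``real obstacle,'' and you are right: as written, your proof leaves exactly these two identities unproved, and citing \cite[Lemma~4.10]{Matic10} does not close the gap, since its scope (in the paper's usage) does not reach the shifted segments. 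Supplying a short inductive argument via Proposition~\ref{propulag}(2)--(3) and Lemma~\ref{lemaindprva}/\ref{lemainddruga} (mirroring Proposition~\ref{propnulaprva}) would repair this, but that work is genuinely required and is what the paper's proof actually does.
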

\begin{proof}
We discuss only the case $-a = \alpha -2$, since the case $-a < \alpha -2$ can be handled in the same way, but more easily. Let us denote by $\sigma_{sp}$ a strongly positive discrete series subrepresentation of $\delta([\nu^{\alpha-1} \rho, \nu^{b} \rho]) \rtimes \delta(\rho, x; \sigma)$ (\cite[Section~4]{Matic3} or Proposition \ref{spds}). Note that we have $\alpha \geq \frac{5}{2}$.

By \cite[Theorem~4.1]{Mu3}, in $R(G)$ we have
\begin{gather*}
\delta([\nu^{-\alpha+2} \rho, \nu^{b} \rho]) \rtimes \delta(\rho, x; \sigma) = L(\delta([\nu^{-b} \rho, \nu^{\alpha-2} \rho]); \delta(\rho, x; \sigma)) + \tau,
\end{gather*}
where $\tau$ is the unique common irreducible (tempered) subrepresentation of induced representations $\delta([\nu^{-\alpha+2} \rho, \nu^{b} \rho]) \rtimes \delta(\rho, x; \sigma)$ and $\delta([\nu^{-\alpha+2} \rho, \nu^{\alpha-2} \rho]) \rtimes \sigma_{sp}$.

Using the same reasoning as in the previously considered cases, we deduce that the Aubert dual of $L(\delta([\nu^{-b} \rho, \nu^{\alpha-2} \rho]); \delta(\rho, x; \sigma))$ is a subrepresentation of
\begin{equation*}
\nu^{-x} \rho \times \cdots \times \nu^{-b-1} \rho \rtimes \widehat{L(\delta([\nu^{-b} \rho, \nu^{\alpha-2} \rho]); \delta(\rho, b; \sigma))}.
\end{equation*}
Since $\nu^{b} \rho \rtimes \delta(\rho, b; \sigma)$ is irreducible, if $b \geq \alpha$ in the same way as before we obtain an embedding
\begin{equation*}
L(\delta([\nu^{-b} \rho, \nu^{\alpha-2} \rho]); \delta(\rho, b; \sigma)) \hookrightarrow \nu^{b} \rho \times \nu^{b} \rho \rtimes L(\delta([\nu^{-b+1} \rho, \nu^{\alpha-2} \rho]); \delta(\rho, b-1; \sigma)),
\end{equation*}
which enables us to deduce that the Aubert dual of $L(\delta([\nu^{-b} \rho, \nu^{\alpha-2} \rho]); \delta(\rho, b; \sigma))$ is an irreducible subrepresentation of
\begin{equation*}
\nu^{-b} \rho \times \nu^{b} \rho \times \cdots \times \nu^{-\alpha} \rho \times \nu^{-\alpha} \rho \rtimes \widehat{L(\delta([\nu^{-\alpha+1} \rho, \nu^{\alpha-2} \rho]); \sigma)},
\end{equation*}
and we have already seen that the Aubert dual of $L(\delta([\nu^{-\alpha+1} \rho, \nu^{\alpha-2} \rho]); \sigma)$ is isomorphic to $L(\nu^{-\alpha+1} \rho, \nu^{-\alpha+2} \rho, \nu^{-\alpha+2} \rho, \ldots, \nu^{\lceil \alpha \rceil - \alpha - 1} \rho, \nu^{\lceil \alpha \rceil - \alpha - 1} \rho; \tau^{(1)})$.

Let us now determine the Aubert dual of $\tau$. If $x > b + 1$, it follows from the classification provided in \cite[Section~4]{Matic3} that $\sigma_{sp}$ is a subrepresentation $\nu^{x} \rho \rtimes \sigma^{(1)}_{sp}$, where $\sigma^{(1)}_{sp}$ is the unique irreducible subrepresentation of $\delta([\nu^{\alpha-1} \rho, \nu^{b} \rho]) \rtimes \delta(\rho, x-1; \sigma)$. Then $\tau$ is a subrepresentation of $\nu^{x} \rho \rtimes \tau_1$, where $\tau_1$ is a common irreducible subrepresentation of both $\delta([\nu^{-\alpha+2} \rho, \nu^{b} \rho]) \rtimes \delta(\rho, x-1; \sigma)$ and $\delta([\nu^{-\alpha+2} \rho, \nu^{\alpha-2} \rho]) \rtimes \sigma^{(1)}_{sp}$. Continuing in this way we obtain that the Aubert dual of $\tau$ is a subrepresentation of
\begin{equation*}
\nu^{-x} \rho \times \cdots \times \nu^{-b-2} \rho \rtimes \widehat{\tau_2},
\end{equation*}
where $\tau_2$ is the unique common irreducible subrepresentation of $\delta([\nu^{-\alpha+2} \rho, \nu^{b} \rho]) \rtimes \delta(\rho, b+1; \sigma)$ and $\delta([\nu^{-\alpha+2} \rho, \nu^{\alpha-2} \rho]) \rtimes \sigma^{(2)}_{sp}$, where $\sigma^{(2)}_{sp}$ is the unique irreducible subrepresentation of $\delta([\nu^{\alpha-1} \rho, \nu^{b} \rho]) \rtimes \delta(\rho, b+1; \sigma)$. Since $\sigma^{(2)}_{sp}$ is a subrepresentation of $\zeta([\nu^{b-1} \rho, \nu^{b} \rho]) \rtimes \sigma^{(3)}_{sp}$, where $\sigma^{(3)}_{sp}$ is the unique irreducible subrepresentation of $\delta([\nu^{\alpha-1} \rho, \nu^{b-1} \rho]) \rtimes \delta(\rho, b; \sigma)$, and $\mu^{\ast}(\sigma^{(3)}_{sp})$ does not contain an irreducible constituent of the form $\nu^{b} \rho \otimes \pi$ by \cite[Theorem~4.6]{Matic4}, we can continue in the same way to obtain that $\widehat{\tau_2}$ is an irreducible subrepresentation of
\begin{equation*}
\delta([\nu^{-b-1} \rho, \nu^{-b} \rho]) \times \cdots \times \delta([\nu^{-\alpha-1} \rho, \nu^{-\alpha} \rho]) \rtimes \widehat{\tau_3},
\end{equation*}
where $\tau_3$ is the unique common irreducible subrepresentation of $\delta([\nu^{-\alpha+2} \rho$, $\nu^{\alpha-1} \rho]) \rtimes \delta(\rho, \alpha; \sigma)$ and $\delta([\nu^{-\alpha+2} \rho, \nu^{\alpha-2} \rho]) \rtimes \sigma^{(4)}_{sp}$, where $\sigma^{(4)}_{sp}$ is the unique irreducible subrepresentation of $\nu^{\alpha-1} \rho \rtimes \delta(\rho, \alpha; \sigma)$.

It follows at once that $\tau_3$ is a subrepresentation of the induced representation $\nu^{\alpha-1} \rho \times \nu^{\alpha} \rho \rtimes \delta([\nu^{-\alpha+2} \rho, \nu^{\alpha-2} \rho]) \rtimes \sigma$. Since $\delta([\nu^{-\alpha+2} \rho, \nu^{\alpha-2} \rho]) \rtimes \sigma$ is irreducible and $\mu^{\ast}(\sigma^{(4)}_{sp})$ does not contain an irreducible constituent of the form $\nu^{\alpha} \rho \otimes \pi$, it follows that $\tau_3$ is a subrepresentation of $\zeta([\nu^{\alpha-1} \rho, \nu^{\alpha} \rho]) \times \delta([\nu^{-\alpha+2} \rho, \nu^{\alpha-2} \rho]) \rtimes \sigma$. Now the rest of the proof follows in the same way as in the previously considered cases. We note that the Aubert dual of $\tau_3$ can also be obtained using \cite[Lemma~4.13,~Lemma~4.15]{Matic10}.
\end{proof}

\begin{proposition} \label{proppunaind}
If $-a < \alpha - 1$ and $x < b$, in $R(G)$ we have
\begin{gather*}
\zeta([\nu^{-b} \rho, \nu^{-a} \rho]) \rtimes \zeta(\rho, x; \sigma) = \\
L(\nu^{-b} \rho, \ldots, \nu^{-x-2} \rho, \delta([\nu^{-x-1} \rho, \nu^{-x} \rho]), \ldots, \delta([\nu^{-\alpha-1} \rho, \nu^{-\alpha} \rho]), \\
\nu^{a} \rho, \ldots, \nu^{\lceil \alpha \rceil - \alpha - 1} \rho; \tau^{(2)}) + \\
L(\nu^{-b} \rho, \ldots, \nu^{-x-1} \rho, \nu^{-x} \rho, \nu^{-x} \rho, \ldots, \nu^{-\alpha} \rho, \nu^{-\alpha} \rho, \nu^{-\alpha+1} \rho, \ldots, \nu^{a-1} \rho, \\ \nu^{a} \rho, \nu^{a} \rho, \ldots, \nu^{\lceil \alpha \rceil - \alpha - 1} \rho, \nu^{\lceil \alpha \rceil - \alpha - 1} \rho; \tau^{(1)}).
\end{gather*}
If $-a = x$, in $R(G)$ we have
\begin{gather*}
\zeta([\nu^{-b} \rho, \nu^{-a} \rho]) \rtimes \zeta(\rho, x; \sigma) = \\
L(\nu^{-b} \rho, \ldots, \nu^{a-2} \rho, \delta([\nu^{a-1} \rho, \nu^{a} \rho]), \nu^{a} \rho, \ldots, \delta([\nu^{-\alpha-1} \rho, \nu^{-\alpha} \rho]), \nu^{-\alpha} \rho\\
\nu^{-\alpha+1} \rho, \ldots, \nu^{\lceil \alpha \rceil - \alpha - 1} \rho; \tau^{(2)}) + \\
L(\nu^{-b} \rho, \ldots, \nu^{a-1} \rho, \nu^{a} \rho, \nu^{a} \rho, \nu^{a} \rho, \ldots, \nu^{-\alpha} \rho, \nu^{-\alpha} \rho, \nu^{-\alpha} \rho,\\
\nu^{-\alpha+1} \rho, \nu^{-\alpha+1} \rho, \ldots, \nu^{\lceil \alpha \rceil - \alpha - 1} \rho, \nu^{\lceil \alpha \rceil - \alpha - 1} \rho; \tau^{(1)}).
\end{gather*}
\end{proposition}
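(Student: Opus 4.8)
The argument follows the pattern of the proofs of Propositions~\ref{propmanjeprva}--\ref{propmanjedruga} and of the proposition immediately preceding this one. The plan is to pass to the Aubert dual, decompose the resulting generalized principal series, and then compute the Aubert dual of each of its composition factors using the inductive procedure built on Lemmas~\ref{lemaindprva}--\ref{lemaindtreca} and Proposition~\ref{propulag}, reading the answer off from the resulting Langlands data. Since $\widehat{\zeta([\nu^{-b} \rho, \nu^{-a} \rho]) \rtimes \zeta(\rho, x; \sigma)} = \delta([\nu^{a} \rho, \nu^{b} \rho]) \rtimes \delta(\rho, x; \sigma)$ and the Aubert involution is additive and maps irreducible representations to irreducible ones, the proposition follows once both Aubert duals have been determined. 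By the relevant cases of \cite[Theorem~4.1]{Mu3}, the generalized principal series $\delta([\nu^{a} \rho, \nu^{b} \rho]) \rtimes \delta(\rho, x; \sigma)$ is, in $R(G)$, a sum of two irreducible representations: if $-a < \alpha - 1$ (so $-a < \alpha - 1 < x < b$), these are the Langlands quotient $L(\delta([\nu^{-b} \rho, \nu^{-a} \rho]); \delta(\rho, x; \sigma))$ and $L(\delta([\nu^{-x} \rho, \nu^{-a} \rho]); \delta(\rho, b; \sigma))$; if $-a = x$, they are $L(\delta([\nu^{-b} \rho, \nu^{-a} \rho]); \delta(\rho, x; \sigma))$ together with the unique irreducible (tempered) representation which is simultaneously a subrepresentation of $\delta([\nu^{a} \rho, \nu^{b} \rho]) \rtimes \delta(\rho, x; \sigma)$ and of $\delta([\nu^{-x} \rho, \nu^{x} \rho]) \rtimes \delta(\rho, b; \sigma)$.

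The Aubert dual of the Langlands quotient $L(\delta([\nu^{-b} \rho, \nu^{-a} \rho]); \delta(\rho, x; \sigma))$ is obtained by the inductive stripping used in the proofs of Propositions~\ref{propvod} and \ref{propmanjedruga}. Proposition~\ref{propulag}(1) together with Lemma~\ref{lemaindprva} peels off the factors $\nu^{-b} \rho, \ldots, \nu^{-x-2} \rho$, reducing to the Aubert dual of $L(\delta([\nu^{-x-1} \rho, \nu^{-a} \rho]); \delta(\rho, x; \sigma))$. Then alternating applications of Proposition~\ref{propulag}(3) and (1) give embeddings of the form $L(\delta([\nu^{-y-1} \rho, \nu^{-a} \rho]); \delta(\rho, y; \sigma)) \hookrightarrow \zeta([\nu^{y} \rho, \nu^{y+1} \rho]) \rtimes L(\delta([\nu^{-y} \rho, \nu^{-a} \rho]); \delta(\rho, y-1; \sigma))$ (in the case $-a = x$ one also needs embeddings involving the factor $\zeta([\nu^{y} \rho, \nu^{y+1} \rho]) \times \nu^{y} \rho$ and Lemma~\ref{lemaindtreca}); at each stage one checks, exactly as in the proof of Proposition~\ref{propmanjeprva}, by comparing $\mu^{\ast}$ of both sides against the known composition series of the intermediate generalized principal series $\delta([\nu^{a} \rho, \nu^{y+1} \rho]) \rtimes \delta(\rho, y; \sigma)$ via the structural formula of Lemma~\ref{osn}, that it is the Zelevinsky segment representation $\zeta([\nu^{y} \rho, \nu^{y+1} \rho])$, and not the square-integrable $\delta([\nu^{y} \rho, \nu^{y+1} \rho])$, through which the embedding factors. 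Applying Lemma~\ref{lemaindprva} (or Lemma~\ref{lemaindtreca}) at each stage turns these into $\delta([\nu^{-y-1} \rho, \nu^{-y} \rho])$-segments on the dual side, and iterating down to $y = \alpha$ reduces the problem to the Aubert dual of $L(\delta([\nu^{-\alpha} \rho, \nu^{-a} \rho]); \delta(\rho, \alpha; \sigma))$, hence -- depending on whether $\alpha \geq \frac{3}{2}$, $\alpha = 1$, or $\alpha = \vh$ -- to $\widehat{L(\delta([\nu^{-\alpha+1} \rho, \nu^{-a} \rho]); \sigma)}$, $\widehat{\sigma}$, or $\tau(\rho, \sigma)$; these base cases are supplied by \cite[Lemma~4.10]{Matic10}, \cite[Theorem~3.5]{Matic14} and Lemma~\ref{lemaprva}, and are precisely where the base representations $\tau^{(1)}$, $\tau^{(2)}$, $\tau(\rho, \sigma)$ and the three regimes of $\alpha$ enter the final Langlands datum, just as in Proposition~\ref{propmanjedruga}. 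Reading off this datum gives the summand whose base is $\tau^{(2)}$.

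The remaining factor is handled similarly. When $-a < \alpha - 1$, the Aubert dual of $L(\delta([\nu^{-x} \rho, \nu^{-a} \rho]); \delta(\rho, b; \sigma))$ is computed as in the proof of Proposition~\ref{propmanjedruga} with the roles of $b$ and $x$ interchanged: Proposition~\ref{propulag}(3) and Lemma~\ref{lemaindprva} strip off $\nu^{-b} \rho, \ldots, \nu^{-x-1} \rho$, passing from $\delta(\rho, b; \sigma)$ to $\delta(\rho, x; \sigma)$, after which alternating $\nu^{x} \rho$-factors -- now dualized through Lemma~\ref{lemainddruga} -- produce the doubled blocks $\nu^{-x} \rho, \nu^{-x} \rho, \ldots, \nu^{-\alpha} \rho, \nu^{-\alpha} \rho$; since $-a < \alpha - 1$, the segment $[\nu^{-x} \rho, \nu^{-a} \rho]$ terminates before reaching the delicate reducibility range, so the procedure again reduces to $\widehat{L(\delta([\nu^{-\alpha+1} \rho, \nu^{-a} \rho]); \sigma)}$, and one obtains the summand whose base is $\tau^{(1)}$. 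When $-a = x$, the Aubert dual of the tempered factor is obtained from \cite[Theorem~4.16]{Matic10} (together with \cite[Lemma~4.13,~Lemma~4.15]{Matic10}), after first reducing it to a tempered representation of a smaller group by stripping off strongly positive and cuspidal data with Proposition~\ref{propulag}, exactly as for the tempered factors in the proof of Proposition~\ref{propmanjeprva}. Adding the two Aubert duals in each case gives the two displayed identities.

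The main difficulty, as in the preceding propositions, lies in the bookkeeping for $\widehat{L(\delta([\nu^{-b} \rho, \nu^{-a} \rho]); \delta(\rho, x; \sigma))}$: at every inductive step one must pin down, using the structural formula of Lemma~\ref{osn} and the known (length two or length four) composition series of the intermediate generalized principal series, that the correct constituent appears -- the Zelevinsky representation $\zeta([\nu^{y} \rho, \nu^{y+1} \rho])$ rather than $\delta([\nu^{y} \rho, \nu^{y+1} \rho])$, and likewise the correct constituent near the bottom of the chain -- since this is exactly what produces the doubled pattern in the Langlands data, and one must keep the three regimes $\alpha = \vh$, $\alpha = 1$, $\alpha \geq \frac{3}{2}$ apart. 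Once these constituents have been identified, Lemmas~\ref{lemaindprva}--\ref{lemaindtreca} render the remaining computations routine.
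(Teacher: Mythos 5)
Your proposal follows essentially the same overall strategy as the paper: dualize to the generalized principal series, decompose it via \cite{Mu3}, identify the Langlands quotient and the second constituent, and then compute the Aubert duals of each by inductive peeling using Proposition~\ref{propulag} and Lemmas~\ref{lemaindprva}--\ref{lemaindtreca}. The decomposition you cite in both the case $-a<\alpha-1$ and the case $-a=x$ matches the paper, and your treatment of the Langlands quotient $L(\delta([\nu^{-b}\rho,\nu^{-a}\rho]);\delta(\rho,x;\sigma))$ --- stripping $\nu^{-b}\rho,\ldots,\nu^{-x-2}\rho$, then $\zeta([\nu^{y}\rho,\nu^{y+1}\rho])$-factors (with the extra $\nu^y\rho$ and Lemma~\ref{lemaindtreca} when $-a=x$) --- is the paper's argument.

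Where you diverge is in the second constituent. For $L(\delta([\nu^{-x}\rho,\nu^{-a}\rho]);\delta(\rho,b;\sigma))$ (resp.\ for the tempered $\tau$ when $-a=x$) the paper does not continue the long inductive chain all the way down. After stripping off $\nu^{-b}\rho,\ldots,\nu^{-x-1}\rho$ (resp.\ $\nu^{-b}\rho,\ldots,\nu^{a-1}\rho$), it exploits the key simplification that the remaining representation $L(\delta([\nu^{-x}\rho,\nu^{-a}\rho]);\delta(\rho,x;\sigma))\cong\delta([\nu^{a}\rho,\nu^{x}\rho])\rtimes\delta(\rho,x;\sigma)$ (resp.\ $\delta([\nu^{a}\rho,\nu^{-a}\rho])\rtimes\delta(\rho,-a;\sigma)$) is \emph{irreducible}; this lets it write down an explicit cuspidal constituent of its Jacquet module and apply Lemma~\ref{lemaprva} directly, bypassing the rest of the iteration. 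Your version instead carries the peeling all the way to $\widehat{L(\delta([\nu^{-\alpha+1}\rho,\nu^{-a}\rho]);\sigma)}$; this should still converge to the same Langlands datum (and indeed the final base-case GPS $\delta([\nu^{-\alpha+1}\rho,\nu^{-a}\rho])\rtimes\sigma$ is also irreducible, so Lemma~\ref{lemaprva} again applies there), but it is a longer and slightly harder-to-justify path than the paper's shortcut. Two minor inaccuracies worth flagging: you describe the terminal step of the first chain as $L(\delta([\nu^{-\alpha}\rho,\nu^{-a}\rho]);\delta(\rho,\alpha;\sigma))$ when in fact it is $L(\delta([\nu^{-\alpha}\rho,\nu^{-a}\rho]);\sigma)$ (the last $\rho$-twist of $\delta(\rho,x;\sigma)$ is consumed by the final $\zeta$-peel); and your discussion of the ``three regimes $\alpha=\vh,1,\geq\frac{3}{2}$'' and $\tau(\rho,\sigma)$ imports a case split from Proposition~\ref{propmanjedruga} that does not occur here --- in this proposition the answers are written uniformly in $\tau^{(1)},\tau^{(2)}$, and when $-a<\alpha-1$ the hypothesis $-a\geq 0$ already forces $\alpha\geq\frac{3}{2}$, so the small-$\alpha$ regimes cannot arise. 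Neither of these affects the conclusion, but the first should be corrected for the indexing to come out right.
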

\begin{proof}
If $-a < \alpha - 1$ and $x < b$, in $R(G)$ we have
\begin{gather*}
\delta([\nu^{a} \rho, \nu^{b} \rho]) \rtimes \delta(\rho, x; \sigma) = \\
L(\delta([\nu^{-b} \rho, \nu^{-a} \rho]); \delta(\rho, x; \sigma)) + L(\delta([\nu^{-x} \rho, \nu^{-a} \rho]); \delta(\rho, b; \sigma)).
\end{gather*}
In the same way as in the previously considered cases, we deduce that the Aubert dual of $L(\delta([\nu^{-b} \rho, \nu^{-a} \rho]); \delta(\rho, x; \sigma))$ is a subrepresentation of
\begin{equation*}
\nu^{-b} \rho \times \cdots \times \nu^{-x-2} \rho \rtimes \widehat{L(\delta([\nu^{-x-1} \rho, \nu^{-a} \rho]); \delta(\rho, x; \sigma))}.
\end{equation*}
Since the induced representation $\delta([\nu^{a} \rho, \nu^{x} \rho]) \rtimes \delta(\rho, x; \sigma)$ is irreducible, it follows that $\mu^{\ast}(L(\delta([\nu^{-x-1} \rho, \nu^{-a} \rho]); \delta(\rho, x; \sigma)))$ does not contain an irreducible constituent of the form $\nu^{x+1} \rho \otimes \pi$. Thus, we conclude that
$L(\delta([\nu^{-x-1} \rho, \nu^{-a} \rho])$; $\delta(\rho, x; \sigma))$ is a subrepresentation of $\zeta([\nu^{x} \rho, \nu^{x+1} \rho]) \rtimes L(\delta([\nu^{-x} \rho, \nu^{-a} \rho]); \delta(\rho, x-1; \sigma))$, and, consequently, that the Aubert dual of $L(\delta([\nu^{-x-1} \rho, \nu^{-a} \rho]); \delta(\rho, x; \sigma))$ is a subrepresentation of
\begin{equation*}
\delta([\nu^{-x-1} \rho, \nu^{-x} \rho]) \times \cdots \times \delta([\nu^{-\alpha-1} \rho, \nu^{-\alpha} \rho]) \rtimes \widehat{L(\delta([\nu^{-\alpha} \rho, \nu^{-a} \rho]); \sigma)}.
\end{equation*}
It has been already proved that the Aubert dual of $L(\delta([\nu^{-\alpha} \rho, \nu^{-a} \rho]); \sigma)$ is isomorphic to $L(\nu^{a} \rho, \ldots, \nu^{\lceil \alpha \rceil - \alpha - 1} \rho; \tau^{(2)})$.

Also, the Aubert dual of $L(\delta([\nu^{-x} \rho, \nu^{-a} \rho]); \delta(\rho, b; \sigma))$ is an irreducible subrepresentation of
\begin{equation*}
\nu^{-b} \rho \times \cdots \times \nu^{-x-1} \rho \rtimes \widehat{L(\delta([\nu^{-x} \rho, \nu^{-a} \rho]); \delta(\rho, x; \sigma))}.
\end{equation*}
Since the induced representation $\delta([\nu^{a} \rho, \nu^{x} \rho]) \rtimes \delta(\rho, x; \sigma)$ is irreducible, the Jacquet module of $L(\delta([\nu^{-x} \rho, \nu^{-a} \rho]); \delta(\rho, x; \sigma))$ with respect to the appropriate parabolic subgroup contains
\begin{gather*}
\nu^{x} \rho \otimes \nu^{x} \rho \otimes \cdots \otimes \nu^{\alpha} \rho \otimes \nu^{\alpha} \rho \otimes \nu^{\alpha-1} \rho \otimes \cdots \otimes \nu^{-a+1} \rho \otimes \\
\nu^{-a} \rho \otimes \nu^{-a} \rho \otimes \cdots \otimes \nu^{\alpha - \lceil \alpha \rceil + 1} \rho \otimes \nu^{\alpha - \lceil \alpha \rceil + 1} \rho \otimes \tau',
\end{gather*}
where $\tau' \cong \sigma$ if $a \not\in \mathbb{Z}$ and $\tau' \cong \rho \otimes \sigma$ otherwise. Now, using Lemma \ref{lemaprva} we obtain the Aubert dual of $L(\delta([\nu^{-x} \rho, \nu^{-a} \rho]); \delta(\rho, x; \sigma))$.

If $-a = x$, in $R(G)$ we have
\begin{equation*}
\delta([\nu^{a} \rho, \nu^{b} \rho]) \rtimes \delta(\rho, x; \sigma) =
L(\delta([\nu^{-b} \rho, \nu^{-a} \rho]); \delta(\rho, -a; \sigma)) + \tau,
\end{equation*}
where $\tau$ is the unique irreducible (tempered) common subrepresentation of $\delta([\nu^{a} \rho, \nu^{b} \rho]) \rtimes \delta(\rho, -a; \sigma)$ and $\delta([\nu^{a} \rho, \nu^{-a} \rho]) \rtimes \delta(\rho, b; \sigma)$.

First we have that the Aubert dual of $L(\delta([\nu^{-b} \rho, \nu^{-a} \rho]); \delta(\rho, -a; \sigma))$ is a subrepresentation of
\begin{equation*}
\nu^{-b} \rho \times \cdots \times \nu^{a-2} \rho \rtimes \widehat{L(\delta([\nu^{a-1} \rho, \nu^{-a} \rho]); \delta(\rho, -a; \sigma))}.
\end{equation*}
In the same way as in previously considered cases we deduce that $L(\delta([\nu^{a-1} \rho$, $\nu^{-a} \rho]); \delta(\rho, -a; \sigma))$ is a subrepresentation of
\begin{equation*}
\zeta([\nu^{-a} \rho, \nu^{-a+1} \rho]) \times \nu^{-a} \rho \rtimes L(\delta([\nu^{a} \rho, \nu^{-a-1} \rho]); \delta(\rho, -a-1; \sigma)),
\end{equation*}
and that the Aubert dual of $L(\delta([\nu^{a-1} \rho, \nu^{-a} \rho]); \delta(\rho, -a; \sigma))$ is a subrepresentation of
\begin{equation*}
\delta([\nu^{a-1} \rho, \nu^{a} \rho]) \times \nu^{a} \rho \times \cdots \times \delta([\nu^{-\alpha-1} \rho, \nu^{-\alpha} \rho]) \times \nu^{-\alpha} \rho \rtimes \widehat{L(\delta([\nu^{-\alpha} \rho, \nu^{\alpha-1} \rho]); \sigma)}.
\end{equation*}
It has already been observed that the Aubert dual of $L(\delta([\nu^{-\alpha} \rho, \nu^{\alpha-1} \rho]); \sigma)$ is isomorphic to $L(\nu^{-\alpha+1} \rho, \ldots, \nu^{\lceil \alpha \rceil - \alpha - 1} \rho; \tau^{(2)})$.

In a standard way we obtain that the Aubert dual of $\tau$ is a subrepresentation of
\begin{equation*}
\nu^{-b} \rho \times \cdots \times \nu^{a-1} \rho \rtimes \widehat{\tau'},
\end{equation*}
where $\tau' \cong \delta([\nu^{a} \rho, \nu^{-a} \rho]) \rtimes \delta(\rho, -a; \sigma)$, and now $\widehat{\tau'}$ can be directly obtained using Lemma \ref{lemaprva}. This ends the proof.
\end{proof}

Now we turn our attention to the case $\rho_0 \not\cong \rho$. We omit the proofs, since all the results can be obtained in the same way as in the $\rho_0 \cong \rho$ case, enhanced by Lemma \ref{lemarazl}.

\begin{proposition}
Suppose that $\rho_0 \not\cong \rho$. Then $\zeta([\nu^{-b} \rho_0, \nu^{-a} \rho_0]) \rtimes \zeta(\rho, x; \sigma)$ is irreducible if and only if $b < \beta$. If $b \geq \beta$ and $-a = b$, in $R(G)$ we have
\begin{gather*}
\zeta([\nu^{-b} \rho_0, \nu^{b} \rho_0]) \rtimes \zeta(\rho, x; \sigma) = \\
L(\nu^{-x} \rho, \ldots, \nu^{-\alpha} \rho, \nu^{-b} \rho_0, \nu^{-b} \rho_0, \ldots, \nu^{\lceil \beta \rceil - \beta - 1} \rho_0, \nu^{\lceil \beta \rceil - \beta - 1} \rho_0; \tau^{(1)}) + \\
L(\nu^{-x} \rho, \ldots, \nu^{-\alpha} \rho, \nu^{-b} \rho_0, \nu^{-b} \rho_0, \ldots, \nu^{-\beta} \rho_0, \nu^{-\beta} \rho_0, \nu^{-\beta+1} \rho_0, \ldots, \nu^{\lceil \beta \rceil - \beta - 1} \rho_0; \tau^{(2)}).
\end{gather*}
If $\beta \leq -a < b$, in $R(G)$ we have
\begin{gather*}
\zeta([\nu^{-b} \rho_0, \nu^{-a} \rho_0]) \rtimes \zeta(\rho, x; \sigma) = \\
L(\nu^{-x} \rho, \ldots, \nu^{-\alpha} \rho, \nu^{-b} \rho_0, \ldots, \nu^{a-1} \rho_0, \nu^{a} \rho_0, \nu^{a} \rho_0, \ldots, \nu^{\lceil \beta \rceil - \beta - 1} \rho_0, \nu^{\lceil \beta \rceil - \beta - 1} \rho_0; \tau^{(1)}) + \\
L(\nu^{-x} \rho, \ldots, \nu^{-\alpha} \rho, \nu^{-b} \rho_0, \ldots, \nu^{a-1} \rho_0, \nu^{a} \rho_0, \nu^{a} \rho_0, \ldots, \nu^{-\beta} \rho_0, \nu^{-\beta} \rho_0, \\
\nu^{-\beta+1} \rho_0, \ldots, \nu^{\lceil \beta \rceil - \beta - 1} \rho_0; \tau^{(2)}) + \\
L(\nu^{-x} \rho, \ldots, \nu^{-\alpha} \rho, \nu^{-b} \rho_0, \ldots, \nu^{a-2} \rho_0, \delta([\nu^{a-1} \rho_0, \nu^{a} \rho_0]), \ldots, \delta([\nu^{-\beta-1} \rho_0, \nu^{-\beta} \rho_0]), \\
\nu^{-\beta+1} \rho_0, \ldots, \nu^{\lceil \beta \rceil - \beta - 1} \rho_0; \tau^{(2)}).
\end{gather*}
If $-a < \beta = b$, in $R(G)$ we have
\begin{gather*}
\zeta([\nu^{-b} \rho_0, \nu^{-a} \rho_0]) \rtimes \zeta(\rho, x; \sigma) = \\
L(\nu^{-x} \rho, \ldots, \nu^{-\alpha} \rho, \nu^{-b} \rho_0, \ldots, \nu^{a-1} \rho_0, \nu^{a} \rho_0, \nu^{a} \rho_0, \ldots, \nu^{\lceil \beta \rceil - \beta - 1} \rho_0, \nu^{\lceil \beta \rceil - \beta - 1} \rho_0; \tau^{(1)}) + \\
L(\nu^{-x} \rho, \ldots, \nu^{-\alpha} \rho, \nu^{a} \rho_0, \ldots, \nu^{\lceil \beta \rceil - \beta - 1} \rho_0; \tau^{(2)}).
\end{gather*}
If $-a < \beta  < b$, in $R(G)$ we have
\begin{gather*}
\zeta([\nu^{-b} \rho_0, \nu^{-a} \rho_0]) \rtimes \zeta(\rho, x; \sigma) = \\
L(\nu^{-x} \rho, \ldots, \nu^{-\alpha} \rho, \nu^{-b} \rho_0, \ldots, \nu^{a-1} \rho_0, \nu^{a} \rho_0, \nu^{a} \rho_0, \ldots, \nu^{\lceil \beta \rceil - \beta - 1} \rho_0, \nu^{\lceil \beta \rceil - \beta - 1} \rho_0; \tau^{(1)}) + \\
L(\nu^{-x} \rho, \ldots, \nu^{-\alpha} \rho, \nu^{-b} \rho_0, \ldots, \nu^{-\beta-1} \rho_0, \nu^{a} \rho_0, \ldots, \nu^{\lceil \beta \rceil - \beta - 1} \rho_0; \tau^{(2)}).
\end{gather*}
\end{proposition}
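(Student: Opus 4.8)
The plan is to run the Aubert-duality argument of the $\rho_0\cong\rho$ case (Propositions~\ref{propmanjeprva}--\ref{proppunaind}) essentially verbatim, using Lemma~\ref{lemarazl} to strip off the $\rho$-part. By Theorem~\ref{aub}, and using that $\rho_0$ is self-contragredient, the Aubert dual of $\zeta([\nu^{-b}\rho_0,\nu^{-a}\rho_0])\rtimes\zeta(\rho,x;\sigma)$ is the generalized principal series $\delta([\nu^{a}\rho_0,\nu^{b}\rho_0])\rtimes\delta(\rho,x;\sigma)$, so it suffices to list its composition factors and compute the Aubert dual of each. Since $\rho_0\not\cong\rho$, the reducibility and composition series of this induced representation are governed by the reducibility point $\beta$ of $\rho_0$ and $\sigma$ exactly as over a cuspidal base point: by \cite[Section~2, Theorem~2.1]{Mu3} together with the classification of discrete series \cite{KimMatic, MT1}, the representation $\delta([\nu^{a}\rho_0,\nu^{b}\rho_0])\rtimes\delta(\rho,x;\sigma)$ is irreducible precisely when $b<\beta$, and otherwise, in the four subcases $-a=b\ge\beta$, $\beta\le-a<b$, $-a<\beta=b$, and $-a<\beta<b$, it decomposes into a sum of the expected number of discrete series and Langlands quotients of the form $L(\delta([\nu^{c}\rho_0,\nu^{-a}\rho_0]);\delta(\rho,x;\sigma))$. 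Taking Aubert duals yields the stated irreducibility criterion, and the appearance of $\tau^{(1)}$ versus $\tau^{(2)}$ reflects whether $a\in\mathbb Z$ (equivalently $\beta\in\mathbb Z$) or not.

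The main step is to apply Lemma~\ref{lemarazl} to each irreducible subquotient $\pi$ of $\delta([\nu^{a}\rho_0,\nu^{b}\rho_0])\rtimes\delta(\rho,x;\sigma)$. It furnishes an irreducible $\pi_1\in R(G)$ with $\pi\hookrightarrow\delta([\nu^{\alpha}\rho,\nu^{x}\rho])\rtimes\pi_1$ such that $\widehat{\pi}$ is the unique irreducible subrepresentation of $\nu^{-x}\rho\times\nu^{-x+1}\rho\times\cdots\times\nu^{-\alpha}\rho\rtimes\widehat{\pi_1}$, and moreover, if $\widehat{\pi_1}\cong L(\delta_1,\ldots,\delta_k;\tau_{temp})$, then $\widehat{\pi}\cong L(\nu^{-x}\rho,\ldots,\nu^{-\alpha}\rho,\delta_1,\ldots,\delta_k;\tau_{temp})$. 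One then identifies $\pi_1$: embedding $\pi$ into $\delta([\nu^{\alpha}\rho,\nu^{x}\rho])\times\delta([\nu^{a}\rho_0,\nu^{b}\rho_0])\rtimes\sigma$ and invoking Lemma~\ref{lemajantz} shows that $\pi_1$ is an irreducible subquotient of $\delta([\nu^{a}\rho_0,\nu^{b}\rho_0])\rtimes\sigma$, and a multiplicity-one count on the relevant term of $\mu^{\ast}$, of exactly the kind carried out in the proof of Proposition~\ref{propnulaprva}, singles out which one. Since $\delta([\nu^{a}\rho_0,\nu^{b}\rho_0])\rtimes\sigma$ falls under the $\beta>0$ analysis of \cite[Section~4]{Matic10} (for instance \cite[Lemma~4.10]{Matic10} and its companions), the duals $\widehat{\pi_1}$ are precisely the Langlands quotients occurring in the statement with the initial block $\nu^{-x}\rho,\ldots,\nu^{-\alpha}\rho$ removed; prepending this block via the formula above recovers the claimed $\widehat{\pi}$. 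For the Langlands-quotient factor $L(\delta([\nu^{-b}\rho_0,\nu^{-a}\rho_0]);\delta(\rho,x;\sigma))$ one argues identically, now with $\pi_1\cong L(\delta([\nu^{-b}\rho_0,\nu^{-a}\rho_0]);\sigma)$, whose Aubert dual is again read off from \cite[Section~4]{Matic10}.

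The main obstacle is the bookkeeping. In each of the four subcases one must verify that Lemma~\ref{lemajantz} together with the $\mu^{\ast}$-multiplicity argument really selects the intended $\pi_1$ among the constituents of $\delta([\nu^{a}\rho_0,\nu^{b}\rho_0])\rtimes\sigma$, and that for every half-integer $z$ occurring one has the commutation $\nu^{z}\rho\times\delta([\nu^{x_i}\rho_0,\nu^{y_i}\rho_0])\cong\delta([\nu^{x_i}\rho_0,\nu^{y_i}\rho_0])\times\nu^{z}\rho$; this last point, which uses $\rho_0\not\cong\rho$, is exactly what legitimizes writing the Langlands data of $\widehat{\pi}$ as the concatenation of the $\rho$-block with the $\rho_0$-block. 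Granting this, no computation beyond what was already carried out for $\rho_0\cong\rho$ and for $\delta([\nu^{a}\rho_0,\nu^{b}\rho_0])\rtimes\sigma$ in \cite{Matic10} is required, and matching the $\beta$-dependent strongly positive or tempered tails $\tau^{(1)},\tau^{(2)}$ coming from \cite[Section~4]{Matic10} with the prepended $\rho$-segment produces the four displayed identities.
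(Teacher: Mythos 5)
Your proposal is essentially the approach the paper intends: the paper simply says the results ``can be obtained in the same way as in the $\rho_0\cong\rho$ case, enhanced by Lemma~\ref{lemarazl},'' and you have correctly unpacked what that means—pass to the Aubert dual $\delta([\nu^{a}\rho_0,\nu^{b}\rho_0])\rtimes\delta(\rho,x;\sigma)$, list its constituents via \cite{Mu3} and the discrete series classification, use Lemma~\ref{lemarazl} together with Lemma~\ref{lemajantz} and a $\mu^{\ast}$-multiplicity count to reduce to subquotients $\pi_1$ of $\delta([\nu^{a}\rho_0,\nu^{b}\rho_0])\rtimes\sigma$, read off $\widehat{\pi_1}$ from \cite[Section~4]{Matic10}, and concatenate with the $\rho$-block, which is legitimate precisely because $\rho_0\not\cong\rho$ makes the two families of segments commute. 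One small inaccuracy in a summarizing sentence: you write that ``the appearance of $\tau^{(1)}$ versus $\tau^{(2)}$ reflects whether $a\in\mathbb Z$,'' but in fact both $\tau^{(1)}$ and $\tau^{(2)}$ occur in every decomposition; it is only their \emph{definitions} (given at the start of Section~\ref{Section a0}) that branch on whether $a\in\mathbb Z$. This does not affect the validity of the argument, which is otherwise the paper's.
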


\section{Case $a = \vh$}\label{Section a1/2}

This section is devoted to the case $a = \vh$. Again, we first consider the more complicated case $\rho_0 \cong \rho$, and
let $\tau(\rho_1, \sigma_1)$ be as in the previous section.

Irreducibility criterion is a direct consequence of \cite[Theorem~5.1]{Mu3}:

\begin{proposition}
Degenerate principal series $\zeta([\nu^{-b} \rho, \nu^{-\vh} \rho]) \rtimes \zeta(\rho, x; \sigma)$ is irreducible if and only if one of the following holds:
\begin{itemize}
\item $\alpha > \vh$ and $b = x$,
\item $b < \alpha - 1$.
\end{itemize}
\end{proposition}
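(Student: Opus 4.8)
The plan is to pass through the Aubert involution. By the general discussion preceding Remark 2.4, the degenerate principal series $\zeta([\nu^{-b} \rho, \nu^{-\vh} \rho]) \rtimes \zeta(\rho, x; \sigma)$ is irreducible if and only if its Aubert dual, the generalized principal series $\delta([\nu^{\vh} \rho, \nu^{b} \rho]) \rtimes \delta(\rho, x; \sigma)$, is irreducible, since $D_{G}$ is an involution taking irreducibles to irreducibles and respecting length. So the entire statement reduces to the reducibility criterion for $\delta([\nu^{\vh} \rho, \nu^{b} \rho]) \rtimes \delta(\rho, x; \sigma)$, where $\delta(\rho, x; \sigma)$ is the strongly positive discrete series attached to the segment $[\nu^{\alpha} \rho, \nu^{x} \rho]$ and $\sigma$. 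This is exactly the kind of generalized principal series of ``particular type'' whose reducibility is governed by \cite[Theorem~5.1]{Mu3} (the case $a = \vh$ of the half-integral reducibility setup).

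First I would unwind the cuspidal support of $\delta(\rho, x; \sigma)$: by Proposition \ref{spds} (and the explicit description in \cite{Matic14}), its $\rho$-part of the cuspidal support is $\{\nu^{\alpha} \rho, \nu^{\alpha+1}\rho, \ldots, \nu^{x}\rho\}$ when $x \geq \alpha$, together with $\sigma$. The reducibility of $\delta([\nu^{\vh} \rho, \nu^{b} \rho]) \rtimes \delta(\rho, x; \sigma)$ then depends on how the segment $[\nu^{\vh}\rho, \nu^{b}\rho]$ interacts with this support and with the reducibility point $\alpha$ of $\nu^{\bullet}\rho \rtimes \sigma$. According to \cite[Theorem~5.1]{Mu3}, since here the ``lower endpoint'' is fixed at $\vh$, the induced representation is \emph{irreducible} precisely in the two degenerate regimes: (i) the segment lies strictly below the relevant reducibility threshold, which translates to $b < \alpha - 1$; and (ii) the ``linked'' configuration degenerates because the upper endpoint $b$ coincides with $x$ while $\alpha > \vh$ (so that no discrete series subquotient can appear — the point $\nu^{\vh}\rho$ is not a reducibility point of $\nu^{\vh}\rho \rtimes \delta(\rho,x;\sigma)$ unless $\alpha = \vh$, and the segment cannot be ``extended past'' $x$). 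In all other cases — $\alpha - 1 \leq b$ with either $b \neq x$ or $\alpha = \vh$ — one checks directly from the $\mu^{\ast}$ structural formula (Lemma \ref{osn}) that $\mu^{\ast}$ of the induced representation contains a suitable constituent $\delta([\ldots]) \otimes \delta(\rho, \cdot; \sigma)$ with multiplicity at least two, forcing reducibility, exactly as in the analogous computations in \cite{Mu3} and \cite{Matic8}.

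The concrete steps are therefore: (1) cite that irreducibility of the degenerate principal series is equivalent to irreducibility of its Aubert dual (\ref{gosprva}) specialized to $\rho_0 \cong \rho$, $a = \vh$; (2) invoke \cite[Theorem~5.1]{Mu3} with the cuspidal support of $\delta(\rho, x; \sigma)$ as computed above; (3) translate the resulting reducibility conditions on $\delta([\nu^{\vh}\rho, \nu^{b}\rho]) \rtimes \delta(\rho, x; \sigma)$ into the two stated conditions on $(b, x, \alpha)$, checking in particular that when $\alpha = \vh$ the point $b = x$ no longer yields irreducibility (there the relevant half-integral reducibility of $\nu^{\vh}\rho \rtimes \delta(\rho, x; \sigma)$ kicks in), and that the lower bound $b < \alpha - 1$ is sharp. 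The main obstacle is step (3): matching the notational conventions of \cite[Theorem~5.1]{Mu3} (which is stated for $\delta([\nu^{a}\rho, \nu^{b}\rho]) \rtimes \delta(\rho, x; \sigma)$ in terms of the relative positions of $a$, $b$, $x$, and $\alpha$) to the present normalization, and carefully isolating the boundary cases $b = \alpha - 1$, $b = x$, and $\alpha = \vh$ so that no spurious irreducible or reducible case is missed. Everything else is a direct quotation of existing results.
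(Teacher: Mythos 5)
Your approach is exactly the paper's: the proposition is obtained by applying the Aubert involution to pass from the degenerate principal series to the generalized principal series $\delta([\nu^{\vh}\rho, \nu^b\rho]) \rtimes \delta(\rho, x; \sigma)$, and then quoting \cite[Theorem~5.1]{Mu3} for its reducibility criterion. The paper states the proposition as ``a direct consequence of \cite[Theorem~5.1]{Mu3}'' with no further argument, so your step (3) (the translation of conditions) is indeed the only real content, and your proposed method matches the paper's.
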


The composition factors in other cases are given in the following sequence of propositions.

\begin{proposition}
If $\alpha > \vh$ and $x < b$, in $R(G)$ we have
\begin{gather*}
\zeta([\nu^{-b} \rho, \nu^{-\vh} \rho]) \rtimes \zeta(\rho, x; \sigma) = \\
L(\nu^{-b} \rho, \ldots, \nu^{-x-2} \rho, \delta([\nu^{-x-1} \rho, \nu^{-x} \rho]), \ldots, \delta([\nu^{-\alpha-1} \rho, \nu^{-\alpha} \rho]); \tau^{(2)}) + \\
L(\nu^{-b} \rho, \ldots, \nu^{-x-1} \rho, \nu^{-x} \rho, \nu^{-x} \rho, \ldots, \nu^{-\alpha} \rho, \nu^{-\alpha} \rho, \nu^{-\alpha+1} \rho, \ldots, \nu^{-\vh} \rho; \sigma).
\end{gather*}
\end{proposition}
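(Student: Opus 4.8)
The plan is to argue exactly as in the proof of Proposition \ref{propvod}, by passing to the Aubert dual. First note that $\alpha>\vh$ together with $\alpha-\vh\in\mathbb Z$ forces $\alpha\geq\frac{3}{2}$, so all the segments occurring below are nonempty and we are in the situation $\vh<\alpha\leq x<b$. The Aubert dual of the degenerate principal series in the statement is the generalized principal series $\delta([\nu^{\vh}\rho,\nu^{b}\rho])\rtimes\delta(\rho,x;\sigma)$, and I would start from its decomposition, which by \cite[Theorem~5.1]{Mu3} (the half-integral counterpart of \cite[Theorem~2.1]{Mu3} used for $a\geq 1$) reads, in $R(G)$,
\begin{equation*}
\delta([\nu^{\vh}\rho,\nu^{b}\rho])\rtimes\delta(\rho,x;\sigma)=L(\delta([\nu^{-b}\rho,\nu^{-\vh}\rho]);\delta(\rho,x;\sigma))+L(\delta([\nu^{-x}\rho,\nu^{-\vh}\rho]);\delta(\rho,b;\sigma)).
\end{equation*}
Thus it remains to identify the Aubert dual of each of the two Langlands subquotients, after which one reads off the statement; in effect the argument of Proposition \ref{propvod} for the case $a\leq\alpha$ carries over verbatim with $a$ replaced by $\vh$, the strongly positive representation $\sigma_{sp}$ occurring there now being the unique irreducible subrepresentation of $\nu^{\vh}\rho\times\nu^{\frac{3}{2}}\rho\times\cdots\times\nu^{\alpha}\rho\rtimes\sigma$, i.e.\ $\tau^{(2)}$.

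For $L(\delta([\nu^{-x}\rho,\nu^{-\vh}\rho]);\delta(\rho,b;\sigma))$ I would use Proposition \ref{propulag}$(3)$ together with Lemma \ref{lemaindprva} to strip off $\nu^{-b}\rho,\ldots,\nu^{-x-1}\rho$, reducing to $\widehat{L(\delta([\nu^{-x}\rho,\nu^{-\vh}\rho]);\delta(\rho,x;\sigma))}$. Then, since $\nu^{x}\rho\rtimes\delta(\rho,x;\sigma)$ is irreducible, Proposition \ref{propulag}$(1)$ and $(3)$ produce an embedding into $\nu^{x}\rho\times\nu^{x}\rho\rtimes L(\delta([\nu^{-x+1}\rho,\nu^{-\vh}\rho]);\delta(\rho,x-1;\sigma))$, and iterating this via Lemma \ref{lemainddruga} places the pairs $\nu^{-x}\rho,\nu^{-x}\rho,\ldots,\nu^{-\alpha}\rho,\nu^{-\alpha}\rho$ in front of $\widehat{L(\delta([\nu^{-\alpha+1}\rho,\nu^{-\vh}\rho]);\sigma)}$. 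Since $\delta([\nu^{-\alpha+1}\rho,\nu^{-\vh}\rho])\rtimes\sigma$ is irreducible, Lemma \ref{lemaprva} identifies this final dual with the unique irreducible subrepresentation of $\nu^{-\alpha+1}\rho\times\cdots\times\nu^{-\vh}\rho\rtimes\sigma$, and reassembling yields the second Langlands subquotient in the statement.

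For $L(\delta([\nu^{-b}\rho,\nu^{-\vh}\rho]);\delta(\rho,x;\sigma))$ I would first use Lemma \ref{lemaindprva} with Proposition \ref{propulag}$(1)$ to strip off $\nu^{-b}\rho,\ldots,\nu^{-x-2}\rho$, reducing to $\widehat{L(\delta([\nu^{-x-1}\rho,\nu^{-\vh}\rho]);\delta(\rho,x;\sigma))}$. As in Proposition \ref{propvod}, the crucial point is that $\mu^{\ast}(L(\delta([\nu^{-x-1}\rho,\nu^{-\vh}\rho]);\delta(\rho,x;\sigma)))$ contains no constituent of the form $\nu^{x+1}\rho\otimes\pi$; this follows from the decomposition
\begin{equation*}
\delta([\nu^{\vh}\rho,\nu^{x+1}\rho])\rtimes\delta(\rho,x;\sigma)=L(\delta([\nu^{-x-1}\rho,\nu^{-\vh}\rho]);\delta(\rho,x;\sigma))+L(\delta([\nu^{-x}\rho,\nu^{-\vh}\rho]);\delta(\rho,x+1;\sigma))
\end{equation*}
of \cite[Theorem~5.1]{Mu3}, the irreducibility of $\delta([\nu^{\vh}\rho,\nu^{x}\rho])\rtimes\delta(\rho,x;\sigma)$, and the structural formula. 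Proposition \ref{propulag}$(3)$ and $(1)$ then give
\begin{equation*}
L(\delta([\nu^{-x-1}\rho,\nu^{-\vh}\rho]);\delta(\rho,x;\sigma))\hookrightarrow\zeta([\nu^{x}\rho,\nu^{x+1}\rho])\rtimes L(\delta([\nu^{-x}\rho,\nu^{-\vh}\rho]);\delta(\rho,x-1;\sigma)),
\end{equation*}
and, iterating (with the analogous vanishing for $\nu^{i}\rho$, $i\in\{x,x+1\}$, checked at each step) via Lemma \ref{lemaindprva}, one is left with $\delta([\nu^{-x-1}\rho,\nu^{-x}\rho]),\ldots,\delta([\nu^{-\alpha-1}\rho,\nu^{-\alpha}\rho])$ in front of $\widehat{L(\delta([\nu^{-\alpha}\rho,\nu^{-\vh}\rho]);\sigma)}$. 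By \cite[Theorem~3.5]{Matic14} this last dual is the unique irreducible (strongly positive) subrepresentation of $\nu^{\vh}\rho\times\nu^{\frac{3}{2}}\rho\times\cdots\times\nu^{\alpha}\rho\rtimes\sigma$, i.e.\ $\tau^{(2)}$, and reassembling yields the first Langlands subquotient, which completes the identification of the two composition factors.

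The main obstacle, as always in this type of argument, is bookkeeping: at each step of the two inductions one must confirm that the Jacquet module $\mu^{\ast}$ of the current, smaller Langlands subquotient omits precisely the cuspidal data being stripped off, so that Lemmas \ref{lemaindprva} and \ref{lemainddruga} apply and produce a \emph{unique} irreducible subrepresentation. These vanishing statements are exactly those established for the integral case in Proposition \ref{propvod}, and they transfer after replacing $a$ by $\vh$ and \cite[Theorem~2.1]{Mu3} by \cite[Theorem~5.1]{Mu3}; no new phenomenon arises, and the only structural change is that the strongly positive piece is now $\tau^{(2)}$.
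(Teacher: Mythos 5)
Your proposal is correct and follows essentially the same route as the paper: start from the \cite[Theorem~5.1]{Mu3} decomposition of the Aubert dual, strip off the unitary GL-part via Lemma \ref{lemaindprva}/\ref{lemainddruga} with Proposition \ref{propulag}, and identify the remaining duals via \cite[Theorem~3.5]{Matic14} and Lemma \ref{lemaprva}. The only (harmless) variation is in the endgame for $L(\delta([\nu^{-x}\rho,\nu^{-\vh}\rho]);\delta(\rho,b;\sigma))$: you iterate down to $L(\delta([\nu^{-\alpha+1}\rho,\nu^{-\vh}\rho]);\sigma)$, while the paper instead exploits the irreducibility of $\delta([\nu^{\vh}\rho,\nu^{x}\rho])\rtimes\delta(\rho,x;\sigma)$ and reads off its Jacquet module directly, as in Proposition \ref{proppunaind} --- both give the same result.
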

\begin{proof}
By \cite[Theorem~5.1]{Mu3}, in $R(G)$ we have:
\begin{gather*}
\delta([\nu^{\vh} \rho, \nu^{b} \rho]) \rtimes \delta(\rho, x; \sigma) = \\
L(\delta([\nu^{-b} \rho, \nu^{-\vh} \rho]); \delta(\rho, x; \sigma)) + L(\delta([\nu^{-x} \rho, \nu^{-\vh} \rho]); \delta(\rho, b; \sigma)).
\end{gather*}
First, in a standard way, using the intertwining operators methods, Proposition \ref{propulag}$(1)$ and Lemma \ref{lemaindprva}, we get that the Aubert dual of $L(\delta([\nu^{-b} \rho, \nu^{-\vh} \rho]);$ $\delta(\rho, x; \sigma))$ is a subrepresentation of
\begin{equation*}
\nu^{-b} \rho \times \cdots \times \nu^{-x-2} \rho \rtimes \widehat{L(\delta([\nu^{-x-1} \rho, \nu^{-\vh} \rho]); \delta(\rho, x; \sigma))}.
\end{equation*}
Since $\delta([\nu^{\vh} \rho, \nu^{x} \rho]) \rtimes \delta(\rho, x; \sigma)$ is irreducible, $\nu^{x+1} \rho \otimes \delta([\nu^{\vh} \rho, \nu^{x} \rho]) \rtimes \delta(\rho, x; \sigma)$ is the unique irreducible constituent of the form $\nu^{x+1} \rho \otimes \pi$ appearing in $\mu^{\ast}(\delta([\nu^{\vh} \rho$, $\nu^{x+1} \rho]) \rtimes \delta(\rho, x; \sigma))$, and it is obviously contained in $\mu^{\ast}(L(\delta([\nu^{-x} \rho, \nu^{-\vh} \rho])$; $\delta(\rho, x+1; \sigma))$. This leads to an embedding
\begin{gather*}
L(\delta([\nu^{-x-1} \rho, \nu^{-\vh} \rho]); \delta(\rho, x; \sigma)) \hookrightarrow \\
\zeta([\nu^{x} \rho, \nu^{x+1} \rho]) \rtimes L(\delta([\nu^{-x} \rho, \nu^{-\vh} \rho]); \delta(\rho, x-1; \sigma)),
\end{gather*}
which leads to
\begin{gather*}
L(\delta([\nu^{-x-1} \rho, \nu^{-\vh} \rho]); \delta(\rho, x; \sigma)) \hookrightarrow \\
\delta([\nu^{-x-1} \rho, \nu^{-x} \rho]) \times \cdots \times \delta([\nu^{-\alpha-1} \rho, \nu^{-\alpha} \rho]) \rtimes \widehat{L(\delta([\nu^{-\alpha} \rho, \nu^{-\vh} \rho]); \sigma)},
\end{gather*}
and by \cite[Theorem~3.5]{Matic14} the Aubert dual of $L(\delta([\nu^{-\alpha} \rho, \nu^{-\vh} \rho]); \sigma)$ is isomorphic to $\tau^{(2)}$.

Using Proposition \ref{propulag}$(3)$ and Lemma \ref{lemaindprva}, we deduce that the Aubert dual of $L(\delta([\nu^{-x} \rho, \nu^{-\vh} \rho]); \delta(\rho, b; \sigma))$ is a subrepresentation of
\begin{equation*}
\nu^{-b} \rho \times \cdots \times \nu^{-x-1} \rho \rtimes \widehat{L(\delta([\nu^{-x} \rho, \nu^{-\vh} \rho]); \delta(\rho, x; \sigma))}.
\end{equation*}
Now by irreducibility of $\delta([\nu^{\vh} \rho, \nu^{x} \rho]) \rtimes \delta(\rho, x; \sigma)$, the rest of the proof follows in the same way as in the proof of Proposition \ref{proppunaind}.
\end{proof}

\begin{proposition}
If $\alpha > \vh$ and $\alpha-1 \leq b < x$, in $R(G)$ we have
\begin{gather*}
\zeta([\nu^{-b} \rho, \nu^{-\vh} \rho]) \rtimes \zeta(\rho, x; \sigma) = \\
L(\nu^{-x} \rho, \ldots, \nu^{-b-1} \rho, \nu^{-b} \rho, \nu^{-b} \rho, \ldots, \nu^{-\alpha} \rho, \nu^{-\alpha} \rho, \nu^{-\alpha+1} \rho, \ldots, \nu^{-\vh} \rho; \sigma) + \\
L(\nu^{-x} \rho, \ldots, \nu^{-b-2} \rho, \delta([\nu^{-b-1} \rho, \nu^{-b} \rho]), \ldots, \delta([\nu^{-\alpha} \rho, \nu^{-\alpha+1} \rho]), \nu^{-\alpha+2} \rho, \ldots, \nu^{-\vh} \rho; \sigma).
\end{gather*}
\end{proposition}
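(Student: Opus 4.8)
The plan is to carry out the argument of Proposition~\ref{propspprva} with $a$ replaced by $\vh$. Recall that throughout this section $\alpha-\vh\in\Z$, so the hypothesis $\alpha>\vh$ forces $\alpha\geq\frac{3}{2}$; in particular $\vh\leq\alpha-1\leq b<x$, and all the segments written in the statement and below are well defined under the convention that $\delta([\nu^{x}\rho,\nu^{y}\rho])$ is omitted when $x>y$ (this degeneration occurs only in the boundary case $\alpha=\frac{3}{2}$, where the displayed factors simplify accordingly). First I would pass to Aubert duals via Theorem~\ref{aub}: it suffices to determine the composition factors of the generalized principal series $\delta([\nu^{\vh}\rho,\nu^{b}\rho])\rtimes\delta(\rho,x;\sigma)$ together with their Aubert duals. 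Since $\vh\leq\alpha-1\leq b<x$, \cite[Theorem~2.1]{Mu3} and the classification of discrete series give, in $R(G)$,
\begin{equation*}
\delta([\nu^{\vh}\rho,\nu^{b}\rho])\rtimes\delta(\rho,x;\sigma)=L(\delta([\nu^{-b}\rho,\nu^{-\vh}\rho]);\delta(\rho,x;\sigma))+L(\delta([\nu^{-\alpha+2}\rho,\nu^{-\vh}\rho]);\sigma_{sp}),
\end{equation*}
where $\sigma_{sp}$ is the unique irreducible (strongly positive discrete series) subrepresentation of $\delta([\nu^{\alpha-1}\rho,\nu^{b}\rho])\rtimes\delta(\rho,x;\sigma)$.

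To handle the first summand, I would apply Proposition~\ref{propulag}(3) repeatedly together with Lemma~\ref{lemaindprva} to see that $\widehat{L(\delta([\nu^{-b}\rho,\nu^{-\vh}\rho]);\delta(\rho,x;\sigma))}$ embeds in $\nu^{-x}\rho\times\cdots\times\nu^{-b-1}\rho\rtimes\widehat{L(\delta([\nu^{-b}\rho,\nu^{-\vh}\rho]);\delta(\rho,b;\sigma))}$, checking each required $\mu^{\ast}$-nonoccurrence with the structural formula and the description of Jacquet modules of strongly positive representations. Then, using irreducibility of $\nu^{b}\rho\rtimes\delta(\rho,b;\sigma)$ from \cite[Proposition~3.1]{Mu3}, Proposition~\ref{propulag}(1), and the self-duality $\widetilde{\nu^{b}\rho}\cong\nu^{-b}\rho$, one obtains embeddings into $\nu^{b}\rho\times\nu^{b}\rho\times\delta([\nu^{-b+1}\rho,\nu^{-\vh}\rho])\rtimes\delta(\rho,b-1;\sigma)$, and iterating with Lemma~\ref{lemainddruga} reduces $\delta(\rho,b;\sigma)$ down to $\delta(\rho,\alpha;\sigma)$ and then to $\sigma$; the terminal term $L(\delta([\nu^{-\alpha+1}\rho,\nu^{-\vh}\rho]);\sigma)$ is treated by Lemma~\ref{lemaprva} using irreducibility of $\delta([\nu^{-\alpha+1}\rho,\nu^{-\vh}\rho])\rtimes\sigma$, its dual being the unique irreducible subrepresentation of $\nu^{-\alpha+1}\rho\times\cdots\times\nu^{-\vh}\rho\rtimes\sigma$. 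Reassembling yields $L(\nu^{-x}\rho,\ldots,\nu^{-b-1}\rho,\nu^{-b}\rho,\nu^{-b}\rho,\ldots,\nu^{-\alpha}\rho,\nu^{-\alpha}\rho,\nu^{-\alpha+1}\rho,\ldots,\nu^{-\vh}\rho;\sigma)$, the first listed factor.

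For the second summand I would imitate the treatment of the strongly positive term in Proposition~\ref{propspprva}. Using the embeddings of strongly positive representations from \cite[Section~3]{Matic4}, write $\sigma_{sp}\hookrightarrow\nu^{x}\rho\rtimes\sigma'_{sp}$ with $\sigma'_{sp}$ the analogous strongly positive subrepresentation at parameter $x-1$; then Proposition~\ref{propulag}(3) and Lemma~\ref{lemaindprva} peel off $\nu^{-x}\rho\times\cdots\times\nu^{-b-2}\rho$ from the Aubert dual. Two further applications of Proposition~\ref{propulag}(3) together with \cite[Theorem~3.4]{Matic4} produce an embedding of the shape $L(\delta([\nu^{-\alpha+2}\rho,\nu^{-\vh}\rho]);\sigma^{(1)}_{sp})\hookrightarrow\zeta([\nu^{b}\rho,\nu^{b+1}\rho])\rtimes L(\delta([\nu^{-\alpha+2}\rho,\nu^{-\vh}\rho]);\sigma^{(2)}_{sp})$, and iterating with Lemma~\ref{lemainddruga} brings one down to $\widehat{L(\delta([\nu^{-\alpha+2}\rho,\nu^{-\vh}\rho]);\sigma)}$, which by Lemma~\ref{lemaprva} is the unique irreducible subrepresentation of $\nu^{-\alpha+2}\rho\times\cdots\times\nu^{-\vh}\rho\rtimes\sigma$ (when $\alpha=\frac{3}{2}$ this summand is simply $\sigma_{sp}$, whose Aubert dual follows at once from \cite[Theorem~3.5]{Matic14}). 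Reassembling gives the second listed factor, which completes the identification.

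I expect the main obstacle to be the bookkeeping around the strongly positive constituent $\sigma_{sp}$: at each stage one must verify, using \cite[Theorem~4.6]{Matic4}, \cite[Section~7]{MatTad} and the structural formula, that the relevant Jacquet modules contain no constituent of the form $\nu^{i}\rho\otimes\pi$ (or $\nu^{d-k}\rho\otimes\cdots\otimes\nu^{d}\rho\otimes\pi'$) that would obstruct Lemmas~\ref{lemaindprva}, \ref{lemainddruga} and \ref{lemaindtreca}, and that the successive embeddings of $\sigma_{sp}$ and of $\delta(\rho,\cdot\,;\sigma)$ terminate precisely at $\sigma$ at the level $\alpha$; in addition the degenerate case $\alpha=\frac{3}{2}$ must be tracked so that the empty segments appearing in the statement are read correctly.
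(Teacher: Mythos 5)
Your proposal is correct and proceeds by essentially the same Aubert-involution mechanism as the paper, with a slightly different route for the first summand. For $L(\delta([\nu^{-b}\rho,\nu^{-\vh}\rho]);\delta(\rho,x;\sigma))$ the paper refers to the previous proposition in this section (the $\alpha>\vh$, $x<b$ case) with the roles of $b$ and $x$ interchanged: after peeling $\nu^{-x}\rho\times\cdots\times\nu^{-b-1}\rho$ via Proposition~\ref{propulag}$(3)$ and Lemma~\ref{lemaindprva}, one exploits the irreducibility of $\delta([\nu^{\vh}\rho,\nu^{b}\rho])\rtimes\delta(\rho,b;\sigma)$ (valid because $\alpha>\vh$ and here $b$ plays the role of $x$) to read off the full cuspidal chain in the Jacquet module and apply Lemma~\ref{lemaprva} once, as in Proposition~\ref{proppunaind}. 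You instead transplant the argument of Proposition~\ref{propspprva}, peeling the pairs $\nu^{-t}\rho\times\nu^{-t}\rho$ for $t$ from $b$ down to $\alpha$ via Lemma~\ref{lemainddruga}, each step relying on irreducibility of $\nu^{t}\rho\rtimes\delta(\rho,t;\sigma)$ and of $\delta([\nu^{\vh}\rho,\nu^{t-1}\rho])\rtimes\delta(\rho,t-1;\sigma)$ (both hold in this range), and then handling the terminal term $L(\delta([\nu^{-\alpha+1}\rho,\nu^{-\vh}\rho]);\sigma)$ with Lemma~\ref{lemaprva}. Both routes are valid and yield the same Langlands datum; the paper's is shorter (a single Jacquet-module computation), while yours tracks more intermediate representations but hews closely to the already-verified \ref{propspprva} pattern. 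Your treatment of the second summand $L(\delta([\nu^{-\alpha+2}\rho,\nu^{-\vh}\rho]);\sigma_{sp})$ coincides exactly with the paper's, which likewise defers to Proposition~\ref{propspprva}. One small correction: for the initial decomposition of the generalized principal series in the case $a=\vh$ the appropriate citation is $\cite[\text{Theorem}~5.1]{Mu3}$ (the $l_1=-\vh$ case), not $\cite[\text{Theorem}~2.1]{Mu3}$ — the latter identifies the discrete/tempered and non-tempered pieces abstractly, whereas the specific two-constituent description with $\sigma_{sp}$ is the content of Theorem~5.1.
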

\begin{proof}
By \cite[Theorem~5.1]{Mu3}, in $R(G)$ we have:
\begin{gather*}
\delta([\nu^{\vh} \rho, \nu^{b} \rho]) \rtimes \delta(\rho, x; \sigma) = \\
L(\delta([\nu^{-b} \rho, \nu^{-\vh} \rho]); \delta(\rho, x; \sigma)) + L(\delta([\nu^{-\alpha+2} \rho, \nu^{-\vh} \rho]); \sigma_{sp}),
\end{gather*}
where $\sigma_{sp}$ is the unique irreducible (strongly positive) subrepresentation of $\delta([\nu^{\alpha - 1} \rho, \nu^{b} \rho]) \rtimes \delta(\rho, x; \sigma)$.

The Aubert dual of $L(\delta([\nu^{-b} \rho, \nu^{-\vh} \rho]); \delta(\rho, x; \sigma))$ can be obtained following the same lines as in the proof of the previous proposition, interchanging the roles of $b$ and $x$. On the other hand, the Aubert dual of $L(\delta([\nu^{-\alpha+2} \rho, \nu^{-\vh} \rho]); \sigma_{sp})$ can be determined in the same way as in the proof of Proposition \ref{propspprva}.
\end{proof}

\begin{proposition}
If $\alpha = \vh$ and $x < b$, in $R(G)$ we have
\begin{gather*}
\zeta([\nu^{-b} \rho, \nu^{-\vh} \rho]) \rtimes \zeta(\rho, x; \sigma) = \\
L(\nu^{-b} \rho, \ldots, \nu^{-x-1} \rho, \nu^{-x} \rho, \nu^{-x} \rho, \ldots, \nu^{-\frac{1}{2}} \rho, \nu^{-\frac{1}{2}} \rho; \sigma) + \\
L(\nu^{-b} \rho, \ldots, \nu^{-x-2} \rho, \delta([\nu^{-x-1} \rho, \nu^{-x} \rho]), \ldots, \delta([\nu^{-\frac{5}{2}} \rho, \nu^{-\frac{3}{2}} \rho]), \delta([\nu^{-\frac{3}{2}} \rho, \nu^{\frac{1}{2}} \rho]); \sigma) + \\
L(\nu^{-b} \rho, \ldots, \nu^{-x-2} \rho, \delta([\nu^{-x-1} \rho, \nu^{-x} \rho]), \ldots, \delta([\nu^{-\frac{3}{2}} \rho, \nu^{-\frac{1}{2}} \rho]); \delta(\rho, \vh; \sigma)) + \\
L(\nu^{-b} \rho, \ldots, \nu^{-x-1} \rho, \nu^{-x} \rho, \nu^{-x} \rho, \ldots, \nu^{-\frac{3}{2}} \rho, \nu^{-\frac{3}{2}} \rho; \tau(\rho; \sigma)).
\end{gather*}
\end{proposition}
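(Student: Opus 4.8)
The plan is to transport the problem through the Aubert involution. As explained before Remark~2.4, the representation $\zeta([\nu^{-b}\rho,\nu^{-\vh}\rho])\rtimes\zeta(\rho,x;\sigma)$ is the Aubert dual of the generalized principal series $\delta([\nu^{\vh}\rho,\nu^{b}\rho])\rtimes\delta(\rho,x;\sigma)$. Since $\alpha=\vh$ and $\vh\le x<b$, \cite[Theorem~5.1]{Mu3} together with the classification of discrete series shows that this generalized principal series is a multiplicity-one representation of length four, whose constituents are the Langlands quotient $L(\delta([\nu^{-b}\rho,\nu^{-\vh}\rho]);\delta(\rho,x;\sigma))$, one (essentially) tempered constituent $\tau_{0}$, the Langlands quotient $L(\delta([\nu^{-x}\rho,\nu^{-\vh}\rho]);\delta(\rho,b;\sigma))$, and one further non-tempered constituent whose ``tempered part'' involves $\delta(\rho,\vh;\sigma)$. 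As $D_{G}$ is an involution which sends irreducibles to irreducibles and respects identities in $R(G)$, the asserted formula follows once the Aubert dual of each of these four constituents is identified; since $D_{G}$ preserves the partial cuspidal support, the matching of each dual with one of the four listed summands is automatic.

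For the (essentially) tempered constituent $\tau_{0}$ no new work is needed: its Aubert dual is already recorded in \cite[Theorem~4.21]{Matic10} (the $\alpha=\vh$ analogue of \cite[Theorems~4.11,~4.16]{Matic10} invoked in the proof of Proposition~\ref{propmanjeprva}), and it equals the fourth listed summand, the one with partial cuspidal support $\tau(\rho,\sigma)$.

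The three non-tempered constituents are handled by the inductive peeling used throughout Sections~\ref{Section beta0}--\ref{Section a0}. For $L(\delta([\nu^{-b}\rho,\nu^{-\vh}\rho]);\delta(\rho,x;\sigma))$: use Proposition~\ref{propulag}$(1)$ to pull $\nu^{b}\rho,\dots,\nu^{x+2}\rho$ to the front and dualize each step with Lemma~\ref{lemaindprva}, reducing to $\widehat{L(\delta([\nu^{-x-1}\rho,\nu^{-\vh}\rho]);\delta(\rho,x;\sigma))}$; then, combining Proposition~\ref{propulag}$(3)$ and $(1)$, obtain the embedding into $\zeta([\nu^{x}\rho,\nu^{x+1}\rho])\rtimes L(\delta([\nu^{-x}\rho,\nu^{-\vh}\rho]);\delta(\rho,x-1;\sigma))$, check via the structural formula (Lemma~\ref{osn}) and the length-four decomposition of $\delta([\nu^{\vh}\rho,\nu^{x+1}\rho])\rtimes\delta(\rho,x;\sigma)$ from \cite[Theorem~5.1]{Mu3} that no constituent $\nu^{i}\rho\otimes\pi$ with $i\in\{x,x+1\}$ occurs in the relevant $\mu^{\ast}$, and iterate with Lemma~\ref{lemainddruga}; the process terminates at $L(\nu^{-\vh}\rho;\sigma)$, whose dual is $\delta(\rho,\vh;\sigma)$ by \cite[Theorem~3.5]{Matic14}, and the segments $\delta([\nu^{-x-1}\rho,\nu^{-x}\rho]),\dots,\delta([\nu^{-\frac{3}{2}}\rho,\nu^{-\vh}\rho])$ collected along the way assemble into the third listed summand. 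The constituent $L(\delta([\nu^{-x}\rho,\nu^{-\vh}\rho]);\delta(\rho,b;\sigma))$ is treated the same way, using Proposition~\ref{propulag}$(3)$ to pull $\nu^{b}\rho,\dots,\nu^{x+1}\rho$ out and reduce to $\widehat{L(\delta([\nu^{-x}\rho,\nu^{-\vh}\rho]);\delta(\rho,x;\sigma))}$; this last dual is obtained, as in Proposition~\ref{proppunaind}, from the Jacquet module of $L(\delta([\nu^{-x}\rho,\nu^{-\vh}\rho]);\delta(\rho,x;\sigma))$ via Lemma~\ref{lemaprva}, the only change being that one must feed in the (length two) decomposition of $\delta([\nu^{\vh}\rho,\nu^{x}\rho])\rtimes\delta(\rho,x;\sigma)$ from \cite[Theorem~5.1]{Mu3} in place of the irreducibility which was available when $\alpha>\vh$; this yields the first listed summand. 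The remaining non-tempered constituent is handled identically, its bottom term having Aubert dual controlled by $\tau(\rho,\sigma)$ via \cite[Lemma~4.10]{Matic10} exactly as at the end of the proof of Proposition~\ref{propmanjedruga}, and with the longer tail $\delta([\nu^{-\frac{3}{2}}\rho,\nu^{\vh}\rho])$ arising precisely because $\alpha=\vh$; this gives the second listed summand. For $\rho_{0}\not\cong\rho$ one would in addition invoke Lemma~\ref{lemarazl}, but that case is not at issue here.

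The main obstacle will be the Jacquet module bookkeeping around the overlap point $\nu^{\vh}\rho$: this is exactly where the two constituents beyond those present in the $\alpha>\vh$ case are born, and it is where the multiplicity-one statements must be verified, namely that the intermediate induced representations $\zeta([\cdots])\rtimes L(\cdots)$ and $\delta([\cdots])\rtimes L(\cdots)$ possess a unique irreducible subrepresentation, and that the pertinent Jacquet modules contain at most one constituent of the form $\nu^{i}\rho\otimes\pi$ or $\nu^{i}\rho\times\nu^{i}\rho\otimes\pi$. These require a short case split according to the position of $x$, together with the explicit length-two and length-four decompositions of $\delta([\nu^{\vh}\rho,\nu^{y+1}\rho])\rtimes\delta(\rho,y;\sigma)$ for $\vh\le y\le x$ from \cite[Theorem~5.1]{Mu3} and \cite[Proposition~3.2]{Matic8} (for $y=x$ this is the $b=x$ case of the present section). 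These are the same phenomena already met in Proposition~\ref{propmanjedruga}, so no genuinely new technique is needed, only more careful tracking of constituents.
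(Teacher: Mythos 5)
Your high-level plan — dualize to $\delta([\nu^{\vh}\rho,\nu^{b}\rho])\rtimes\delta(\rho,x;\sigma)$, use \cite[Theorem~5.1]{Mu3} to get the four constituents $L(\delta([\nu^{-b}\rho,\nu^{-\vh}\rho]);\delta(\rho,x;\sigma))$, $\sigma_{ds}$, $L(\delta([\nu^{-x}\rho,\nu^{-\vh}\rho]);\delta(\rho,b;\sigma))$, $L(\delta([\nu^{-b}\rho,\nu^{x}\rho]);\sigma)$, and compute each Aubert dual by inductive peeling — is exactly what the paper does. But the execution has two genuine errors, and they swap the constituents against the wrong summands, so the argument as written does not establish the formula.

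First, your claim that ``since $D_G$ preserves the partial cuspidal support, the matching is automatic'' does no work here: all four summands in the statement have partial cuspidal support $\sigma$ (both $\tau(\rho,\sigma)$ and $\delta(\rho,\vh;\sigma)$ are built on $\sigma$), so the matching must actually be computed. Second, for $L(\delta([\nu^{-x}\rho,\nu^{-\vh}\rho]);\delta(\rho,b;\sigma))$ you propose to compute $\widehat{L(\delta([\nu^{-x}\rho,\nu^{-\vh}\rho]);\delta(\rho,x;\sigma))}$ by reading off a single cuspidal Jacquet-module term and invoking Lemma~\ref{lemaprva}, as in Proposition~\ref{proppunaind}. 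That argument relies on the \emph{irreducibility} of $\delta([\nu^{\vh}\rho,\nu^{x}\rho])\rtimes\delta(\rho,x;\sigma)$, which fails precisely when $\alpha=\vh$ (it is length two). When it fails, the offending Jacquet term does not lie in the Langlands quotient, and the shortcut gives the wrong answer: you obtain the first listed summand, whereas the correct dual is the fourth one, with tempered bottom $\tau(\rho,\sigma)$. The paper instead peels $L(\delta([\nu^{-y}\rho,\nu^{-\vh}\rho]);\delta(\rho,y;\sigma))\hookrightarrow\nu^{y}\rho\times\nu^{y}\rho\rtimes L(\delta([\nu^{-y+1}\rho,\nu^{-\vh}\rho]);\delta(\rho,y-1;\sigma))$ via Proposition~\ref{propulag}$(3)$,$(1)$ and Lemma~\ref{lemainddruga}, terminating at $L(\nu^{-\vh}\rho;\delta(\rho,\vh;\sigma))$, whose dual is $\tau(\rho,\sigma)$. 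Third, for $L(\delta([\nu^{-b}\rho,\nu^{-\vh}\rho]);\delta(\rho,x;\sigma))$ you assert the peeling terminates at $L(\nu^{-\vh}\rho;\sigma)$ with dual $\delta(\rho,\vh;\sigma)$, yielding the third summand. This is off by a step: the recursion $L(\delta([\nu^{-y-1}\rho,\nu^{-\vh}\rho]);\delta(\rho,y;\sigma))\hookrightarrow\zeta([\nu^{y}\rho,\nu^{y+1}\rho])\rtimes L(\delta([\nu^{-y}\rho,\nu^{-\vh}\rho]);\delta(\rho,y-1;\sigma))$ needs $y-1\geq\alpha=\vh$, so it stops at $L(\delta([\nu^{-\frac{3}{2}}\rho,\nu^{-\vh}\rho]);\delta(\rho,\vh;\sigma))$ (there is no $\delta(\rho,-\vh;\sigma)$), the last collected segment is $\delta([\nu^{-\frac{5}{2}}\rho,\nu^{-\frac{3}{2}}\rho])$, and the terminal dual is $L(\delta([\nu^{-\frac{3}{2}}\rho,\nu^{\vh}\rho]);\sigma)$ — giving the \emph{second} summand. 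Your chain accidentally recycles the calculation that actually belongs to $L(\delta([\nu^{-b}\rho,\nu^{x}\rho]);\sigma)$ (whose dual is indeed obtained via $\widehat{L(\nu^{-\vh}\rho;\sigma)}\cong\delta(\rho,\vh;\sigma)$, yielding the third summand, not the second as you state). Correcting these three points forces $\widehat{\sigma_{ds}}$ to be the first summand, as the paper reads off from \cite[Theorem~5.2.(i)]{Matic10}, rather than the fourth as you claim.
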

\begin{proof}
By \cite[Theorem~5.1]{Mu3}, in $R(G)$ we have:
\begin{gather*}
\delta([\nu^{\vh} \rho, \nu^{b} \rho]) \rtimes \delta(\rho, x; \sigma) =
L(\delta([\nu^{-b} \rho, \nu^{-\vh} \rho]); \delta(\rho, x; \sigma)) + \sigma_{ds} + \\
L(\delta([\nu^{-x} \rho, \nu^{-\vh} \rho]); \delta(\rho, b; \sigma)) + L(\delta([\nu^{-b} \rho, \nu^{x} \rho]); \sigma),
\end{gather*}
where $\sigma_{ds}$ is the unique common irreducible (discrete series) subrepresentation of both $\delta([\nu^{\vh} \rho, \nu^{b} \rho]) \rtimes \delta(\rho, x; \sigma)$ and $\delta([\nu^{-x} \rho, \nu^{b} \rho]) \rtimes \sigma$. Note that $\widehat{\sigma_{ds}}$ has been determined in \cite[Theorem~5.2.(i)]{Matic10}.

Let us now determine the Aubert dual of $L(\delta([\nu^{-x} \rho, \nu^{-\vh} \rho]); \delta(\rho, b; \sigma))$. In a standard way we conclude that it is a subrepresentation of
\begin{equation*}
\nu^{-b} \rho \times \cdots \times \nu^{-x-1} \rho \rtimes \widehat{L(\delta([\nu^{-x} \rho, \nu^{-\vh} \rho]); \delta(\rho, x; \sigma))}.
\end{equation*}
For $x \geq \frac{3}{2}$, we have the following embeddings and isomorphisms:
\begin{align*}
L(\delta([\nu^{-x} \rho, \nu^{-\vh} \rho]); \delta(\rho, x; \sigma)) & \hookrightarrow \delta([\nu^{-x+1} \rho, \nu^{-\vh} \rho]) \times \nu^{-x} \rho \rtimes \delta(\rho, x; \sigma) \\
& \cong \nu^{x} \rho \times \delta([\nu^{-x+1} \rho, \nu^{-\vh} \rho]) \rtimes \delta(\rho, x; \sigma) \\
& \hookrightarrow \nu^{x} \rho \times \nu^{x} \rho \times \delta([\nu^{-x+1} \rho, \nu^{-\vh} \rho]) \rtimes \delta(\rho, x-1; \sigma),
\end{align*}
which enable us to conclude that $L(\delta([\nu^{-x} \rho, \nu^{-\vh} \rho]); \delta(\rho, x; \sigma))$ is a subrepresentation of $\nu^{x} \rho \times \nu^{x} \rho \rtimes L(\delta([\nu^{-x+1} \rho, \nu^{-\vh} \rho]); \delta(\rho, x-1; \sigma))$. Thus, repeating these arguments and using Lemma \ref{lemainddruga}, we get that the Aubert dual of $L(\delta([\nu^{-x} \rho, \nu^{-\vh} \rho]); \delta(\rho, x; \sigma))$ is a subrepresentation of
\begin{equation*}
\nu^{-x} \rho \times \nu^{-x} \rho \times \cdots \times \nu^{-\frac{3}{2}} \rho \times \nu^{-\frac{3}{2}} \rho \rtimes \widehat{L(\nu^{-\vh} \rho; \delta(\rho, \vh; \sigma))},
\end{equation*}
and it has already been observed that $\widehat{L(\nu^{-\vh} \rho; \delta(\rho, \vh; \sigma))} \cong \tau(\rho; \sigma)$.

Now we analyze the Aubert duals of representations $L(\delta([\nu^{-b} \rho, \nu^{x} \rho]); \sigma)$ and $L(\delta([\nu^{-b} \rho, \nu^{-\vh} \rho]); \delta(\rho, x; \sigma))$. Using the same arguments as before, we obtain the following embeddings:
\begin{gather*}
\widehat{L(\delta([\nu^{-b} \rho, \nu^{x} \rho]); \sigma)} \hookrightarrow
\nu^{-b} \rho \times \cdots \times \nu^{-x-2} \rho \rtimes \widehat{L(\delta([\nu^{-x-1} \rho, \nu^{x} \rho]); \sigma)}, \\
\widehat{L(\delta([\nu^{-b} \rho, \nu^{-\vh} \rho]); \delta(\rho, x; \sigma))} \hookrightarrow \\
\nu^{-b} \rho \times \cdots \times \nu^{-x-2} \rho \rtimes \widehat{L(\delta([\nu^{-x-1} \rho, \nu^{-\vh} \rho]); \delta(\rho, x; \sigma))}.
\end{gather*}
Since $\delta([\nu^{\vh} \rho, \nu^{x} \rho]) \rtimes \delta(\rho, x; \sigma)$ is a length two representation by \cite[Theorem~5.1]{Mu3}, it follows at once from the structural formula that $\mu^{\ast}(\delta([\nu^{\vh} \rho, \nu^{x+1} \rho]) \rtimes \delta(\rho, x; \sigma))$ contains exactly two irreducible constituents of the form $\nu^{x+1} \rho \otimes \pi$, which have to be contained in $\mu^{\ast}(L(\delta([\nu^{-x} \rho, \nu^{-\vh} \rho]); \delta(\rho, x+1; \sigma)))$ and in $\mu^{\ast}(\sigma'_{ds})$, where $\sigma'_{ds}$ is the unique discrete series subquotient of $\delta([\nu^{\vh} \rho, \nu^{x+1} \rho]) \rtimes \delta(\rho, x; \sigma)$. Thus, neither $\mu^{\ast}(L(\delta([\nu^{-x-1} \rho, \nu^{x} \rho]); \sigma))$, nor $\mu^{\ast}(L(\delta([\nu^{-x-1} \rho, \nu^{-\vh} \rho])$; $\delta(\rho, x; \sigma)))$ contains irreducible constituent of the form $\nu^{x+1} \rho \otimes \pi$. This leads to an embedding
\begin{equation*}
L(\delta([\nu^{-x-1} \rho, \nu^{x} \rho]); \sigma) \hookrightarrow \zeta([\nu^{x} \rho, \nu^{x+1} \rho]) \rtimes L(\delta([\nu^{-x} \rho, \nu^{x-1} \rho]); \sigma)
\end{equation*}
and, if $x \geq \frac{3}{2}$, to an embedding
\begin{gather*}
L(\delta([\nu^{-x-1} \rho, \nu^{-\vh} \rho]); \delta(\rho, x; \sigma)) \hookrightarrow \\
\zeta([\nu^{x} \rho, \nu^{x+1} \rho]) \rtimes L(\delta([\nu^{-x} \rho, \nu^{-\vh} \rho]); \delta(\rho, x-1; \sigma)).
\end{gather*}
Using Lemma \ref{lemaindtreca} and repeating the same arguments, we obtain
\begin{gather*}
\widehat{L(\delta([\nu^{-x-1} \rho, \nu^{x} \rho]); \sigma)} \hookrightarrow \\
\delta([\nu^{-x-1} \rho, \nu^{-x} \rho]) \times \cdots \times \delta([\nu^{-\frac{3}{2}} \rho, \nu^{-\vh} \rho]) \rtimes \widehat{L(\nu^{-\frac{1}{2}} \rho; \sigma)},
\end{gather*}
and
\begin{gather*}
\widehat{L(\delta([\nu^{-x-1} \rho, \nu^{-\vh} \rho]); \delta(\rho, x; \sigma))} \hookrightarrow \\
\delta([\nu^{-x-1} \rho, \nu^{-x} \rho]) \times \cdots \times \delta([\nu^{-\frac{5}{2}} \rho, \nu^{-\frac{3}{2}} \rho]) \rtimes \widehat{L(\delta([\nu^{-\frac{3}{2}} \rho, \nu^{-\vh} \rho]); \delta(\rho, \vh; \sigma))},
\end{gather*}
We have already seen that $\widehat{L(\nu^{-\frac{1}{2}} \rho; \sigma)} \cong \delta(\rho, \vh; \sigma)$ and that the Aubert dual of $L(\delta([\nu^{-\frac{3}{2}} \rho, \nu^{-\vh} \rho]); \delta(\rho, \vh; \sigma))$ is isomorphic to $L(\delta([\nu^{-\frac{3}{2}} \rho, \nu^{\vh} \rho]); \sigma)$. This ends the proof.
\end{proof}

\begin{proposition}
If $\alpha = \vh$ and $b \leq x$, in $R(G)$ we have
\begin{gather*}
\zeta([\nu^{-b} \rho, \nu^{-\vh} \rho]) \rtimes \zeta(\rho, x; \sigma) = \\
L(\nu^{-x} \rho, \ldots, \nu^{-b-1} \rho, \nu^{-b} \rho, \nu^{-b} \rho, \ldots, \nu^{-\frac{3}{2}} \rho, \nu^{-\frac{3}{2}} \rho; \tau(\rho; \sigma)) + \\
L(\nu^{-x} \rho, \ldots, \nu^{-b-1} \rho, \nu^{-b} \rho, \nu^{-b} \rho, \ldots, \nu^{-\frac{1}{2}} \rho, \nu^{-\frac{1}{2}} \rho; \sigma).
\end{gather*}
\end{proposition}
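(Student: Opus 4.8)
The plan is to argue exactly as in the preceding propositions, via the Aubert involution. By Theorem~\ref{aub} the Aubert dual of $\zeta([\nu^{-b} \rho, \nu^{-\vh} \rho]) \rtimes \zeta(\rho, x; \sigma)$ is the generalized principal series $\delta([\nu^{\vh} \rho, \nu^{b} \rho]) \rtimes \delta(\rho, x; \sigma)$, so it suffices to determine the composition factors of the latter together with their Aubert duals and then dualize the resulting identity in $R(G)$ back. Since $\alpha = \vh$ and $b \leq x$, the segment $[\nu^{\vh}\rho, \nu^{b}\rho]$ lies in the cuspidal support of $\delta(\rho, x; \sigma)$, and \cite[Theorem~5.1]{Mu3} gives, in $R(G)$,
\begin{equation*}
\delta([\nu^{\vh} \rho, \nu^{b} \rho]) \rtimes \delta(\rho, x; \sigma) = L(\delta([\nu^{-b} \rho, \nu^{-\vh} \rho]); \delta(\rho, x; \sigma)) + \sigma_{ds},
\end{equation*}
a length-two representation, where $\sigma_{ds}$ is the discrete series subrepresentation. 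Thus the task reduces to computing $\widehat{\sigma_{ds}}$ and $\widehat{L(\delta([\nu^{-b} \rho, \nu^{-\vh} \rho]); \delta(\rho, x; \sigma))}$.

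The first of these is immediate: $\sigma_{ds}$ is a discrete series of exactly the kind occurring in the case $\alpha = \vh$, $x < b$ (with the roles of $b$ and $x$ interchanged), so by \cite[Theorem~5.2]{Matic10} its Aubert dual is
\begin{equation*}
\widehat{\sigma_{ds}} \cong L(\nu^{-x} \rho, \ldots, \nu^{-b-1} \rho, \nu^{-b} \rho, \nu^{-b} \rho, \ldots, \nu^{-\vh} \rho, \nu^{-\vh} \rho; \sigma),
\end{equation*}
which is the second summand in the statement.

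For the second one, I would run the inductive peeling procedure used throughout Sections~\ref{Section a1}--\ref{Section a1/2}. Since $\delta(\rho, x; \sigma) \h \nu^{x}\rho \rtimes \delta(\rho, x-1; \sigma)$, a repeated application of Proposition~\ref{propulag}$(3)$ (the hypotheses $t \neq b$, $t \neq \vh$ hold because the peeled exponents $t \in \{x, x-1, \ldots, b+1\}$ all exceed $b \geq \vh$) together with Lemma~\ref{lemaindprva} shows that $\widehat{L(\delta([\nu^{-b} \rho, \nu^{-\vh} \rho]); \delta(\rho, x; \sigma))}$ is the unique irreducible subrepresentation of
\begin{equation*}
\nu^{-x} \rho \times \cdots \times \nu^{-b-1} \rho \rtimes \widehat{L(\delta([\nu^{-b} \rho, \nu^{-\vh} \rho]); \delta(\rho, b; \sigma))}.
\end{equation*}
Next, using irreducibility of $\nu^{b}\rho \rtimes \delta(\rho, b; \sigma)$ (\cite[Proposition~3.1]{Mu3}) and the chain
\begin{align*}
L(\delta([\nu^{-b} \rho, \nu^{-\vh} \rho]); \delta(\rho, b; \sigma)) & \h \delta([\nu^{-b+1} \rho, \nu^{-\vh} \rho]) \times \nu^{b} \rho \rtimes \delta(\rho, b; \sigma) \\
& \h \nu^{b} \rho \times \nu^{b} \rho \times \delta([\nu^{-b+1} \rho, \nu^{-\vh} \rho]) \rtimes \delta(\rho, b-1; \sigma),
\end{align*}
an inequality $\mu^{\ast}(L(\delta([\nu^{-b} \rho, \nu^{-\vh} \rho]); \delta(\rho, b; \sigma))) \geq \delta([\nu^{-b} \rho, \nu^{-\vh} \rho]) \otimes \delta(\rho, b; \sigma)$ identifies the relevant irreducible subquotient of $\delta([\nu^{-b+1} \rho, \nu^{-\vh} \rho]) \rtimes \delta(\rho, b-1; \sigma)$ as $L(\delta([\nu^{-b+1} \rho, \nu^{-\vh} \rho]); \delta(\rho, b-1; \sigma))$, and Lemma~\ref{lemainddruga} then peels off the pair $\nu^{-b}\rho, \nu^{-b}\rho$. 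Iterating this step lowers the left endpoint of the segment and the parameter of $\delta(\rho, \cdot\,; \sigma)$ simultaneously, peeling off the pairs $\nu^{-b}\rho, \nu^{-b}\rho$, then $\nu^{-b+1}\rho, \nu^{-b+1}\rho$, down to $\nu^{-\frac{3}{2}}\rho, \nu^{-\frac{3}{2}}\rho$, and reduces the problem to $\widehat{L(\nu^{-\vh}\rho; \delta(\rho, \vh; \sigma))}$, which has already been observed (and used in the preceding proposition) to be isomorphic to $\tau(\rho, \sigma)$. Assembling the peeled factors, $\widehat{L(\delta([\nu^{-b} \rho, \nu^{-\vh} \rho]); \delta(\rho, x; \sigma))}$ is the unique irreducible subrepresentation of
\begin{equation*}
\nu^{-x} \rho \times \cdots \times \nu^{-b-1} \rho \times \nu^{-b} \rho \times \nu^{-b} \rho \times \cdots \times \nu^{-\frac{3}{2}} \rho \times \nu^{-\frac{3}{2}} \rho \rtimes \tau(\rho, \sigma),
\end{equation*}
i.e.\ the first summand. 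Dualizing the $R(G)$-identity above then yields the claimed decomposition, with multiplicity one and the Langlands-subrepresentation descriptions following as in the earlier proofs.

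The step I expect to be the main obstacle is the Jacquet-module bookkeeping: at each stage one must check, using the structural formula (Lemma~\ref{osn}) and the known Jacquet modules of the strongly positive representations $\delta(\rho, x; \sigma)$ and of the intermediate Langlands representations, that the relevant $\mu^{\ast}$'s do not contain the obstructing constituents of the form $\nu^{i}\rho \otimes \pi$ which are required for Lemmas~\ref{lemaindprva} and \ref{lemainddruga} to apply; this is routine but must be carried out carefully. A minor additional point is the boundary case $b = x$, where the first peeling step is vacuous: there one only needs that \cite[Theorem~5.1]{Mu3} still produces a length-two decomposition (it does, as $[\nu^{\vh}\rho, \nu^{x}\rho]$ still lies in the support of $\delta(\rho, x; \sigma)$) and that the reduction reaches $\widehat{L(\nu^{-\vh}\rho; \delta(\rho, \vh; \sigma))} \cong \tau(\rho, \sigma)$ unchanged.
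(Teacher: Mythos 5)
Your argument matches the paper's: both dualize to the length-two generalized principal series of \cite[Theorem~5.1]{Mu3}, read off the Aubert dual of the tempered summand from \cite[Theorem~5.2.(i)]{Matic10} (interchanging the roles of $b$ and $x$ from the preceding proposition), and compute the dual of the Langlands quotient by the same peeling procedure via Proposition~\ref{propulag} and Lemmas~\ref{lemaindprva}, \ref{lemainddruga}, terminating at $\widehat{L(\nu^{-\vh}\rho;\delta(\rho,\vh;\sigma))}\cong\tau(\rho,\sigma)$. The one slip is the label $\sigma_{ds}$: when $b=x$ the second summand $\tau$ is tempered but not square-integrable, which is why the paper says its dual is obtained ``in the proof of'' \cite[Theorem~5.2.(i)]{Matic10} rather than citing the theorem's statement directly; your boundary-case paragraph addresses the Langlands piece but should also note this for $\tau$.
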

\begin{proof}
By \cite[Theorem~5.1]{Mu3}, in $R(G)$ we have:
\begin{equation*}
\delta([\nu^{\vh} \rho, \nu^{b} \rho]) \rtimes \delta(\rho, x; \sigma) =
L(\delta([\nu^{-b} \rho, \nu^{-\vh} \rho]); \delta(\rho, x; \sigma)) + \tau,
\end{equation*}
where $\tau$ is the unique common irreducible (tempered) subrepresentation of both $\delta([\nu^{\vh} \rho, \nu^{b} \rho]) \rtimes \delta(\rho, x; \sigma)$ and $\delta([\nu^{-b} \rho, \nu^{x} \rho]) \rtimes \sigma$. Note that $\tau$ is a discrete series if $b < x$. The Aubert duals of $L(\delta([\nu^{-b} \rho, \nu^{-\vh} \rho]); \delta(\rho, x; \sigma))$ and $\tau$ can be obtained in the same way as in the proof of the previous proposition (and in the proof of \cite[Theorem~5.2.(i)]{Matic10}), interchanging the roles of $b$ and $x$.
\end{proof}

The remaining case is covered in the following proposition, a detailed verification being left to the reader.

\begin{proposition}
Suppose that $\rho_0 \not\cong \rho$. Then the degenerate principal series $\zeta([\nu^{-b} \rho_0, \nu^{-\vh} \rho_0]) \rtimes \zeta(\rho, x; \sigma)$ is irreducible if and only if $b < \beta$. If $b \geq \beta$, in $R(G)$ we have
\begin{gather*}
\zeta([\nu^{-b} \rho_0, \nu^{-\vh} \rho_0]) \rtimes \zeta(\rho, x; \sigma) =
L(\nu^{-x} \rho, \ldots, \nu^{-\alpha} \rho, \nu^{-b} \rho, \ldots, \nu^{-\vh} \rho; \sigma) + \\
L(\nu^{-x} \rho, \ldots, \nu^{-\alpha} \rho, \nu^{-b} \rho, \ldots, \nu^{-\beta-1} \rho; \tau^{(2)}).
\end{gather*}
\end{proposition}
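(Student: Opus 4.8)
The plan is to run the same argument as in the $\rho_0\cong\rho$ cases of this section, exploiting that the $\rho$-line and the $\rho_0$-line are disjoint (so the relevant general linear representations commute and Lemma~\ref{lemarazl} applies) in order to reduce everything to a length-two generalized principal series not involving $\rho$.

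First I would pass to the Aubert dual. By (\ref{gosprva}) the Aubert dual of $\zeta([\nu^{-b}\rho_0,\nu^{-\vh}\rho_0])\rtimes\zeta(\rho,x;\sigma)$ is $\delta([\nu^{\vh}\rho_0,\nu^{b}\rho_0])\rtimes\delta(\rho,x;\sigma)$; since $\rho_0\not\cong\rho$, the point of reducibility of $\nu^{s}\rho_0\rtimes\delta(\rho,x;\sigma)$ equals the point $\beta$ for $\nu^{s}\rho_0\rtimes\sigma$, so by \cite[Section~2]{Mu3} this generalized principal series is irreducible precisely when $\beta\notin\{\vh,\tfrac{3}{2},\dots,b\}$, i.e. when $b<\beta$ (recall $\beta\geq\vh$), and by Theorem~\ref{aub}(ii) the same holds for the degenerate principal series. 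For $b\geq\beta$, \cite[Theorem~2.1]{Mu3} together with Proposition~\ref{spds} gives, in $R(G)$,
\begin{equation*}
\delta([\nu^{\vh}\rho_0,\nu^{b}\rho_0])\rtimes\delta(\rho,x;\sigma)=\sigma_{sp}+L(\delta([\nu^{-b}\rho_0,\nu^{-\vh}\rho_0]);\delta(\rho,x;\sigma)),
\end{equation*}
with $\sigma_{sp}$ the unique irreducible (strongly positive discrete series) subrepresentation, and likewise $\delta([\nu^{\vh}\rho_0,\nu^{b}\rho_0])\rtimes\sigma=\sigma_{sp}'+L(\delta([\nu^{-b}\rho_0,\nu^{-\vh}\rho_0]);\sigma)$ with $\sigma_{sp}'$ strongly positive.

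Next I would apply Lemma~\ref{lemarazl} to each of the two subquotients of the generalized principal series: it presents both as subrepresentations of $\delta([\nu^{\alpha}\rho,\nu^{x}\rho])\rtimes\pi_1$ for $\pi_1\in\{\sigma_{sp}',\,L(\delta([\nu^{-b}\rho_0,\nu^{-\vh}\rho_0]);\sigma)\}$, and identifies their Aubert duals with $L(\nu^{-x}\rho,\dots,\nu^{-\alpha}\rho,\delta_1,\dots,\delta_k;\tau_{temp})$ where $L(\delta_1,\dots,\delta_k;\tau_{temp})=\widehat{\pi_1}$. So it suffices to compute $\widehat{\sigma_{sp}'}$ and $\widehat{L(\delta([\nu^{-b}\rho_0,\nu^{-\vh}\rho_0]);\sigma)}$. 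Applying $D_{G}$ to the displayed decomposition of $\delta([\nu^{\vh}\rho_0,\nu^{b}\rho_0])\rtimes\sigma$ and using $D_{G}\circ i_{M}=i_{M}\circ D_{M}$ together with $\widehat{\sigma}\cong\sigma$ (Theorem~\ref{aub}(iii),(v)) and the identity preceding (\ref{gosprva}), one sees that $\widehat{\sigma_{sp}'}$ and $\widehat{L(\delta([\nu^{-b}\rho_0,\nu^{-\vh}\rho_0]);\sigma)}$ are the two composition factors of $\zeta([\nu^{-b}\rho_0,\nu^{-\vh}\rho_0])\rtimes\sigma$. The Langlands subrepresentation of the latter is $L(\nu^{-b}\rho_0,\dots,\nu^{-\vh}\rho_0;\sigma)$, and since a discrete series cannot be a Langlands quotient, $\widehat{L(\delta([\nu^{-b}\rho_0,\nu^{-\vh}\rho_0]);\sigma)}\cong L(\nu^{-b}\rho_0,\dots,\nu^{-\vh}\rho_0;\sigma)$ while $\widehat{\sigma_{sp}'}$ is the other factor; by \cite[Lemma~4.10]{Matic10} (with $\rho_0$ in place of $\rho$) the latter is $L(\nu^{-b}\rho_0,\dots,\nu^{-\beta-1}\rho_0;\tau^{(2)})$, which for $b=\beta$ reduces to $\tau^{(2)}$. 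Substituting these back through Lemma~\ref{lemarazl} and using $\nu^{z}\rho\times\nu^{w}\rho_0\cong\nu^{w}\rho_0\times\nu^{z}\rho$ to arrange the Langlands data then yields exactly the two summands in the statement.

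The work that needs care — and the only real obstacle — is the bookkeeping underlying the uniqueness-of-subrepresentation conclusions: checking at each step that the hypotheses of Lemma~\ref{lemarazl} (and, where \cite[Lemma~4.10]{Matic10} is unwound, of Lemmas~\ref{lemaindprva} and \ref{lemaprva}) are genuinely met, i.e. that the pertinent $\mu^{\ast}$'s contain no interfering constituents of the form $\nu^{i}\rho_0\otimes\pi_2$, uniformly over $b\geq\beta$, and in particular treating the boundary case $b=\beta$ where one summand's Langlands datum loses its $\rho_0$-part and one must verify directly that $\widehat{\sigma_{sp}'}\cong\tau^{(2)}$.
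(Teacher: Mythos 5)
Your overall strategy — pass to the Aubert dual $\delta([\nu^{\vh}\rho_0,\nu^{b}\rho_0])\rtimes\delta(\rho,x;\sigma)$, show it has length two, and push the two composition factors through Lemma~\ref{lemarazl} — is exactly the route the paper intends here (the paper declares this proposition to be a routine application of the same machinery as the other cases and omits the details). Your final set of two Langlands quotients is correct, and the reduction to computing the Aubert duals of $\sigma_{sp}'$ and $L(\delta([\nu^{-b}\rho_0,\nu^{-\vh}\rho_0]);\sigma)$ is the right one.

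However, your identification of which Aubert dual is which is reversed, and the justification you give for it does not hold up. You assert that $\widehat{L(\delta([\nu^{-b}\rho_0,\nu^{-\vh}\rho_0]);\sigma)}\cong L(\nu^{-b}\rho_0,\dots,\nu^{-\vh}\rho_0;\sigma)$ on the grounds that ``a discrete series cannot be a Langlands quotient.'' The Aubert involution on the Grothendieck group does not interchange subrepresentations and quotients of a standard module and its Zelevinsky dual, so this heuristic is not an argument. In fact the correct identification is the opposite one, and it comes straight from Lemma~\ref{lemaprva}: the discrete series $\sigma_{sp}'$ is a subrepresentation of $\delta([\nu^{\vh}\rho_0,\nu^{b}\rho_0])\rtimes\sigma\hookrightarrow\nu^{b}\rho_0\times\cdots\times\nu^{\vh}\rho_0\rtimes\sigma$, so its Jacquet module contains $\nu^{b}\rho_0\otimes\cdots\otimes\nu^{\vh}\rho_0\otimes\sigma$; Lemma~\ref{lemaprva} then forces $\widehat{\sigma_{sp}'}\hookrightarrow\nu^{-b}\rho_0\times\cdots\times\nu^{-\vh}\rho_0\rtimes\sigma$, whose unique irreducible subrepresentation is $L(\nu^{-b}\rho_0,\ldots,\nu^{-\vh}\rho_0;\sigma)$. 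Thus $\widehat{\sigma_{sp}'}\cong L(\nu^{-b}\rho_0,\ldots,\nu^{-\vh}\rho_0;\sigma)$, and it is $\widehat{L(\delta([\nu^{-b}\rho_0,\nu^{-\vh}\rho_0]);\sigma)}$ that equals $L(\nu^{-b}\rho_0,\ldots,\nu^{-\beta-1}\rho_0;\tau^{(2)})$; the latter is obtained by peeling $\nu^{-b}\rho_0,\ldots,\nu^{-\beta-1}\rho_0$ one at a time via Proposition~\ref{propulag}$(1)$ and Lemma~\ref{lemaindprva}, reducing to $\widehat{L(\delta([\nu^{-\beta}\rho_0,\nu^{-\vh}\rho_0]);\sigma)}\cong\tau^{(2)}$ from \cite[Theorem~3.5]{Matic14}. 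Because you swap the two duals, you also misattribute \cite[Lemma~4.10]{Matic10}: that lemma (and its analogues used throughout the paper) computes the dual of a Langlands representation with a $\delta$-segment, not the dual of a discrete series. Since the final statement is only a sum of two terms in $R(G)$, this swap does not change your stated conclusion, but the argument supporting it is wrong as written.

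Two smaller points. First, \cite[Theorem~2.1]{Mu3} as reproduced in Theorem~\ref{GSpinTheorem2.1} treats the cases with $-a\geq\beta$ and cannot be quoted here with $a=\vh$ (so $-a<0<\beta$); the length-two decomposition should instead be taken from the $l_1=-\vh$ case of Muić's analysis (\cite[Theorem~5.1]{Mu3}, adapted to $\rho_0\not\cong\rho$ via the decoupling argument), or argued directly. Second, the unique irreducible subrepresentation of $\delta([\nu^{\vh}\rho_0,\nu^{b}\rho_0])\rtimes\delta(\rho,x;\sigma)$ is a discrete series but is generally not strongly positive when $\beta>\vh$; this does not affect your argument, but the parenthetical claim should be dropped.
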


\section{The odd $GSpin$ case}
\label{GSpin}

In this section we consider the odd $GSpin$ case. 

\begin{remark}
All the propositions in Sections \ref{Section beta0} -- \ref{Section a1/2} are valid for the odd $GSpin$ case with exactly the same statements. More precisely, all the arguments used in \cite{Matic14, Matic10, Mu3} (except \cite[Theorem 2.1]{Mu3}), as well as those used in the previous sections, can be directly carried out to the odd $GSpin$ case, since they completely rely on properties of the Aubert involution which hold for general reductive groups, the structural formula and classifications of discrete series provided for the odd $GSpin$ groups in \cite{Kim1, KimMatic} (see also Lemma \ref{osn} for the structure formula for odd $GSpin$ groups). In the following, we will comment on the generalizations of the results in \cite{Mu3} to odd $GSpin$ groups and give the proof for the odd $GSpin$ case of \cite[Theorem 2.1]{Mu3}.  
\end{remark}

Let us first recall the definition of odd $GSpin$ groups.
Let $\nu_m$ be the $m \times m$ matrix with ones on the second diagonal and zeros elsewhere. Let $J_{2m} = \begin{pmatrix}
0 & \nu_m\\
-\nu_m & 0 
\end{pmatrix}$. Then the similitude symplectic groups are defined as follows:
\begin{align*}
GSp(2n, F) & = \{g\in GL(2n, F) : {}^tg J_{2n} g = \lambda(g) J_{2n} \text{ for some } \lambda(g) \in F^*\}.
\end{align*}
Let $T = \{t=\mathrm{diag}(t_1, \ldots, t_n, a t_n^{-1}, \ldots, a t_1^{-1}): t_i, a \in F^*\},$ then $T$ is a maximal torus for $GSp(2n, F)$. For $t=\mathrm{diag}(t_1, \ldots, t_n, a t_n^{-1}, \ldots, a t_1^{-1}) \in T$, let $e_0(t)=a$, and let $e_i(t)=t_i$ for $i=1,\ldots,n$. Let $X=\operatorname{Hom}(T, F^*)$ be the character lattice of $T$. Then
$X=\mathbb{Z} e_0 \oplus \mathbb{Z} e_1 \oplus \cdots \oplus \mathbb{Z} e_n$. 
Let $X^{\vee} = \operatorname{Hom}(F^*, T)$ be the cocharacter lattice of $X$, and let $\{e_0^*, e_1^*, \ldots, e_n^*\}$ be the basis of $X^{\vee}$ dual to the basis $\{e_0, e_1, \ldots, e_n\}$ of $X$. Then $X^{\vee}=\mathbb{Z} e_0^* \oplus \mathbb{Z} e_1^* \oplus \cdots \oplus \mathbb{Z} e_n^*$. Let $\Delta  = \{ e_i-e_{i+1}, i=1,\ldots,n-1, 2e_n-e_0\}, \Delta^{\vee}  = \{e_i^*-e_{i+1}^*,i=1,\ldots,n-1, e_n^*\}. $ Then the root datum of $GSp(2n)$ is $(X, \Delta, X^{\vee}, \Delta^{\vee})$. 

\begin{definition}
$GSpin(2n+1, F)$ is $F$-points of the unique split $F$-group having root datum $(X^{\vee}, \Delta^{\vee}, X, \Delta)$ which is dual to that of $GSp(2n, F)$.
\end{definition}

\begin{remark}
Let $Spin(2n+1, F)$ be the double covering of special orthogonal group $SO(2n+1, F)$. Then by \cite[Proposition 2.2]{Asgari1}, the derived group of the split $GSpin(2n+1, F)$ is $Spin(2n+1, F)$ and $GSpin(2n+1, F)$ is isomorphic to 
$$(GL(1,F) \times Spin(2n+1,F))/\{(1,1), (-1,c)\},$$
where $c = (2e_n-e_0)(-1)$.
\end{remark}

We now briefly summarize the main results in \cite{Mu3}. Let $H_n$ be either a symplectic group or special odd orthogonal group defined over a non-archimedean local field $F$ of characteristic different from $2$, having split rank $n$. In \cite{Mu3}, Mui\'c studies the reducibility of $\delta \rtimes \sigma$, where $\sigma$ is a strongly positive representation in $H_n(F)$ and $\delta:=\delta([\nu^{-l_1}\rho, \nu^{l_2}\rho]) $ is an irreducible essentially square integrable representation of $GL_m(F)$ (Here, $\rho$ is an irreducible unitary cuspidal representation of $GL(F)$ and $l_1, l_2 \in \mathbb{R}$ is such that $l_1 + l_2 \in \mathbb{Z}_{\geq 0}$). Mui\'c, in \cite{Mu3},  further describes the composition series of $\delta \rtimes \sigma$ if it is reducible. Chapter 3, Chapter 4, and Chapter 5 in \cite{Mu3} describe the cases $l_1 \leq -1, l_1 \geq 0,$ and $l_1 = -1/2$ (Proposition 3.1, Theorem 4.1, and Theorem 5.1), respectively. The main ingredients for the proofs of those propositions and theorems are Tadi\'c's structure formula for $H_n$ \cite{Tad5} 
 (he mainly uses the information from $GL$ cuspidal part in the Jacquet modules of the representations) 
and the classification of discrete series of $H_n$ \cite{MT1}. All those ingredients are now available for odd $GSpin$ groups (Lemma \ref{osn} and \cite{KimMatic}).
However, we note that the proof of \cite[Theorem~2.1]{Mu3} can not be applied to the $GSpin$ groups. We will reprove this theorem below (Theorem \ref{GSpinTheorem2.1}), in the case which we use when determining the composition factors of the degenerate principal series. Then, for odd $GSpin$ groups, all the results in Chapters 3, 4, and 5 in \cite{Mu3}, together with the correction of \cite[Theorem~4.1.(iv),~Lemma~4.9]{Mu3} obtained in \cite[Proposition~3.2]{Matic8}, follow in the same way as in those two papers.
Therefore, our results on the composition factors of the degenerate principal series also hold in the odd $GSpin$ case. 

% However, we note that the proof of \cite[Theorem~2.1]{Mu3} can not be applied to the $GSpin$ groups. Thus, we reprove that theorem, in the case which we use when determining the composition factors of the degenerate principal series. 

\begin{remark}
To prove \cite[Theorem~2.1]{Mu3}, two lemmas (\cite[Lemma~2.1,~2.2]{Mu3}: description of non-tempered subquotients and tempered but non-square integrable subquotients of generalized principal series) are needed. The main ingredients in the proofs of those lemmas are again Tadi\'c's structure formula (especially the information about $GL$ cuspidal support), Casselman's square-integrability criterion, and classification of discrete series representations, which all can be applied directly to $GSpin(2n+1, F)$, so we skip the proofs of those lemmas for $GSpin(2n+1,F)$.
\end{remark}

% It remains to prove $GSpin$ version of \cite[Theorem~2.1]{Mu3} to obtain the results on the composition factors of the degenerate principal series in the odd $GSpin$ case.

Recall that $\alpha$ (resp. $\beta$) is the reducibility point of $\rho$ (resp. $\rho_0$) and $\sigma$, i.e., $\nu^{s} \rho \rtimes \sigma$ (resp. $\nu^{s} \rho_0 \rtimes \sigma$) is irreducible if and only if $s \not\in \{ \alpha, -\alpha \}$ (resp. $s \not\in \{ \beta, -\beta \})$.

\begin{thrm}\label{GSpinTheorem2.1}
Suppose that $\sigma$ is an irreducible cuspidal representation of $GSpin(2n+1, F)$, and that one of the following holds:
\begin{enumerate}[(1)]
\item $\rho_0 \not\cong \rho$, $\beta \leq -a < b$, and $b - \beta \in \mathbb{Z}$,
\item $\rho_0 \cong \rho$, $b > -a > x$, and $b - \alpha \in \mathbb{Z}$,
\item $\rho_0 \cong \rho$, $\alpha - 1 \leq -a < b < x$, $-a \geq 0$, and $b - \alpha \in \mathbb{Z}$.
\end{enumerate}
Then in an appropriate Grothendieck group we have
\begin{gather*}
\delta([\nu^{a} \rho_0, \nu^{b} \rho_0]) \rt \delta(\rho, x; \sigma) = L(\delta([\nu^{-b} \rho_0, \nu^{-a} \rho_0]); \delta(\rho, x; \sigma)) + \sigma^{(1)}_{ds} + \sigma^{(2)}_{ds},
\end{gather*}
where $\sigma^{(1)}_{ds}$ and $\sigma^{(2)}_{ds}$ are mutually non-isomorphic discrete series subrepresentations of $\delta([\nu^{a} \rho_0, \nu^{b} \rho_0]) \rt \delta(\rho, x; \sigma)$.
\end{thrm}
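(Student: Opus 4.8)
The plan is to read off the Jordan--Hölder content of $\delta([\nu^{a} \rho_0, \nu^{b} \rho_0]) \rtimes \delta(\rho, x; \sigma)$ directly from Jacquet modules, using only tools that are available in the odd $GSpin$ setting: the structural formula (Lemma~\ref{osn}), the Langlands classification, Casselman's square--integrability criterion, the description of Jacquet modules of strongly positive representations (\cite[Theorem~4.6]{Matic4}, \cite[Section~7]{MatTad}), the classification of discrete series (\cite{KimMatic}), and the $GSpin$ analogues of \cite[Lemma~2.1,~Lemma~2.2]{Mu3} whose proofs carry over as noted in the preceding remark. It is precisely the reliance on these ingredients, rather than on the specific argument of \cite{Mu3}, that circumvents the obstruction to the original proof.

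First I would single out the candidate irreducible subquotients. Passing to the Grothendieck group and using the equalities recorded just before Remark~2.4, our representation equals $\delta([\nu^{-b} \rho_0, \nu^{-a} \rho_0]) \rtimes \delta(\rho, x; \sigma)$ (with the $\omega_\sigma$--twisted segment in the $GSpin$ case); since $a + b > 0$ in all three cases, the Langlands quotient $L(\delta([\nu^{-b} \rho_0, \nu^{-a} \rho_0]); \delta(\rho, x; \sigma))$ is a subquotient of it, and being the unique irreducible quotient of a standard module it occurs with multiplicity one. The $GSpin$ analogue of \cite[Lemma~2.1]{Mu3} shows that under the hypotheses of the theorem this is the only non--tempered irreducible subquotient, while the $GSpin$ analogue of \cite[Lemma~2.2]{Mu3} together with Casselman's criterion rules out any tempered but non--square--integrable subquotient. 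Hence every remaining subquotient is a discrete series, and the classification of discrete series of $GSpin(2n+1,F)$ leaves, in each of the three cases separately, exactly two possibilities: a pair $\sigma^{(1)}_{ds}, \sigma^{(2)}_{ds}$ attached to the relevant Jordan block data and distinguished by the accompanying parity (``sign'') invariant. This is the step one runs through case by case, exactly as in \cite{Mu3} but with \cite{KimMatic} and \cite[Theorem~4.6]{Matic4} replacing the classical inputs.

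Next I would verify that $\sigma^{(1)}_{ds}$ and $\sigma^{(2)}_{ds}$ genuinely embed into $\delta([\nu^{a} \rho_0, \nu^{b} \rho_0]) \rtimes \delta(\rho, x; \sigma)$ and are non--isomorphic. From the explicit Jacquet module of each $\sigma^{(i)}_{ds}$ supplied by the discrete series classification one extracts an irreducible constituent of the form $\delta([\nu^{a} \rho_0, \nu^{b} \rho_0]) \otimes \delta(\rho, x; \sigma)$ (or, when $\rho_0 \cong \rho$, the appropriate cuspidal string terminating in $\delta(\rho, x; \sigma)$ or in its partial cuspidal support) occurring in $\mu^{\ast}(\sigma^{(i)}_{ds})$; Frobenius reciprocity, combined with Lemma~\ref{lemajantz} to pass through the chain of intermediate parabolics, then yields the embedding $\sigma^{(i)}_{ds} \hookrightarrow \delta([\nu^{a} \rho_0, \nu^{b} \rho_0]) \rtimes \delta(\rho, x; \sigma)$. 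Comparing these Jacquet modules via the structural formula --- one checks that the distinguishing constituent of $\mu^{\ast}(\sigma^{(1)}_{ds})$ does not appear in $\mu^{\ast}(\sigma^{(-1)}_{ds})$ --- shows $\sigma^{(1)}_{ds} \not\cong \sigma^{(2)}_{ds}$, and both differ from the non--tempered $L(\delta([\nu^{-b} \rho_0, \nu^{-a} \rho_0]); \delta(\rho, x; \sigma))$.

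It remains to show that the length is exactly three and that each constituent has multiplicity one. For this I would compute, using Lemma~\ref{osn} and the known Jacquet modules of the strongly positive $\delta(\rho, x; \sigma)$, the multiplicity of a well--chosen ``leading'' irreducible term, for instance $\delta([\nu^{a} \rho_0, \nu^{b} \rho_0]) \times \delta([\nu^{\alpha} \rho, \nu^{x} \rho]) \otimes \sigma$ (with the evident modification when $\rho_0 \cong \rho$, and the $\omega_\sigma$--twist in the $GSpin$ case), inside $\mu^{\ast}$ of the full induced representation, and compare it with the total contribution of the three representations already identified; the count is consistent only if the composition series consists of exactly $L(\delta([\nu^{-b} \rho_0, \nu^{-a} \rho_0]); \delta(\rho, x; \sigma))$, $\sigma^{(1)}_{ds}$ and $\sigma^{(2)}_{ds}$, each with multiplicity one. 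The main obstacle is this last bookkeeping in the cases $\rho_0 \cong \rho$ (items (2) and (3)), where the segment $[\nu^{a} \rho, \nu^{b} \rho]$ overlaps the cuspidal support of $\delta(\rho, x; \sigma)$: disentangling the overlapping sums in the structural formula and tracking the parity conventions of the discrete series classification is precisely the portion of the argument that in \cite{Mu3} rested on a tool unavailable for $GSpin$, and therefore has to be carried out directly here.
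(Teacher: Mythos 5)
Your proposal follows the paper's general outline at the start: identify $L(\delta([\nu^{-b}\rho_0,\nu^{-a}\rho_0]);\delta(\rho,x;\sigma))$ as the unique non--tempered subquotient via the $GSpin$ analogues of \cite[Lemma~2.1,~2.2]{Mu3}, rule out tempered non--square--integrable subquotients by Casselman's criterion, and reduce to counting discrete series subquotients. You also correctly observe that $\sigma^{(1)}_{ds}$, $\sigma^{(2)}_{ds}$ are the only candidate discrete series by cuspidal support and the classification in \cite{KimMatic}. Up to this point the approach matches the paper, and the ingredients you list are the ones actually used.

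The divergence, and the gap, is in your final step. You propose to compute the multiplicity of a single ``leading'' Jacquet constituent, such as $\delta([\nu^{a}\rho_0,\nu^{b}\rho_0])\times\delta([\nu^{\alpha}\rho,\nu^{x}\rho])\otimes\sigma$, in $\mu^{\ast}$ of the full induced representation and balance it against the contributions of the three candidate subquotients. This is not what the paper does, and you explicitly concede that in cases (2) and (3), where $\rho_0\cong\rho$ and the segments overlap, this bookkeeping ``has to be carried out directly here'' --- but you do not carry it out. That concession is precisely the gap: the term you suggest can appear with high multiplicity, and controlling its multiplicity inside $\sigma^{(1)}_{ds}$, $\sigma^{(2)}_{ds}$ individually is not addressed, so the balance does not close. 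The paper's argument avoids this entirely: instead of a global count, it uses the classification of \cite{KimMatic} to show that any square--integrable subquotient $\pi$ must embed into one of a short, explicit list of induced representations (which list depends on whether $\alpha\geq 2$), so by Frobenius reciprocity $\mu^{\ast}(\pi)$ must contain one of a short list of ``identifying'' constituents $\delta([\nu^{a}\rho,\nu^{b}\rho])\otimes\delta(\rho,x;\sigma)$, $\delta([\nu^{-b}\rho,\nu^{x}\rho])\otimes\delta(\rho,-a;\sigma)$, or $\delta([\nu^{-\alpha+2}\rho,\nu^{-a}\rho])\otimes\sigma_{sp}$. The first of these is handled by \cite[Theorem~3.14]{KimMatic} directly (only two subrepresentations), and each of the latter two is shown to appear with multiplicity one in $\mu^{\ast}(\delta([\nu^{a}\rho,\nu^{b}\rho])\rtimes\delta(\rho,x;\sigma))$ and to already be contributed by a specific $\sigma^{(j)}_{ds}$ or $\sigma^{(k)}_{ds}$, which is pinned down using the intermediate tempered representations $\tau_i$ and the proof of \cite[Theorem~3.15]{KimMatic}. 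This targeted, multiplicity--one reasoning is the content you would need to replace your unfinished counting step, and it is the substance of the paper's proof.
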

\begin{proof}
We prove only the part $(3)$, other parts can be proved in the same way, but more easily. It can be seen in the same way as in the proof of \cite[Theorem~2.1]{Mu3} that $L(\delta([\nu^{-b} \rho_0, \nu^{-a} \rho_0]); \delta(\rho, x; \sigma))$ is the unique non-tempered irreducible subquotient of $\delta([\nu^{a} \rho_0, \nu^{b} \rho_0]) \rt \delta(\rho, x; \sigma)$. Also, representations $\sigma^{(1)}_{ds}$ and $\sigma^{(2)}_{ds}$ have been constructed in \cite[Theorem~3.14]{KimMatic}. Let us prove that there are no other irreducible tempered subquotients of $\delta([\nu^{a} \rho_0, \nu^{b} \rho_0]) \rt \delta(\rho, x; \sigma)$.

Let $\pi$ denote an irreducible tempered subquotient of $\delta([\nu^{a} \rho_0, \nu^{b} \rho_0]) \rt \delta(\rho, x; \sigma)$. From the cuspidal support considerations one can conclude that $\pi$ has to be square-integrable and non-strongly positive. Thus, by the classification given in \cite{KimMatic}, if $\alpha \geq 2$, $\pi$ can be written as a subrepresentation of one of the following induced representations:
\begin{equation*}
\delta([\nu^{a} \rho, \nu^{b} \rho]) \rtimes \delta(\rho, x; \sigma), \delta([\nu^{-b} \rho, \nu^{x} \rho]) \rtimes \delta(\rho, -a; \sigma), \delta([\nu^{-\alpha + 2} \rho, \nu^{-a} \rho]) \rtimes \sigma_{sp},
\end{equation*}
where $\sigma_{sp}$ stands for the unique irreducible subrepresentation of $\delta([\nu^{\alpha-1} \rho, \nu^{b} \rho])$ $\rtimes \delta(\rho, x; \sigma)$.
Thus, $\mu^{\ast}(\pi)$ contains one of the following irreducible constituents:
\begin{equation*}
\delta([\nu^{a} \rho, \nu^{b} \rho]) \otimes \delta(\rho, x; \sigma), \delta([\nu^{-b} \rho, \nu^{x} \rho]) \otimes \delta(\rho, -a; \sigma), \delta([\nu^{-\alpha + 2} \rho, \nu^{-a} \rho]) \otimes \sigma_{sp}.
\end{equation*}

If $\alpha < 2$, $\pi$ can be written as a subrepresentation of one of the following induced representations:
\begin{equation*}
\delta([\nu^{a} \rho, \nu^{b} \rho]) \rtimes \delta(\rho, x; \sigma), \delta([\nu^{-b} \rho, \nu^{x} \rho]) \rtimes \delta(\rho, -a; \sigma),
\end{equation*}
and $\mu^{\ast}(\pi)$ contains one of the following irreducible constituents:
\begin{equation*}
\delta([\nu^{a} \rho, \nu^{b} \rho]) \otimes \delta(\rho, x; \sigma), \delta([\nu^{-b} \rho, \nu^{x} \rho]) \otimes \delta(\rho, -a; \sigma).
\end{equation*}

By \cite[Theorem~3.14]{KimMatic}, only irreducible subrepresentations of $\delta([\nu^{a} \rho, \nu^{b} \rho])$ $\rt \delta(\rho, x; \sigma)$ are $\sigma^{(1)}_{ds}$ and $\sigma^{(2)}_{ds}$. Also, it is easy to see, using the odd $GSpin$ version of the structural formula given in \cite{Kim1}, together with the classification of strongly positive discrete series, that $\delta([\nu^{-b} \rho, \nu^{x} \rho]) \otimes \delta(\rho, -a; \sigma)$ appears with multiplicity one in $\mu^{\ast}(\delta([\nu^{a} \rho, \nu^{b} \rho]) \rtimes \delta(\rho, x; \sigma))$, and that $\delta([\nu^{-\alpha + 2} \rho, \nu^{-a} \rho]) \otimes \sigma_{sp}$ also appears with multiplicity one in $\mu^{\ast}(\delta([\nu^{a} \rho, \nu^{b} \rho]) \rtimes \delta(\rho, x; \sigma))$ if $\alpha \geq 2$.

Let $\tau_i$, for $i \in \{ 1, 2 \}$, denote an irreducible tempered subrepresentation of $\delta([\nu^{a} \rho, \nu^{-a} \rho]) \rtimes \delta(\rho, x; \sigma)$ such that $\sigma^{(i)}_{ds}$ is the unique irreducible subrepresentation of $\delta([\nu^{-a+1} \rho, \nu^{b} \rho]) \rtimes \tau_i$. By \cite[Section~4]{Tad6}, there is a unique $j \in \{ 1, 2 \}$ such that $\tau_j$ is a subrepresentation of $\delta([\nu^{-a+1} \rho, \nu^{x} \rho]) \times \delta([\nu^{a} \rho, \nu^{-a} \rho]) \rtimes \delta(\rho, -a; \sigma)$. It follows from the proof of \cite[Theorem~3.15]{KimMatic} that $\sigma^{(j)}_{ds}$ is a subrepresentation of $\delta([\nu^{-b} \rho, \nu^{x} \rho]) \rtimes \delta(\rho, -a; \sigma)$, so $\mu^{\ast}(\sigma^{(j)}_{ds})$ contains $\delta([\nu^{-b} \rho, \nu^{x} \rho]) \otimes \delta(\rho, -a; \sigma)$.

Similarly, if $\alpha \geq 2$, then there is a unique $k \in \{ 1, 2 \}$ such that $\tau_k$ is a subrepresentation of $\delta([\nu^{\alpha-1} \rho, \nu^{-a} \rho]) \times \delta([\nu^{\alpha-1} \rho, \nu^{-a} \rho]) \times \delta([\nu^{-\alpha+2} \rho, \nu^{\alpha-2} \rho]) \rtimes \delta(\rho, x; \sigma)$. It follows from the proof of \cite[Theorem~3.15]{KimMatic} that $\sigma^{(k)}_{ds}$ is a subrepresentation of $\delta([\nu^{-\alpha + 2} \rho, \nu^{-a} \rho]) \rtimes \sigma_{sp}$. Frobenius reciprocity implies that $\mu^{\ast}(\sigma^{(k)}_{ds})$ contains $\delta([\nu^{-\alpha + 2} \rho, \nu^{-a} \rho]) \otimes \sigma_{sp}$.

From the multiplicities of $\delta([\nu^{a} \rho, \nu^{b} \rho]) \otimes \delta(\rho, x; \sigma)$, $\delta([\nu^{-b} \rho, \nu^{x} \rho]) \otimes \delta(\rho, -a; \sigma)$, and $\delta([\nu^{-\alpha + 2} \rho, \nu^{-a} \rho]) \otimes \sigma_{sp}$ in $\mu^{\ast}(\delta([\nu^{a} \rho, \nu^{b} \rho]) \rtimes \delta(\rho, x; \sigma))$, we conclude that $\pi$ is isomorphic either to $\sigma^{(1)}_{ds}$ or to $\sigma^{(2)}_{ds}$, and the theorem is proved.
\end{proof}

\bibliographystyle{siam}
\bibliography{Literatura}

\begin{flushleft}
{Yeansu Kim\\
Department of Mathematics Education\\
Chonnam National University\\
77 Yongbong-ro, Buk-gu, Gwangju city, South Korea\\
E-mail: ykim@chonnam.ac.kr}
\\
{Baiying Liu \\
Department of Mathematics\\
Purdue University\\
West Lafayette, IN, USA 47907\\
E-mail: liu2053@purdue.edu}
\\
{Ivan Mati\'{c} \\
Department of Mathematics, University of Osijek \\ Trg Ljudevita
Gaja 6, Osijek, Croatia\\ E-mail: imatic@mathos.hr}
\end{flushleft}

\end{document}